\documentclass[11pt, reqno]{amsart}
\usepackage{paper_style}
\usepackage[paper]{std_math}

\usepackage{local}
\usepackage[mode=buildnew]{standalone}
	
\IfFileExists{./\foldername/abstract.tex}{}{}

\hypersetup{pdfauthor={Gaurav Aggarwal, Konstantin Andritsch}
	,pdftitle=""
	,urlcolor=blue
	,citecolor=red
	,linkcolor=blue
	,colorlinks=true 
}

\newcommandx{\gaadd}[2][2=]{{\color{purple}{\tiny [GA]} #1} \marginpar{\color{purple} [GA] #2}}

\begin{document}

\title{Generalised Gauss--Kuzmin Distribution for Klein sails in $\RR^3$}

\author[G. Aggarwal]{Gaurav Aggarwal}
\address{Institut für Mathematik, University of Zürich, Zürich, Switzerland}
\email{gaurav.aggarwal@math.uzh.ch}

\author[K. Andritsch]{Konstantin Andritsch}
\address{Department of Mathematics, ETH Zürich, Zürich, Switzerland}
\email{konstantin.andritsch@math.ethz.ch}

\subjclass[2020]{Primary: 11J70, Secondary: 37A17, 11H06, 52C05}
\keywords{Klein sails, Gauss–Kuzmin-type distribution, homogeneous dynamics, Diophantine approximation, equidistribution}
\thanks{G.~A. gratefully acknowledges support from the Swiss National Science Foundation (grant 200020-212617).\\
\hspace*{1.3em}K.~A. gratefully acknowledges support from the Swiss National Science Foundation (grant 10003145).}
\date{}

\begin{abstract}	
	The classical Gauss--Kuzmin distribution describes the asymptotic distribution of continued fraction digits. Geometrically, this may be interpreted as the equidistribution of faces of Klein sails in $\RR^2$.

In this paper, we establish a three-dimensional analogue of this phenomenon. We prove the equidistribution of local face structures in generic Klein sails in $\RR^3$, thereby obtaining a higher-dimensional generalization of the Gauss--Kuzmin distribution. In addition, we resolve several open questions posed by Karpenkov~\cite{Ka17}.

Our approach is based on homogeneous dynamics and is motivated by the work of Kontsevich and Suhov~\cite{KS99}. More precisely, we construct a cross-section for the diagonal flow on $\SL_3(\mathbb R)/\SL_3(\mathbb Z)$, such that visits to the cross-section encode the geometry of three-dimensional sails. The principal technical contribution is the proof that the associated cross-sectional measure is finite, which enables us to derive the limiting face statistics.

\end{abstract} 

\maketitle

    \addtocontents{toc}{\protect\setcounter{tocdepth}{1}}

    \tableofcontents


\section{Introduction}

The continued fraction expansion of a real number $\alpha\in\mathbb{R}$ is an expression of the form
\[ \alpha = a_0+\frac{1}{a_1+\displaystyle\frac{1}{a_2+\displaystyle\frac{1}{\ddots}}}, \]
where $a_0\in\mathbb Z$ and $a_i\in\mathbb N$ for $i\ge1$, and is also denoted by $\alpha=[a_0;a_1,a_2,\dots]$. 
This expansion arises naturally from the Euclidean algorithm and plays a central role in the study of Diophantine approximation. In particular, the convergents of the continued fraction expansion provide optimal rational approximations to real numbers and capture important arithmetic properties of the number.

In 1850, C. Hermite posed the problem of generalizing ordinary continued fractions to higher dimensions~\cite{Hermite1839Letter}. In response to this question, \citeauthor{K1895} introduced in 1895 a geometric construction that provides a higher-dimensional analogue of continued fraction expansions~\cite{K1895}. Let us briefly recall the construction of Klein sails. 

Let $L_1,\ldots,L_n$ be $n$ linearly independent rays in $\RR^n$. The choice of directions determines the cone
\[
C_{L_1,\ldots,L_n}
=
\left\{
t_1w_1+\cdots+t_nw_n:
t_1,\ldots,t_n\geq 0
\right\},
\]
where $w_i$ is a vector pointing in the chosen direction of $L_i$.

Consider the convex hull of the non-zero integer lattice points contained in this cone, namely
\[
\operatorname{conv}\big((C_{L_1,\dots,L_n}\cap\mathbb Z^n)\setminus\{0\}\big).
\]
The resulting polyhedron is called the \emph{Klein polyhedron}, and its boundary is called the \emph{Klein sail} associated with $L_1,\dots,L_n$, and denoted by $\Klein_{L_1,\dots,L_n}$. The $(n-1)$-dimensional facets of the Klein sail are referred to as its \emph{faces}.

Throughout the paper, we restrict our attention to cones whose boundary hyperplanes contain no non-zero lattice points. Under this assumption, every face of the associated Klein sail is a bounded integral polytope; see \cite{M03}.

To illustrate the connection with ordinary continued fractions, let us restrict to the case $n=2$ and consider the cone generated by
$L_1=(1,\alpha)$ and $L_2=(1,-1/\alpha)$ for an irrational number $\alpha$. 

\begin{figure}[h!]
    \centering
        \tikzmath{\xmin = -5; \ymin =-10; \xmax = 20; \ymax =12; } 
    \begin{tikzpicture}[scale=0.4]
        \def\alpha{0.7360679774997898} 
        
        \foreach \x in {\xmin, ..., \xmax} {
            \foreach \y in {\ymin, ..., \ymax} {
                \fill[black!20] (\x,\y) circle (1.5pt);
            }
        }

        \draw[thin, ->] (\xmin-0.5,0) -- (\xmax+0.5,0) node[right] {$x$};
        \draw[thin, ->] (0,\ymin-0.5) -- (0,\ymax+0.5) node[above] {$y$};

        \foreach \x in {\xmin, 5, 10, 15, \xmax}
            \draw (\x,2pt) -- (\x,-2pt) node[below, anchor=north, font=\tiny] {{\pgfmathtruncatemacro{\myint}{\x}\myint}};
        \foreach \y in {\ymin, -5, 5, 10}
            \draw (2pt,\y) -- (-2pt,\y) node[left, anchor=east, font=\tiny] {{\pgfmathtruncatemacro{\myint}{\y}\myint}};

        \draw[blue!70!black, domain=0:16, samples=100, ->] 
            plot (\x, {\alpha*\x}); 
        \draw[blue!70!black, domain=0:9, samples=100, ->] 
            plot ({\alpha*\x}, -\x);

        \foreach \point in {
            (2, 1), (7, 5), (11, 8)  
        } {
            \fill[black] \point circle (4pt);
        }

        \draw[thick, black] (3, -4) -- node[above, sloped, font=\footnotesize] {$a_{3}$} (1, -1)
                                    -- node[above, sloped, rotate=180, font=\footnotesize] {$a_1$} (1, 0)
                                    -- node[below, sloped, font=\footnotesize] {$a_2$} (3, 2)
                                    -- node[below, sloped, font=\footnotesize] {$a_4$} (15, 11);

        \draw[thick, dashed, black] (3, -4) --  ++({0.1*11}, {0.1*-15});
        \draw[thick, dashed, black] (15, 11) --  ++({0.06*19}, {0.06*14});
        
        \foreach \point in {
            (1, 0), (3, 2), (15, 11),
            (1, -1), (3, -4)
        } {
            \fill[red] \point circle (4pt);
        }

    \end{tikzpicture}
    \caption{Part of the Klein sail $\Klein_{L_1,L_2}$ associated to the irrational number $\alpha =\frac{-3 + \sqrt{20}}{2} \approx 0.73607$. The continued fraction expansion of $\alpha$ is $[0;\overline{1,2,1,3}]$ and $a_1=1$, $a_2=2$, $a_3=1$, $a_4=3$ are precisely the integer lengths of the edges of the depicted part of the Klein sail. The convergents can be read of the (red) vertices of the sail.}
    \label{fig:gcf-2dim}
\end{figure}

In this case, the Klein sail is an infinite polygonal chain whose vertices are integer lattice points. We associate to each edge of the sail its \emph{integer length}, defined as the number of integer points on the edge minus one. A classical result proved in \cite{EGH89} states that these integer lengths are precisely the entries of the continued fraction expansion of $\alpha$ (see \Cref{fig:gcf-2dim}). Moreover, the vertices of the Klein sail correspond to the convergents of the continued fraction expansion of $\alpha$. 

This correspondence provides a geometric interpretation of the continued fraction expansion of a real number. Consequently, Klein sails in higher dimensions are often referred to as \emph{geometric higher-dimensional continued fractions}; see, for example, \cite{Karpenkovbook}.

The theory of Klein sails has a rich history, closely intertwined with the theory of continued fractions. We briefly recall some of the major developments and refer the reader to~\cite{Karpenkovbook,Lachaudsurvey} and references therein for more detailed accounts.

In the two-dimensional setting, a real number is badly approximable if and only if the partial quotients in its continued fraction expansion are uniformly bounded. Geometrically, this is equivalent to boundedness of the integer lengths of the edges of the associated Klein sail. In \cite{G05,G08}, \citeauthor{G05} established a multidimensional analogue of this characterization. More precisely, \cite[Theorem~2.1]{G08} shows that the local invariants of the vertices and $(n-1)$-dimensional faces of $\Klein$, called \emph{determinants}  (see \emph{loc.\ cit.} for the definition), are uniformly bounded if and only if
\[
\min_{x\in\ZZ^n\setminus\{0\}}
\abs{\langle w_1,x\rangle\cdots\langle w_n,x\rangle}>0,
\]
where $w_i$ denotes the unit normal vector to the ray $L_i$.

Another fundamental result in the classical theory is Lagrange's theorem, which states that a real number has an eventually periodic continued fraction expansion if and only if it is a positive quadratic irrational. This characterization also admits a higher-dimensional analogue. \citeauthor{T83}~\cite{T83} proved that Klein sails arising from algebraic data are topologically periodic, while \citeauthor{K94}~\cite{K94} established the converse by showing that topologically periodic Klein sails are necessarily of algebraic origin (see \Cref{sec:cubic-sails}). For more details, we refer the reader to~\cite{Lachaud,L98}. 

Owing to this periodic structure, Klein sails associated with totally real cubic fields constitute one of the best understood classes of examples. Several such sails have been computed explicitly; see, for example, \citeauthor{BP94}~\cite{BP94}, \citeauthor{P00}~\cite{P00,P05}, and \citeauthor{K04}~\cite{K04}.

The explicit construction and computation of Klein sails has also been a central topic of investigation (see \cite{BP94,Ko96,K06}). In particular, \citeauthor{BP94}~\cite{BP94} proposes an algorithm for the successive computation of the faces of a Klein sail. This algorithm is the natural higher-dimensional analogue of the geometric procedure for computing the continued fraction expansion of a real number. It constructs many vertices lying on faces of level one (see Definition~\ref{def: level}) and, in a lot of cases, determines the entire Klein sail. Since the present paper (Theorem~\ref{thm:face-equi}) provides quantitative control on the number of faces of level greater than one, our results offer theoretical support for this heuristic.

\medskip

Beyond these explicit constructions, \citeauthor{Ar98}~\cite{Ar98,Ar02} considered statistical aspects of geometric multidimensional continued fractions and posed several questions concerning the statistical properties of Klein sails and their local characteristics. Among these questions, he formulated the following conjecture:

\begin{conjecture*}[{\cite[Conjecture~4, first part]{Ar98}}]
For almost every simplicial cone whose vertex is the origin of the space $\RR^n\supset\ZZ^n$, the statistic of any reasonable characteristic of the sail of this cone (that is, of the boundary of the convex hull of the set of integer points lying inside the cone) is universal (independent of the cone).
\end{conjecture*}

The author mentioned that by \emph{reasonable characteristic}, he meant, for instance, the distribution of the integer lengths of the edges, the distribution of $2$-faces according to the number of their edges or by the integral area and by the affine integer equivalence classes, the distribution of the vertices according to the number of edges containing them and the like.

\medskip

In this paper, we study the distribution of affine integer equivalence classes of $(n-1)$-dimensional faces of Klein sails in $\RR^n$. To explain this notion, recall that every face of a Klein sail is an integral polytope contained in an affine rational $(n-1)$-dimensional hyperplane.

Two such faces are said to be \emph{affinely integer equivalent} if one is mapped to the other by an element of the affine unimodular group $\GL_n(\ZZ)\ltimes\ZZ^n$. Since every affine rational $(n-1)$-dimensional hyperplane can be mapped onto $\RR^{n-1}\times\{0\}$ by an element of $\GL_n(\ZZ)\ltimes\ZZ^n$, every face is affinely integer equivalent to an integral polytope in $\ZZ^{n-1}\times\{0\}$. Consequently, the set of affine integer equivalence classes of $(n-1)$-dimensional faces may be identified with
\[
\FC^{n-1}
=
\frac{\{\text{integral polytopes in }\ZZ^{n-1}\}}
{\GL_{n-1}(\ZZ)\ltimes\ZZ^{n-1}}.
\]

For a face $f$ of a Klein sail, we denote its equivalence class in $\FC^{n-1}$ by $[f]$. Explicitly, if $h\in\GL_n(\ZZ)\ltimes\ZZ^n$ maps the affine hyperplane containing $f$ onto $\RR^{n-1}\times\{0\}$, then $[f]$ is the affine integer equivalence class of the projection of $hf$ onto the first $n-1$ coordinates. This definition is independent of the choice of $h$.

Every representative of the class $[f]$ is obtained from $f$ by an affine unimodular transformation. Consequently, the class $[f]$ retains the complete integral-affine structure of the face. In particular, it determines numerical invariants such as the normalized volume (integer area in dimension two), as well as finer combinatorial and arithmetic data, including the number of vertices, the edge structure, and the configuration of lattice points.

For $n=2$, this notion recovers the classical theory of continued fractions. Indeed, every (non-trivial) integral interval in $\ZZ$ is affinely integer equivalent to $\{0,1,\ldots,m\}$ for a unique $m\in\NN$, and hence $\FC^1$ may be naturally identified with $\NN$. Under this identification, the affine integer equivalence class of an edge is completely determined by its integer length.  Consequently, the affine integer equivalence classes of the edges of a Klein sail correspond precisely to the partial quotients in the associated continued fraction expansion.

Their asymptotic distribution is given by the classical Gauss--Kuzmin distribution $\nu_{GK}$, a probability measure on $\NN\simeq\FC^1$. It describes the limiting distribution of the partial quotients in the continued fraction expansion of a Lebesgue-typical real number. More precisely, for Lebesgue almost every $\alpha\in\RR$ and every $k\in\NN$, if $\alpha=[a_0;a_1,a_2,\ldots]$, then
\[
\frac{1}{N}\sum_{n\le N}\mathbf 1_{\{k\}}(a_n)
\longrightarrow
\nu_{GK}(\{k\})
=
\frac{1}{\ln 2}
\ln\!\left(1+\frac{1}{k(k+2)}\right),
\]
as $N\to\infty$.

Interpreted in terms of Klein sails, this means that for a generic pair of lines, the asymptotic proportion of edges having integer length $k$ is $\nu_{GK}(\{k\})$. Thus, the two-dimensional case follows from the classical Gauss--Kuzmin theorem. This classical result provides supporting evidence and motivation for the formulation of Arnold's conjecture in higher dimensions.

For $n\geq 3$, the conjecture was resolved by \citeauthor{KS99}~\cite{KS99}. More precisely, they proved the existence of a limiting measure describing the distribution of affine integer equivalence classes of faces of Klein sails. To state their result, we introduce the following notation.

\begin{definition}
    Let $V\subset\RR^n$ be an integral-affine plane not containing the origin. Then there exists a vector $m= (m_1, \ldots, m_n)\in\ZZ^n$ with $\gcd(m_1, \ldots, m_n)=1$ and an integer $d\ge 1$ such that
    \[ V=\{x\in\RR^n:\langle m,x\rangle=d\}. \]
    The \highlight{integral distance from the origin} of $V$ is defined to be the integer $d$. We denote it by
    \[ \operatorname{dist}_{\mathbb Z}(V,0)=d. \]
\end{definition}
\begin{definition} \label{def: level}
    Let $f$ be a $(n-1)$-dimensional face of a Klein sail $\Klein$. Let $V_f$ be the affine plane containing $f$.
    \begin{enumerate}
        \item The \highlight{level} of $f$ is
        \[ \level(f) \defeq \operatorname{dist}_{\mathbb Z}(V_f,0). \]
    
        \item For $i=1,\ldots, n$, let $P_i=V_f\cap L_i$, where $L_1,\ldots,L_n$ are the rays of the cone defining the Klein sail $\Klein$. The \highlight{visible distance} of $f$ is
        \[ d_v(f) \defeq \max_{i\neq j} \frac{\norm{P_i}[2]}{\norm{P_j}[2]}. \]
    \end{enumerate}
\end{definition}

\begin{theorem}[\cite{KS99}]
\label{thm:KS99}
    There exists a measure $\nu_{\FC}$ on $\FC^{n-1}\times\NN$ such that for every finitely supported function $\varphi\colon\FC^{n-1}\times\NN$ and almost every linearly independent rays $L_1,\ldots,L_n$ in $\mathbb R^n$ the associated Klein sail $\Klein_{L_1,\ldots,L_n}$ satisfies
    \begin{align}\label{eq:KS-msr}
        \frac{1}{T^{n-1}}\sum_{\substack{f\text{ face of }\Klein_{L_1,\ldots,L_n} \\ d_v(f)\leq e^T}} \varphi([f],\level(f)) \to \nu_{\FC}(\varphi).
    \end{align}
\end{theorem}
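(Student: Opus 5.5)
We reduce \Cref{thm:KS99} to an ergodic theorem for the diagonal flow on $X_n=\SL_n(\RR)/\SL_n(\ZZ)$, following the Kontsevich--Suhov strategy.

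\emph{Step 1: normalisation.} Up to a rescaling of each ray, which does not change the sail, we may take $g_0\in\SL_n(\RR)$ mapping the coordinate rays $e_1,\dots,e_n$ to $L_1,\dots,L_n$. The sail of $(L_i)$ is then $g_0$ applied to the sail of the positive orthant $\RR_{\ge0}^n$ with respect to the lattice $\Lambda=g_0^{-1}\ZZ^n$. The diagonal group $A=\{a=\mathrm{diag}(e^{t_1},\dots,e^{t_n}):\sum t_i=0\}$ preserves the orthant. It therefore maps faces of the sail of $\Lambda$ bijectively onto faces of the sail of $a\Lambda$, and it preserves both the integral-affine class $[f]$ (computed with respect to the lattice) and the level.

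\emph{Step 2: cross-section.} Let $\mathcal S\subset X_n$ be the set of unimodular lattices $\Lambda'$ whose sail in the orthant has a face whose hyperplane is orthogonal to $(1,\dots,1)$. For such a face the points $P_i$ all have equal norm. Each face $f$ of the sail of $\Lambda$ then corresponds to a unique $a_f\in A$ with $a_f\Lambda\in\mathcal S$, namely the diagonal element making the normal of $V_f$ proportional to $(1,\dots,1)$. Writing $P_i=s_ie_i$, one has $a_f=\mathrm{diag}(s_i^{-1})$ up to scaling, so $\log d_v(f)=\max_{i,j}|t_i-t_j|$ for $a_f=\exp(t)$. The condition $d_v(f)\le e^T$ thus says that $\log a_f$ lies in the dilate $T\cdot P$ of a fixed convex polytope $P\subset\mathfrak a\cong\RR^{n-1}$. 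The data $([f],\level(f))$ is a function $\tilde\varphi$ on $\mathcal S$, locally constant on pieces of the section. The sum in \eqref{eq:KS-msr} therefore becomes
\[
\sum_{a\in A,\ \log a\in TP,\ a\Lambda\in\mathcal S}\tilde\varphi(a\Lambda),
\]
a count of visits of the $A$-orbit to the transversal $\mathcal S$, which has codimension $n-1$.

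\emph{Step 3: ergodic theorem.} Thicken $\mathcal S$ to a set $\mathcal S_\epsilon=\{a\Lambda':\Lambda'\in\mathcal S,\ \log a\in B_\epsilon\}$ and compare the count above with $\int_{TP}\mathbf 1_{\mathcal S_\epsilon}\tilde\varphi(\exp(t)\Lambda)\,dt$. Since the $A$-action on $X_n$ is mixing, a multiparameter Birkhoff theorem for averages over dilated convex sets (Wiener's ergodic theorem for $\RR^{n-1}$-actions) gives, for Haar-a.e.\ $\Lambda$, convergence of $T^{-(n-1)}$ times this integral to $\mathrm{vol}(P)\,\mu(\mathbf 1_{\mathcal S_\epsilon}\tilde\varphi)/\mathrm{vol}(B_\epsilon)$. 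We then define $\nu_{\FC}(\varphi)$ as this quantity, namely $\mathrm{vol}(P)$ times the cross-sectional measure of $\tilde\varphi$. Finally, the statement "Haar-a.e.\ $\Lambda$" transfers to "a.e.\ rays", because the map from rays to $A$-orbits of lattices is locally a smooth submersion.

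\emph{Main obstacle.} The difficulty is justifying the thickening. One must show that the visits are uniformly separated, i.e.\ that $\mathcal S$ is a genuine transversal, and that $\tilde\varphi\mathbf 1_{\mathcal S_\epsilon}$ is Riemann integrable, so that the ergodic theorem applies to an indicator rather than only to continuous functions. Since $\varphi$ is finitely supported, $\tilde\varphi$ lives on the part of $\mathcal S$ where the face has bounded level and bounded shape. I would first show that this part is relatively compact in $X_n$: a bounded face at bounded level in a unimodular lattice forces a lower bound on the systole via Mahler's criterion. I would then show that its boundary, where the face degenerates or the sail combinatorics change, is a finite union of lower-dimensional pieces and hence has measure zero. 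Uniform separation of visits follows because two distinct faces both normal to $(1,\dots,1)$ cannot occur for nearby $a$ once the lattice is in a compact set.
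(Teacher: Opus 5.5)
There is a genuine gap in your ``main obstacle'' paragraph. Your reduction is the mechanism the paper uses for its set $\mathcal C$: cone to orthant, a transversal $\mathcal S$ of lattices having a face orthogonal to $(1,\dots,1)$, faces $\leftrightarrow$ visits, $d_v(f)\le e^T$ $\leftrightarrow$ a dilated window, then a multiparameter ergodic theorem. The paper only cites this statement, but proves its $n=3$ strengthening this way in Theorem~\ref{thm:face-equi}. The problem is your claim that the part of $\mathcal S$ with a fixed face type and level is relatively compact in $X_n$. That claim is false, so it supplies neither uniform separation nor the boundary/Riemann-integrability step.

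Already for $n=3$ and level $1$ there is a counterexample.
\begin{itemize}
\item Let the face be a unimodular triangle $p_1p_2p_3$ (a basis of $\Lambda$) in the slice $\{\langle x,v\rangle=s\}$, where $v=3^{-1/2}(1,1,1)$. Since $\det(p_1,p_2,p_3)=1$, this triangle has area $1/(2s)$.
\item Put $p_1,p_2$ near one corner of the slice triangle and $p_3$ near a second corner, with $u=p_2-p_1$ roughly orthogonal to the side joining these two corners. Then $|u|\asymp s^{-2}$.
\item The slice points lie on lines parallel to $u$, with spacing $\operatorname{covol}(\Lambda\cap v^\perp)/|u| = s^{-1}/|u|\asymp s$. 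For suitable choices only two of these lines meet the open slice triangle, each in a segment of length $O(|u|)$ next to a corner.
\item These $\Lambda$ lie in $\mathcal S$ with the same type and level for every large $s$. Yet their systole is at most $|u|\to 0$.
\end{itemize}

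The conclusions you need are still true, but for other reasons.

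\emph{Separation comes from discreteness of $\langle\Lambda,v\rangle$, not from compactness.} Suppose $y$ and $a_ty$ both lie in $\mathcal S$ with $t\neq 0$. Then the sail of $y$ has distinct faces $f,g$ with normals $v$ and $a_tv$.
\begin{itemize}
\item Since $g$ spans a hyperplane different from $V_f$, some $p\in g$ has $\langle p,v\rangle\ge s_y(1+1/k)$, where $k$ is the level of $y$.
\item Every $q\in f$ satisfies $\langle q,a_tv\rangle\ge\langle p,a_tv\rangle$, because $g$ is the first-contact face for $a_tv$.
\item All these points lie in the closed orthant, so $e^{\max_i t_i}s_y\ge e^{\min_i t_i}s_y(1+1/k)$, i.e.\ $\max_{i,j}|t_i-t_j|\ge\ln(1+1/k)$.
\end{itemize}
This gives an injective thickening of $\{\mathrm{level}\le K\}$ whose size depends only on $K$. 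In particular the transverse measure of that set is finite.

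\emph{Riemann integrability is not needed at all.} Wiener's theorem holds for every $L^1$ function at almost every point, and $\FC^{n-1}\times\mathbb N$ is countable. Window-boundary effects are handled by sandwiching $TP$ between $(T\pm c)P$.

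\emph{A smaller slip.} The identity $\log d_v(f)=\max|t_i-t_j|$ requires the columns of $g_0$ to have equal norms, as in the paper's choice $g=r(v_1\,v_2\,v_3)^{-1}$. Otherwise the window is shifted by a fixed vector, which is harmless.

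\emph{How the paper differs.} The paper avoids uniform separation altogether. It realises $\mathcal C$ as a relatively open subset of a closed orbit of a subgroup complementary to $A_+$ (Lemma~\ref{lem:cross hom space}). This gives flow boxes with pointwise widths $\gamma_x$ and vague convergence against $C_c(\mathcal C)$ (Lemma~\ref{lem:equi-cross-section}). It then passes to test functions that are not compactly supported using finiteness of the cross-sectional measure and the counting function $\#\Phi^{-1}$ (Lemma~\ref{lem:equi-cross-section 2}).
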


The proof of this theorem is based on a dual approach to multidimensional continued fractions. Instead of fixing the lattice and varying the simplicial cone, \citeauthor{KS99}~\cite{KS99} fix the cone $\RR_{\geq 0}^n$ and vary the lattice. More precisely, if $g\in\SL_n(\RR)$ is chosen so that $C=g^{-1}\RR_{\geq0}^n$, then the Klein sail associated with the cone $C$ can be equivalently viewed as the Klein sail associated with the fixed cone $\RR_{\geq 0}^n$ and the varying lattice $g\ZZ^n$. Thus, the problem is transferred to the space of unimodular lattices
\[
\{g\ZZ^n:g\in\SL_n(\RR)\}\simeq \SL_n(\RR)/\SL_n(\ZZ).
\]

The diagonal flow preserves the cone $\RR_{\geq0}^n$. \citeauthor{KS99} constructed a cross-section for this flow such that faces of Klein sails correspond to visits of the orbit to the cross-section. The ergodicity of the diagonal flow on $\SL_n(\RR)/\SL_n(\ZZ)$ then yields limiting statistics for affine integer equivalence classes of faces of generic Klein sails. They also studied several related statistics, including the expected number of lattice points in the interiors of faces; see \cite{KS99}.

However, the finiteness of the cross-sectional measure remained as an open question, and subsequently, also raised by \citeauthor{Ka17}~\cite{Ka17} (see Section~\ref{subsec:Question of Karpenkov}) in an equivalent form.

From the perspective of the above theorem, this translates to whether the limiting measure $\nu$ finite? More generally, can the convergence in~\eqref{eq:KS-msr} be extended from finitely supported functions to a larger class of functions, such as bounded functions on $\FC^{n-1}\times\NN$? In particular, it remains unclear whether a direct analogue of the Gauss--Kuzmin distribution holds, where the normalization on the left-hand side of~\eqref{eq:KS-msr} is replaced by
\[
\#\{f:d_v(f)\leq e^T\},
\]
the number of faces of a Klein sail satisfying a bound on its visible distance.

\subsection{Main result}

In this paper, we prove the finiteness of the cross-sectional measure associated with Klein sails in $\RR^3$. As a consequence, we obtain the following generalized Gauss--Kuzmin distribution for Klein sails in $\RR^3$.
\begin{theorem}\label{thm:face-equi}
    The measure $\nu_{\FC}$ in Theorem~\ref{thm:KS99} is finite. Moreover, for every bounded function $\varphi$ on $\FC^2\times\mathbb N$ and for Lebesgue almost every triple of linearly independent rays $L_1,L_2,L_3\subset\mathbb R^3$ the following holds.
     \begin{align}\label{eq:KS-msr 2}
        \frac{1}{T^{n-1}}\sum_{\substack{f\text{ face of }\Klein_{L_1,\ldots,L_n} \\ d_v(f)\leq e^T}} \varphi([f],\level(f)) \to \nu_{\FC}(\varphi).
    \end{align}
    As a result, for Lebesgue almost every triple of linearly independent rays $L_1,L_2,L_3\subset\mathbb R^3$ and for every bounded function $\varphi\colon\FC^2\times\NN\to\RR$, we have
    \[ \frac{1}{N_{\Klein}(T)} \sum_{\substack{f \text{ face of }\Klein\\d_v(f)\le T}} \varphi([f],\level(f)) \longrightarrow \frac{1}{\nu_{\FC}(\FC^2\times\NN)}\int_{\FC^2\times\mathbb N} \varphi\dd\nu_{\FC}, \]
    as $T\to\infty$, where
    \[ N_{\Klein}(T) \coloneqq\#\{f \text{ face of }\Klein : d_v(f)\le T\}. \]
    Equivalently, the affine integer equivalence classes of faces of the Klein sail, ordered according to their visible distance, become equidistributed in $\FC^2\times\mathbb N$ with respect to $\nu_{\FC}$.
    
    Moreover, the measure $\nu_{\FC}$ satisfies
    \begin{align}\label{eq:level asymptote}
        \frac{1}{k^3} \ll \nu_{\FC}(\FC^2\times \{k\}) \ll \frac{\ln k +1}{k^2},
    \end{align}
    for all $k \in \NN$, where the implied constants are independent of $k$.
\end{theorem}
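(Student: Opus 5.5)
We work in the dual picture of Kontsevich--Suhov. Fix the cone $\RR_{\ge0}^3$, vary the lattice $\Lambda=g\ZZ^3$ in $X=\SL_3(\RR)/\SL_3(\ZZ)$, and let $A=\{a_t=\mathrm{diag}(e^{t_1},e^{t_2},e^{t_3}):\sum t_i=0\}$ act.

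\textbf{The cross-section.} Take the cross-section $\mathcal S\subset X$ of lattices $\Lambda$ for which the Klein sail of $(\Lambda,\RR^3_{\ge0})$ has a face whose supporting plane is $\{x_1+x_2+x_3=\ell\}$ for some $\ell$. The plane can be normalised to this form by an element of $A$, using that the points $P_i$ are equidistant from the origin in the $A$-normalised frame. This makes $\mathcal S$ a $6$-dimensional transversal to the $2$-dimensional $A$-orbits. Faces $f$ with $d_v(f)\le e^T$ then correspond to visits of the orbit $A\Lambda$ to $\mathcal S$ inside a region of $A\simeq\RR^2$ of area $\asymp T^{2}$, which explains the normalisation $T^{n-1}$. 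Theorem~\ref{thm:KS99} already supplies the cross-sectional measure $\mu_{\mathcal S}$ with $\nu_{\FC}=(\Phi)_*\mu_{\mathcal S}$, where $\Phi(\Lambda)=([f],\level(f))$.

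\textbf{Reduction to finiteness.} Once $\mu_{\mathcal S}(\mathcal S)<\infty$, the rest is standard. Each level set $\{\Phi=c\}$ is a Borel subset of finite measure. The ergodic theorem for the $\RR^2$-action, in its cross-section version (Birkhoff over the expanding family of boxes, plus a thickening argument to pass from visit counts to integrals of a flow-box function), extends \eqref{eq:KS-msr} from finitely supported $\varphi$ to bounded $\varphi$. The tail is handled by approximating $\varphi$ by its truncation to finitely many classes and applying the same ergodic theorem to the indicator of the complement, which converges a.e. to its small measure. Dividing by the case $\varphi\equiv1$ gives the normalised Gauss--Kuzmin statement. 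The almost-every rays version follows from the almost-every lattice version by the identification $C=g^{-1}\RR^3_{\ge0}$ and Fubini.

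\textbf{Finiteness and the level bounds.} This is the main obstacle. I would parametrise $\mathcal S$ by the data of a face: the plane $x_1+x_2+x_3=\ell$ together with the lattice. Write the measure via a Siegel-type integral as a sum over primitive integral normals $m$ and levels $d$. The cross-section measure of faces at level $k$ equals the Haar volume of lattices $\Lambda$ such that
\begin{itemize}
\item[(i)] the plane $\{\langle m,x\rangle=k\}$ meets $\Lambda\cap\RR^3_{\ge0}$ in a triangle of points spanning it, and
\item[(ii)] the open region $\{x\ge0,\ \langle m,x\rangle<k\}$ contains no nonzero lattice point.
\end{itemize}
Condition (ii) is a simplex of volume $\asymp k^3/\prod$ of the intercepts that must avoid $\Lambda$. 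I would apply a Minkowski/Siegel-type estimate: the probability that a convex body of volume $V$ avoids a random lattice decays like $V^{-1}$ up to logarithmic factors in dimension three, via the Rogers mean value for the successive minima tail. Integrating this against the distribution of the face's shape gives the upper bound $(\ln k+1)/k^2$, with the log coming from thin elongated simplices where the decay is only in two directions. The delicate part is the unbounded region of degenerate lattices near the cusp, where one successive minimum is tiny; there I would split by the size of $\lambda_1(\Lambda)$ and use reduction theory (Siegel sets) to integrate directly. Summing over $k$ gives $\sum (\ln k)/k^2<\infty$, which proves finiteness.

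\textbf{Lower bound.} For the bound $k^{-3}$, I would exhibit an explicit open set of lattices of measure $\gg k^{-3}$. Take a perturbation of $\ZZ^3$ scaled so that a unimodular triangle sits on a level-$k$ plane whose empty region is a fixed-shape simplex. Such lattices occupy a neighbourhood of the required measure, obtained by counting the choices of the normal $m$ with the corresponding constraint.
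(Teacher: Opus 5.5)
Your reduction of the distributional statements to finiteness is fine and is essentially the paper's: Lemma~\ref{lem:equi-cross-section 2} handles the constant function through the multiplicity function $F(x)=\#\Phi^{-1}(x)\in L^1$, and this is combined with Theorem~\ref{muJM} and the a.e.\ local constancy of $\psi$ from Corollary~\ref{cor:local-stability}. The gap is in the step you yourself call the main obstacle: your route to finiteness and to \eqref{eq:level asymptote} cannot work.

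\textbf{Why the avoidance estimate fails.} The estimate $\Pr(\Lambda\cap K=\emptyset)\ll V^{-1}$ is a statement about Haar-random lattices in $X_3$. The transversal measure is not the Haar volume of a set of lattices satisfying your conditions (i) and (ii). Unfold over the primitive dual vector $m$ normal to the face and put $\rho=\|m\|$. Conditionally on $m$, the lattice is determined by the planar lattice $L_0=\Lambda\cap m^{\perp}$, of covolume $\rho$, together with the translate $b$ of the first layer. The level-$k$ mass becomes $\int \Pr_{L_0,b}[\level=k]\,\rho^2\,d\rho$. With $r=\rho^3$ this is exactly the paper's $\int\Leb(\fund^{(k)}_{r^{1/2}A'})\,\frac{dr}{r}$, integrated over $X_2$. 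In this conditional model the randomness is only in $(L_0,b)$, so the Rogers/Athreya--Margulis bound does not apply.

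Worse, the bound is quantitatively useless even if granted. By Lemma~\ref{lem:covering-successive}, level $k$ forces $\rho^3\asymp k^2/\lambda_2(\tri,A'\ZZ^2)^2$. Hence the empty simplex has volume $V\asymp k^3/\rho^3\gg k$. Integrating $V^{-1}$ against $\rho^2\,d\rho$ over this window gives a bound of order $k$ per level for lattices of bounded shape, which is not summable. By the paper's result, the actual probability over $b$ in this window is on average $O(k^{-4}\ln k)$. The decay is arithmetic, not volumetric: emptiness of the layers $\ell<k$ means $b\notin\bigcup_{\ell<k}\bigl(\tri+\tfrac1\ell L\bigr)$. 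This is a union of translates of a fixed triangle centred at the rational points of $L$ with denominator $<k$, and only small pockets survive it.

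\textbf{What is missing.} You need a mechanism that exploits this rational structure. The paper's has three ingredients:
\begin{enumerate}
\item Lemma~\ref{lem:desc-of-basis-I}: any witness $P_1=c_1a_1+c_2a_2$ has $\gcd(c_i,k)=1$ and satisfies one of three congruences mod $k$. This is proved by producing a forbidden lattice point at a lower level, via \cite[Theorem 2]{W64}.
\item Reduced bases (Lemma~\ref{lem:red-basis-in-tri}), which leave only $O(\phi(k))$ witness configurations.
\item For each configuration, three obstructing points $Q_1,Q_2,Q_3$ of denominators $k'',k',k-1$, built from the Farey neighbours of $c_1'/k$. Inclusion--exclusion of equilateral triangles then gives $\ll 1/\bigl((k-1)k'(k-k')\bigr)$ after the $dr/r$ integration.
\end{enumerate}
The logarithm is the harmonic sum $\sum_{k'}1/(k'(k-k'))$, not a contribution from thin elongated simplices. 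The cusp of $X_2$ plays no role, since the bound is uniform in $A'$.

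Your lower-bound sketch has the same problem. It never verifies that the perturbed lattices have all layers $\ell<k$ empty. That verification is exactly the equality in \eqref{eq:concrete-set} for the explicit family $Z_k$ with $c_1'=1$, and it is where the $k^{-3}$ comes from.
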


In particular, it follows that for almost every Klein sail $\Klein$ we have $N_{\Klein}(T) \sim c(\log T)^2$ where $c= \nu_{\FC}(\FC^2\times\NN)$. It would be interesting to compute this constant explicitly.

We note that explicit frequencies for certain classes of faces of three-dimensional Klein sail were previously computed by \citeauthor{Ka07}~\cite{Ka07}. The present work establishes a general distributional framework addressing all types of faces simultaneously.

\begin{remark}
    In dimension $2$, every face of a Klein sail has level equal to $1$. This fact is not completely immediate from the definitions. However, it is equivalent to the well-known determinant formula for continued fractions, stating that two consecutive best approximants $\frac{p_n}{q_n}$ and $\frac{p_{n+1}}{q_{n+1}}$ of any irrational number $\alpha\in \RR$ satisfy $p_nq_{n+1} - q_np_{n+1} = \pm 1$. Geometrically, this is equivalent to the fact that the lattice spanned by the vectors $(p_n,q_n)$ and $(p_{n+1},q_{n+1})$ is equal to $\ZZ^2$.
    
    The situation changes dramatically in higher dimensions. Already in dimension $3$, there exist Klein sails with faces of level greater than $1$. Consequently, the geometry and combinatorics of faces become substantially richer than in the classical two-dimensional continued fraction setting.
    
    Indeed, as will become apparent from the proof of Theorem~\ref{thm:face-equi}, the main technical difficulty lies in understanding faces of integral distance greater than $1$.
    It would be interesting to determine the precise asymptotic behaviour of $\nu_{\FC}(\FC^2\times\{k\})$ as $k\to\infty$.
\end{remark}
\begin{remark}
We also refer the reader to Corollary~\ref{cor:no-fund-domain}, where we study the structure of the set
\[
\supp(\nu_{\FC})\cap(\FC^2\times\{k\}),
\]
for $k\ge2$.
\end{remark}

\subsection{Questions of Karpenkov}\label{subsec:Question of Karpenkov}

Another approach to generalizing Gauss--Kuzmin statistics in higher-dimensional
continued fractions is through geometric relative frequencies (see definition below), introduced and
studied extensively by Karpenkov~\cite{Ka07, Ka17} and others. We briefly recall this viewpoint
and explain its relation to our result.

Let $\sK_n$ denote the space of all Klein sails in $\RR^n$. A Klein sail is
determined by an ordered collection of $n$ linearly independent rays
$L_1,\ldots,L_n\subset\RR^n$, considered up to permutation. Thus, the space
$\sK_n$ can be identified with
\[
    \sK_n \simeq \SL_n(\RR)/(\mathrm{D}_n\rtimes S_n),
\]
where $\mathrm{D}_n$ denotes the subgroup of diagonal matrices with positive entries and $S_n$ is the permutation group acting by permuting the rays. Indeed, the correspondence is given by
\[
    g\mapsto \{\RR_{\geq 0}g\mathbf e_1,\ldots,
    \RR_{\geq 0}g\mathbf e_n\},
\]
where $\mathbf e_i$ denote the standard basis vectors, equivalently, by associating to $g$ the simplicial cone~$g\RR_{\geq0}^n$.

Consequently, $\sK_n$ carries a natural smooth manifold structure and admits an
$\SL_n(\RR)$-invariant measure, unique up to scaling. We fix such a measure
$\mu$, which is usually referred to as the Möbius measure.

For two convex subsets $f_1,f_2\subset\ZZ^n$ contained in affine rational
hyperplanes, we say that $f_1$ and $f_2$ are \emph{integer equivalent} if
there exists an element $\gamma\in\SL_n(\ZZ)$ such that $f_2=\gamma f_1$.

For a convex subset $f\subset\ZZ^n$ contained in an affine rational
hyperplane, its relative frequency is given by
\[
    \rel(f)
    =
    \mu\left(
    \{ \Klein \in\sK_n:f\text{ appears as a face of }\Klein \}
    \right).
\]
The $\SL_n(\ZZ)$-invariance of the measure $\mu$ implies that
\[
    \rel(\gamma f)=\rel(f),\qquad \gamma\in\SL_n(\ZZ).
\]
Indeed, the set of Klein sails containing $\gamma f$ as a face is the image under $\gamma$ of the
set of Klein sails containing $f$ as a face, due to invariance of $\ZZ^n$ under $\gamma$, and hence has the same measure by the
$\SL_n(\RR)$-invariance of $\mu$.

Therefore, the relative frequency depends only on the integer equivalence
class of the face. Karpenkov~\cite{Ka17} asked whether these frequencies satisfy suitable
finiteness properties.

\begin{problem}[{\cite[Problem~16]{Ka17}}]
For every positive integer constant C there exist only finitely many pairwise
integer non-equivalent faces with frequencies exceeding C.
\end{problem}

\begin{problem}[{\cite[Problem~17]{Ka17}}]
Is it true that the sum of all relative frequencies for all integer equivalence classes of faces is finite
for higher dimensions $(n \geq 3)$?
\end{problem}

We now answer these questions affirmatively in dimension three. 

\begin{theorem}
\label{thm:Karpenkov question}
    In dimension $3$, the relative frequencies of Klein sail faces satisfy
    \[
        \sum_{[f]}\rel([f])<\infty,
    \]
    where the sum is taken over all integer equivalence classes of faces.
    As a result, for every $C>0$, there exist only finitely many integer
    non-congruent faces satisfying $\rel([f])>C$.
\end{theorem}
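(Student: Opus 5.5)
The plan is to reduce Theorem~\ref{thm:Karpenkov question} to the finiteness of $\nu_{\FC}$ from Theorem~\ref{thm:face-equi}. The link is that $\sum_{[f]}\rel([f])$ is a normalised cross-sectional measure for the diagonal flow on $X=\SL_3(\RR)/\SL_3(\ZZ)$.

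First I unfold the sum. Write $\sK_3=\SL_3(\RR)/(\mathrm D_3\rtimes S_3)$ with the Möbius measure $\mu$. Then
\[
\sum_{[f]}\rel([f]) = \int_{\sK_3} \#\{\text{faces of }\Klein \text{ up to } \SL_3(\ZZ)\text{-equivalence, counted with the relevant multiplicity}\}\,d\mu .
\]
More concretely, I choose one representative $f$ in each $\SL_3(\ZZ)$-class and use Fubini to rewrite the sum as the measure of the set of pairs $(g,f)$, with $f$ a face of the sail of the cone $g\RR^3_{\ge0}$, taken modulo the diagonal action of $\SL_3(\ZZ)$. Passing to the dual picture, where the cone is fixed and the lattice varies, I move to $\SL_3(\RR)/\SL_3(\ZZ)$ and quotient by $\mathrm D_3\rtimes S_3$. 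This set becomes the set of lattices $\Lambda=g^{-1}\ZZ^3$ together with a face of the sail of $\Lambda$ in $\RR^3_{\ge0}$, taken modulo $\mathrm D_3$. Faces are permuted by $\mathrm D_3$, so I normalise each one by the diagonal element that puts its hyperplane, say, into the form $\{x_1+x_2+x_3=1\}$, i.e. equal intercepts. This is exactly the Kontsevich--Suhov cross-section $\mathcal S\subset X$: the set of lattices for which that hyperplane supports a face of the sail.

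Second, I identify $\mu$ restricted to this fibration with the product of Haar measure on $X$ and Lebesgue measure on the two-dimensional group $\mathrm D_3$, up to the finite factor $|S_3|$. Under this identification, $\sum_{[f]}\rel([f])$ equals a constant times the cross-sectional measure $\mu_{\mathcal S}(\mathcal S)$. The cross-sectional measure $\mu_{\mathcal S}$ is defined via $\mu_X(\{a s: s\in\mathcal S, a\in B\})=\mathrm{vol}(B)\,\mu_{\mathcal S}(\mathcal S)$ for small $B\subset\mathrm D_3$. The same ergodic-theorem argument underlying Theorem~\ref{thm:KS99} identifies $\nu_{\FC}(\FC^2\times\NN)$ with a constant multiple of $\mu_{\mathcal S}(\mathcal S)$, the constant coming from the normalisation $T^{n-1}$, i.e. the area of $\{a\in\mathrm D_3:\text{visible distance}\le e^T\}$. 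By Theorem~\ref{thm:face-equi}, $\nu_{\FC}$ is finite, so the sum is finite. The bookkeeping I must check is that one fixed face class can occur at most finitely (boundedly) often per sail up to diagonal symmetry, and that integer equivalence under $\SL_3(\ZZ)$ versus $\GL_3(\ZZ)\ltimes\ZZ^3$ changes things only by bounded factors and the level label. Finiteness is insensitive to these factors.

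The second assertion follows immediately. If infinitely many pairwise non-equivalent classes had $\rel([f])>C$, the sum would diverge.

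The main obstacle is the precise measure-theoretic identification in the second step. I need to check that the Möbius measure of $\{\Klein: f\text{ is a face}\}$, summed over classes, is exactly the transverse measure of $\mathcal S$, with no hidden infinite multiplicity coming from stabilisers of $f$ in $\SL_3(\ZZ)$. I would handle this by noting that the stabiliser of a bounded polytope is finite, and by writing the unfolding through a fundamental domain for $\SL_3(\ZZ)$ explicitly.
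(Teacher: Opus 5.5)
Your proposal is correct and is essentially the paper's argument. The paper writes the Möbius measure as the pushforward of right Haar measure on $P$. It observes that, up to a null set, the $p\in P_f$ (cones having $f\subset\ZZ^2\times\{k\}$ as a face) map onto the piece $C_{[f]}$ of the cross-section $\C$, with fibres contained in cosets of $\operatorname{Stab}_{\SL_3(\ZZ)}(f)$, deduces $\rel([f])\le 6\,\mu_\C(C_{[f]})$, and sums over the disjoint pieces using $\mu_\C(\C)=\nu_{\FC}(\FC^2\times\NN)<\infty$.

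Two corrections to your bookkeeping. First, the stabiliser multiplicity needs a bound uniform in $f$; the paper gets $\le 6$ because the stabiliser embeds in a finite subgroup of $\SL_2(\ZZ)$. Second, comparing $\SL_3(\ZZ)$-classes with $\GL_3(\ZZ)\ltimes\ZZ^3$-classes is unnecessary, since only total masses enter. It is also not a bounded-factor comparison: the elements of $\SL_3(\ZZ)$ preserving $\ZZ^2\times\{k\}$ act by $\SL_2(\ZZ)$ and translations by $k\ZZ^2$ only.
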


The key point
is that integer equivalence classes of faces correspond precisely to subsets of
the cross-section of the diagonal flow constructed in Section~\ref{sec:explicit-crosssection}.
More precisely, the relative frequency of a class of a face is proportional to the measure of the
corresponding subset of the cross-section. Hence, the finiteness of the total
sum of relative frequencies is equivalent to the finiteness of the
cross-sectional measure. For more details, see Section~\ref{subsec:Karpenkov}.

\subsection{Equidistribution of periodic Klein sails}\label{sec:cubic-sails}

Let $\Klein$ be the Klein sail associated to a simplicial cone $C\subset\RR^3$. We say that $\Klein$ is \highlight{periodic}, if and only if there is a rank-$2$ abelian subgroup of $\SL_3(\ZZ)$ which leaves the Klein sail invariant. More precisely, if the simplicial cone is given by $C = g\RR^3_{\geq0}$, then the associated Klein sail $\Klein$ is periodic if and only if 
\[ \Xi(\Klein) \defeq \SL_3(\ZZ) \cap gA_+g^{-1} \]
is a rank-$2$ subgroup of $\SL_3(\ZZ)$. For periodic Klein sails $\Klein$ one can choose a finite set of faces $\mathcal{F}_\Klein$ of $\Klein$ such that every other face of $\Klein$ is the image of a face in $\mathcal{F}_\Klein$ under some element of $\Xi(\Klein)$.

Let $C$ be a simplicial cone generated by the three linearly independent rays $L_1,L_2,L_3\in\RR^3$. Then $C$ is said to be split, if $L_1,L_2,L_3$ are eigendirections of a hyperbolic matrix $h\in\Mat[3](\ZZ)$ whose characteristic polynomial is an irreducible cubic polynomial with real, positive roots. \citeauthor{K94} characterized periodic Klein sails in terms of split cones in \cite{K94}.

\begin{theorem}[{\cite{K94}, \cite[Prop.~3]{L98}}]
    The Klein sail $\Klein$ associated to a simplicial cone $C$ is periodic if and only if $C$ is split.
    In fact, $C$ is split if and only if there is a totally real algebraic number field $F$ of degree $3$ and a $\QQ$-basis $(m_1,m_2,m_3)$ of $F$ such that
    \begin{align}\label{eq:cone-cubic}
        C = \{x\in\RR^3 \st\forall i\in\{1,2,3\}~\langle \sigma(m_i),x\rangle \geq 0\},
    \end{align} 
    where $\sigma\colon F \hookrightarrow \RR^3$ is the Minkowski embedding of $F$. Equivalently,        
    \[
    C = \{c_1L_1 + c_2L_2 + c_3L_3 \in\RR^3\st c_1,c_2,c_3\geq 0\}, 
    \] 
    is generated by the three rays $L_1$, $L_2$ and $L_3$ generated by the column vectors of the matrix
    \[ \begin{pmatrix} - & \sigma(m_1) & - \\- &  \sigma(m_2) & - \\- &  \sigma(m_3) & -\end{pmatrix}^{-1}. \]
\end{theorem}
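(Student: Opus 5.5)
The statement has three parts: split $\Rightarrow$ periodic, periodic $\Rightarrow$ split, and the description of split cones via a cubic field. I would prove them in that order, working throughout with $C=g\RR^3_{\geq0}$, so that $L_i=\RR_{\geq0}g\mathbf e_i$ and $gA_+g^{-1}$ is the group of matrices that are diagonal in the basis $g\mathbf e_i$ with positive eigenvalues.

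\emph{Split $\Rightarrow$ periodic.} Let $h\in\Mat[3](\ZZ)$ have irreducible characteristic polynomial with real positive roots, and let $L_i$ be its eigendirections. Put $F=\QQ(\lambda)$ for a root $\lambda$. This is a totally real cubic field, and the $\QQ$-algebra $\QQ[h]$ is isomorphic to $F$. Since the eigenvalues are distinct, the centralizer of $h$ in $\Mat[3](\QQ)$ is exactly $\QQ[h]$. The integral matrices in $\QQ[h]$ form an order $\mathcal O\subset F$, and the elements of $\GL_3(\ZZ)\cap\QQ[h]$ are precisely the units of $\mathcal O$. By Dirichlet's unit theorem (with $r_1=3$, $r_2=0$) this unit group has rank $2$. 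Its squares are totally positive units, so they act on each $L_i$ by a positive scalar. Hence they lie in $\SL_3(\ZZ)\cap gA_+g^{-1}=\Xi(\Klein)$, and this group therefore has rank $2$.

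\emph{Periodic $\Rightarrow$ split.} Take the logarithm map $gA_+g^{-1}\to\{t\in\RR^3:t_1+t_2+t_3=0\}$. Because $\SL_3(\ZZ)$ is discrete, $\Xi(\Klein)$ maps onto a discrete subgroup of rank $2$, i.e.\ a lattice in this plane. A lattice meets the three walls $t_i=t_j$ only in a proper subset, so I can choose $h\in\Xi(\Klein)$ with three distinct positive eigenvalues and eigendirections $L_1,L_2,L_3$. The key step, and the one I expect to be the main obstacle, is showing that the characteristic polynomial of $h$ is irreducible over $\QQ$.

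\begin{itemize}
\item Suppose it were reducible. Then $h$ would have a rational eigenvalue, which is a positive algebraic unit and hence equals $1$, with a rational eigenline.
\item The eigenlines of $h$ are exactly the $L_i$, so this rational line is some $L_i$. That contradicts the standing assumption that the boundary hyperplanes of the cone (and in particular its edges) contain no non-zero lattice points.
\item One can also argue without that assumption. Every element of $\Xi(\Klein)$ commutes with $h$, so it preserves this rational line and acts on it by a rational unit, i.e.\ by $1$. Then $\Xi(\Klein)$ lies in the line $t_i=0$ of log-space, which is impossible for a discrete subgroup of rank $2$.
\end{itemize}

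\emph{Description via a cubic field.} Let $h$ be as above and $F=\QQ(\lambda)$. Choose a left eigenvector $v=(m_1,m_2,m_3)\in F^3$ with $vh=\lambda v$.

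\begin{itemize}
\item Irreducibility makes $m_1,m_2,m_3$ linearly independent over $\QQ$. Otherwise a rational linear relation would give a rational $h$-invariant subspace, hence a proper rational factor of the characteristic polynomial. So $(m_1,m_2,m_3)$ is a $\QQ$-basis of $F$.
\item Applying the three embeddings $\sigma_j$, the vectors $\sigma_j(v)$ are left eigenvectors for the three eigenvalues $\sigma_j(\lambda)$. They are therefore the linear forms that vanish on two of the $L_i$, i.e.\ the facet normals of $C$ up to sign.
\item To fix the signs, replace $v$ by $\alpha v$, where $\alpha\in F$ has a prescribed sign under each embedding. Such an $\alpha$ exists by weak approximation, or concretely by taking a suitable rational linear combination of $1,\lambda,\lambda^2$. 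The basis $(\alpha m_i)$ then gives the description \eqref{eq:cone-cubic} of $C$.
\end{itemize}

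Read the other way, \eqref{eq:cone-cubic} says that the matrix with rows $\sigma(m_i)$ has $C$ as the preimage of the positive octant. The rays of $C$ are therefore the columns of its inverse, which is the final formula in the statement.

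For the converse of this last part, start from a $\QQ$-basis $(m_i)$ of $F$ and let $\theta\in F$ be a totally positive primitive element, for instance a suitable square. Multiplication by $\theta$ in the basis $(m_i)$ gives a rational matrix, and multiplying by an integer clears denominators without changing the eigendirections. This matrix has eigendirections $L_i$, positive eigenvalues, and irreducible characteristic polynomial. Hence $C$ is split.
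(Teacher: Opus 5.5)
Your proof is correct and follows the same skeleton as the paper's discussion after the statement. Split $\Rightarrow$ periodic goes through Dirichlet's unit theorem for integral matrices commuting with $h$, periodic $\Rightarrow$ split through a regular element of $\Xi(\Klein)$, and the field description through the Minkowski embedding of a module attached to $h$. The differences are in the details, and they are improvements:
\begin{itemize}
\item The paper cites Latimer--MacDuffee to get a module $M\subset F$ with $M\cong\ZZ^3$, and uses the totally positive units of $\ZZ[h]$. You instead read the basis off a left eigenvector $v\in F^3$ (which is how that correspondence is proved), and use the order $\QQ[h]\cap\mathrm{Mat}_3(\ZZ)$, which is the paper's $\theta(\mathcal O_M)$.
\item You make the sign normalisation $v\mapsto\alpha v$ explicit, where the paper just asserts that the cone goes to the orthant.
\item Most importantly, you prove that the characteristic polynomial of a regular $h\in\Xi(\Klein)$ is irreducible, which the paper only asserts. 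Keep your second argument (a rational eigenline forces $\Xi(\Klein)$ into the line $t_i=0$), since it does not need the standing assumption on the boundary.
\end{itemize}

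One caveat, inherited from the statement and shared by the paper's own derivation, which rewrites $\sigma\circ\Psi(x)\in\RR^3_{\ge0}$ as $\langle\sigma(m_i),x\rangle\ge 0$. Your eigenvector argument shows that the facet normals of $C$ are $\sigma_j(v)=(\sigma_j(m_1),\sigma_j(m_2),\sigma_j(m_3))$ for $j=1,2,3$. That is, $C=\{x:\sigma_j(\sum_i x_im_i)\ge 0,\ j=1,2,3\}$, and the rays are the columns of the inverse of the matrix whose \emph{columns} are $\sigma(m_i)$. These are also exactly the eigendirections of multiplication by $\theta$ in the basis $(m_i)$, which is what your converse needs.

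The version you then quote, with normals $\sigma(m_i)$ and the matrix with \emph{rows} $\sigma(m_i)$, is the transpose. Its rays are $\sigma(m_k^*)$ for the trace-dual basis $(m_k^*)$. For non-Galois $F$, none of these can be an eigenline of a rational matrix with irreducible characteristic polynomial. A rational ray is excluded outright, and for $\beta\in F\setminus\QQ$ the ratio $\sigma_2(\beta)/\sigma_1(\beta)$ has degree $6$ rather than $3$. So your argument proves the intended statement, but the two sentences identifying it with the displayed formula and the final matrix should be transposed.
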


We explain this equivalence briefly and refer to \cite{ELMV11, K94, L98} for a thorough discussion of this equivalence. 

Let $C\subset\RR^3$ be a split cone. Let $h\in\Mat[3](\ZZ)$ be such such that $C$ is generated by eigendirections of $h$. Then the characteristic polynomial $P$ of $h$ is a cubic polynomial with positive and real roots, and we let $\order_{P}$ be the ring $\ZZ[t]/P$ and $F_P = \order_P\otimes\QQ$ be the associated cubic field. 

The map $\theta\colon\order_P \to \Mat[3](\ZZ)$ obtained by mapping $ t\to h$ and extending linearly makes $\ZZ^3$ into an $\order_P$-module and gives an embedding $F_P\to\Mat[3](\QQ)$. Thus, there is a free rank-$3$ module $M$ in $F_P$, defined up to multiplication by $F_P^\times$, such that $M$ and $\ZZ^3$ are isomorphic as $\order_P$-modules (see \cite{LM33}).

Now, fix a $\ZZ$-basis $(m_1,m_2,m_3)$ of the module $M$, such that the coordinate map $\iota\colon M \to \ZZ^3$ is an $\order_P$-module isomorphism that satisfies
\[ \iota(\lambda m) = \theta(\lambda)\iota(m) \]
for all $\lambda\in\order_P$ and $m\in M$. 
If we identify $\sigma\colon M\otimes\RR\xrightarrow{\simeq}\RR^3$ via the Minkowski embedding $\sigma$ given by the three Galois embeddings $\sigma_1,\sigma_2,\sigma_3\colon F_P\hookrightarrow\RR$, then the cone $C$ corresponds under $\iota$ to the axes of $\RR^3$, that is $\iota\circ\sigma^{-1}(\RR^3_{\geq0}) = C$. Indeed, let $\Psi\colon\ZZ^3\to M$ be the inverse of $\iota$, that is $\Psi(x_1,x_2,x_3) = x_1m_1+x_2m_2+x_3m_3,$
then we claim that 
\[ C = \{x\in\RR^3\st \sigma\circ\Psi(x) \in \RR^3_{\geq0} \}. \]
Since 
\[ \sigma\circ\Psi(x_1,x_2,x_3) = \begin{pmatrix} | & | & | \\ \sigma(m_1) &  \sigma(m_2) & \sigma(m_3) \\| & | & |\end{pmatrix}\begin{pmatrix} x_1 \\ x_2 \\ x_3\end{pmatrix} \]
we have that
\[ \sigma\circ\Psi(x)\in\RR^3_{\geq0} \Leftrightarrow \forall i\in\{1,2,3\}~\langle \sigma(m_i),x\rangle \geq 0, \] 
as claimed. In particular, the Klein sail $\Klein$ associated to $C$ may, under $\iota$, be seen as the convex hull of the set
\[ M^+ \defeq\{m\in M\st \forall i\in\{1,2,3\}~\sigma_i(m)\geq 0 \} \]
of totally positive elements of the module $M$.

In order to see why $\Klein$ is periodic, consider the order
\[ \order_M \defeq \{\lambda\in F_P\st \lambda M \subset M\} \]
associated to $M$ and notice that $\order_P\subseteq\order_M$. By definition of $\theta$ we have
\[ \theta(F_P) \cap \Mat[3](\ZZ) = \theta(\order_M). \]
By Dirichlet's unit theorem, the units $\order_P^\times$ form a rank-$2$ subgroup and the totally positive units
\[ \mathcal{U}_P \defeq \{\lambda\in\order_P^\times\st\sigma_i(\lambda)\geq 0\} \]
are a finite index subgroup of $\order_P^\times$. Since
\[ \theta(\order_M^\times) = \theta(F_P) \cap \GL_3(\ZZ)\subseteq \GL_3(\ZZ), \]
the subgroup $\theta(\mathcal{U}_P)\subseteq \SL_3(\ZZ)$ forms a rank $2$-subgroup leaving invariant the cone $C$ defined by the eigenvectors of the matrix $h$ as well as the associated Klein sail $\Klein$, thus $\Klein$ is periodic. 

Conversely, let $\Klein$ be a periodic Klein sail associated to the cone $C = g\RR^3_{\geq0}$. Then $gA_+g^{-1}\subset\SL_3(\RR)$ is a $2$-dimensional real split torus, and $\Xi(\Klein) = \SL_3(\ZZ)\cap gA_+g^{-1}$ is a rank-$2$ free abelian subgroup, hence a lattice in $gA_+g^{-1}$. In particular, the $A_+$-orbit of $g^{-1}\SL_3(\ZZ)$ is compact (also referred to as periodic).
The characteristic polynomial of any regular element $h\in\Xi(\Klein)$ (that is with pairwise different eigenvalues) is a monic cubic irreducible polynomial with integer coefficients and $C$ is generated by the eigendirections of $h$ and thus is a split cone. 
In particular, by the discussion above we obtain that the Klein sail $\Klein$ may be seen as the convex hull of totally positive elements of a free rank-$3$ module $M$ in a totally real cubic number field $F$. By convenience, we denote the order $\order_M$ associated to this module $M$ also by $\order_\Klein$.

\medskip

There is a natural equivalence relation between periodic Klein sails given by left multiplication by matrices in $\SL_3(\ZZ)$. We denote the class of the Klein sail $\Klein$ by $[\Klein]$.
By examining the discussion above, we note that there is a tight connection between equivalence classes of periodic Klein sails and modules in totally real cubic fields. Indeed, choosing a basis of the module $M$ amounts to picking a representative in the equivalence class of the corresponding Klein sail. Thus 
we have the $1$-to-$1$ correspondence
\begin{gather}\label{eq:packet-equiv}
    \left\{\begin{aligned}
        &\text{totally real cubic fields $F$ considered up to isomorphism}\\[-1.1ex]
        &\text{and free rank-$3$ modules $M$ in $F$, up to multiplication by $F^\times$}
    \end{aligned}
    \right\}\notag\\
    \longleftrightarrow \\
    \{\text{equivalence classes $[\Klein]$ of periodic Klein sails}\}.\notag
\end{gather}
 Given a totally real cubic field $F$ and a free rank-$3$ module $M$ in $F$, defined up to multiplication by $F^\times$, we pick a basis $(m_1,m_2,m_3)$ of $M$ and define the isomorphism $\iota\colon M\to\ZZ^3$ as before. This gives an embedding $\theta\colon F\to\Mat[3](\QQ)$ via the multiplication representation in the chosen basis. 
 The cone $C$ defined as in \eqref{eq:cone-cubic} will give rise to a periodic Klein sail $\Klein$ such that $\Xi(\Klein) = \theta(\mathcal{U}_M)$ is a rank-$2$ subgroup of $\SL_3(\ZZ)$, where $\mathcal{U}_M$ denote the totally positive units of $\order_M= \order_\Klein$. 

 For the other direction, not that two equivalent periodic Klein sails $\Klein$ and $\Klein'$ differ by right multiplication by an element $\gamma\in\SL_3(\ZZ)$. In particular, the associated rank-$2$ subgroups of $\SL_3(\ZZ)$ satisfy $\Xi(\Klein') = \Xi(\gamma\Klein) = \gamma\Xi(\Klein)\gamma^{-1}$. Hence, since they are conjugated by $\gamma\in\SL_3(\ZZ)$, regular elements in $\Xi(\Klein)$ and $\Xi(\Klein')$ define the same characteristic polynomial and thus the same totally real cubic field. Moreover, the difference between $\Klein$ and $\Klein'$ is reflected in the choice of the basis $(m_1,m_2,m_3)$ of the module $M$ described above. Thus, $[\Klein]$ determines $M$ and \eqref{eq:packet-equiv} is established.

\begin{definition}
    Let $\order\subset F$ be an order inside a totally real cubic field. Let $\sigma_1,\sigma_2,\sigma_3\colon F\hookrightarrow\RR$ be the three Galois embeddings.
    \begin{enumerate}
        \item The \highlight{discriminant} of $\order$ is defined as 
                \[ \disc(\order) \defeq (\det(\sigma_i(\alpha_j))_{i,j=1}^3)^2 \]
            where $(\alpha_j)_{j=1}^3$ is any $\ZZ$-basis of $\order$.
        \item The \highlight{regulator} $R_\order$ of $\order$ is the covolume of its unit lattice under the logarithmic embedding. More precisely, let
        \begin{align*}
            \ell\colon\order^\times &\to \left\{(x_1,x_2,x_3)\in \RR^3\st \sum_{i=1}^3x_i = 0\right\}\\
            \alpha &\mapsto (\log|\sigma_1(\alpha)|,\log|\sigma_2(\alpha)|,\log|\sigma_3(\alpha)|)
        \end{align*} 
        then $R_\order = \operatorname{covol}(\pi(\ell(\order^\times)))$, where $\pi\colon\{(x_1,x_2,x_3)\in \RR^3\st \sum_{i=1}^3x_i = 0\}\to\RR^2$ is the projection to the first two coordinates and the covolume is taken with respect to the usual Lebesgue measure on $\RR^2$.
    \end{enumerate} 
\end{definition}

Recall that for a periodic Klein sail $\Klein$, we denote by $\mathcal{F}_\Klein$ a finite set of faces of $\Klein$, such that any other face of $\Klein$ is the image of a face in $\mathcal{F}_\Klein$ under some element of $\Xi(\Klein)$. Note that since $\Xi(\Klein)\subset\SL_3(\ZZ)$, the integer-face types and the levels of the elements of $\mathcal{F}_\Klein$ do not depend on the specific choice of $\mathcal{F}_\Klein$. Further, if $\Klein$ and $\Klein'$ are two equivalent periodic Klein sails they differ by an element in $\SL_3(\ZZ)$, thus the integer-face types and levels of the finite set of faces $\mathcal{F}_\Klein$ and $\mathcal{F}_{\Klein'}$ coincide. In particular, we can associate to every equivalence class of periodic Klein sails $[\Klein]$ a natural measure on $\FC^{2}\times\NN$ given by
\[ \nu_{[\Klein]} \defeq \frac{1}{R_{\order_\Klein}}\sum_{f\in \mathcal{F}_\Klein}\delta_{([f],\level(f))}, \]
where $R_{\order_\Klein}$ is the regulator of the order $\order_\Klein$. Note that the integer-affine type of faces in $\mathcal{F}_\Klein$ can be the same, so $\nu_{[\Klein]}$ is a weighted average of integer-affine types appearing in the periodic Klein sail~$\Klein$.

We define collection of equivalence classes of periodic Klein sails
\[ \mathcal{P}_{\order} = \{[\Klein] \st \Klein \text{ periodic Klein sail and $\order_\Klein = \order$} \} \]
and call it the coarse packet associated to $\order$. It is a finite collection, since up to multiplication by $F^\times$, there are only finitely free rank-$3$ modules $M$ in $F$ such that $\order_M = \order$. Indeed, $M$ can be seen as a rank-$1$ $\order$-module and since $M\subset F$, it is naturally a fractional $\order$-ideal and the condition $\order_M = \order$ requires the ideal to be proper. As there are only finitely many proper fractional $\order$-ideals up to scaling by $F^\times$, finiteness of $\mathcal{P}_{\order}$ follows.

Since $\mathcal{P}_{\order}$ is finite, we can associate to it the probability measure $\nu_{\order}$ on $\FC^{2}\times\NN$ defined by averaging of $\nu_{[\Klein]}$ for every $[\Klein]\in\mathcal{P}_{\order}$, that is
\begin{align}\label{def:nu_O}
     \nu_{\order} \defeq \frac{1}{\abs{\mathcal{P}_{\order}}}\sum_{[\Klein]\in\mathcal{P}_{\order}}\nu_{[\Klein]} = \frac{1}{\abs{\mathcal{P}_{\order}}\cdot R_{\order}}\sum_{[\Klein]\in\mathcal{P}_{\order}}\sum_{f\in \mathcal{F}_\Klein}\delta_{([f],\level(f))}.
\end{align}
As a consequence of the cross-section theory constructed in the paper, together with the influential work of \citeauthor{ELMV11} in \cite{ELMV11}, we deduce the following.

\begin{theorem}\label{thm:periodic-equi}
    Let $\order_n\subset F_n$ be a sequence of orders, in a sequence of totally real fields $K_n$. Let $\nu_{\mathcal{O}_n}$ be the sequence of measures on {$\FC^{2} \times \NN$}, associated to the coarse packet $\mathcal{P}_{\mathcal{O}_n}$ via~\eqref{def:nu_O}.\linebreak
    If $\disc(\mathcal{O}_n)\to\infty$ as $n\to\infty$, then
    \[ \nu_{\order_n}(\varphi) \to \nu_{\FC}(\varphi), \]
    for all finitely supported functions $\varphi$ on $\FC^{2} \times \NN$.
\end{theorem}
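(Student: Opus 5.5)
Via the cross-section dictionary, the theorem becomes an equidistribution statement for periodic $A$-orbits. We fix the cross-section $\mathcal{S}\subset X=\SL_3(\RR)/\SL_3(\ZZ)$ for the action of the diagonal group $A_+$, as constructed in Section~\ref{sec:explicit-crosssection}. Visits of the orbit $A_+ g^{-1}\SL_3(\ZZ)$ to $\mathcal{S}$ correspond bijectively to faces of the sail $g\RR^3_{\ge0}$. There is a locally constant map $\Phi\colon\mathcal{S}\to\FC^2\times\NN$ recording the class $([f],\level(f))$ of the corresponding face, and $\nu_{\FC}$ is the pushforward under $\Phi$ of the cross-sectional measure $\mu_{\mathcal S}$ induced by Haar measure $m_X$. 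Concretely, for a finitely supported $\varphi$ we will write
\[
\nu_{\FC}(\varphi)=\lim_{\varepsilon\to0}\frac{1}{\varepsilon^2}\, m_X\bigl(\{a x: x\in\mathcal{S},\ \varphi(\Phi(x))\neq 0\ \text{weighted by }\varphi,\ a\in A_+,\ \norm{\log a}<\varepsilon\}\bigr),
\]
that is, as the integral against $m_X$ of a function $\Psi_\varphi^\varepsilon$ obtained by thickening $\varphi\circ\Phi$ in the flow direction.

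\textbf{Periodic side.} For a periodic sail $\Klein$ with $C=g\RR^3_{\ge0}$, the orbit $A g^{-1}\SL_3(\ZZ)$ is compact, with stabilizer $g^{-1}\Xi(\Klein)g$. Its visits to $\mathcal S$ modulo the stabilizer are exactly the faces in $\mathcal F_\Klein$. Hence the $A$-invariant probability measure $\mu_{\Klein}$ on this orbit satisfies
\[
\mu_\Klein(\Psi_\varphi^\varepsilon)\approx \varepsilon^2\cdot\frac{1}{\mathrm{vol}(A/\Xi)}\sum_{f\in\mathcal F_\Klein}\varphi([f],\level(f)).
\]
The covolume $\mathrm{vol}(A/\Xi)$ equals $R_{\order_\Klein}$ up to the fixed index $[\order^\times:\mathcal U]$ and a universal normalization of Haar measure on $A$. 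These constants are absorbed, exactly as in the normalization chosen in \eqref{def:nu_O}. Averaging over $\mathcal P_{\order}$ then gives $\nu_{\order}(\varphi)$ in terms of the packet measure $\mu_{\order}=\frac1{|\mathcal P_\order|}\sum\mu_\Klein$. This packet is the union of compact $A$-orbits attached to the proper ideal classes of $\order$, i.e.\ precisely the packets of \cite{ELMV11}.

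\textbf{Equidistribution and passage to the limit.} By \cite{ELMV11}, if $\disc(\order_n)\to\infty$ then $\mu_{\order_n}\to m_X$ weak-$*$. We will take this as the main input; it is stated for packets associated to orders, so the averaging over the coarse packet is exactly what is needed. The main obstacle is that $\Psi_\varphi^\varepsilon$ is not continuous with compact support. For compactness, $\varphi$ is finitely supported, and each fibre $\Phi^{-1}([f],k)$ lies in a compact subset of $X$. This holds because a lattice with a face of bounded type and level on the coordinate cone has its shortest vectors controlled, so Mahler's criterion applies. For continuity, $\Psi_\varphi^\varepsilon$ is a characteristic function of a set whose boundary consists of the boundary of $\mathcal S$ (faces degenerating) together with the flow-time endpoints. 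This boundary is a finite union of lower-dimensional real-analytic pieces, hence $m_X$-null. Weak-$*$ convergence of probability measures then extends to bounded compactly supported functions with $m_X$-null discontinuity sets (portmanteau). This yields $\mu_{\order_n}(\Psi^\varepsilon_\varphi)\to m_X(\Psi^\varepsilon_\varphi)$ for each fixed $\varepsilon$.

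Finally, we exchange the limits in $n$ and $\varepsilon\to0$. Since the sets involved are uniformly compact and the flow box is injective for small $\varepsilon$ (on the compact support $\mathcal S$ has a uniform return-time lower bound), the identity $\mu(\Psi^\varepsilon_\varphi)=\varepsilon^2\,\mu_{\mathcal S}(\varphi\circ\Phi)$ holds exactly for any $A$-invariant $\mu$, for all $\varepsilon$ below a uniform $\varepsilon_0$. No limit exchange is then needed, and this gives $\nu_{\order_n}(\varphi)\to\nu_{\FC}(\varphi)$. The delicate points to check carefully are the null-boundary property of $\mathcal S$ and the injectivity radius on the relevant compact set.
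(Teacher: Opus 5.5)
Your route is the paper's. Each point of $A_+z\cap\mathcal C$ on a compact orbit carries mass $1/\mathrm{vol}(A_+z)$, \cite{ELMV11} gives $\mu_{\order_n}\to m_{X_3}$, and this convergence is transferred to the cross-section and pushed forward by the face-type map. The paper does the transfer with Lemma~\ref{lem:conv-in-crosssection} and ``continuity of $\psi$''. You do it by thickening $\varphi\circ\Phi$ along the flow and applying portmanteau on $X_3$. That difference is harmless, but one load-bearing claim in your version is false: the fibres $\Phi^{-1}([f],k)$ are \emph{not} relatively compact in $X_3$.

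The face type fixes the face only lattice-intrinsically, not the shape of $\Lambda\cap E_v$, so Mahler's criterion gives nothing. Here is a counterexample with $k=1$, checked via Lemma~\ref{lem:level-equivalence}.
\begin{itemize}
\item The triangle $\tri$ has bottom vertices $(\pm\sqrt{3/2},-1/\sqrt2)$. A vertical line at horizontal distance $\delta$ from one of them meets $\tri$ in a chord of length $\sqrt3\,\delta$.
\item Take $L=\ZZ(d,0)+\ZZ(0,\epsilon)$ with $d=\sqrt6-\delta_1-\delta_2$, $\sqrt3\,\delta_1=\epsilon/2$ and $\sqrt3\,\delta_2=3\epsilon/2$. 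Put $b=(-\sqrt{3/2}+\delta_1,\,-1/\sqrt2+\epsilon/10)$.
\item Then $(L+b)\cap\tri=\{b,\ b+(d,0),\ b+(d,\epsilon)\}$, which is a unimodular triangle. So $L+b\in\Cc^{(1)}$ with the simplest face type, for every small $\epsilon>0$.
\item The corresponding $\Lambda\in X_3$ contains a vector of length $(\epsilon d)^{-1/3}\epsilon\asymp\epsilon^{2/3}$, so it leaves every compact set as $\epsilon\to0$.
\end{itemize}

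Compactness was meant to give you a uniform return-time bound. That is, $(x,t)\mapsto a_tx$ should be injective on $\Phi^{-1}(\operatorname{supp}\varphi)\times\{\|t\|_\infty<\varepsilon\}$. Injectivity makes $\Psi^\varepsilon_\varphi$ bounded and makes $\mu(\Psi^\varepsilon_\varphi)=|B_\varepsilon|\,\mu_{\mathcal C}(\varphi\circ\Phi)$ exact for every $A_+$-invariant $\mu$. Compact support itself is unnecessary. The measures $\mu_{\order_n}$ and $m_{X_3}$ are probability measures, so \cite{ELMV11} gives convergence against bounded continuous functions, and Theorem~\ref{muJM} then covers bounded $m_{X_3}$-a.e.\ continuous functions.

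The separation you need does hold, but it comes from quantization of heights, not from compactness.
\begin{itemize}
\item Let $\Lambda\in\mathcal C$ have level $k$, and suppose $a_t\Lambda\in\mathcal C$. Then $a_{-t}$ maps the face of $a_t\Lambda$ onto minimizers $y$ of $\langle a_tv,\cdot\rangle$ on $\Lambda_{\ge0}\setminus\{0\}$.
\item Comparing with a face point of $\Lambda$ gives $\langle v,y\rangle\le e^{2\|t\|_\infty}s_\Lambda$.
\item Since $\langle v,\Lambda\rangle=\tfrac{s_\Lambda}{k}\ZZ$, the bound $\|t\|_\infty<\tfrac12\log(1+\tfrac1k)$ forces every such $y$ into $s_\Lambda v+E_v$.
\item Three of these points are non-collinear, so the two face planes coincide and $t=0$.
\end{itemize}
Hence visits of any $A_+$-orbit to $\bigcup_{k\le K}\mathcal C^{(k)}$ are $\tfrac12\log(1+\tfrac1K)$-separated, uniformly on $X_3$ including the cusp. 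This gives your injectivity with $\varepsilon_0=\tfrac14\log(1+\tfrac1K)$, and after that your argument goes through. The discontinuity set of $\Psi^\varepsilon_\varphi$ comes from lattices with a nonzero point on a coordinate plane. It is a countable union of null pieces, not a finite one, which is still harmless.

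The paper's last step, passing from vague convergence on $\mathcal C$ to finitely supported $\varphi$ ``by continuity of $\psi$'', tacitly needs this same input, since $\mathbf 1_{\psi^{-1}(\omega)}\notin C_c(\mathcal C)$. Your thickening combined with the separation bound is a clean way to supply it.

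One smaller point concerns normalization. Let $\mathcal U$ denote the totally positive units. The index $[\order^\times:\pm\mathcal U]$ is not a fixed constant: it equals $1$, $2$ or $4$ depending on the order, so it cannot be absorbed. Each atom has mass $1/\mathrm{covol}(\Xi(\Klein))=1/([\order^\times:\pm\mathcal U]\,R_{\order})$. For $\psi_*\mu_{A_+z,\mathcal C}=\nu_{[\Klein]}$ to hold, $\nu_{[\Klein]}$ has to be normalized by that covolume. Lemma~\ref{lem:per-orbit} uses $R_{\order_\Klein}$ alone.
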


In particular, the measure $\nu_{\FC}$ may be naturally obtained by a sequence of finitely supported measures on $\FC^{2}\times\NN$. On the one hand, \Cref{thm:periodic-equi} tell us that the periodic Klein sails behave as complicated as typical (non-periodic) Klein sails. On the other hand, \Cref{thm:periodic-equi} also implies that any given integer-affine type appears as the affine type of a face of some periodic Klein sail.

\begin{corollary}\label{cor:ex-of-simplest-int-type}
    Let $F$ be a totally real cubic field and $\mathcal{O}$ an order in $F$. If $\disc(\mathcal{O})$ is large enough, there
    is a periodic Klein sail $\Klein$ with $\order_\Klein = \order$, such that $\Klein$ contains a face of simplest integer-affine type, \ie a triangle with no interior integral points.
\end{corollary}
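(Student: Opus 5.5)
The plan is to deduce the corollary from \Cref{thm:periodic-equi} by testing the convergence against a single indicator function. Let $\Delta\in\FC^2$ be the class of the unimodular triangle $\operatorname{conv}\{0,\mathbf e_1,\mathbf e_2\}$. Up to $\GL_2(\ZZ)\ltimes\ZZ^2$, this is the unique lattice triangle with no lattice points other than its vertices. Set $\varphi=\mathbf 1_{\{(\Delta,1)\}}$. This function is finitely supported, so \Cref{thm:periodic-equi} applies to it.

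\textbf{Step 1: the limit is positive.} I first show that $\nu_{\FC}(\{(\Delta,1)\})>0$. By \Cref{thm:KS99}, it suffices to produce a positive-measure set of cones whose sails contain a positive proportion of level-one unimodular triangles. More directly, in the cross-section picture I would show that the subset of the cross-section encoding faces of type $(\Delta,1)$ has nonempty interior. Concretely, take the lattice $\ZZ^3$ with the plane $x_1+x_2+x_3=1$. The face $\operatorname{conv}\{\mathbf e_1,\mathbf e_2,\mathbf e_3\}$ is a face of the sail of $\RR^3_{\ge0}$. This persists for all lattices $g\ZZ^3$ with $g$ in a small open set whose image cone is perturbed so that no other lattice points lie below the plane spanned by $g\mathbf e_i$ within $\RR_{\ge0}^3$. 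This is an open condition, because only finitely many lattice points are relevant near the triangle. Hence the corresponding cross-section piece has positive transverse measure, and $\nu_{\FC}(\varphi)>0$.

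\textbf{Step 2: pass to the packets.} Apply \Cref{thm:periodic-equi} to any sequence of orders with discriminant tending to infinity. This gives $\nu_{\order_n}(\varphi)\to\nu_{\FC}(\varphi)>0$. Arguing by contradiction: if infinitely many orders $\order$ had $\nu_\order(\varphi)=0$, they would have unbounded discriminant, since there are only finitely many cubic orders of bounded discriminant. Enumerating them as a sequence would contradict the limit. So for $\disc(\order)$ large, $\nu_\order(\varphi)>0$.

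\textbf{Step 3: read off the sail.} By \eqref{def:nu_O}, $\nu_\order(\varphi)>0$ means some $[\Klein]\in\mathcal P_\order$ has a face $f\in\mathcal F_\Klein$ with $[f]=\Delta$ and $\level(f)=1$. Such a $\Klein$ satisfies $\order_\Klein=\order$ and contains the required triangle.

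The main obstacle is Step 1, namely justifying positivity of $\nu_{\FC}$ at this atom. This requires knowing that $\nu_{\FC}$ is the pushforward of the cross-sectional measure under an open (or at least measure-class-preserving) encoding. I would rely on the explicit cross-section of Section~\ref{sec:explicit-crosssection} and verify that the face-type map is locally constant on an open set there.
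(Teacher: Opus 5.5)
Your proposal is correct and follows the route the paper intends. The paper reads the corollary off from \Cref{thm:periodic-equi} tested on the indicator of the simplest type, and you make explicit the positivity $\nu_{\FC}(\{(\Delta,1)\})>0$ that the paper leaves implicit, using $\nu_{\FC}=\psi_*\mu_{\C}$ together with local constancy of the face map on a nonempty relatively open piece of $\C$.

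The one thing to tidy is the base point in Step~1. $\ZZ^3$ itself is not in $\C$: its first-contact points $\mathbf e_i$ lie on the boundary of $\RR^3_{>0}$, so no point of $\ZZ^3\cap\RR^3_{>0}$ lies on $x_1+x_2+x_3=1$. Instead, start from the unimodular rescaling of the lattice with basis $u_i=\mathbf e_i+\epsilon(\mathbf e_j+\mathbf e_k)$, where $\{i,j,k\}=\{1,2,3\}$ and $\epsilon\in(0,\tfrac12)$ is irrational. Its closed first-contact slice is exactly $\{u_1,u_2,u_3\}\subset\RR^3_{>0}$, and it has no points on the coordinate axes. \Cref{cor:local-stability} then gives the required open set directly.
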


\Cref{cor:ex-of-simplest-int-type} gives further evidence for the conjecture \cite[Conjecture 12]{Ka17} that every Klein sail contains a face of simplest integer-affine type.

\subsection{Further discussion and open questions}

Despite substantial progress in the study of Klein sails, their fine face structure remains poorly understood. For instance, in dimension three, it is still unknown whether every Klein sail contains the simplest element of $\FC^{2}$, namely a triangle with no interior integral points. Similarly, it is not known whether every Klein sail necessarily contains a face of level greater than one, or whether there exist Klein sails all of whose faces have level one. In view of \Cref{cor:ex-of-simplest-int-type}, it would be interesting to characterize all integer-affine types that belong to the support of $\nu_{\FC}$.

Moreover, in this paper we mainly focus on cones generated by rays that are totally irrational.  In dimension $n=2$, the asymptotic normality of the continued fraction expansion of most rationals with a high denominator is known from the work of \citeauthor{DS18}~\cite{DS18}, thus, it is natural to expect that the higher dimensional analogue is true as well. More precisely, the asymptotic frequency of faces of \highlight{rational cones} in $\RR^3$, that is cones generated by three rational rays should be the same as for totally irrational ones. We intend to return to these questions in future work.

The present paper focuses exclusively on the two-dimensional faces of three-dimensional Klein sails. It would be interesting to obtain analogous results for edges and vertices. A further direction is to extend our methods to higher dimensions. Although the combinatorial structure of Klein sails becomes substantially more intricate as the dimension increases, we expect that the strategy developed here can be adapted to the higher-dimensional setting.

For further open problems concerning geometric multidimensional continued fractions, we refer the reader to \cite{Ka17}.

\subsection{Organization of the paper}

\Cref{sec:abstract-theory} briefly reviews the abstract theory of cross-sections.
\Cref{sec:explicit-crosssection} introduces the necessary notation for the space of lattices and explicitly constructs the cross-section for the diagonal flow associated with Klein sails.
Assuming the finiteness of the cross-sectional measure, \Cref{sec:proofs-of-main-thms} proves Theorems~\ref{thm:face-equi}, \ref{thm:Karpenkov question}, and~\ref{thm:periodic-equi}.

We now begin the technical core of the paper, namely, proving the finiteness of the cross-sectional measure. In \Cref{sec:Changing the viewpoint}, we reformulate this problem as the finiteness of the measure of a certain subset of $\GL_2(\RR)\ltimes\RR^2/\SL_2(\ZZ)\ltimes\ZZ^2$, an infinite volume homogeneous space naturally identified with the space of two-dimensional affine lattices. We then decompose this subset into a countable family of sets $\{\Cclv\}_{k\in\NN}$ indexed by the level reducing the problem to estimating the measures of these sets.

\Cref{sec:strategy-msr-estimate} studies the geometry of the sets $\Cclv$ and outlines the strategy for obtaining the required measure estimates. It states upper (\Cref{prop:upper-bound-level-measure}) and lower (\Cref{prop:lower-bound-level-measure}) bounds on the set of affine lattices $r^{1/2}A'\ZZ^2+b$ in the cross-section for a fixed unimodular lattice $A'\ZZ^2$ and shows how they imply the desired finiteness result. The remainder of the paper is devoted to proving these two propositions.

\Cref{sec:bounds-on-level} relates the successive minima of the lattice $A\ZZ^2$ to the level of the affine lattice $A\ZZ^2+b$, thereby obtaining the first restrictions on the possible lattices corresponding to a given level.
\Cref{sec:witnesses} establishes congruence restrictions on the lattice points that can occur on faces of Klein sails, while \Cref{sec:reduced-minima} proves the existence of certain configurations on the faces of Klein sails for every level $k\ge2$. Together, these results significantly restrict the possible configurations of lattice points appearing on a face and play a central role in the measure estimates. \Cref{sec:auxiliary-results} collects several auxiliary results, including properties of Farey fractions and elementary geometric facts concerning intersections of triangles, that are used in the proofs.

Finally, \Cref{sec:proof-level-measure} proves \Cref{prop:upper-bound-level-measure,prop:lower-bound-level-measure} by implementing the strategy developed in \Cref{sec:strategy-msr-estimate} together with the results established in \Cref{sec:witnesses,sec:reduced-minima,sec:auxiliary-results}.

\subsection*{Acknowledgement}
The authors thank Menny Aka, Manfred Einsiedler and Alexander Gorodnik for various helpful discussions during the preparation of this paper.


\section{Cross-sections: Abstract Theory}\label{sec:abstract-theory}

This section briefly summarizes the theory of cross-sections, which serves as the main tool of the paper. The exposition is largely based on~\cite{SW24, aggarwalghosh2024joint, AG24Levy}; see also~\cite{CC19,AN93,AthreyaCheung, Marklof2010, MarStro, Nadkarni, Ambrose, Ambrose_Kakutani, Wagh, AGh25, AG26}. Readers already familiar with the theory may skip this section on a first reading.

We begin by recalling the notion of vague convergence of measures.

\begin{definition}[Vague convergence]
\label{def:vague}
Let $X$ be a locally compact second countable topological space. A sequence of Radon measures
$\{\mu_k\}$ on $X$ converges \emph{vaguely} to a Radon measure $\mu$ if
\[
\int_X \varphi\,d\mu_k \longrightarrow \int_X \varphi\,d\mu
\]
for every $\varphi\in C_c(X)$, where $C_c(X)$ is the space of continuous compactly supported functions on $X$.
\end{definition}

\begin{remark}
If $\mu_k$ and $\mu$ are probability measures, then vague convergence is equivalent to weak-* convergence. We recall that the latter is defined by replacing the function space $C_c(X)$ with $C_0(X)$, the space of continuous functions on $X$ vanishing at infinity, in the above definition.
\end{remark}

\subsection{Cross-sections in a manifold}

\begin{definition}[Cross-section]
\label{def:cross-section}

Let $X$ be a smooth manifold of dimension $m$, equipped with a smooth flow $\{a_t : t \in \RR^n\}$. A subset $\cS \subset X$ is called a cross-section for the action if for every $x \in \cS$, there exists an open neighbourhood $U_x \subset X$ of $x$, a constant $\gamma_x > 0$, and a diffeomorphism
\[
\psi_x : U_x \to V_x \subset \RR^{m-n} \times \RR^n,
\]
onto an open neighbourhood $V_x\subset\RR^m$ of the origin such that
\[
\psi_x(\cS \cap U_x)
=
V_x \cap (\RR^{m-n} \times \{0\}),
\]
and for every $y \in \cS \cap U_x$ and every $t \in [-\gamma_x,\gamma_x]^n$ satisfying $a_t y \in U_x$, one has
\[
\psi_x(a_t y)
=
(\pi(\psi_x(y)), t),
\]
where
\[
\pi \colon \RR^{m-n} \times \RR^n \to \RR^{m-n}
\]
denotes the projection onto the first factor. We call $(U_x,\psi_x,\gamma_x)$ as above a flow-box chart.
\end{definition}

\begin{remark}
The definition immediately implies that $\cS$ is an embedded submanifold of $X$ of codimension $n$. Moreover, every open subset of a cross-section is again a cross-section. In particular, the empty set is also a cross-section.

This differs from the classical notion of a global cross-section, where one additionally requires that for every $x \in X$, the set $\{t \in \RR^n : a_t x \in \cS\}$ is non-empty and discrete.  In our setting, discreteness follows directly from the local flow-box structure, whereas the non-empty condition is not required.

\end{remark}

\begin{proposition}
\label{prop:def cross measure}

Let $\cS$ be a cross-section for the flow $\{a_t : t \in \RR^n\}$ on $X$, and let $\mu$ be an $a_t$-invariant Radon probability measure on $X$.
Then there exists a Radon measure $\mu_{\cS}$ on $\cS$ such that for every $x \in \cS$ and every associated flow-box chart $\psi_x : U_x \to V_x \subset \RR^{m-n} \times \RR^n$, one has
\[
(\psi_x)_*(\mu|_{U_x})
=
\left(
(\pi \circ \psi_x)_*(\mu_{\cS}|_{\cS \cap U_x})
\otimes m_{\RR^n}
\right)\Big|_{V_x},
\]
where $m_{\RR^n}$ denotes the Lebesgue measure on $\RR^n$.

Equivalently, in local flow-box coordinates, the measure $\mu$ decomposes as the product of the transverse measure $\mu_{\cS}$ and the Lebesgue measure along the flow directions.

\end{proposition}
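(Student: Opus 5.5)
The plan is to construct $\mu_{\cS}$ locally in flow-box charts and then glue. Fix $x\in\cS$ with flow-box chart $(U_x,\psi_x,\gamma_x)$. Shrinking $U_x$ if necessary, we may assume that $V_x$ contains a product set $W_x\times(-\gamma_x,\gamma_x)^n$, with $W_x\subset\RR^{m-n}$ open, on which the chart straightens the flow. Consider the pushforward $\lambda_x\defeq(\psi_x)_*(\mu|_{\psi_x^{-1}(W_x\times(-\gamma_x,\gamma_x)^n)})$. It is a Radon measure on $W_x\times(-\gamma_x,\gamma_x)^n$.

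\textbf{Step 1: $\lambda_x$ is translation invariant in the $\RR^n$-coordinate.} Because $\mu$ is $a_t$-invariant and the chart conjugates the flow to translation in the last $n$ coordinates, we have $\lambda_x(B\times(I+s))=\lambda_x(B\times I)$ whenever $B\subset W_x$ is Borel, $I$ is a box, and both $I$ and $I+s$ lie in $(-\gamma_x,\gamma_x)^n$. Using the flow-box identity $\psi_x(a_ty)=(\pi\psi_x(y),t)$, one gets $a_s\psi_x^{-1}(B\times I)=\psi_x^{-1}(B\times(I+s))$. A standard argument on a $\pi$-system of boxes then gives the product structure
\[
\lambda_x=\nu_x\otimes m_{\RR^n}|_{(-\gamma_x,\gamma_x)^n},
\qquad
\nu_x(B)\defeq\frac{\lambda_x\bigl(B\times(-\gamma_x,\gamma_x)^n\bigr)}{(2\gamma_x)^n}.
\]
To prove this, fix $B$. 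The map $I\mapsto\lambda_x(B\times I)$ is a finite, translation-invariant, locally defined measure on the cube. Hence it is a multiple of Lebesgue measure: subdivide into dyadic boxes, note that each has the same mass, and pass to the limit. The multiple is a measure in $B$ by countable additivity. Define $\mu_{\cS}$ on $\cS\cap U_x$ as the pullback of $\nu_x$ under $\pi\circ\psi_x$, which is a homeomorphism from $\cS\cap U_x$ onto $W_x$.

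\textbf{Step 2: compatibility and gluing.} For two charts at $x$ and $x'$ with overlapping cross-section pieces, the transition map $\psi_{x'}\circ\psi_x^{-1}$ has the form $(w,t)\mapsto(\phi(w),t+c(w))$ near $\cS$. Here $\phi$ is the transition on $\cS$, and the shift $c(w)$ is forced to be $0$ because both charts send $\cS$ to $\{t=0\}$. Discreteness of return times within the $\gamma$-windows guarantees that points of $\cS$ are matched correctly. Consequently the two product decompositions agree on the overlap, so the local measures $\phi_*\nu_x$ and $\nu_{x'}$ coincide there. I expect this to be the main obstacle. The issue is making the local flow-box geometry precise enough that the overlap really is a union of product pieces, which requires shrinking $\gamma$ and using that distinct points of $\cS$ in one chart cannot be joined by a short flow segment.

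With compatibility in hand, cover $\cS$ by countably many charts, which is possible since $\cS$ is second countable as a submanifold. Glue the local measures via a measurable partition subordinate to the cover; this yields a well-defined Radon measure. Local finiteness holds because each $\nu_x$ is finite, as $\mu$ is a probability measure.

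\textbf{Step 3: the stated identity on all of $V_x$.} The final identity must hold on all of $V_x$, not just on the product piece. So cover $V_x$ by smaller product boxes $W'\times(t_0+(-\gamma',\gamma')^n)$. On each such box, flowing back by $-t_0$ moves into a product neighbourhood of $\cS$, and invariance of $\mu$ transports the Step 1 decomposition. Uniqueness of the resulting measure then follows from the fact that the identity determines $\mu_{\cS}$ on the sets $(\pi\circ\psi_x)^{-1}(B)$, and these generate the Borel sets of $\cS$.
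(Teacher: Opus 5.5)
Your Steps~1 and~2 are essentially the paper's argument, done more carefully: normalize $\mu\bigl(\psi_x^{-1}(A\times(-\gamma_x,\gamma_x)^n)\bigr)$, use $a_t$-invariance to get the product structure, and glue via charts whose transitions respect that structure.

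The gap is Step~3. The flow-box definition only gives $\psi_x(a_ty)=(\pi(\psi_x(y)),t)$ for $y\in\cS\cap U_x$ and $\|t\|_\infty\le\gamma_x$. On the rest of $V_x$, $\psi_x$ is an arbitrary diffeomorphism; by ``the rest'' I mean everything not reached from the slice $V_x\cap(\RR^{m-n}\times\{0\})$ by vertical segments of length at most $\gamma_x$ inside $V_x$. So a box $W'\times(t_0+(-\gamma',\gamma')^n)\subset V_x$ need not consist of points $\psi_x(a_{t_0+s}y)$ with $y\in\cS\cap U_x$. This happens when $\|t_0\|_\infty>\gamma_x$, or when the box lies over points $w$ with $(w,0)\notin V_x$. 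In that case ``flowing back by $-t_0$'' is not expressed by the chart at all.

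This cannot be patched, because the identity on all of $V_x$ is false for general charts. On $X=\RR^2/\ZZ^2$ with $a_t(x,y)=(x,y+t)$ and $\mu$ Lebesgue, take
\begin{itemize}
\item $\cS=(0,\tfrac12)\times\{0\}$, $U_x=(0,\tfrac12)\times(-\tfrac14,\tfrac14)$ and $\gamma_x=\tfrac18$;
\item $\psi_x(x,y)=(x-\tfrac14,\,y+\chi(y))$, where $\chi$ is smooth, $\chi=0$ on $[-\tfrac18,\tfrac18]$, $|\chi'|<1$ and $\chi'\not\equiv0$.
\end{itemize}
This is a flow-box chart in the paper's sense. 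The part $|y|<\tfrac18$ forces $\mu_{\cS}=dx$, so the right-hand side has constant density $1$ on $V_x$. But $(\psi_x)_*\mu$ has density $(1+\chi'(y))^{-1}$ at $\psi_x(x,y)$, which differs from $1$ wherever $\chi'\neq0$.

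What Steps~1--2 do prove is the identity for product charts $V_x=W_x\times(-\gamma_x,\gamma_x)^n$. This is also all the paper's proof establishes: it tacitly assumes that every point of $U_x$ is $a_ty_0$ with $y_0\in\cS\cap U_x$ and $t$ small, and that $A\times[-\gamma_x,\gamma_x]^n\subset V_x$. Any chart can be shrunk to a product chart by restricting $\psi_x$ to the preimage of a product contained in $V_x$. Only this product part is used later: Lemma~\ref{lem:conv-in-crosssection} and Lemma~\ref{lem:equi-cross-section} work on $\pi(V_x)\times(-\gamma_x,\gamma_x)^n$.

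So drop Step~3 and state the result for product charts. In that setting, the straightening you assume in Step~1 does follow from the definition, by a short continuity argument along $s\mapsto a_{st}y$, $s\in[0,1]$. One small correction: the pullback of $\nu_x$ should be defined on $\cS\cap\psi_x^{-1}(W_x\times\{0\})$, not on all of $\cS\cap U_x$.
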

\begin{proof}
We construct $\mu_{\cS}$ locally in flow-box charts and then verify compatibility.

Fix $x \in \cS$ and let $(U_x,\psi_x,\gamma_x)$ be a flow-box chart as in Definition~\ref{def:cross-section}. Write
\[
\psi_x : U_x \to V_x \subset \RR^{m-n} \times \RR^n, 
\qquad
\psi_x(y) = (z,t).
\]
By the defining property of the cross-section, every $y \in U_x$ can be uniquely written as
\[
y = a_t y_0,
\qquad y_0 \in \cS \cap U_x,
\]
with $t$ sufficiently small and in these coordinates the flow is a translation in the $\RR^n$-factor.

Define a measure $\nu_x$ on $\pi(V_x) \subset \RR^{m-n}$ by
\[
\nu_x(A)
:=
\frac{1}{(2\gamma_x)^n}\mu\big(\psi_x^{-1}(A \times [-\gamma_x,\gamma_x]^n)\big),
\]
for Borel sets $A \subset \pi(V_x)$. Note that by definition of $\gamma_x$, we have  
\[ A \times [-\gamma_x,\gamma_x]^n \subset U_x. \]
Now we use the $a_t$-invariance of $\mu$. Since the flow acts by translation in the $\RR^n$-coordinate in these charts, it follows from Fubini's theorem that for measurable $B \subset V_x$ of product form $B = A \times I$,
\[
\mu(\psi_x^{-1}(A \times I))
=
\nu_x(A)\, m_{\RR^n}(I),
\]
where $m_{\RR^n}$ is the Lebesgue measure.
This shows that in local coordinates,
\[
(\psi_x)_*(\mu|_{U_x})
=
(\nu_x \otimes m_{\RR^n})|_{V_x}.
\]

It remains to identify $\nu_x$ with the pushforward of a measure on $\cS$. By construction, $\nu_x$ depends only on sets of the form $A \times \{0\}$ under $\psi_x$, and the identification
\[
\cS \cap U_x \xrightarrow{\ \psi_x\ } \pi(V_x) \times \{0\}
\]
allows us to define a measure $\mu_{\cS}$ on $\cS$ by
\[
(\pi \circ \psi_x)_*(\mu_{\cS}|_{\cS \cap U_x}) = \nu_x.
\]

The consistency on overlaps of flow-boxes follows from $a_t$-invariance of $\mu$ and the fact that transition maps between charts preserve the product structure up to translation in the $\RR^n$-direction.

Hence, for every $x \in \cS$,
\[
(\psi_x)_*(\mu|_{U_x})
=
\left(
(\pi \circ \psi_x)_*(\mu_{\cS}|_{\cS \cap U_x})
\otimes m_{\RR^n}
\right)\Big|_{V_x},
\]
as required.
\end{proof}

\begin{lemma}\label{lem:conv-in-crosssection}
Suppose $\cS$ is a cross-section for the flow $\{a_t: t \in \RR^n\}$ on $X$. 
Assume $(\mu_k)_{k \in \NN}$ is a sequence of $a_t$-invariant finite Radon measures converging to $\mu$ in vague topology. 
Then the associated cross-sectional measures $\mu_{k,\cS}$ converge vaguely to $\mu_{\cS}$.
\end{lemma}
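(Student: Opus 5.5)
The claim is local, so the plan is to test vague convergence against functions supported in single flow-box charts and then assemble these with a partition of unity. By \Cref{def:vague} it suffices to show $\int_{\cS}\phi\,d\mu_{k,\cS}\to\int_{\cS}\phi\,d\mu_{\cS}$ for every $\phi\in C_c(\cS)$. Here $\cS$ carries the subspace topology; it is an embedded submanifold, hence locally compact and second countable. The charts $\cS\cap U_x$ cover $\operatorname{supp}\phi$, so a finite subcover and a subordinate continuous partition of unity reduce the problem to the case $\operatorname{supp}\phi\subset \cS\cap U_x$ for a single flow-box chart $(U_x,\psi_x,\gamma_x)$. We may shrink this chart around the compact set $\operatorname{supp}\phi$.

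We then thicken $\phi$ to a compactly supported function on $X$. Fix $0<\gamma\le\gamma_x$ small enough that $K:=\psi_x^{-1}\big(\pi\psi_x(\operatorname{supp}\phi)\times[-\gamma,\gamma]^n\big)$ is a compact subset of $U_x$; this is possible because $\pi\psi_x(\operatorname{supp}\phi)$ is compact in the open set $\pi(V_x)$. Choose $\rho\in C_c((-\gamma,\gamma)^n)$ with $\int\rho\,dm_{\RR^n}=1$, and define
\[
\Phi(y)=\phi\big(\psi_x^{-1}(\pi\psi_x(y),0)\big)\,\rho\big(t(y)\big)\quad\text{for } y\in U_x,
\]
where $t(y)$ is the $\RR^n$-coordinate of $\psi_x(y)$. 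Set $\Phi=0$ outside $U_x$. Then $\Phi\in C_c(X)$ with support in $K$.

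Next we apply \Cref{prop:def cross measure}. That proposition is stated for probability measures, but its proof only uses invariance and the Radon property, so it holds verbatim for the finite invariant measures $\mu_k$ and for $\mu$; the cross-sectional measures are defined via the same chart. Fubini in the chart $\psi_x$ gives
\[
\int_X\Phi\,d\mu_k=\int_{\cS}\phi\,d\mu_{k,\cS}\cdot\int\rho\,dm_{\RR^n}=\int_{\cS}\phi\,d\mu_{k,\cS},
\]
and the same identity holds for $\mu$. Since $\mu_k\to\mu$ vaguely and $\Phi\in C_c(X)$, the left side converges, so $\int\phi\,d\mu_{k,\cS}\to\int\phi\,d\mu_{\cS}$.

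The main obstacle is the justification of these identities rather than the limiting argument. First, \Cref{prop:def cross measure} must apply to the $\mu_k$, and the cross-sectional measure must be independent of the chart, which holds by the compatibility stated in that proposition. Second, $\Phi$ must be continuous and compactly supported, which forces the shrinking of the chart and of $\gamma$ described above. One further point: if $\mu$ is not assumed invariant a priori, it is invariant anyway, because vague limits of $a_t$-invariant measures are $a_t$-invariant; $a_t$ acts by homeomorphisms and preserves $C_c(X)$. So $\mu_{\cS}$ is well defined.
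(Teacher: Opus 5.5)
Your proposal is correct and follows essentially the same route as the paper: you localize to a single flow-box chart, use the product decomposition $(\psi_x)_*(\mu_k|_{U_x})=(\nu_{k,x}\otimes m_{\RR^n})|_{V_x}$ from Proposition~\ref{prop:def cross measure} to transfer vague convergence to the transverse factor, and then glue over charts. Your explicit thickening $\Phi=\phi\otimes\rho$ and the partition-of-unity step only spell out what the paper leaves implicit; the paper uses the same thickening device in the proof of Lemma~\ref{lem:equi-cross-section}.
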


\begin{proof}
Fix $x \in \cS$ and choose a flow-box chart $(U_x,\gamma_x,\psi_x)$ as in Definition~\ref{def:cross-section}. Write
\[
\psi_x: U_x \to V_x \subset \RR^{m-n} \times \RR^n,
\qquad \psi_x(y) = (z,t).
\]

By \Cref{prop:def cross measure} we have for each $k$ the local product decomposition
\begin{align*}
    (\psi_x)_*(\mu_k|_{U_x})
=
(\nu_{k,x} \otimes m_{\RR^n})|_{V_x}, \qquad 
&\nu_{k,x} \defeq (\pi \circ \psi_x)_*(\mu_{k, \cS}|_{\cS \cap U_x}), \\
(\psi_x)_*(\mu|_{U_x})
=
(\nu_x \otimes m_{\RR^n})|_{V_x}, \qquad &\nu_{x} \defeq (\pi \circ \psi_x)_*(\mu_{\cS}|_{\cS \cap U_x}),
\end{align*}
Since $\mu_k \to \mu$ in the vague topology, we also have
\[
(\nu_{k,x} \otimes m_{\RR^n})|_{V_x} = (\psi_x)_*(\mu_k|_{U_x}) \;\to\; (\psi_x)_*(\mu|_{U_x})= (\nu_x \otimes m_{\RR^n})|_{V_x}
\quad \text{vague on } V_x.
\]
Since the supports of $\nu_{k,x}$ and $\nu_x$ are contained in $\pi(V_x)$, and since $V_x$ contains
\[
\pi(V_x) \times (-\gamma_x,\gamma_x)^n,
\]
it follows that $\nu_{k,x} \to \nu_x$ in the vague topology.
This further implies vague convergence of $\mu_{k,\cS}$ to $\mu_{\cS}$ on $\cS \cap U_x$.

Since $x \in \cS$ was arbitrary and these local measures agree on overlaps, we conclude that
\[
\mu_{k,\cS} \xrightarrow[k\to\infty]{} \mu_{\cS}
\]
in the vague topology of Radon measures on $\cS$.
\end{proof}

\begin{lemma}
\label{lem:equi-cross-section}
Suppose $\cS$ is a cross-section for the flow $\{a_t: t \in \RR^n\}$ on $X$, and let $J \subset \RR^n$ be a bounded set with smooth boundary.
Let $x \in X$, let $(T_k)_{k \in \NN}$ be an increasing sequence with $T_k \to \infty$, and assume that the probability measures
\[
\mu_k := \frac{1}{|J|} \int_J \delta_{a_{T_k t}x}\dd t
\]
converge in the vague topology to a probability measure $\mu$.
Then
\[
\frac{1}{T_k^n|J|} \sum_{\{r \in J: a_{T_k r}x \in \cS\}} \delta_{a_{T_k r}x}
\;\xrightarrow[k\to\infty]{vague}\;
\mu_{\cS}.
\]
\end{lemma}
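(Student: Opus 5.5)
Since vague convergence is local and $\cS$ is covered by flow-box charts, I will reduce the statement to a computation inside a single chart and then patch the charts together with a partition of unity. Fix $\varphi\in C_c(\cS)$. By second countability, the support of $\varphi$ is covered by finitely many charts $(U_{x_i},\psi_{x_i},\gamma_{x_i})$. Choose a partition of unity subordinate to this cover, so that $\varphi=\sum_i\varphi_i$ with $\operatorname{supp}\varphi_i\subset \cS\cap U_{x_i}$ compact. It then suffices to prove
\[
\frac{1}{T_k^n|J|}\sum_{\{r\in J: a_{T_kr}x\in\cS\}}\varphi_i(a_{T_kr}x)\longrightarrow \int\varphi_i\dd\mu_{\cS}
\]
for each $i$. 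So fix one chart, write $U=U_{x_i}$, $\psi=\psi_{x_i}$, $\gamma=\gamma_{x_i}$, and shrink $\gamma$ so that $\psi^{-1}(\pi\psi(\operatorname{supp}\varphi_i)\times[-\gamma,\gamma]^n)$ is a compact subset of $U$.

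\textbf{Thickening.} For small $0<\epsilon\le\gamma$, define $\Phi_\epsilon$ on $X$ by
\[
\Phi_\epsilon(\psi^{-1}(z,t))=\epsilon^{-n}\tilde\varphi(z)\,\eta(t/\epsilon),
\]
and set $\Phi_\epsilon=0$ outside $U$. Here $\tilde\varphi=\varphi_i\circ(\pi\psi)^{-1}$, and $\eta\ge0$ is continuous, supported in $[-1,1]^n$, with $\int\eta=1$. Then $\Phi_\epsilon\in C_c(X)$. Proposition~\ref{prop:def cross measure} gives $\int\Phi_\epsilon\dd\mu=\int\varphi_i\dd\mu_{\cS}$, and the hypothesis gives
\[
\frac{1}{|J|}\int_J\Phi_\epsilon(a_{T_kt}x)\dd t\to\int\Phi_\epsilon\dd\mu .
\]
Next I compute the left side. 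Substitute $s=T_kt$. Each visit time $s_0=T_kr$ with $a_{s_0}x\in\operatorname{supp}\varphi_i$ contributes the bump $\epsilon^{-n}\varphi_i(a_{s_0}x)\eta((s-s_0)/\epsilon)$ for $s$ near $s_0$, because the chart straightens the flow. This bump integrates to $\varphi_i(a_{s_0}x)$. Visits are $2\gamma$-separated inside the chart, so for $\epsilon<\gamma$ the bumps have disjoint supports. Hence
\[
\frac{1}{T_k^n|J|}\int_{T_kJ}\Phi_\epsilon(a_sx)\dd s=\frac{1}{T_k^n|J|}\sum_{r\in J}\varphi_i(a_{T_kr}x)+E_k .
\]
The error $E_k$ comes only from visits whose bump straddles $\partial(T_kJ)$, i.e.\ visit times within distance $\epsilon\sqrt n$ of $\partial(T_kJ)$.

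\textbf{Boundary error (main obstacle).} I need $E_k=o(1)$, uniformly enough in $\epsilon$. Visit times to the compact set $\operatorname{supp}\varphi_i$ are $2\gamma$-separated. Since $\partial J$ is smooth, the $\epsilon\sqrt n$-neighbourhood of $\partial(T_kJ)$ has volume $O(T_k^{n-1})$ for $T_k\ge1$. So it contains $O(T_k^{n-1})$ visits, each contributing at most $\|\varphi_i\|_\infty$. Thus $E_k=O(T_k^{-1})\to0$.

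The separation claim is the delicate point. It must hold globally in $t$, not just within one chart. The argument: if two visits to $K=\operatorname{supp}\varphi_i$ at times $s,s'$ satisfy $|s-s'|<\gamma$, then both lie in the single flow box around $a_sx\in K$. There the time coordinate identifies them, forcing $s=s'$.

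\textbf{Conclusion.} For $\varphi_i\ge0$, combining the three steps gives convergence of the normalized sum directly. The limit is exact for any fixed small $\epsilon$, so no limit in $\epsilon$ is even needed. For general $\varphi_i$, apply this to the positive and negative parts, which stay continuous with compact support in the chart. Summing over $i$ yields the claimed vague convergence to $\mu_{\cS}$.
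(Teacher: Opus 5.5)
Your proposal is correct and is essentially the paper's argument: thicken $\varphi$ by a bump $\eta$ in the flow direction inside a flow box, identify $\int\Phi\,d\mu$ with $\int\varphi\,d\mu_{\cS}$ via the local product structure, split the orbit integral over $T_kJ$ into contributions from individual visit times, and control the terms straddling $\partial(T_kJ)$ using smoothness of $\partial J$. The only difference is how that boundary error is bounded. You count visits in the boundary layer using separation of return times; note that your argument gives $\gamma$-separation rather than $2\gamma$, so either take $\epsilon<\gamma/2$ or get disjointness of the bumps directly from injectivity of $\psi$. The paper instead bounds the error by $\|F\|_\infty$ times the measure of the $2\gamma_x/T_k$-neighbourhood of $\partial J$.
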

\begin{proof}
It suffices to test against nonnegative functions $\varphi \in C_c(\cS)$ supported in a single flow-box.

Fix $x \in \cS$ such that $\supp(\varphi) \subset U_x$, where $(U_x,\gamma_x,\psi_x)$ is the flow-box chart as in Definition~\ref{def:cross-section}. Write
\[
\psi_x: U_x \to V_x \subset \RR^{m-n} \times \RR^n,
\qquad
\psi_x(y) = (z,t).
\]
Let $\eta \in C_c(\RR^n)$ satisfy
\[
\mathrm{supp}(\eta) \subset (-\gamma_x,\gamma_x)^n,
\qquad
\int_{\RR^n}\eta(t)\dd t = 1.
\]
Define $F \in C_c(U_x)$ by
\[
F(a_t y) := \varphi(y)\,\eta(t),
\qquad
y \in \cS \cap U_x,\ \|t\|_\infty < \gamma_x,
\]
and extend $F$ by zero outside of $U_x$.
By Proposition~\ref{prop:def cross measure}, we have
\[
\int_X F\dd\mu = \int_{\cS} \varphi\dd\mu_{\cS}.
\]
Now observe, using the definition of $\mu_k$, that
\[
\int_X F\dd\mu_k
=
\frac{1}{|J|}\int_J F(a_{T_k t}x)\dd t.
\]
Using the flow-box structure, each $t$ with $a_{T_k t}x$ in $\supp(F)$ can be uniquely written as
\[
 t = s+r,
\qquad a_{T_kr} x \in \cS \cap U_x,\ \|T_ks\|_\infty<\gamma_x.
\]
Hence, we may rewrite
\[
F(a_{T_k t}x)
=
\varphi( a_{T_kr} x)\,\eta(T_ks).
\]
Therefore,
\begin{align}\label{eq:int_F}
\begin{split}
    \int_J F(a_{T_k t}x)\dd t
&=
\sum_{\{r :\ a_{T_k r}x \in \cS\}}
\varphi(a_{T_k r}x) \int_{\{s: \   \|T_ks\|_\infty \leq \gamma_x, \  s+r \in J\}} \eta(T_k s)\dd s\\
&= \sum_{\{r \in J :\ a_{T_k r}x \in \cS\}}
\varphi(a_{T_k r}x) \int_{\{s: \   \|T_ks\|_\infty \leq \gamma_x \}} \eta(T_k s)\dd s+
o(1).
\end{split}
\end{align}
To prove the last equality, note that if 
\[
\{s: \ \|T_k s\|_\infty \leq \gamma_x,\ r+s \in J\}=\{s: \   \|T_ks\|_\infty \leq \gamma_x \},
\]
then necessarily $r \in J$ since $s=0$ is contained in the right hand side, and the corresponding terms in the two expressions in \eqref{eq:int_F} coincide. Similarly, if
\[
\{s: \ \|T_k s\|_\infty \leq \gamma_x,\ r+s\in J\}=\emptyset,
\]
then necessarily $r \notin J$, and the contribution vanishes in both expressions.
Thus the discrepancy comes only from those $r$ for which 
\[
\{s: \ \|T_k s\|_\infty \leq \gamma_x,\ r+s\in J\}
\]
is neither empty nor equal to $\{s: \   \|T_ks\|_\infty \leq \gamma_x \}$. Equivalently, these are precisely the points $r$ such that
\[ r+\{s: \   \|T_ks\|_\infty \leq \gamma_x \}
\]
intersects both $J$ and $J^c$. Hence the set of all such points $r+s$ is contained in the $2\gamma_x/T_k$-neighbourhood of $\partial J$. Since $\partial J$ is smooth, the measure of this neighbourhood tends to zero as $k\to\infty$. Because $\varphi$ and $\eta$ are bounded, the total contribution of these boundary terms is $o(1)$, which proves the equality.

Thus, using that $\int_{\RR^n}\eta(t)\dd t = 1$, we obtain
\[
\int_X F\dd\mu_k
=
\frac{1}{T_k^n |J|}
\sum_{\{r \in J : a_{T_k r}x \in \cS\}}
\varphi(a_{T_k r}x)
+ o(1).
\]
By assumption $\mu_k \to \mu$, so
\[
\int_X F\dd\mu_k \longrightarrow \int_X F\dd\mu.
\]
Combining with the identity for $\mu$, we obtain
\[
\frac{1}{T_k^n |J|}
\sum_{\{r \in J : a_{T_k r}x \in \cS\}}
\varphi(a_{T_k r}x)
\longrightarrow
\int_{\cS} \varphi\dd\mu_{\cS}.
\]
This proves vague convergence of the measures and completes the proof. 
\end{proof}

\begin{lemma}
\label{lem:equi-cross-section 2}

Suppose $\cS$ is a cross-section for the flow $\{a_t:t\in\RR^n\}$ on $X$, let $J\subset\RR^n$ be a bounded set with smooth boundary, and let $\mu$ be an ergodic $\{a_t:t\in\RR^n\}$-invariant probability measure on $X$ such that the associated cross-sectional measure $\mu_{\cS}$ is finite.
Then there exists an $\{a_t:t\in\RR^n\}$-invariant subset $Y\subset X$ of full $\mu$-measure such that for every $x\in Y$ and for every bounded continuous function $\varphi$ on $\cS$, we have
\begin{align}
    \label{eq:lem:equi-cross-section}
\frac{1}{T^n|J|}
\sum_{\{t\in J:a_{Tt}x\in\cS\}}
\varphi(a_{Tt}x)
\xrightarrow[T\to\infty]{}
\int_{\cS}\varphi\,d\mu_{\cS}.
\end{align}
\end{lemma}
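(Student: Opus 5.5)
Fix a countable family $\{\varphi_j\}\subset C_c(\cS)$ that is dense in $C_c(\cS)$ in the following sense: for every $\psi\in C_c(\cS)$ and $\varepsilon>0$ there is $j$ with $\|\psi-\varphi_j\|_\infty<\varepsilon$ and $\supp\varphi_j$ inside a fixed compact set containing $\supp\psi$. Such a family exists because $\cS$ is a second countable locally compact manifold. We also add to the family an exhaustion of $\cS$ by functions $0\le \chi_m\le 1$ in $C_c(\cS)$ with $\chi_m\uparrow 1$ pointwise.

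The plan has three steps.

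\textbf{Step 1: a full-measure set of generic points.} Let $J'$ range over a countable family of sets of the form $J$ together with slight enlargements and shrinkings of $J$. The multiparameter Birkhoff ergodic theorem for $\RR^n$-actions (Wiener's theorem for dilates of a fixed nice set, applied to $\mu$) gives an invariant full-measure set $Y$ with
\[
\frac{1}{|J'|}\int_{J'}F(a_{Tt}x)\,dt\to\int F\,d\mu
\]
for a countable dense set of $F\in C_c(X)$ and all $x\in Y$. Taking $Y$ to be the intersection over all these conditions, and intersecting it with its translates along a countable dense set of times, we may assume $Y$ is $a_t$-invariant. Invariance is in any case automatic from the flow-box structure, since shifting $x$ changes the sums only by boundary terms. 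It follows that for $x\in Y$ the measures $\frac1{|J|}\int_J\delta_{a_{Tt}x}\,dt$ converge vaguely to $\mu$ along \emph{every} sequence $T_k\to\infty$.

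\textbf{Step 2: compactly supported test functions.} Apply Lemma~\ref{lem:equi-cross-section} to every sequence $T_k\to\infty$. Its proof works verbatim for any real sequence, and the limit is independent of the sequence, so \eqref{eq:lem:equi-cross-section} holds for every $\varphi\in C_c(\cS)$. One point needs care: that lemma tests only functions supported in a single flow box. A general $\varphi\in C_c(\cS)$ is handled by a finite partition of unity, since its support is compact and is therefore covered by finitely many flow boxes.

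\textbf{Step 3: bounded continuous test functions, where finiteness of $\mu_\cS$ is used.} This is the main obstacle, namely preventing escape of mass in the counting measures $\eta_T:=\frac1{T^n|J|}\sum\delta_{a_{Tt}x}$. Pick $\chi\in C_c(\cS)$ with $0\le\chi\le1$ and $\mu_\cS(1-\chi)<\varepsilon$, which is possible because $\mu_\cS$ is finite. We need
\[
\limsup_{T\to\infty}\eta_T(1-\chi)\le C\varepsilon,
\]
which requires an upper bound on the total mass $\eta_T(\cS)$ converging to $\mu_\cS(\cS)$. Vague convergence gives only the liminf inequality $\liminf\eta_T(\cS)\ge\mu_\cS(\cS)$, so the upper bound must come from elsewhere.

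I would obtain it by a Kac/Ambrose-type argument. Thicken the cross-section to $\cS^{\gamma}=\{a_s y: y\in\cS,\ \|s\|_\infty<\gamma(y)\}$, where $\gamma(\cdot)$ is a continuous positive injectivity radius. Then apply the Birkhoff theorem to the indicator of $\cS^\gamma$ weighted by $\gamma(y)^{-n}$; this weight is integrable because $\mu(\cS^{\gamma}\text{ weighted})=2^n\mu_\cS(\cS)<\infty$ by Proposition~\ref{prop:def cross measure}. The resulting $L^1$ function $G$ on $X$ satisfies
\[
\int_{J}G(a_{Tt}x)\,dt\approx T^{-n}\#\{\text{visits}\}
\]
up to boundary terms controlled by the enlargements $J'$ from Step 1. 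Its Birkhoff averages converge for $x\in Y$ (enlarge $Y$ by one more full-measure condition), which yields $\limsup\eta_T(\cS)\le\mu_\cS(\cS)$. Combining this with Step 2 for $\varphi\chi$ and bounding $\varphi(1-\chi)$ by $\|\varphi\|_\infty(1-\chi)$ gives the result for bounded continuous $\varphi$. Choosing $\gamma$ so that the flow boxes are genuinely injective, and checking measurability of $G$, is the technical heart of the argument.
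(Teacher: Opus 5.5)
Your proposal is correct and follows essentially the paper's route: the pointwise ergodic theorem together with Lemma~\ref{lem:equi-cross-section} handles $C_c(\cS)$, and finiteness of $\mu_{\cS}$ enters through an $L^1$ visit-counting function on $X$ whose Birkhoff averages control the total number of visits, which is exactly the tightness step you need. The paper's counting function is $F(x)=\#\Phi^{-1}(x)$ with $\Phi(y,t)=a_ty$ on $\cS\times[-1,1]^n$; because it counts overlapping preimages with multiplicity, it avoids the continuous injectivity radius you identified as the technical heart, while your sandwiching by enlarged and shrunken copies of $J$ treats more carefully the boundary terms that the paper absorbs into an $o(1)$.
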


\begin{proof}
Since $\mu_{\cS}$ is finite, it is enough to prove the claim for functions in $\RR\cdot\mathbf 1+C_c(\cS)$.

By the pointwise ergodic theorem (e.\,g.,~ see \cite[Chap.~8]{EW}) there exists an
$\{a_t:t\in\RR^n\}$-invariant subset $Y_1\subset X$ of full
$\mu$-measure such that for every $x\in Y_1$,
\[
\frac1{|J|}
\int_J\delta_{a_{Tt}x}\,dt
\xrightarrow[T\to\infty]{vague}
\mu.
\]
Hence, Lemma~\ref{lem:equi-cross-section} implies that for every
$f\in C_c(\cS)$ and every $x\in Y_1$, the equation \eqref{eq:lem:equi-cross-section} holds.

It therefore remains to prove the claim for the constant function
$\mathbf1$.
Define $F: X \rightarrow \NN$ as 
$$
F(x)= \#\Phi^{-1}(x),
$$
 where $\Phi:\cS\times[-1,1]^n\to X$ is given by $(y,t)\mapsto a_t y $.
Using the local product structure of \(\mu\) in flow boxes, namely that
locally the measure decomposes as
\[
d\mu = d\mu_{\cS}\,dm_{\mathbb R^n},
\]
we get that
$$
\int_X \#\Phi^{-1}(x)\, d\mu(x)=\int_{\cS\times[-1,1]^n} \mathbf{1}\,  d\mu_{\cS}(y)dm_{\RR^n}(t).
$$
Hence, we obtain
\[
\int_X F\,d\mu
=
m_{\mathbb R^n}([-1,1]^n) \,\mu_{\cS}(\cS)
=
2^n\mu_{\cS}(\cS)
<\infty.
\]
Hence \(F\in L^1(X,\mu)\).

Applying the pointwise ergodic theorem once more, there exists an
$\{a_t:t\in\RR^n\}$-invariant subset $Y_2\subset X$ of full
$\mu$-measure such that
\[
\frac1{|J|}
\int_JF(a_{Tt}x)\,dt
\longrightarrow
\int_XF\,d\mu,
\qquad
x\in Y_2.
\]
Arguing exactly as in the proof of
Lemma~\ref{lem:equi-cross-section}, we obtain
\[
\int_JF(a_{Tt}x)\,dt
=
\frac{2^n}{T^n}
\#\{t\in J:a_{Tt}x\in\cS\}
+o(1).
\]
Combining this with the convergence above yields
\[
\frac1{T^n|J|}
\#\{t\in J:a_{Tt}x\in\cS\}
\longrightarrow
\frac1{2^n}\int_XF\,d\mu
=
\mu_{\cS}(\cS),
\]
which is precisely the required convergence for the constant function.

Finally, set $Y:=Y_1\cap Y_2$, which is the required
$\{a_t:t\in\RR^n\}$-invariant set of full $\mu$-measure.
\end{proof}

\subsection{Cross-section in homogeneous space}

\begin{lemma}
\label{lem:cross hom space}
Let $X = G/\Gamma$, where $G$ is a connected Lie group and $\Gamma < G$ is a lattice. 
Let $\{a_t : t \in \RR^n\}$ be an abelian Lie subgroup of $G$.
Let $H < G$ be a closed Lie subgroup such that
\[
\dim H = \dim G - n,
\qquad
\{a_t : t \in \RR^n\}\cap H = \{e\}.
\]
Assume that for some $x \in X$, the orbit $H x$ is closed and that $\cS \subset H x$ is relatively open in $H x$, i.e. for every $y \in \cS$, there exists a neighbourhood $O_y \subset H$ of the identity such that $O_y y \subset \cS$.
Then $\cS$ is a cross-section for the flow $\{a_t : t \in \RR^n\}$ on $X$.

Moreover, if $\mu$ denotes the $G$-invariant probability measure on $X$, then the associated cross-sectional measure $\mu_{\cS}$ is, up to normalization, the restriction to $\cS$ of the measure on $H x$  induced by the right Haar measure on $H$ under the orbit map $h \mapsto hx$.
\end{lemma}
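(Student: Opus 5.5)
The plan is to build flow-box charts directly from a local product decomposition of $G$ near the identity, transported to $X$ through the orbit map. Let $\mathfrak g$, $\mathfrak h$ and $\mathfrak a$ denote the Lie algebras of $G$, $H$ and $A=\{a_t\}$. The first step is to show $\mathfrak g=\mathfrak h\oplus\mathfrak a$. The dimensions add up, so it suffices that $\mathfrak h\cap\mathfrak a=0$. If some nonzero $Y$ lay in this intersection, then $\exp(sY)$ would lie in $A\cap H$ for all $s$, contradicting $A\cap H=\{e\}$. (Here I use that $A$ is an injectively immersed copy of $\RR^n$, so these elements are nontrivial for small $s\neq0$.) By the inverse function theorem, the map
\[
m\colon H\times\RR^n\to G,\qquad (h,t)\mapsto a_t h,
\]
is then a local diffeomorphism near $(e,0)$.

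Next I fix $y\in\cS$ and write $y=h_0x$. Since $Hx$ is closed, it is an embedded submanifold, and the orbit map $h\mapsto hx$ induces a homeomorphism $H/\mathrm{Stab}_H(x)\to Hx$. Because $\Gamma$ is discrete, the stabiliser $\mathrm{Stab}_H(y)=H\cap g\Gamma g^{-1}$ (for $y=g\Gamma$) is discrete in $H$. I then choose a small neighbourhood $O\subset O_y$ of $e$ in $H$ and $\gamma>0$ with the following properties:
\begin{enumerate}
\item $m$ restricted to $O\times(-2\gamma,2\gamma)^n$ is a diffeomorphism onto its image;
\item the map $(h,t)\mapsto a_t h y$ is injective on $O\times(-2\gamma,2\gamma)^n$.
\end{enumerate}
The injectivity in (2) is the main obstacle. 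Suppose $a_t h y=a_{t'}h'y$. Then $h'^{-1}a_{t'-t}h$ lies in the discrete group $\mathrm{Stab}_G(y)$ and is close to $e$, so it equals $e$ once the neighbourhoods are small enough. By (1) this forces $t=t'$ and $h=h'$. One must also make sure that points of $Hx$ returning from "far away" in $H$ do not re-enter the box. This is exactly where closedness of $Hx$ is needed: it makes $Hx$ embedded, so $\cS\cap U_y$ coincides with $Oy$ for a suitably small open $U_y$.

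With these choices I take
\[
U_y=\{a_t h y : h\in O,\ \|t\|_\infty<\gamma\},
\]
which is open because the map is a local diffeomorphism, and define $\psi_y(a_t h y)=(\phi(h),t)$, where $\phi$ is a chart of $H$ centred at $e$. By construction $\psi_y$ maps $\cS\cap U_y=Oy$ onto the slice $t=0$. Moreover, it satisfies $\psi_y(a_s z)=(\pi\psi_y(z),s)$ for $z\in Oy$ and small $s$, since $A$ is abelian and $a_s a_0 h y=a_s h y$. This verifies Definition~\ref{def:cross-section}.

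For the measure statement, I will use that Haar measure $m_G$ is given locally, in the coordinates $(h,t)\mapsto a_t h$, by a smooth density times $m_H^{\mathrm{right}}\otimes m_{\RR^n}$. The map $(t,h)\mapsto a_t h$ realises $G$ locally as the product $A\cdot H$. For groups $G=AH$ locally, the standard formula reads $dm_G(a h)=\Delta(\cdot)\,dm_A(a)\,dm_H^{\mathrm{right}}(h)$, up to a modular factor. That factor is constant along $A$ because $A$ is abelian, and it is invariant under left translation by $A$. Left $A$-invariance of $\mu$, together with uniqueness of the transverse measure in Proposition~\ref{prop:def cross measure}, then identifies $\mu_{\cS}$ on $Oy$ with a constant multiple of the pushforward of right Haar measure on $H$. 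The remaining point is to check that this constant is the same on every chart: translating between charts by elements of $H$ preserves right Haar measure, so the constants agree.
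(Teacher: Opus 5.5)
Your flow-box construction follows the paper's route: the local product map $(t,h)\mapsto a_th$ near $e$, transported to $X$ through $h\mapsto hy$. You are actually more careful than the paper on two points it passes over. First, you check injectivity of $(h,t)\mapsto a_thy$ on a box, and the use of $2\gamma$ correctly handles the requirement $\psi_y(a_sz)=(\pi\psi_y(z),s)$. Second, you use closedness of $Hx$ to get embeddedness, so that no distant piece of $Hx$ re-enters $U_y$ and $\cS\cap U_y=Oy$. That part is fine.

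The gap is in the measure identification. In the coordinates $(t,h)\mapsto a_th$, Haar measure has the form $dm_G=\rho(h)\,dt\,dm_H^{\mathrm{right}}(h)$, and the modular factor is $\rho=\Delta_G|_H$, a function of $h$ alone. The reasons you give for discarding it only rule out dependence on $t$, which was never present: constancy along $A$, commutativity of $A$, left $A$-invariance of $\mu$, and uniqueness of the transverse measure. Uniqueness in Proposition~\ref{prop:def cross measure} would then only give $\mu_{\cS}=\rho\cdot(\text{pushforward of } m_H^{\mathrm{right}})$ on $Oy$. That is a constant multiple only if $\rho$ is constant.

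The missing input is that $G$ is unimodular, which follows from $\Gamma$ being a lattice; this is exactly what the paper invokes. With $\Delta_G\equiv 1$, Lemma~\ref{lem:Haar-product} gives $m_G\propto m_{\RR^n}\otimes m_H^{\mathrm{right}}$ near $e$ with no density. Right-invariance of $m_G$ (again unimodularity) then transports this decomposition to a neighbourhood of any $g$ with $y=g\Gamma$ with the same constant. That is what makes the constant independent of the chart. Your remark that right translations in $H$ preserve $m_H^{\mathrm{right}}$ only covers the $H$-side of that comparison.
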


We will use the following lemma to prove Lemma~\ref{lem:cross hom space}.
\begin{lemma}[{\cite[Lem.~11.31]{EW}}]\label{lem:Haar-product} 
        Let $L$ be a Lie group and let $L_1$, $L_2$ be closed subgroups such that $L_1\cap L_2 = \{e\}$ and $\dim L_1 + \dim L_2 = \dim L$. Then
        \begin{enumerate}
            \item the map $p\colon L_1\times L_2 \to L$ given by $p(\ell_1,\ell_2) = \ell_1\ell_2$ is a diffeomorphism onto an open subset $\mathcal{U} \subseteq L$.
            \item If furthermore $L$ is unimodular, and $\msr_{L_1}^{\mathrm{left}}$, $\msr_{L_2}^{\mathrm{right}}$ denote the left and right Haar measures on $L_1$, $L_2$, respectively, then $p_*(\msr_{L_1}^{\mathrm{left}}\times\msr_{L_2}^{\mathrm{right}})$ is proportional to the restriction to $\mathcal{U}$ of a Haar measure on $L$.
        \end{enumerate}
    \end{lemma}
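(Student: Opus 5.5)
Statement to prove: Lemma~\ref{lem:Haar-product}, cited from \cite[Lem.~11.31]{EW}. The plan is to handle the two parts separately, each by a standard Lie-theoretic argument.

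\emph{Part (1): local diffeomorphism.} First compute the differential of $p$ at $(\ell_1,\ell_2)$. Identify tangent spaces by left and right translation:
\[
dp_{(\ell_1,\ell_2)}(\ell_1X,\,Y\ell_2)=\ell_1(X+Y)\ell_2,\qquad X\in\mathfrak l_1,\ Y\in\mathfrak l_2 .
\]
Since $L_1\cap L_2=\{e\}$, the Lie algebras satisfy $\mathfrak l_1\cap\mathfrak l_2=0$. Together with $\dim\mathfrak l_1+\dim\mathfrak l_2=\dim\mathfrak l$, this gives $\mathfrak l=\mathfrak l_1\oplus\mathfrak l_2$. Hence $dp$ is bijective at every point, and $p$ is a local diffeomorphism with open image $\mathcal U=L_1L_2$.

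Injectivity is also direct. If $\ell_1\ell_2=\ell_1'\ell_2'$, then $\ell_1'^{-1}\ell_1=\ell_2'\ell_2^{-1}\in L_1\cap L_2=\{e\}$. An injective local diffeomorphism is a diffeomorphism onto its open image.

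\emph{Part (2): the product measure.} Let $m=p_*(\msr^{\mathrm{left}}_{L_1}\times\msr^{\mathrm{right}}_{L_2})$, a Radon measure on $\mathcal U$. Note that $\mathcal U$ is invariant under left multiplication by $L_1$ and right multiplication by $L_2$. Equivariance of $p$ then shows that $m$ is invariant under $u\mapsto \ell_1 u\ell_2$ for all $\ell_1\in L_1$, $\ell_2\in L_2$.

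The Haar measure $m_L|_{\mathcal U}$ has the same invariance, because $L$ is unimodular and so $m_L$ is both left and right invariant. The group $L_1\times L_2$ acts transitively on $\mathcal U$ via $(\ell_1,\ell_2)\cdot u=\ell_1u\ell_2^{-1}$. The stabilizer of $e$ is trivial, so $\mathcal U\cong L_1\times L_2$ as a homogeneous space.

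Under this identification, an invariant measure on $\mathcal U$ corresponds to a measure on $L_1\times L_2$ invariant under left multiplication in the first factor and right multiplication in the second. By uniqueness of Haar measure on $L_1\times L_2$ (left Haar on $L_1$ times right Haar on $L_2$), any such measure is a constant multiple of $\msr^{\mathrm{left}}_{L_1}\times\msr^{\mathrm{right}}_{L_2}$. Applying this uniqueness both to $m$ and to $m_L|_{\mathcal U}$, pulled back via $p$, shows they are proportional.

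The only delicate point is bookkeeping: the action must be written correctly so that it really is a left Haar condition on the product group. Concretely, $(\ell_1,\ell_2)\mapsto(\ell_1'\ell_1,\ell_2\ell_2')$ is left translation in the group $L_1\times L_2^{\mathrm{op}}$, and right Haar measure on $L_2$ is left Haar measure on $L_2^{\mathrm{op}}$. So uniqueness of left Haar measure on $L_1\times L_2^{\mathrm{op}}$ gives the claim.

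The other point to check is that $m_L|_{\mathcal U}$ is nonzero and Radon. This holds because $\mathcal U$ is open, and Radon-ness transfers through the homeomorphism $p$.
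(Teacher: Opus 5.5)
The paper gives no proof of its own here and simply cites \cite[Lem.~11.31]{EW}. Your argument is correct and is the standard proof of that result: for part~(1), the direct sum $\mathfrak{l}=\mathfrak{l}_1\oplus\mathfrak{l}_2$ (from $\mathfrak{l}_1\cap\mathfrak{l}_2=0$ and the dimension count) together with injectivity from $L_1\cap L_2=\{e\}$; for part~(2), uniqueness of left Haar measure on $L_1\times L_2^{\mathrm{op}}$, applied to the pull-back via $p$ of the bi-invariant Haar measure of the unimodular group $L$.
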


\begin{proof}[Proof of Lemma~\ref{lem:cross hom space}]
Fix $x\in X$ as in the statement of the lemma. Then the map
\[
H \to H x, \qquad h \mapsto hx
\]
is a local diffeomorphism, and the assumption that $H x$ is closed implies that it is an immersed submanifold of $X$.

We first construct local product coordinates for the $H$-action and the $\{a_t : t \in \RR^n\}$-action. 
By assumption,
\[
\dim \{a_t : t \in \RR^n\} + \dim H = \dim G,
\qquad
\{a_t : t \in \RR^n\} \cap H = \{e\}.
\]
Hence, the hypotheses of Lemma~\ref{lem:Haar-product} apply for $L=G$, $L_1=\{a_t : t \in \RR^n\}$ and $L_2=H$, giving an open neighbourhood $\mathcal{U} \subset G$ of the identity and a diffeomorphism
\[
\RR^n \times H \to \mathcal{U}, \qquad (t,h) \mapsto a_t h.
\]
Composing with the map $G  \to X$, $(g,x) \mapsto gx$, we obtain a smooth map
\[
\Phi\colon \RR^n \times H \to X, \qquad (t,h) \mapsto a_t h x.
\]
This map is a local diffeomorphism near $(0,e)$, and hence, after translation, near every point $(0,h)$ with $h x \in H x$.

Since $\cS$ is relatively open in $H x$, for each $y = h x \in H x$, there exists a neighbourhood of $y$ such that $\Phi$ identifies $X$ locally around $y$ with a product neighbourhood of $(0,h)$ in $\RR^n \times H$ in such a way that:
\begin{itemize}
    \item the $\{a_t : t \in \RR^n\}$-orbit corresponds to the $\RR^n$-factor,
    \item the $H$-orbit in $\RR^n\times H$ corresponds to the local $H$-orbit around $y$ in $\cS$.
\end{itemize}
This verifies the local product structure required in the definition of a cross-section.

We now identify the cross-sectional measure. Since $G$ is unimodular (as it contains a lattice), the Haar measure $m_G$ on $G$ descends to a $G$-invariant probability measure $\mu$ on $X$. 

By Lemma~\ref{lem:Haar-product}, the product of the Lebesgue measure on $\RR^n$ and the right invariant Haar measure $m_H$ on $H$ pushes forward under $(t,h) \mapsto  a_th$ to a measure on $G$ which is proportional to $m_G$. Pushing further to $X = G/\Gamma$, we obtain that locally
\begin{align}
d\mu \;\asymp\;   dm_{\RR^n} \otimes d\mu_{H x},
    \label{eq:lem: cross hom space 1}
\end{align}
where $\mu_{H x}$ is the measure on $H x$ induced from the fixed right Haar measure $m_H$ on $H$, and $m_{\RR^n}$ is the Lebesgue measure along the $\{a_t : t \in \RR^n\}$-orbits. Note that the proportionality constant in \eqref{eq:lem: cross hom space 1} is the same as the one appearing while comparing $m_G$ with $m_{\RR^n} \otimes m_H$, and hence is independent of the point $x$ inside the orbit $Hx$.

By definition of the cross-sectional measure, $\mu_{\cS}$ is obtained by restricting the transverse factor of this product structure to $\cS$. Hence, $\mu_{\cS}$ is proportional to the restriction of $\mu_{H x}$ to $\cS$. 
\end{proof}


\section{Cross-section for the Klein Sails}\label{sec:explicit-crosssection}

Given three linearly independent rays $L_1,L_2,L_3$ with generating vectors $w_1,w_2,w_3\in\RR^3$, that is $L_i = \RR_{>0}w_i$ for $i=1,2,3$, they define a simplicial cone via
\[ C_{L_1,L_2,L_3} = \{c_1w_1+c_2w_2+c_3w_3\in\RR^3~:~ c_1,c_2,c_3\geq0 \}. \]
The associated Klein sail $\Klein$ is defined as the boundary of the convex hull of $C_{L_1,L_2,L_3}\cap\ZZ^3$. Notice that the definition of the Klein sail does not depend on the choice of the vectors $w_i \in L_i$ as any other choice will give the same simplicial cone $C_{L_1,L_2,L_3}$. In particular, for a face $f$ of the Klein sail $\Klein$ with the associated affine plane $V_f$ containing $f$, we may use the vectors $w_i \defeq V_f\cap L_i$ for $i=1,2,3$ to define the cone $C_{L_1,L_2,L_3}$. 

Let
\[
\widetilde{g}_f = \begin{pmatrix} w_1 & w_2 & w_3 \end{pmatrix} \in \GL_3(\RR) 
\]
be the (invertible) $(3\times 3)$-matrix whose columns are given by the vectors $w_i$ and define the normalized matrix 
$$
g_f \defeq \det(\widetilde{g}_f)^{-1/3}\widetilde{g}_f \in \SL_3(\RR).
$$
Then we have $C_{L_1,L_2,L_3} = \widetilde{g}_f\RR^3_{\geq0} = g_f\RR^3_{\geq0}$. Since we are only interested in the integral affine-type of the face $f$, this type will only depend on the $\SL_3(\ZZ)$-coset representative of the matrix $\widetilde{g}_f$.
This suggests to move to the \enquote{dual picture} of Klein sails by considering the following set up.

\medskip
Let
\[
G = \SL_3(\RR),
\qquad
\Gamma = \SL_3(\ZZ),
\qquad
X_3 \defeq \tquot{G}{\Gamma}.
\]
Recall that $X_3$ is identified with the space of unimodular lattices in $\RR^3$ via $g\Gamma \mapsto g\ZZ^3$, and let $\msr_{X_3}$ denote the unique $G$-invariant probability measure on $X_3$.
Let
$$
a_t= \begin{pmatrix}
    e^{t_1} \\ & e^{t_2} \\ && e^{t_3}
\end{pmatrix}, \qquad t= (t_1, t_2, t_3),
$$
and
\begin{align}
    \label{eq:def diagonal group}
    A_+
\defeq
\left\{ a_t
\st t=(t_1,t_2, t_3) \text{ satisfy }
t_1+t_2+t_3=0
\right\}
\leq G.
\end{align}
Thus, \(A_+\) is a two-parameter flow on the space \(X_3\) and can be
identified with the group
\[
\{(t_1,t_2,t_3)\in\RR^3:t_1+t_2+t_3=0\}.
\]
Throughout this paper, for a subset \(K\) of the above group, the notation
\(|K|\) will be used to denote the Haar measure of \(K\), where the unique
choice of measure is given by the equation
\[
\left|
\left\{
(t_1,t_2,t_3)\in\mathbb R^3:
t_1+t_2+t_3=0,\;
|t_i-t_j|\leq1
\text{ for all }i\neq j
\right\}
\right|
=1.
\]
The same normalized measure will also be used to define the cross-sectional
measure on \(X_3\) for the \(A_+\)-flow, defined as in
\Cref{prop:def cross measure}.

Define
\[
\v \defeq 3^{-1/2}(1,1,1)\in\RR^3,
\]
and let $\Hv$ denote the hyperplane orthogonal to $\v$.
For a subset $\Lambda\subset\RR^3$, we define
\[
\Lambda_+ \defeq \Lambda\cap\RR^3_{>0}, \quad \Lambda_{\geq 0}\defeq \Lambda\cap\RR^3_{\geq 0}
\]
For such $\Lambda$, define
\begin{align}
    \label{eq: def s lambda}
    s_\Lambda
\defeq
\inf\{s>0\st (\Lambda_{\geq 0}-s\v)\cap\Hv\neq\emptyset\}.
\end{align}
Equivalently,
\begin{align}\label{eq: def s lambda 2}
s_\Lambda
=
\inf\{\langle \v,w\rangle \st {w\in\Lambda_{\geq 0} \setminus \{0\}} \}.
\end{align}
Note that if $\Lambda$ is a lattice in $\RR^3$, then $0 < s_\Lambda < \infty$. Indeed, finiteness follows from the fact that $s_\Lambda=\infty$ if and only if $\Lambda_+=\emptyset$, while positivity follows from the fact that $\Lambda_+$ has only finitely many points in any bounded subset of $\RR^3$.

\begin{definition}
    We now define the set $\C\subset X_3$ by 
\begin{align}\label{eq:def-Klein-Sail-crosssection}
    \C \defeq \left\{ \Lambda\in X_3\;\middle|\; \text{ the affine $\ZZ$-span of }(\Lambda_+-s_\Lambda\v)\cap\Hv \text{ is an affine lattice in }\Hv,\right\}.
\end{align}
Recall that the affine $\ZZ$-span of a set $S$ is defined as set of all points of the form
\[ c_1 v_1 + \ldots + c_n v_n, \]
where $n \in \NN$, $v_1, \ldots, v_n \in S$ and the integers $c_1, \ldots, c_n\in\ZZ$ satisfy $c_1+ \ldots + c_n= 1$.

\end{definition}
\begin{remark}
    Geometrically, a lattice $\Lambda$ belongs to $\C$ if and only if there exists $s_\Lambda>0$ such that the affine hyperplanes
\[
s\v+\Hv,
\qquad s>0,
\]
satisfy the following properties:
\begin{enumerate}
\item
The hyperplane $s\v+\Hv$ does not intersect
$\Lambda\cap\RR_{\geq0}^3$ for any $0<s<s_\Lambda$.

\item
At the \enquote{first-contact time} $s_\Lambda$, the intersection
\[
(\Lambda\cap\RR_{>0}^3)\cap(s_\Lambda\v+\Hv)
\]
spans an affine lattice in $\Hv$.
Equivalently, the points are not contained in an affine line.
\end{enumerate}
\end{remark}

For $\Lambda \in \C$, define
\[
\Lambda_0 \defeq \Lambda \cap \Hv,
\qquad
\affLambda
\defeq
(\Lambda - s_\Lambda v)\cap \Hv.
\]
Note that $\affLambda$ is equal to the affine $\ZZ$-span of
$(\Lambda_+-s_\Lambda\v)\cap\Hv$. This follows from the fact that
$\Lambda-s_\Lambda\v$ is a two-dimensional lattice and
$(\Lambda_+-s_\Lambda\v)\cap\Hv$ is a convex lattice polygon. More precisely,
suppose that its affine $\ZZ$-span is a proper finite-index sublattice of
$\Lambda-s_\Lambda\v$. Let $a_1,a_2,a_3$ be three non-collinear points of
$(\Lambda_+-s_\Lambda\v)\cap\Hv$ whose affine $\ZZ$-span is this sublattice.
Then the fundamental parallelogram
\[
a_1+[0,1](a_2-a_1)+[0,1](a_3-a_1)
\]
contains a lattice point $x$ of $\Lambda-s_\Lambda\v$ which does not belong
to this sublattice. Write
\[
x=a_1+c_1(a_2-a_1)+c_2(a_3-a_1),
\qquad 0\leq c_1,c_2\leq1.
\]
A direct computation shows that either $x$ or $a_2+a_3-x$ must lies in the triangle with vertices
$a_1,a_2,a_3$. Since both points are lattice points and
are not contained in the assumed affine sublattice, this contradicts the
definition of the affine $\ZZ$-span of
$(\Lambda_+-s_\Lambda\v)\cap\Hv$. Therefore, the affine $\ZZ$-span must be the
whole lattice $\affLambda$.

Also, observe that $\affLambda + s_\Lambda v$ is precisely the set obtained at the \enquote{first-contact slice}
\[
\Lambda\cap (s_\Lambda v+\Hv),
\]
and hence by definition of $\C$, the set $\affLambda$ is an affine lattice in $\Hv$. 
Moreover, the base lattice of $\affLambda$ is $\Lambda_0$. Indeed, if
$t\in\Hv$ satisfies $t+s_\Lambda v\in\Lambda$, then $\affLambda=t+\Lambda_0$. 
We also choose $t_\Lambda\in\Lambda$ such that
\[
\langle v,t_\Lambda\rangle
=
\inf\{
|\langle v,w\rangle| : w\in\Lambda\setminus\{0\}
\}.
\]
Although the choice of $t_\Lambda$ is not unique, two of such choices differ by an element of $\Lambda_0$.
Thus,
\begin{equation*}
    \Lambda = \Lambda_0+\ZZ t_\Lambda.
\end{equation*}
Since $\Lambda$ is unimodular, we have $\langle v,t_\Lambda\rangle=\covol(\Lambda_0)^{-1}$. We define the \highlight{level} of $\Lambda$ by
\begin{align}
    \level(\Lambda) \defeq& s_\Lambda\,\covol(\Lambda_0) \nonumber \\
    =&\frac{s_\Lambda} {\inf\{|\langle v,w\rangle| : w\in\Lambda\setminus\{0\}\}}. \label{eq:def level}
\end{align}
Since $\Lambda=\Lambda_0+\ZZ t_\Lambda$ and $\Lambda_0\subset\Hv$, we have $\{\langle v,w\rangle:w\in\Lambda\}=\{n\langle v,t_\Lambda\rangle:n\in\ZZ\}$.
Consequently,
\[
\level(\Lambda)
=
\min\Bigl\{
\ell\ge1:
(\Lambda_0+\ell t_\Lambda)\cap\RR_{\ge0}^3\neq\emptyset
\Bigr\}.
\]

Another useful interpretation of the level is that it equals the index of the
sublattice of $\Lambda$ generated by the vectors in
\[
\Lambda_+\cap (s_\Lambda v+\Hv),
\]
namely the vectors witnessing that $\Lambda\in\C$.

  The following is a key result needed in the proof of Theorem~\ref{thm:face-equi}, \ref{thm:Karpenkov question}, and~\ref{thm:periodic-equi}.

   \begin{theorem}\label{thm:main}
    The set $\C$ defined in \eqref{eq:def-Klein-Sail-crosssection} is a cross-section for the action of $A_+$ on $X_3$. Moreover, the cross-section measure $\mu_\C$ on $\C$ corresponding to the Siegel--Haar probability measure $\msr_{X_3}$ on $X_3$ (defined in Proposition~\ref{prop:def cross measure}) is finite, and satisfies the following estimate
    \begin{align}
        \label{eq:main:thm}
     \frac{1}{k^3}  \ll \mu_{\C}(\{\Lambda \in \C: \level(\Lambda) =k \}) \ll \frac{\ln k+1}{k^2}, 
    \end{align}
    where the implied constants are independent of $k$.
\end{theorem}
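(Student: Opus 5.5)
The plan has three parts: prove that $\C$ is a cross-section, rewrite $\mu_\C$ level by level as a Haar-measure integral over two-dimensional affine lattices, and then estimate each level.

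\emph{Cross-section.} Let $H\le G$ be the subgroup preserving the plane $\Hv$ and the linear functional $w\mapsto\langle v,w\rangle$. In the decomposition $\RR^3=\Hv\oplus\RR v$, its elements act as $(x,sv)\mapsto(Mx+s\,u,\ sv)$ with $M\in\GL_2(\RR)$, $u\in\Hv$, and with a scaling on $\RR v$ compensating $\det M$. Then $H\cong \GL_2(\RR)\ltimes\RR^2$ has dimension $6=\dim G-2$, and $H\cap A_+=\{e\}$, because a nontrivial element of $A_+$ does not preserve $\Hv$.

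Fix $\Lambda\in\C$ of level $k$. For $h\in H$ near $e$, the lattice $h\Lambda$ has the same level. Its first-contact slice is the $h$-image of that of $\Lambda$. I must check that the condition ``no point of $\Lambda_{\ge0}$ on the slices $j\langle v,t_\Lambda\rangle$, $j<k$, while the slice at level $k$ meets $\RR^3_{>0}$ in three non-collinear points'' is open along $H$. The witnessing points lie in the open orthant, and only finitely many lattice points lie near the relevant compact triangles, which gives openness. After a small nontrivial $a_t$, the three witnesses no longer lie on a common plane $s v+\Hv$, so $a_t\Lambda\notin\C$.

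Thus each $\C_k\defeq\{\Lambda\in\C:\level(\Lambda)=k\}$ is locally an open piece of an $H$-orbit, and one can check that it is relatively open in a closed $H$-orbit. Lemma~\ref{lem:cross hom space} then shows that $\C$ is a cross-section and that $\mu_\C$ is a multiple of the measure induced by Haar measure on $H$.

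\emph{Reduction to affine lattices.} The map $\Lambda\mapsto\affLambda$ identifies $\C_k$ with a set $\Cclv$ of affine lattices in $\Hv\cong\RR^2$. Write $\affLambda=r^{1/2}A'\ZZ^2+b$ with $A'\ZZ^2\in X_2$ unimodular and $r=\covol(\Lambda_0)$; then $s_\Lambda=k/r$. Denote by $\Delta_s$ the triangle $\{x\in\Hv:x+sv\in\RR^3_{\ge0}\}$, which has linear size $\asymp s$. The condition $\Lambda\in\C_k$ becomes:
\begin{enumerate}
\item the translates $r^{1/2}A'\ZZ^2+\tfrac{j}{k}b$ are disjoint from $\Delta_{j/r}$ for $1\le j<k$, modulo the obvious identification of these shifted slices;
\item the affine lattice $r^{1/2}A'\ZZ^2+b$ meets the interior of $\Delta_{k/r}$ in three non-collinear points.
\end{enumerate}
The Haar measure becomes $dr$ (up to a power of $r$) $\times\, d\mu_{X_2}(A')\times db$, with $b$ ranging over the torus $\RR^2/r^{1/2}A'\ZZ^2$. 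So $\mu_\C(\C_k)$ is the integral over $(r,A')$ of the measure of the admissible $b$. The lower bound $k^{-3}$ comes from exhibiting an explicit open family: take $A'$ with one short vector of length $\asymp 1/k$ (up to constants) and $r\asymp k$ near a special value, and a $b$-box of area $\gg k^{-\,O(1)}$ realising level exactly $k$, chosen so that the total is $\gg k^{-3}$.

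\emph{Upper bound, the main obstacle.} For $\sum_k\mu_\C(\C_k)<\infty$ one needs the $(\ln k+1)/k^2$ bound. Two constraints drive it. First, if $\lambda_1\le\lambda_2$ are the successive minima of $r^{1/2}A'\ZZ^2$, then condition~(1) for small $j$ forces the lattice to avoid triangles of size $\asymp j/r$. This forces $\lambda_2\gg 1/r$, and for large $k$ forces $\lambda_1$ to be small, i.e.\ a thin lattice. Covolume $r$ means $\lambda_1\lambda_2\asymp r$, so this confines $(r,A')$ to a cusp region of $X_2$ of measure $\asymp$ a power of $1/k$. Second, for such $A'$ the lattice points lie on parallel lines at spacing $\asymp r/\lambda_1$. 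Condition~(1) for all $j<k$ then becomes a Diophantine condition on the position of $b$ transverse to these lines: the points $\{jb\}$ must avoid windows whose size is proportional to $j$. This resembles a three-gap/Farey-fraction count, and I would estimate it using the congruence restrictions on witnesses and the Farey-fraction estimates announced in the organization section.

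I expect the hardest step to be showing that the admissible $b$-set has measure $\ll (\ln k)/k^{2}$ after integrating over the cusp region. My first attempt would be to sort $b$ by the rational $p/q$, with $q\le k$, that best approximates its transverse coordinate. The Farey bound $\sum_{q\le k}\phi(q)/q^{3}\ll\ln k$-type estimates would then produce the logarithm.
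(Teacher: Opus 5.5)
\textbf{Cross-section part.} Your argument here is essentially the paper's. You show that $\C$ is open along the stabiliser $\gv^{-1}P\gv$ of $\Hv$, using that only finitely many lattice points lie near the compact region below the first-contact slice and that the witnesses lie in the open orthant. You then apply \Cref{lem:cross hom space}. One caveat, which the paper shares: you call this orbit closed, but it is actually dense in $X_3$, since lattices meeting $\Hv$ in a rank-two sublattice are dense. What you really need is that every $g\Lambda\in\C$ with $g$ near $e$ satisfies $g\Hv=\Hv$. This holds because three non-collinear first-contact points of $g\Lambda$ are $g$-images of points of $\Lambda\cap(s_\Lambda v+\Hv)$.

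\textbf{Upper bound: the main gap.} The measure estimates carry all the content, and your upper-bound plan rests on a false premise. Level $k$ does not push the shape $A'$ into a cusp. By \Cref{lem:covering-successive}, the level only fixes the \emph{scale} of the normalised lattice (it is $\asymp\lambda_2(\tri,\cdot)$), so every shape comes with its own window of $r$ of bounded $\frac{dr}{r}$-mass. Indeed, the paper bounds $\int\Leb(\mathcal F^{(k)}_{r^{1/2}A'})\frac{dr}{r}$ uniformly in $A'$ (\Cref{prop:upper-bound-level-measure}). The set $Z_k$ of \Cref{prop:lower-bound-level-measure} is cut out by an angular condition of width $\asymp1/k$, not by a cusp condition. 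So the decay in $k$ must come from the admissible translations $b$ for each fixed shape, and your reduction to a one-dimensional problem on parallel lines is not available.

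The ingredients that make this work are missing from your plan:
\begin{itemize}
\item[(i)] Reduced minima and \Cref{lem:red-basis-in-tri}, so that witness differences can be taken from at most $12$ bases.
\item[(ii)] The congruence \Cref{lem:desc-of-basis-I}: if $P_1=c_1a_1+c_2a_2$, then $\gcd(c_i,k)=1$ and one of $c_1+1$, $c_2+1$, $c_1+c_2$ is $\equiv0\pmod k$. It is proved by producing $d_i\ge0$ with $\sum d_i<k$ such that $\frac1k\sum d_iP_i$ is a lattice point in a lower slice (via \cite{W64}). This leaves $O(\phi(k))$ configurations.
\item[(iii)] For each configuration, three forbidden translates $-Q_i+\tri$. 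Here $Q_1,Q_2$ are built from the Farey neighbours $\frac{h'}{k'}<\frac{c_1'}{k}<\frac{h''}{k''}$ and $Q_3$ from slice $k-1$. They cut $\tri(\mathcal P)$ along three distinct sides (\Cref{lem:directions-of-reduced-elements}), so an explicit inclusion--exclusion integral over $r$ is $\ll\beta_1\beta_2\beta_3\ll\frac{1}{(k-1)k'(k-k')}$.
\end{itemize}
Summing over $k'$ gives $\frac{H_{k-1}}{k(k-1)}$, and that is where $\ln k$ comes from. Your $\sum_{q\le k}\phi(q)/q^3$ is bounded and yields no logarithm.

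\textbf{Lower bound.} This is only asserted, and the family you propose cannot give it. Unimodular lattices with a vector of length $\asymp1/k$ have $\msr_{X_2}$-measure $\asymp k^{-2}$. By the uniform per-shape bound they therefore contribute $\ll k^{-4}(\ln k+1)$, which is $o(k^{-3})$. You need a set of shapes of measure $\gg1/(k\ln k)$. The paper takes $Z_k$: essentially, $\rho_i$ within angle $O(1/k)$ of the line $\langle v_e,\cdot\rangle=0$, together with $\langle v_i,\rho_e\rangle\ge\frac12\lambda_1$. It fixes $c_1'=1$ and shows that the admissible set is then \emph{exactly} $\tri(\mathcal P)\setminus\bigcup_i\tri(\mathcal P\cup\{Q_i\})$. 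This gives $\gg k^{-2}(\lambda_1/\lambda_2)^2$ per shape, with $\int_{Z_k}(\lambda_1/\lambda_2)^2\,d\msr_{X_2}\asymp1/k$.

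\textbf{Parametrisation.} Your map is not an identification. Since $\Lambda=\Lambda_0+\ZZ t_\Lambda$, a given first-contact slice $\Lambda_0+w$ belongs to up to $k^2$ lattices in $\C_k$, namely $\Lambda_0+\ZZ t$ for the $k^2$ classes of $t\in\frac1k(w+\Lambda_0)$ modulo $\Lambda_0$. Hence ``$\frac jk b$ modulo the obvious identification'' is not well defined for $b$ in the torus. The paper instead parametrises by the index-one slice, i.e.\ the affine lattice $A\ZZ^2+b$ whose slice at height $\ell$ is $A\ZZ^2+\ell b$. Since the theorem is precisely about powers of $k$, this factor $k^2$ cannot be left implicit.
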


In this section, we prove the first assertion of Theorem~\ref{thm:main}. We postpone the proof of the finiteness of $\mu_\C$ and equation~\eqref{eq:main:thm} to Section~\ref{sec:Changing the viewpoint}. We will need the following notation and lemma for the proof.

    Define
    \begin{align}
        \label{eq:def P}
         P \defeq \left\{\begin{pmatrix} A & b\\  & \det(A)^{-1} \end{pmatrix}\st A\in\GL_2(\RR),~b\in\RR^2\right\} \leq G,
    \end{align}
    and let
    \begin{align}\label{eq:def-of-gv}
         \gv = \begin{pmatrix} -\tfrac{1}{\sqrt{2}}&\tfrac{1}{\sqrt{2}}&0\\ -\tfrac{1}{\sqrt{6}}&-\tfrac{1}{\sqrt{6}}&\tfrac{2}{\sqrt{6}}\\ \tfrac{1}{\sqrt{3}}&\tfrac{1}{\sqrt{3}}&\tfrac{1}{\sqrt{3}}\end{pmatrix}\in \SO_3(\RR),
    \end{align}
    The matrix $g$ satisfies 
    \[ \gv^{-1}\mathbf e_3 = \gv^t \mathbf e_3 = \v, \]
    where $\mathbf e_3= (0,0,1)$ denotes the third standard basis vector of $\RR^3$.

  \begin{proof}[Proof of the first part of Theorem~\ref{thm:main}]
Note that it is enough to show that $\gv\C$ is a relatively open subset of $P\Gamma$ in $X_3$. To see this, note that the claim implies that $\C$ is a relatively open subset of the $H$-orbit of $x$ in $X_3$, where
\[
H=\gv^{-1}P\gv,
\qquad
x=\gv^{-1}\Gamma.
\]
Since $A\cap H=\{e\}$, the result follows from Lemma~\ref{lem:cross hom space}.

To prove the claim, first notice that $\gv\Hv=\RR^2\times\{0\}$. Thus, for $\Lambda\in\C$,
\[
\gv\Lambda_0
=
\gv(\Lambda\cap\Hv)
=
\gv\Lambda\cap(\RR^2\times\{0\})
\]
is a lattice in $\RR^2\times\{0\}$. This immediately implies that
\[
\gv\Lambda
=
\begin{pmatrix}
A & b\\
& \det(A)^{-1}
\end{pmatrix}
\ZZ^3,
\]
for some $A\in\GL_2(\RR)$ and $b\in\RR^2$. Hence $\gv\Lambda\in P\Gamma$, and therefore $\gv\C\subset P\Gamma$.

To prove relative openness, fix $\Lambda\in\C$, and let
\[
\Lambda_+\cap(s_\Lambda v+\Hv)
=
\{w_1,\ldots,w_n\}.
\]
Define
\[
R_\Lambda
=
\{w\in\RR_{\ge0}^3:\langle w,v\rangle\le s_\Lambda\}.
\]
Since $\Lambda \in \C$, there exists a sufficiently small $\e>0$ such that the $2\e$-neighbourhood
\[
R_\Lambda^{(2\e)}
=
\{w\in\RR^3:d(w,R_\Lambda)\le 2\e\}
\]
intersects $\Lambda$ exactly in the set $\{0\}\cup (\Lambda_{\geq 0} \cap(s_\Lambda v+\Hv))$.

Since each $w_i$ lies in the open set $\RR_{>0}^3$, after possibly
decreasing $\e$, we may additionally assume that the $\e$-neighbourhood of
each $w_i$ is contained in $\RR_{>0}^3$ and so in particular does not intersect any coordinate plane.

Now let
\[
h
=
\gv^{-1}
\begin{pmatrix}
A & b\\
& \det(A)^{-1}
\end{pmatrix}
\gv
\in H.
\]
Since $\Lambda$ is discrete and $R_\Lambda^{(2\e)}$ is bounded, by
taking $h$ sufficiently close to the identity, we may ensure that $\|hw- w\|_{\infty} \leq \e$, for all $w \in \RR^3$ with $\|w\|_\infty \leq 2s_\Lambda$.

The latter, along with the choice of $\e$, implies that the $n$ points $\{hw_1,\dots,hw_n\}$ of $h\Lambda$ are contained in 
\[
R_{\Lambda}^{(\e)} \cap \RR_{>0}^3= \{w \in \RR_{> 0}^3: \langle w, \v \rangle \leq s_\Lambda +\e  \}.
\]
and that 
\begin{align*}
    h\Lambda \cap R_\Lambda^{(\e)} &\subset  h\left( \Lambda \cap R_\Lambda^{(2\e)} \right)\\
    &= h \left(  \{0\}\cup (\Lambda_{\geq 0} \cap(s_\Lambda v+\Hv)) \right)\\
    &\subset h \left( \{0\} \cup (\Lambda \cap(s_\Lambda v+\Hv)) \right)\\
    &= \{0\} \cup (h\Lambda \cap (s' v+\Hv)),
\end{align*}
for some $s' \geq s_\Lambda- \e$. The last equality follows from the fact that $h$ preserves $\Hv$ (since $h \in H$). 

Thus, the points
\[
hw_1,\ldots,hw_n
\]
lie in a common affine hyperplane parallel to $\Hv$. Since $h\Lambda\cap R_\Lambda^{(\e)}$ only contains $\{0\}$ and the points on the hyperplane $s'\,v+\Hv$, it follows that $s'$ is the smallest positive value
of $\langle v,w\rangle$ attained by any point in
$h\Lambda\cap\RR_{\ge0}^3\setminus\{0\}$. Hence $s_{h\Lambda}=s'$.

Furthermore, the points $hw_1,\ldots,hw_n$ remain in the interior of the
positive orthant. Therefore the affine $\ZZ$-span of the set
\[
\{hw_1,\ldots,hw_n\} - s_{h\Lambda} \v \subseteq ((h\Lambda)_+ - s_{h\Lambda} \v)\cap \Hv
\]
is still an affine lattice in $\Hv$. Thus, all defining conditions of $\C$ hold for $h\Lambda$, proving that $h\Lambda\in\C$.

Since $\Lambda\in\C$ was arbitrary, $\C$ is relatively open in $Hx$.
Equivalently, $\gv\C$ is relatively open in $P\Gamma$. This completes the
proof. 
\end{proof}

The proof above immediately yields the following consequence, which will be
used later in the proof of Theorem~\ref{thm:face-equi}.

\begin{corollary}
\label{cor:local-stability}
Let $\Lambda\in\C$ and assume that $\Lambda$ does not contain points on the coordinate axes. Then there exists a neighbourhood $U_\Lambda$ of the identity in $\gv^{-1}P\gv$ such that for every $h\in U_\Lambda$,
\[
(h\Lambda)_+\cap(s_{h\Lambda}v+\Hv)
=
h\bigl(\Lambda_+\cap(s_\Lambda v+\Hv)\bigr).
\]
\end{corollary}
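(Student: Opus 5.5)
The plan is to rerun the perturbation argument from the proof of the first part of Theorem~\ref{thm:main} and sharpen its conclusion. There we obtained only the inclusion $h(\Lambda_+\cap(s_\Lambda v+\Hv))\subseteq (h\Lambda)_+\cap(s_{h\Lambda}v+\Hv)$. The new point is the reverse inclusion, and this is exactly where the hypothesis that $\Lambda$ has no points on the coordinate axes enters.

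\textbf{Setup.} Fix $\Lambda\in\C$ and write $\Lambda_+\cap(s_\Lambda v+\Hv)=\{w_1,\dots,w_n\}$. Let $R_\Lambda$ and $R_\Lambda^{(2\e)}$ be as in that proof.

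\textbf{Step 1: strengthen the choice of $\e$.} By definition of $s_\Lambda$, every point of $\Lambda_{\ge0}\setminus\{0\}$ satisfies $\langle v,w\rangle\ge s_\Lambda$. So $\Lambda\cap R_\Lambda$ consists of $0$ together with $\Lambda_{\ge0}\cap(s_\Lambda v+\Hv)$. I claim the latter set equals $\{w_1,\dots,w_n\}$. Suppose a point $w$ of this slice lay on the boundary of the orthant. Then $w$ lies in the simplex $R_\Lambda\cap\partial\RR^3_{\ge0}$, so by the hypothesis it is not on a coordinate axis. It therefore lies in the relative interior of a face of the cone, i.e.\ exactly one coordinate vanishes.

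To rule this out, I would argue as follows.
\begin{itemize}
\item Because $\Lambda\in\C$, the points $w_i$ span the whole slice lattice, so some $w_j$ lies off the hyperplane (coordinate plane) containing $w$.
\item Every lattice point $w+m(w_j-w)$ lies on the same slice.
\item For $m=-1$ the coordinate that vanishes at $w$ becomes negative, but the points near $w$ inside the triangle $\operatorname{conv}(w,w_j,w_k)$ remain nonnegative, for a suitable $w_k$.
\end{itemize}
If this convexity argument turns out to be too messy, I would instead note that in the flow-box picture boundary points of the slice are exactly the degenerate situations excluded by genericity. In that case I would modify $\e$ so that $\Lambda\cap R_\Lambda^{(2\e)}=\{0,w_1,\dots,w_n\}$, keeping the $\e$-balls around the $w_i$ inside $\RR^3_{>0}$. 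I expect this step to be the main obstacle, since it is the only place where the hypothesis is used.

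\textbf{Step 2: the reverse inclusion.} Choose $U_\Lambda$ so that $\|hw-w\|_\infty\le\e$ on the ball of radius $2s_\Lambda$, and so that $h^{-1}R_\Lambda^{(\e)}\subseteq R_\Lambda^{(2\e)}$; this is possible by continuity and boundedness. Then
\[
h\Lambda\cap R_\Lambda^{(\e)}\subseteq h\bigl(\Lambda\cap R_\Lambda^{(2\e)}\bigr)=\{0,hw_1,\dots,hw_n\}.
\]
As in the earlier proof, all $hw_i$ lie in $\RR^3_{>0}$ on the common plane $s'v+\Hv$, and $s_{h\Lambda}=s'$. Any point of $(h\Lambda)_+$ on the plane $s_{h\Lambda}v+\Hv$ has $\langle v,\cdot\rangle\le s_\Lambda+\e$. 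So it lies in $R_\Lambda^{(\e)}$, and hence it is one of the $hw_i$.

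\textbf{Conclusion.} The two inclusions together give the claimed equality for every $h\in U_\Lambda$.
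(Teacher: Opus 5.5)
Your Step~2 is essentially the paper's argument. Step~1, however, has a genuine gap, and it cannot be closed under the hypothesis as stated. You are trying to show that for $\Lambda\in\C$ with no nonzero points on the coordinate axes, the slice $\Lambda_{\ge0}\cap(s_\Lambda v+\Hv)$ has no point with exactly one vanishing coordinate. That claim is false.

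The first-contact condition only excludes nonzero lattice points of $\RR^3_{\ge0}$ with $\langle v,\cdot\rangle<s_\Lambda$. Lattice points on the edges of the triangle $\RR^3_{\ge0}\cap(s_\Lambda v+\Hv)$ are allowed, which is why your convexity bullets never reach a contradiction.

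Here is a concrete counterexample.
\begin{itemize}
\item Take $\Lambda$ of level one with a very fine $\Lambda_0$, so that this triangle contains many lattice points in its interior.
\item Tune the translation so that some point $w$ of the slice lies in the relative interior of an edge.
\item A generic choice of the remaining translation parameter keeps every nonzero lattice point off the three axes, at all heights.
\end{itemize}
Then $\Lambda\in\C$. Now let $h=\gv^{-1}\begin{pmatrix} I&\beta\\ 0&1\end{pmatrix}\gv$ with $\beta$ small and chosen so that $h$ pushes $w$ into the triangle. This $h$ preserves $\langle v,\cdot\rangle$ and translates the slice at height $s_\Lambda$ by a small vector in $\Hv$. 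Hence $s_{h\Lambda}=s_\Lambda$ and $hw\in(h\Lambda)_+\cap(s_{h\Lambda}v+\Hv)$, while $hw\notin h\bigl(\Lambda_+\cap(s_\Lambda v+\Hv)\bigr)$. So the displayed equality fails for $h$ arbitrarily close to the identity. Your fallback, that boundary points are ``excluded by genericity'', is not supplied by an axes-only hypothesis.

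What the argument really needs, and what the paper's one-line proof tacitly uses, is that $\Lambda$ meets each coordinate \emph{plane} only at $0$. This is the condition used in Lemma~\ref{lem:Kar:2}. It is still an $A_+$-invariant null condition, so the application in the proof of Theorem~\ref{thm:face-equi} survives.

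Under that hypothesis Step~1 is immediate. Indeed $\Lambda_{\ge0}\cap(s_\Lambda v+\Hv)=\Lambda_+\cap(s_\Lambda v+\Hv)=\{w_1,\dots,w_n\}$, so the $\e$ chosen in the proof of Theorem~\ref{thm:main} already gives $\Lambda\cap R_\Lambda^{(2\e)}=\{0,w_1,\dots,w_n\}$. Your Step~2 then goes through exactly as in the paper. The only adjustments are harmless constants, such as $\|\cdot\|_\infty$ versus Euclidean distance and the resulting change in $\langle v,\cdot\rangle$, which are absorbed by shrinking $U_\Lambda$.
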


\begin{proof}
In the notation of the preceding proof, for $h$ sufficiently close to the
identity, the assumption on $\Lambda$ not having points on the coordinate axes ensures that the only points of $h\Lambda$ contained in the neighbourhood
$R_\Lambda^{(\e)}$ are $0,\,hw_1,\ldots,hw_n$, where
\[
\Lambda_+\cap(s_\Lambda v+\Hv)
=
\{w_1,\ldots,w_n\}.
\]
Since no additional lattice points enter $R_\Lambda^{(\e)}$, the points
$hw_1,\ldots,hw_n$ are precisely the points realized in the first-contact
slice of $h\Lambda$. Therefore,
\[
(h\Lambda)_+\cap(s_{h\Lambda}v+\Hv)
=
\{hw_1,\ldots,hw_n\}
=
h\bigl(\Lambda_+\cap(s_\Lambda v+\Hv)\bigr),
\]
as claimed.
\end{proof}


\section{Proofs of Theorem~\ref{thm:face-equi},~\ref{thm:Karpenkov question} and~\ref{thm:periodic-equi} assuming~Theorem~\ref{thm:main}}\label{sec:proofs-of-main-thms}

\subsection{Proof of Theorem~\ref{thm:face-equi}}
    We will need the following result along with Theorem~\ref{thm:main} to prove Theorem~\ref{thm:face-equi}.

    \begin{theorem}[See {\cite[Thms.~2.1 $\&$ 2.7]{Bil68}} or {\cite[Chap.~4]{Bou04a}}]
\label{muJM}
    Suppose $X$ is a locally compact second countable Hausdorff space. Let $(\eta_l)_{l \in \NN}$ and $\eta$ be Radon measures on $X$ such that $\eta_l(\varphi)\rightarrow \eta(\varphi)$ as $l \rightarrow \infty$, for every bounded continuous function $\varphi$ on $X$. Assume that $\phi: X \rightarrow \RR$ is a bounded measurable function which is $\eta$-almost everywhere continuous. Then
    \[ \eta_l(\phi) \rightarrow \eta(\phi), \quad \text{ as } l \to \infty. \]
\end{theorem}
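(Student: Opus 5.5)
The plan is to prove this by a sandwich argument, squeezing $\phi$ between an upper and a lower semicontinuous envelope that agree with $\phi$ $\eta$-almost everywhere. First, two reductions. Testing the hypothesis against the constant function $\mathbf 1$ shows $\eta_l(X)\to\eta(X)<\infty$, so all measures involved are finite from some index on. Also, a locally compact second countable Hausdorff space is metrizable, so we fix a compatible metric $d$ on $X$. All measures are nonnegative, which is what makes the inequalities below monotone.

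Next we introduce the envelopes. Let $M=\sup|\phi|$ and set
\[
\phi^*(x)=\lim_{\delta\to0}\sup_{d(y,x)<\delta}\phi(y),\qquad \phi_*(x)=\lim_{\delta\to0}\inf_{d(y,x)<\delta}\phi(y).
\]
Then $\phi^*$ is upper semicontinuous, $\phi_*$ is lower semicontinuous, both are Borel and bounded by $M$, and $\phi_*\le\phi\le\phi^*$ everywhere. At every continuity point of $\phi$ we have $\phi_*=\phi=\phi^*$. Since $\phi$ is $\eta$-a.e. continuous, this gives $\eta(\phi_*)=\eta(\phi)=\eta(\phi^*)$.

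The key step is to approximate $\phi^*$ from above by bounded continuous functions. We take the inf-convolutions
\[
g_n(x)=\sup_{y\in X}\bigl(\phi^*(y)-n\,d(x,y)\bigr).
\]
Each $g_n$ is $n$-Lipschitz and bounded by $M$, and $g_n\ge\phi^*$. The sequence $g_n$ decreases pointwise in $n$. Moreover $g_n\downarrow\phi^*$: points $y$ with $d(x,y)\ge 2M/n$ contribute at most $-M$, so $g_n(x)\le\sup_{d(y,x)<2M/n}\phi^*(y)$, and this tends to $\phi^*(x)$ by upper semicontinuity. For each fixed $n$, positivity of $\eta_l$ and the hypothesis give
\[
\limsup_l\eta_l(\phi)\le\limsup_l\eta_l(g_n)=\eta(g_n).
\]
Letting $n\to\infty$ and using dominated convergence for the finite measure $\eta$ with bound $M$, we obtain $\limsup_l\eta_l(\phi)\le\eta(\phi^*)=\eta(\phi)$.

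The symmetric argument, applied to $-\phi$ (equivalently, approximating $\phi_*$ from below by $\inf_y(\phi_*(y)+n\,d(x,y))$), gives $\liminf_l\eta_l(\phi)\ge\eta(\phi_*)=\eta(\phi)$, which completes the proof. The only genuinely non-formal point is the monotone approximation of semicontinuous functions by continuous ones. That is exactly where metrizability of $X$ enters, via the inf-convolution above.
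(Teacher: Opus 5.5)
Your argument is correct. The paper does not prove this statement; it only cites Billingsley and Bourbaki.

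The route behind the Billingsley reference works with sets rather than functions:
\begin{itemize}
\item The portmanteau theorem turns narrow convergence into $\limsup_l\eta_l(F)\le\eta(F)$ for closed $F$.
\item The mapping theorem then uses $\overline{\phi^{-1}(F)}\subseteq\phi^{-1}(F)\cup D_\phi$, where $D_\phi$ is the $\eta$-null discontinuity set of $\phi$. This gives narrow convergence of the image measures $\eta_l\circ\phi^{-1}\to\eta\circ\phi^{-1}$ on $\mathbb{R}$.
\item One concludes by testing against a bounded continuous function that coincides with the identity on $[-M,M]$.
\end{itemize}
Since those theorems are stated for probability measures, one must also normalize by the total masses. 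These converge, as you note, by testing against $\mathbf 1$.

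Your proof bypasses both the portmanteau theorem and the push-forward. The semicontinuous envelopes $\phi_*\le\phi\le\phi^*$ absorb the discontinuity set. The Lipschitz regularizations $g_n$ do the job that functions like $(1-d(x,F)/\varepsilon)^+$ do in the portmanteau proof. The result is self-contained and handles finite measures directly, without normalization. Its only topological input is Urysohn metrization, and the Billingsley route needs a metric on $X$ as well.

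Two cosmetic points. First, $\sup_{y}\bigl(\phi^*(y)-n\,d(x,y)\bigr)$ is a sup-convolution (the upper Lipschitz envelope), not an inf-convolution. Second, your localization to the ball of radius $2M/n$ tacitly assumes $M>0$; the case $M=0$ is trivial.
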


\begin{proof}[Proof of Theorem~\ref{thm:face-equi}]
Let $\pi_{12}$ and $\pi_3$ denote the natural projections from $ \mathbb R^3=\mathbb R^2\times\mathbb R$ onto $\mathbb R^2$ and $\mathbb R$, respectively.

Define the map
\[
\psi:\mathcal C\longrightarrow \FC^2\times\mathbb N
\]
by
\[
\psi(\Lambda)
=
\left(
\pi_{12}\bigl(p^{-1}\gv(\Lambda_+\cap(s_\Lambda \v+\Hv))\bigr),
\,
\pi_3\bigl(p^{-1}\gv(s_\Lambda\v)\bigr)
\right),
\]
where $\Lambda=\gv^{-1}p\Gamma\in\mathcal C$ with $p\in P$ as defined in \eqref{eq:def P}. Namely, the first component of $\psi(\Lambda)$ is the convex integral polytope obtained from the points in $\Lambda$ witnessing that $\Lambda$ is an element of the cross-section $\C$, whereas the second component of $\psi$ is equal to the level of $\Lambda$.

By Corollary~\ref{cor:local-stability}, the map $\psi$ is locally constant at every point of $O\cap\mathcal C$, where $O$ denotes the set of lattices in $X_3$ containing no point on any coordinate axis. Since $X_3\setminus O$ is an $A_+$-invariant subset of $X_3$ of measure zero, it follows from the definition of $\mu_{\mathcal C}$ that $\mu_{\mathcal C}(\mathcal C\setminus O)=0$.
Hence, $\psi$ is locally constant at $\mu_{\mathcal C}$-almost every point of $\mathcal C$.

Let $F:\FC^2\times\mathbb N\to\mathbb R$ be a bounded function. Since $\psi$ is locally constant almost everywhere, the composition $F\circ\psi$ is continuous at $\mu_{\mathcal C}$-almost every point of $\mathcal C$.

Combining this with Lemma~\ref{lem:equi-cross-section 2} (for $X= X_3$) and Theorem~\ref{muJM} (for $\phi= F \circ \psi$ and $\eta= \mu_\C$), we obtain that there exists an $A_+$-invariant full measure subset $Y$ of $X_3$ such that for all $\Lambda \in Y$, we have
\begin{align} \label{eq:proof-main}
    \frac{1}{T^2|J|}\sum_{\{t\in TJ:\,a_t\Lambda\in\C\}} F\circ\psi(a_t\Lambda) \longrightarrow \mu_{\mathcal C} (F\circ\psi), \qquad (T\to\infty),
\end{align}
where
\[
J = \left\{(t_1,t_2,t_3)\in\mathbb R^3:~t_1+t_2+t_3=0,\; |t_i-t_j|\leq1 \text{ for all }i\neq j \right\}.
\]

Further note that every element $g\in\SL_3(\mathbb R)$ admits a decomposition of the form
\[
g = ra \begin{pmatrix} v_1&v_2&v_3 \end{pmatrix}^{-1}, 
\]
where $a\in A_+$, $r\in\mathbb R$, and $v_1,v_2,v_3$ are unit vectors in $\mathbb S^2 = \{x\in\mathbb R^3:\|x\|_2=1\}$. Indeed, if 
$$
g^{-1} = \begin{pmatrix} w_1&w_2&w_3 \end{pmatrix}
$$
with linearly independent column vectors $w_1,w_2,w_3$ in $\RR^3$, then setting 
\begin{align*}
    v_i=\frac{w_i}{\|w_i\|_2}, \qquad i=1,2,3,
\end{align*}
and 
$$
r = (\|w_1\|_2\|w_2\|_2\|w_3\|_2)^{1/3}\in\RR
$$
one can choose $a\in A_+$ accordingly, to get the desired decomposition. 

Consequently using  $A_+$-invariance and full measurability of $Y$, we get that for Lebesgue-almost every choice of vectors $w_1,w_2,w_3\in\mathbb R^3$, the lattice 
\begin{align*}
\Lambda = r\begin{pmatrix} v_1&v_2&v_3 \end{pmatrix}^{-1} \mathbb Z^3 
\end{align*}
belong to $Y$ and hence, satisfies \eqref{eq:proof-main}, where as before
\begin{align*}
    v_i=\frac{w_i}{\|w_i\|_2}, \qquad i=1,2,3,
\end{align*}
and $r = (\|w_1\|_2\|w_2\|_2\|w_3\|_2)^{1/3}\in\RR$ is chosen so that $\Lambda$ is unimodular.

Now consider three rays $L_1,L_2,L_3$ generated by the unit vectors $v_1,v_2,v_3\in\mathbb{S}^2$, and let $C_{L_1, L_2, L_3}$ denote the cone defined by these rays and let $\Klein$ be the associated Klein sail. Further, define
\[ g=  r \begin{pmatrix} v_1&v_2&v_3 \end{pmatrix}^{-1}, \quad \Lambda = g\ZZ^3 \in X_3\]
where $r\in\RR>0$ is chosen so that $g\in\SL_3(\RR)$.

Consider a face $f$ of the Klein sail $\Klein$ and suppose that $d_v(f)\le e^T$. Let $V_f$ denote the affine plane containing $f$, let $n_f$ be the unit normal vector to $V_f$, such that $\langle n_f, x\rangle >0$ is constant for all $x\in V_f$, and let $E_f$ be the linear plane through the origin orthogonal to $n_f$. For $i=1,2,3$, we let
\[ P_i=V_f\cap L_i. \]
Moreover, we define
\[ t_0= \frac13\log\bigl(\|P_1\|_2 \|P_2\|_2 \|P_3\|_2\bigr), \]
and for $i=1,2,3$, we set
\[ t_i=-\log\|P_i\|_2+t_0. \]
By assumption, we have, $d_v(f)=\max_{i\neq j}\frac{\|P_i\|_2}{\|P_j\|_2}\le e^T$, which implies
\[
\max_{i\neq j}|t_i-t_j|
\le
T,
\]
that is, $t\defeq(t_1, t_2, t_3) \in  TJ$. Notice that for $i=1,2,3$ we have $P_i = \norm{P_i}[2]v_i$ and so by definition of $g$, we get that
\[ a_tgP_i = r\norm{P_i}[2] a_t\begin{pmatrix} v_1 & v_2 & v_3 \end{pmatrix}^{-1} v_i\ = r\exp(t_0)e_i, \]
so the three points $a_tgP_i$ lie in a common translate of $\Hv$. Consequently,
\[ a_tgn_f=\alpha\v \]
for some $\alpha\in\mathbb R_{>0}$, and
\[ a_tgE_f= \Hv. \]

Since $f$ is a face of the Klein sail $\Klein$, obtained by the convex hull of lattice points in the cone $C_{L_1,L_2,L_3}$, the family of affine hyperplanes
\[ sn_f+E_f, \qquad s>0 \]
first intersects the integer lattice points $\ZZ^3$ in the cone $C_{L_1,L_2,L_3}$ precisely at $V_f$, the affine plane containing the face $f$. That is, 
\begin{align}\label{eq:s_f}
    s_f= \min\{s>0:~(\ZZ^3 \cap C_{L_1,L_2,L_3})\cap(sn_f+E_f)\neq\emptyset\}
\end{align}
satisfies
\[ s_fn_f+E_f = V_f. \]
Furthermore, the affine $\mathbb Z$-span of the integer points in $f$ (that is of the points in $V_f \cap C_{L_1,L_2,L_3}^\circ \cap \ZZ^3$) is an affine lattice in $V_f$.

Applying $a_tg$ to the family of affine hyperplanes we get
\[ a_tg(sn_f+ E_f) = s\alpha\v+ \Hv,\qquad s>0 \]
and these first intersect the lattice points $a_t\Lambda$ in the cone $a_tgC_{L_1,L_2,L_3} = \RR^3_{\geq0}$ precisely at $a_tgV_f$. Using \ref{eq:s_f} we thus obtain that
\[ s_{a_t\Lambda} = 
\inf\{s>0:~(a_t\Lambda)_{\geq 0}\cap(sv + \Hv)\neq\emptyset\} = s_f. \]
Further, the affine $\mathbb Z$-span of the lattice points in $(a_t\Lambda)_{+}\cap(s_{a_t\Lambda}v + \Hv)$ is an affine lattice contained in $a_tg V_f$.

Therefore, $a_t\Lambda\in\mathcal C$, and, by construction,
\[ \psi(a_t\Lambda) = ([f],\level(f)). \]
Conversely, every $t\in TJ$ with $a_t\Lambda\in\mathcal C$, corresponds to a face $f$ of the Klein sail $\Klein$ with $d_v(f) \leq e^T$. 

This gives a one-to-one correspondence between the faces $f$ of $\Klein$ with $d_v(f) \leq e^T$ and the parameters $t\in  T  J$ satisfying $a_t\Lambda \in \C$.
Hence, \eqref{eq:proof-main} yields
\begin{align}
\label{eq:proof-final}
 \frac{1}{T^2|J|} \sum_{\substack{
f\text{ face of }\mathcal K\\
d_v(f)\le e^T
}}
F([f],\level(f))
\longrightarrow
\mu_{\mathcal C}(F\circ\psi), \quad \text{as } T \rightarrow \infty.
\end{align}
The theorem now follows using Theorem~\ref{thm:main} by defining $\nu_{\FC} =\psi_*(\mu_{\mathcal C})$ and noting that $|J|=1$ by definition (see Section~\ref{sec:explicit-crosssection}).

\end{proof}


\subsection{Proof of Theorem~\ref{thm:Karpenkov question}} \label{subsec:Karpenkov}

Recall that $\mu$ is the unique (up to scaling) $\SL_3(\RR)$-invariant measure on the space of Klein sails $\sK_3$ obtained by identifying it with $\SL_3(\RR)/(\mathrm{D}_3\rtimes S_3)$.

\begin{lemma}
\label{lem:Kar:1}
    The measure $\mu$ on $\sK_3$ is the pushforward of
    the right-invariant Haar measure $m_P$ on $P$ under the map
    \[
        p\mapsto p^{-1}\gv(D_3\rtimes S_3)
        \simeq p^{-1}\gv\RR_{\geq0}^3,
    \]
    where $P$ and $\gv$ are defined as in \Cref{sec:explicit-crosssection}.
\end{lemma}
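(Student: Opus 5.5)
Since $\SL_3(\RR)$ acts transitively on $\sK_3$ with stabiliser $\mathrm{D}_3\rtimes S_3$, and both sides are defined only up to scaling, it suffices to prove two things. First, the map $\Phi\colon P\to\sK_3$, $p\mapsto p^{-1}\gv(\mathrm{D}_3\rtimes S_3)$, is a local diffeomorphism onto an open set of full $\mu$-measure with finite fibres of locally constant size. Second, $\Phi_* m_P$ is $\SL_3(\RR)$-invariant, or at least transforms like a Haar measure on $G/(\mathrm{D}_3\rtimes S_3)$. We then conclude by uniqueness of the invariant measure on the homogeneous space $\sK_3$. The natural route is through Lemma~\ref{lem:Haar-product}, applied to $L=G$, $L_1=A_+$ (the identity component of $\mathrm{D}_3$ modulo scalars, which are trivial in $\SL_3$), and $L_2=H=\gv^{-1}P\gv$. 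We already used these hypotheses in the proof of the first part of Theorem~\ref{thm:main}: we have $A_+\cap H=\{e\}$, and the dimensions add up, since $2+6=8$.

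The key steps are as follows. By Lemma~\ref{lem:Haar-product}, the product map $A_+\times H\to G$, $(a,h)\mapsto ah$, is a diffeomorphism onto an open set $\mathcal U$. Moreover, $m_{A_+}\otimes m_H^{\mathrm{right}}$ pushes forward to a multiple of $m_G|_{\mathcal U}$. Inverting, the map $(h,a)\mapsto h^{-1}a^{-1}$ sends $m_H^{\mathrm{left}}\otimes m_{A_+}$ onto a multiple of $m_G|_{\mathcal U^{-1}}$, because inversion exchanges left and right Haar measures and preserves $m_G$ by unimodularity. The quotient $G\to G/A_+$ then pushes the Haar measure on $\mathcal U^{-1}$ to the invariant measure on its image, with each fibre being a full $A_+$-coset. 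Hence the map $h\mapsto h^{-1}A_+$ carries $m_H$ to the $G$-invariant measure on the open image. We still have to reconcile left and right Haar measure on $H$: $P$ is not unimodular, so there is a subtlety here. I will handle it by noting that the map is $p\mapsto p^{-1}\gv$. After substituting $h=\gv^{-1}p\gv$, we have $h^{-1}\gv^{-1}=\gv^{-1}p^{-1}$, and inversion converts right Haar measure on $P$ into left Haar measure, which is exactly what the Lemma~\ref{lem:Haar-product} decomposition requires. The conjugation by the orthogonal matrix $\gv$ is harmless: it is a fixed left translation, which preserves the invariant measure. Finally, we pass from $G/A_+$ to $G/(\mathrm{D}_3\rtimes S_3)$ via a finite $|S_3|$-to-one quotient, which only changes the measure by a constant factor.

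It remains to check that the image is conull. Geometrically, $\Phi(p)$ is the cone $p^{-1}\gv\RR^3_{\ge0}$, and $\mathcal U^{-1}$ is the set of $g$ with $g\RR^3_{\ge0}$ admitting a supporting affine plane $V$ that cuts all three rays. Every simplicial cone has such a plane, and after applying $A_+$ we may move it to $s\v+\Hv$. So the image is all of $\sK_3$, possibly up to a null set. Injectivity modulo $S_3$ is also needed. This is because $P$ contains no nontrivial elements preserving both $\RR^3_{\ge 0}$ and the direction $\Hv$ beyond permutations, so the fibres are finite.

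I expect the main obstacle to be this left-versus-right Haar bookkeeping for the non-unimodular group $P$, together with checking that the conjugation by $\gv$ and the inversion line up so that right Haar on $P$ is exactly the measure produced. I will verify this first by computing the modular function of $P$ explicitly and comparing it with the Jacobian of the decomposition in Lemma~\ref{lem:Haar-product}.
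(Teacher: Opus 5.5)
Your strategy is the same as the paper's: Lemma~\ref{lem:Haar-product} for $A_+$ and $\gv^{-1}P\gv$, then inversion, left translation by $\gv$, and the quotients by $\mathrm{D}_3$ and by the finite permutation group. Your conclusion that the \emph{right} Haar measure on $P$ is the one that appears is correct. The left/right bookkeeping you flagged does not need a modular-function computation. Since $\Phi(pq)=q^{-1}\Phi(p)$, the measure $\Phi_*m$ is invariant under the left $P$-action on $\sK_3$ exactly when $m$ is right-invariant. Moreover, $\Phi(P)$ is a single $P$-orbit with finite stabiliser, so uniqueness of invariant measures on it finishes the argument. As written, your chain inverts twice: the map $(h,a)\mapsto h^{-1}a^{-1}$ carries $m_H^{\mathrm{right}}\otimes m_{A_+}$, not $m_H^{\mathrm{left}}\otimes m_{A_+}$, to Haar measure. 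The left-Haar statement belongs to the map $(h',a)\mapsto h'a$.

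The genuine gap is the conull step, and it is not merely unproved but false. Every cone admits a plane $V$ cutting its three rays, but $A_+$ only normalises the cone side. If $a^{-1}g^{-1}V=s\v+\Hv$, then $ga$ can be of the form $p^{-1}\gv$ with $p\in P$ only if $ga\Hv=\RR^2\times\{0\}$, i.e.\ only if $V$ is parallel to $\RR^2\times\{0\}$. Indeed $\gv(s\v+\Hv)=\RR^2\times\{s\}$ and $p^{-1}$ preserves the family of horizontal planes. So the correct description of $\mathcal U^{-1}$ is ``cones cut by a plane parallel to $\Hv$'', and every cone in $\Phi(P)$ has its three rays strictly on one side of $\RR^2\times\{0\}$. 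The cone spanned by $(1,0,1),(0,1,1),(1,1,-1)$ and all nearby cones violate this. Hence $\sK_3\setminus\Phi(P)$ has nonempty interior and positive $\mu$-measure. Equivalently, $A_+\gv^{-1}P\gv=\{g: g\Hv \text{ has a normal vector in } \RR^3_{>0}\}$ is open but misses a neighbourhood of $\mathrm{diag}(1,-1,-1)$.

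The paper's proof rests on the same assertion (``$D_3(\gv^{-1}P\gv)$ is a full measure subset of $G$''), so neither argument proves the lemma as stated. What both actually prove is that $\Phi_*m_P$ is a constant multiple of $\mu$ restricted to $\Phi(P)$. That restricted form is all Theorem~\ref{thm:Karpenkov question} needs. If $f\subset\ZZ^2\times\{k\}$ is a bounded face of the sail of a cone $C$, then each ray of $C$ meets $\RR^2\times\{k\}$. So $C\in\Phi(P)$, and the relative frequency of $f$ is still proportional to $m_P(P_f)$.
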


\begin{proof}
By a direct calculation, the set $D_3(\gv^{-1}P\gv)$ is a full measure subset of $G$. Hence, by Lemma~\ref{lem:Haar-product}
and the unimodularity of $G$, the Haar measure on $G$ is, up to a
normalizing constant, the pushforward of the product measure
$m_{D_3}\otimes m_P$ under the map
\[
    (a,p)\mapsto a\gv^{-1}p\gv.
\]
Composing this map with taking the inverse and multiplying on right by $\gv^{-1}$, both of which preserve the Haar measure, and then passing to the quotient $G/D_3$, we get that the $G$-invariant measure on the quotient $G/D_3$ is identified, up to a scalar multiple, with the measure induced by
$m_P$. Finally, passing to the quotient
\[
    G/D_3\longrightarrow G/(D_3\rtimes S_3)=\sK_3
\]
gives the measure $\mu$ on the space of Klein sails. Hence, $\mu$
is the pushforward of $m_P$ as claimed.
\end{proof}

We now introduce some notation. Given a Klein sail $\Klein$, we denote by
$F(\Klein)$ the set of all faces of $\Klein$. For a convex subset $f\subset \ZZ^3$ contained in a rational affine plane,
define
\[
    P_f=\{p\in P:f\in F(p^{-1}\gv\RR_{\geq0}^3)\}
\]
and let $\pi:P\longrightarrow X$ be defined as
$$
\pi(p)= \gv^{-1}p\Gamma.
$$ 

\begin{lemma}
\label{lem:Kar:2}
Assume that $f$ is a convex subset of $\ZZ^2\times\{k\}$ for some $k\in\NN$.
Then
\[
    m_P(P_f\setminus\pi^{-1}(\C))=0.
\]
Moreover, for every $ p\in P_f\cap\pi^{-1}(\C)$, we have
\begin{align}
    \label{eq:ba 1}
    \gv^{-1} p f
    =
    \RR_{>0}^3 \cap \pi(p)
    \cap
    (s_{\pi(p)}\v+\Hv).
\end{align}
\end{lemma}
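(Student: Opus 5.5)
The plan is to transport the whole situation to the fixed cone $\RR^3_{\geq0}$ by the map $w\mapsto \gv^{-1}pw$. Throughout I read ``$f$ is a face'' as: $f$ is the set of integer points of a two-dimensional face of the Klein sail, so $f$ contains three non-collinear points.

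Fix $p=\begin{pmatrix} A & b\\ & \det(A)^{-1}\end{pmatrix}\in P_f$ and write $C=p^{-1}\gv\RR^3_{\geq0}$. Then $\gv^{-1}p$ maps $C$ onto $\RR^3_{\geq0}$ and $\ZZ^3$ onto $\pi(p)=\gv^{-1}p\ZZ^3$. Hence it maps the Klein polyhedron of $C$ onto the convex hull of $\pi(p)_{\geq0}\setminus\{0\}$, and it carries faces to faces.

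\textbf{Step 1: locate the face plane.} Since $f\subset\ZZ^2\times\{k\}$ spans an affine plane and is a face, its supporting plane is $V_f=\RR^2\times\{k\}$. The element $p$ preserves $\RR^2\times\{0\}$ and sends $\RR^2\times\{k\}$ to $\RR^2\times\{k\det(A)^{-1}\}$. By the choice of $\gv$ in \eqref{eq:def-of-gv}, $\gv^{-1}(\RR^2\times\{0\})=\Hv$ and $\gv^{-1}\mathbf e_3=\v$. Therefore
\[
\gv^{-1}pV_f=s\v+\Hv, \qquad s=k\det(A)^{-1}.
\]
Note that $s>0$, because $C$ lies on the side $x_3>0$ of the origin.

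\textbf{Step 2: identify $s_{\pi(p)}$ and the contact set.} Because $V_f$ is a supporting plane of the Klein polyhedron of $C$, every point of $(C\cap\ZZ^3)\setminus\{0\}$ satisfies $x_3\ge k$. Equality holds exactly for the integer points of $C\cap V_f$, and these are precisely $f$. Transporting by $\gv^{-1}p$, every $w\in\pi(p)_{\geq0}\setminus\{0\}$ satisfies $\langle \v,w\rangle\ge s$, with equality exactly on $\gv^{-1}pf$. Hence, by \eqref{eq: def s lambda 2}, $s_{\pi(p)}=s$ and
\[
\pi(p)_{\geq0}\cap(s_{\pi(p)}\v+\Hv)=\gv^{-1}pf .
\]

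\textbf{Step 3: the null set.} Let $N\subset P$ be the set of $p$ for which $\pi(p)$ has a nonzero point on one of the coordinate planes of $\RR^3$. For each fixed $m\in\ZZ^3\setminus\{0\}$ and each coordinate $j$, the condition $(\gv^{-1}pm)_j=0$ defines a proper real-analytic subvariety of $P$, and this set has Haar measure zero. Taking the countable union over $m$ and $j$, we get $m_P(N)=0$. This is the step I expect to need the most care: one must check that each such subvariety is genuinely proper, i.e.\ that the defining polynomial is not identically zero on $P$, which can be seen by exhibiting one explicit $p$ for which it fails to vanish.

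\textbf{Step 4: conclude.} For $p\in P_f\setminus N$, every point of $\gv^{-1}pf$ lies in the open orthant $\RR^3_{>0}$. So Step 2 gives
\[
\pi(p)_+\cap(s_{\pi(p)}\v+\Hv)=\gv^{-1}pf .
\]
This set contains three non-collinear points. By the fundamental-parallelogram argument given after the definition of $\C$, its affine $\ZZ$-span is the whole affine lattice $(\pi(p)-s_{\pi(p)}\v)\cap\Hv$ once translated by $-s_{\pi(p)}\v$. Hence $\pi(p)\in\C$, which shows $P_f\setminus\pi^{-1}(\C)\subset N$ and gives the first claim.

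For the identity \eqref{eq:ba 1}, I use only that $p\in P_f\cap\pi^{-1}(\C)$. Steps 1 and 2 do not use the assumption $p\notin N$, so they show that the points of $\pi(p)$ on the first-contact slice are exactly $\gv^{-1}pf$. It remains to show these points lie in $\RR^3_{>0}$. The face $f$ lies in $C$ and the interior cone condition (boundary planes containing no nonzero lattice points) keeps it there. Even if some points of $f$ met the boundary of $C$, intersecting with $\RR^3_{>0}$ would discard exactly those points. Since $f$ is the set of integer points of a face, those boundary points are excluded on both sides of \eqref{eq:ba 1}, so I would state the identity with this convention.
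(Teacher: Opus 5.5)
Your proposal is correct and follows essentially the same route as the paper: you transport $C_p$ to $\RR^3_{\geq0}$ by $\gv^{-1}p$ and identify the first-contact slice $s_{\pi(p)}\v+\Hv$ with the image of the face plane. You then discard the null set of $p$ for which $\pi(p)$ meets a coordinate plane nontrivially, pass from $\RR^3_{\geq0}$ to $\RR^3_{>0}$, and conclude $\pi(p)\in\C$, supplying more detail than the paper (e.g.\ $s=k\det(A)^{-1}$ and the analytic-subvariety argument for the null set).

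One caveat: your closing remark that boundary points are ``excluded on both sides'' is not literally right. A lattice point of $f$ on $\partial C$ would map into $\gv^{-1}pf$ but not into $\RR^3_{>0}$, so \eqref{eq:ba 1} genuinely depends on the standing convention that the boundary planes carry no nonzero lattice points (equivalently $p\notin N$). That is exactly the case the paper's proof covers, and it is all that is used later.
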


\begin{proof}
If $f$ is contained in an affine line, then $P_f$ is empty and there is
nothing to prove. Hence, we assume that $f$ affinely spans a
two-dimensional lattice.

Consider $p\in P_f$ such that $\gv^{-1}p\ZZ^3$ intersects each coordinate plane only at the origin. By the definition of
$P_f$, the set $f$ is a face of the Klein sail associated to the cone
\[
    C_p=p^{-1}\gv\RR_{\geq0}^3.
\]
Since $f$ is contained in the affine plane $k\mathbf e_3+E_f$ where $E_f=\RR^2\times\{0\}$, the family of affine hyperplanes
\[
    s\mathbf e_3+E_f,\qquad s>0,
\]
meets the lattice points of $C_p\cap\ZZ^3$ for the first time precisely at $s=k$, and the corresponding set of lattice points is exactly $f$.
Applying $\gv^{-1}p$, we obtain that the first intersection of the family of affine hyperplanes
\[
    s\v+\Hv,\qquad s>0,
\]
with
\[
    \RR_{\geq0}^3\cap\gv^{-1}p\ZZ^3
    =
    \RR_{\geq0}^3\cap\pi(p)
\]
is precisely the set $\gv^{-1}pf$. The fact that we can replace $(\RR_{\geq0}^3\cap\pi(p))$ by $(\RR_{>0}^3\cap\pi(p))$ to obtain \eqref{eq:ba 1} follows since by assumption $\gv^{-1}p\ZZ^3$ intersects each coordinate plane only at the origin. Therefore, $\pi(p) \in \C$ and $p$ satisfies~\eqref{eq:ba 1}.
Since the set of elements $p\in P$, such that the lattice $\gv^{-1}p\ZZ^3$ intersects each coordinate plane only at the origin is a full measure subset of $P$, the \namecref{lem:Kar:2} follows.
\end{proof}

\begin{lemma}
\label{lem:Kar:3.1}
Assume that $f$ is a finite convex subset of $\ZZ^2\times\{k\}$ for some $k\in\NN$ and not contained in an affine line.
Then,
\[
    \left|
    \operatorname{Stab}_{\SL_3(\ZZ)}(f)
    \right|
    \leq 6.
\]
\end{lemma}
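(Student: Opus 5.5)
The plan is to show that the stabiliser embeds into a finite subgroup of $\SL_2(\ZZ)$, and then use the classical classification of such subgroups. Throughout, let $V=\RR^2\times\{k\}$ be the affine plane containing $f$, let $E=\RR^2\times\{0\}$ be its direction, and write $\Sigma=\operatorname{Stab}_{\SL_3(\ZZ)}(f)$.

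First I would show that every $\gamma\in\Sigma$ preserves $V$ and $E$. Since $f$ is not contained in an affine line, its affine span is exactly $V$. A linear map $\gamma$ with $\gamma f=f$ preserves affine spans, so $\gamma V=V$. Its linear part then preserves the direction space, so $\gamma E=E$.

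Second, I would produce a fixed point outside $E$. The finite set $f$ is permuted by $\gamma$, so its barycenter $c=\frac{1}{|f|}\sum_{x\in f}x$ is fixed by $\gamma$. Moreover $c\in V$ and $k\geq1$, so $c\notin E$, and $\RR^3=E\oplus\RR c$. With respect to this decomposition, $\gamma$ acts as $\gamma|_E\oplus 1$. This gives two consequences:
\begin{itemize}
\item $\gamma$ is determined by $\gamma|_E$, so the restriction map $\Sigma\to\GL(E)$, $\gamma\mapsto\gamma|_E$, is injective;
\item $\det(\gamma|_E)=\det\gamma=1$.
\end{itemize}
Since $\gamma\in\SL_3(\ZZ)$ preserves $E\cap\ZZ^3=\ZZ^2\times\{0\}$, the restriction $\gamma|_E$ lies in $\SL_2(\ZZ)$. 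Hence $\Sigma$ is isomorphic to a subgroup of $\SL_2(\ZZ)$.

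Third, I would show that $\Sigma$ is finite. Three affinely independent points of $f\subset V$ are linearly independent, because $0\notin V$. So $\gamma$ is determined by its action on $f$, and $\Sigma$ embeds into the symmetric group on $f$.

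Finally, I would invoke the classical fact that every finite subgroup of $\SL_2(\ZZ)$ is cyclic of order $1,2,3,4$ or $6$. This follows, for instance, from the fact that an element of finite order in $\SL_2(\ZZ)$ has trace in $\{-2,-1,0,1,2\}$ and hence order in $\{1,2,3,4,6\}$, combined with finiteness of the group. It gives $|\Sigma|\le 6$.

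I do not expect a real obstacle here. The only point needing care is the determinant bookkeeping, namely that $\gamma$ acts trivially on $\RR^3/E$. The fixed barycenter $c$ settles this directly, and it also avoids having to treat $\GL_2(\ZZ)$ separately (finite subgroups there can have order $8$ or $12$).
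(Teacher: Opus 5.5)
Your proof is correct and is essentially the paper's argument. The paper reads off from $\gamma(\RR^2\times\{k\})=\RR^2\times\{k\}$ that $\gamma=\begin{pmatrix}A&b\\0&1\end{pmatrix}$ with $A\in\SL_2(\ZZ)$, and gets injectivity of $\gamma\mapsto A$ from the fact that the finite set $f$ is not invariant under a nonzero translation; you obtain the same splitting and injectivity from the fixed barycenter instead, and both arguments conclude from the classification of finite subgroups of $\SL_2(\ZZ)$ (your list of orders $1,2,3,4,6$ is the accurate one, as the paper's list omits $3$, which does not affect the bound).
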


\begin{proof}
Let $\Gamma_f=\operatorname{Stab}_{\SL_3(\ZZ)}(f)$. Define $\Lambda_f$ to be the sublattice spanned by all elements in $f$. By assumption, $f$ is not contained in an affine line and as $k\geq 1$, $\Lambda_f$ is a finite index sublattice of $\ZZ^3$. Since every $\gamma\in\Gamma_f$ satisfies $\gamma f=f$, we immediately obtain $\gamma(\Lambda_f) = \Lambda_f$.
Moreover, every element of $\Gamma_f$ permutes the finitely many vertices of the face $f$. In particular, every $\gamma\in\Gamma_f$ fixing all vertices of $f$ fixes the finite index sublattice $\Lambda_f$ pointwise and so is the identity. Hence, the action of $\Gamma_f$ on the finite set of vertices is faithful, and therefore $\Gamma_f$ is finite.

Let $\gamma\in\Gamma_f$. Since $\gamma(\Lambda_f) = \Lambda_f$, it follows that $\gamma$ is of the form
\[
    \gamma=
    \begin{pmatrix}
        A&b\\
        &1
    \end{pmatrix},
    \qquad
    A\in\SL_2(\ZZ),\quad b\in\ZZ^2,
\]
and so in particular $\gamma\in P$. Since the $f\subset\ZZ^2\times\{k\}$ is not translation invariant, the natural projection $\Gamma_f \to \SL_2(\ZZ)$, $\gamma = \smat{A}{b}{}{1}\to A$ is injective. However, the finite subgroups of $\SL_2(\ZZ)$ are cyclic of order
$1$, $2$, $4$, or $6$. Therefore, $ |\Gamma_f|\leq 6$, which proves the lemma.
\end{proof}

\begin{lemma}
\label{lem:Kar:3}
Assume that $f$ is a finite convex subset of $\ZZ^2\times\{k\}$ for some $k\in\NN$ and not contained in an affine line. If $P_f\cap P_f\gamma\neq\emptyset$, then $\gamma\in \operatorname{Stab}_{\SL_3(\ZZ)}(f)$. In particular, there are at most $6$ elements $\gamma\in\Gamma$ such that $P_f\cap P_f\gamma\neq\emptyset$.
\end{lemma}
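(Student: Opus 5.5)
The plan is to show that any $\gamma$ with $P_f\cap P_f\gamma\neq\emptyset$ lies in $P\cap\SL_3(\ZZ)$, and then to use the uniqueness of the face of a Klein sail cut out by a given supporting plane. The bound of $6$ will then follow directly from Lemma~\ref{lem:Kar:3.1}.

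Fix $p\in P_f\cap P_f\gamma$ and write $p=p'\gamma$ with $p'\in P_f$. Since $P$ is a group, $\gamma=p'^{-1}p\in P\cap\SL_3(\ZZ)$. Its bottom row is then $(0,0,c)$, and its top-left block $A$ is an integer matrix. From $\det(A)\,c=1$ with both factors integers we get $c=\pm1$, so $\gamma$ preserves $\RR^2\times\{0\}$. It therefore maps the plane $\ZZ^2\times\{k\}$ into the plane $\{x_3=ck\}$.

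Next I compare the two cones. With $C_q=q^{-1}\gv\RR^3_{\geq0}$ we have $C_p=\gamma^{-1}p'^{-1}\gv\RR^3_{\geq0}=\gamma^{-1}C_{p'}$. Since $\gamma$ preserves $\ZZ^3$, the Klein polyhedron of $C_p$ is $\gamma^{-1}$ applied to the Klein polyhedron of $C_{p'}$, and faces correspond under this map. Because $f$ is a face of the sail of $C_p$, it follows that $\gamma f$ is a face of the sail of $C_{p'}$. The set $f$ is also a face of the sail of $C_{p'}$, since $p'\in P_f$.

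To show $c=1$, I use that $f\subset\{x_3=k\}$ with $k\geq1$ is a face of the sail of $C_{p'}$. Hence $\{x_3=k\}$ is a supporting plane of the Klein polyhedron that separates it from the origin, so every nonzero lattice point of $C_{p'}$ satisfies $x_3\geq k>0$. The face $\gamma f$ consists of nonzero lattice points of $C_{p'}$ lying in $\{x_3=ck\}$, which forces $ck>0$ and hence $c=1$.

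Now $f$ and $\gamma f$ are both faces of the same Klein polyhedron, and both lie in the supporting plane $\{x_3=k\}$. A supporting plane meets a convex polyhedron in exactly one face, namely the intersection of the plane with the polyhedron. Here the faces are identified with their sets of lattice points, as in the definition of $P_f$, so this gives $\gamma f=f$ and $\gamma\in\operatorname{Stab}_{\SL_3(\ZZ)}(f)$. The ``in particular'' statement then follows from Lemma~\ref{lem:Kar:3.1}.

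The only delicate step is the sign argument $c=1$. It requires checking carefully that the face plane separates the origin from the cone's lattice points, which amounts to unwinding the definition of a face of a Klein sail.
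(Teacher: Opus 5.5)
Your proof is correct and is essentially the paper's argument: you show $\gamma=p'^{-1}p\in P\cap\Gamma$, use $C_p=\gamma^{-1}C_{p'}$ to see that $f$ and $\gamma f$ are faces of the same sail, get $\gamma f=f$ because a supporting plane cuts out a unique face, and finish with Lemma~\ref{lem:Kar:3.1}. Your extra step forcing the bottom-right entry $c=\pm1$ to equal $1$ fills a small gap in the paper, which simply asserts that elements of $P\cap\Gamma$ preserve $\RR^2\times\{k\}$; this fails for elements with $\det(A)=-1$, and your sign step is airtight once you note that for $x\in f$ the nonzero lattice point $2x$ of the cone has third coordinate $2k>k$, so the Klein polyhedron lies in $\{x_3\geq k\}$.
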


\begin{proof}
Assume that $\gamma\in\Gamma$ satisfies $P_f\cap P_f\gamma\neq\emptyset$. Choose $p_1,p_2\in P_f$ such that $p_1=p_2\gamma$. Then,
\[
    \gamma=p_2^{-1}p_1\in P\cap \Gamma,
\]
and so $\gamma$ preserves the affine plane $\RR^2\times\{k\}$ as well as $\ZZ^3$. Thus we get $\gamma f\subset \ZZ^2\times\{k\}$. 

By definition of $P_f$, $f$ is a face of the Klein sail associated to the simplicial cones
\[
    p_1^{-1}\gv\RR_{\geq0}^3 \quad \text{and}\quad p_2^{-1}\gv\RR_{\geq0}^3 .
\]
Applying $\gamma$ gives that $\gamma f$ is a face of the Klein sail associated to the simplical cone
\[
    \gamma p_1^{-1}\gv\RR_{\geq0}^3
    =
    p_2^{-1}\gv\RR_{\geq0}^3 .
\]
Thus, both $f$ and $\gamma f$ lie in the affine plane
$\RR^2\times\{k\}$ and are faces of the same Klein sail
associated to the simplicial cone $p_2^{-1}\gv\RR_{\geq0}^3$. Since a supporting plane of a Klein sail determines a
unique face, we obtain $ \gamma f=f $.
Therefore, every such $\gamma$ belongs to $\operatorname{Stab}_{\SL_3(\ZZ)}(f)$. By Lemma~\ref{lem:Kar:3.1}, this stabilizer has cardinality at most~$6$.
\end{proof}

\begin{proof}[Proof of Theorem~\ref{thm:Karpenkov question}]
Fix an integral equivalence class $[f] = \SL_3(\ZZ)\cdot f$ of a convex subset $f$ of $\ZZ^3$
contained in a rational affine hyperplane and satisfying $\rel([f])>0$. One may choose a representative $f$ of this class such that $f\subset \ZZ^2\times\{k\}$ for some $k\in\NN$.

Define $C_{[f]}=\pi(P_f)\subset \C$.
Equivalently, $C_{[f]}$ consists of those lattices $\Lambda=g\Gamma\in\C$ 
for which
\[
    g^{-1}
    \bigl(\Lambda_+\cap(s_\Lambda\v+\Hv)\bigr)
    \in \SL_3(\ZZ)\cdot f=[f].
\]
It follows immediately from this description that the sets
$C_{[f]}$ are pairwise disjoint as $[f]$ varies.

By Lemmas~\ref{lem:Kar:1} and~\ref{lem:Kar:2}, we have
\[
    \rel([f])
    =
    m_P(P_f)
    =
    m_P(P_f\cap\pi^{-1}(\C)).
\]
Now consider the restriction $\pi:P_f\cap\pi^{-1}(\C)\longrightarrow C_{[f]}$.
By Lemma~\ref{lem:Kar:3}, every fibre of this map has cardinality at most
$6$. Indeed, if $p_1,p_2\in P_f\cap\pi^{-1}(\C)$ have the same image under
$\pi$, then they differ by an element $\gamma\in\Gamma$
satisfying $P_f\cap P_f\gamma\neq\emptyset$. 

Moreover, by Lemma~\ref{lem:cross hom space}, the measure $\mu_\C$ is,
up to normalization, the measure induced on $\C$ by the restriction of
the right Haar measure on $P$. Hence the bounded multiplicity of $\pi$
gives
\[
    \rel([f]) = m_P(P_f\cap\pi^{-1}(\C))
    \leq
    6\,\mu_\C(C_{[f]}).
\]
Finally, the sets $C_{[f]}$ are pairwise disjoint, so we get by Theorem~\ref{thm:main} that
\[
    \sum_{[f]}\rel([f])
    \leq
    6\sum_{[f]}\mu_\C(C_{[f]})
    \leq
    6\mu_\C(\C)
    <\infty.
\]
\end{proof}

\subsection{Proof of Theorem~\ref{thm:periodic-equi}}\label{sec:proof-cubic-application}

    The proof of \Cref{thm:periodic-equi} boils down to two observations. First, a periodic Klein sail $\Klein$ associated to the split cone $g\RR^3_{\geq0}$ defines a compact $A_+$-orbit $A_+z$ in $X_3$, where $z= g^{-1}\Gamma$ in $X_3$. This compact orbit can be equipped with a unique $A_+$-invariant probability measure $\mu_{A_+z}$. The cross-sectional measure $\mu_{A_+z,\C}$ associated to $\mu_{A_+z}$ projects to $\nu_{[\Klein]}$ under the natural projection $\psi\colon\C\to\FC^2\times\NN$. (defined as in proof of Theorem~\ref{thm:face-equi})
    Second, we use the result of \citeauthor{ELMV11} in \cite{ELMV11} about equidistribution of packets of periodic $A_+$-orbits (see \Cref{thm:equi-packet}) to obtain the convergence of the measures $\nu_{\order_n}$ to the cross-sectional measure $\nu_{\FC}$.
    
    \begin{lemma}\label{lem:per-orbit}
        Let $\Klein$ be a periodic Klein sail associated to the split cone $g\RR^3_{\geq0}$. Let $z\defeq g^{-1}\Gamma$ and let $\mu_{A_+z}$ denote the unique $A_+$-invariant probability measure supported on~$A_+z$. The induced cross-sectional measure $\mu_{A_+z,\C}$ is given by
        \[ \mu_{A_+z,\C} = \frac{1}{R_{\order_\Klein}}\sum_{x\in A_+z\cap\C}\delta_x. \]
        In particular, $$\psi_*(\mu_{A_+z,\C}) = \nu_{[\Klein]},$$ where $\psi$ is defined as in proof of Theorem~\ref{thm:face-equi}
    \end{lemma}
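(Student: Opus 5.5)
The plan is to compute the cross-sectional measure of the orbit measure directly from its local description in Proposition~\ref{prop:def cross measure}. First we identify the compact orbit $A_+z$ with the torus $A_+/\mathrm{Stab}_{A_+}(z)$. Here
\[
\mathrm{Stab}_{A_+}(z)=\{a\in A_+ : a g^{-1}\Gamma = g^{-1}\Gamma\}=A_+\cap g^{-1}\Gamma g,
\]
which is conjugate to $\Xi(\Klein)=\SL_3(\ZZ)\cap gA_+g^{-1}$. Under the logarithm $a_t\mapsto t$, the image of this stabilizer is the lattice $\ell(\mathcal U_{\order_\Klein})$ of logarithms of totally positive units.

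Next we normalize. The measure $\mu_{A_+z}$ is the pushforward of Haar measure on $A_+$, with the paper's normalization $|J|=1$, divided by the covolume of this lattice. We must check that this covolume, in the normalization fixed in Section~\ref{sec:explicit-crosssection}, equals $R_{\order_\Klein}$. This is the step where care is needed. Two issues arise. First, $\mathcal U$ versus the full unit group $\order^\times$: the totally positive units may have index up to $4$, so the definitions must be matched; presumably the regulator in $\nu_{[\Klein]}$ is meant consistently, or the discrepancy is absorbed into the convention. Second, the Haar normalization by $J$ must be compared with the projection $\pi$ onto two coordinates. We verify that $J$ projects onto a hexagon of area $1$ under $(t_1,t_2,t_3)\mapsto(t_1,t_2)$, so the two normalizations agree.

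Then we read off the cross-sectional measure locally. Near a point $x\in A_+z\cap\C$, a flow box $(U_x,\psi_x,\gamma_x)$ meets $A_+z$ in the single slice $\{a_tx\}$: the orbit is an injectively immersed compact torus and the flow box is small. Here $\mu_{A_+z}$ restricts to $\tfrac{1}{R}\,m_{\RR^2}$ along that slice. So the transverse factor in Proposition~\ref{prop:def cross measure} is $\tfrac1R\delta_x$. Summing over the finitely many points of $A_+z\cap\C$ gives the formula. Finiteness holds because the intersection is discrete by the flow-box structure and the orbit is compact.

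Finally, for the pushforward, the argument of Theorem~\ref{thm:face-equi} gives a bijection between $A_+z\cap\C$ and the faces of $\Klein$ modulo $\Xi(\Klein)$, that is, $\mathcal F_\Klein$. Concretely, $t\in A_+/\mathrm{Stab}$ with $a_tz\in\C$ corresponds to a face $f$, and translating by $\Xi(\Klein)$ corresponds to moving $t$ by the stabilizer lattice. Under this bijection $\psi(a_tz)=([f],\level(f))$, so $\psi_*\mu_{A_+z,\C}=\tfrac1R\sum_{f\in\mathcal F_\Klein}\delta_{([f],\level(f))}=\nu_{[\Klein]}$. The main obstacle is the normalization bookkeeping in the second step; the remaining steps are direct.
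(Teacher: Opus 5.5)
Your argument follows the paper's own proof. The orbit measure is Haar measure on $A_+/\mathrm{Stab}_{A_+}(z)$ divided by the volume of that torus. A small flow box at each $x\in A_+z\cap\C$ meets the orbit in a single plaque, so each such $x$ gets mass $1/\mathrm{vol}$. Points of $A_+z\cap\C$ correspond to faces modulo $\Xi(\Klein)$. Your check that $J$ projects to a hexagon of area $1$ is correct, so the normalization $|J|=1$ matches the two-coordinate projection used to define $R_\order$.

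The step you leave as ``presumably'' is a real issue, and the paper's proof does not settle it either. It simply asserts $\mu_{A_+z}(\{a_t : t\in R_\gamma\}U)=\Leb(R_\gamma)/R_{\order_\Klein}$. The stabilizer $\mathrm{Stab}_{A_+}(z)$ corresponds to the totally positive units $\mathcal U$ of $\order_\Klein$. With $R_{\order_\Klein}$ defined via the full unit group, as in the paper, the volume of $A_+z$ is therefore $[\order_\Klein^\times:\pm\mathcal U]\cdot R_{\order_\Klein}$. This index lies in $\{1,2,4\}$ and need not be $1$. For example, it equals $4$ for $\ZZ[2\cos(2\pi/7)]$, whose units realize all eight sign patterns. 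So both identities in the lemma hold exactly when $R_{\order_\Klein}$ is read as the covolume of $\ell(\mathcal U)$, i.e.\ of $\log\mathrm{Stab}_{A_+}(z)$. You should adopt that convention explicitly rather than leave the step conditional; the same constant then enters $\nu_{[\Klein]}$, so the pushforward identity is consistent.

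A smaller point concerns finiteness. ``Discrete and the orbit is compact'' does not by itself give finiteness of $A_+z\cap\C$, because $\C$ is not closed. Argue instead as follows. For $t$ in a compact fundamental domain of the stabilizer, $s_{a_tz}$ is uniformly bounded. Hence the witnesses of $a_tz\in\C$ have the form $a_t\lambda$ with $\lambda$ in a fixed finite subset of $g^{-1}\ZZ^3$. Since the witness set determines $t$, only finitely many $t$ occur; equivalently, $\mathcal F_\Klein$ is finite.
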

    \begin{proof}
        It is clear that $\supp(\mu_{A_+z,\C}) \subseteq A_+z\cap\C$. Notice, that since $A_+z$ is a compact orbit, the set is finite $A_+z\cap\C$. Indeed, an infinite set would contradict discreteness of the lattice $A_+\cap\Gamma$ in $A_+$.
        
        We show that each of this finitely many points in $A_+z\cap\C$ obtains the same mass, namely $\frac{1}{R_{\order_\Klein}}$. Let $x\in A_+z\cap\C$ be a points in the periodic $A_+$-orbit $A_+z$. Let $(U_x,\psi_x,\gamma_x)$ be a flow-box-chart around $x$. By shrinking $U_x$ if necessary, we may assume that
        \[ U_x\cap(A_+z\cap \C) = \{x\}. \]
        It follows that for small enough $\gamma> 0$ the $\gamma$-box $R_\gamma$ around the origin in the hyperplane\linebreak $\{(t_1,t_2,t_3)\in\RR^3 : t_1+t_2+t_3=0\}$ satisfies
        \[ \mu_{A_+z}(\{a_t\st t\in R_\gamma\}U) = \frac{\Leb(R_\gamma)}{R_{\order_\Klein}}. \]
        
        However, by the local product structure of the measure $\mu_{A_+z}$, we have that for any $0<\gamma\leq\gamma_x$
        \[ \mu_{A_+z}(\{a_t\st t\in R_\gamma\}U) = \mu_{A_+z,\C}(U_x\cap\C)\Leb(R_\gamma) = \Leb(R_\gamma)\mu_{A_+z,\C}(\{x\}). \]
        Hence, for any $x\in A_+\cap\C$ we have 
        \[ \mu_{A_+z,\C}(\{x\}) = \frac{1}{{R_{\order_\Klein}}} \]
        as claimed.
    \end{proof}
    
    Recall that for an order $\order\subset F$ of a totally real cubic number field, we denote by $\mathcal{P}_{\order}$ the finite set equivalent periodic Klein sails $[\Klein]$ for which $\order_\Klein = \order$. Equivalently (see \eqref{eq:packet-equiv}), this packet consists of a finite set of periodic $A_+$-orbits associated to tuples $(F,M)$ of totally real cubic fields $F$ and rank-$3$ modules $M$, defined up to multiplication by $F^\times$, with $\order_M \defeq \{\lambda\in F\st \lambda M \subset M\} = \order$. We denote by $\mu_{\order}$ the $A_+$-invariant probability measure supported on this finite set of periodic $A_+$-orbits. Then in the seminal work \cite{ELMV11} the authors obtain the following.
    
    \begin{theorem}[{\cite[Theorem 1.4]{ELMV11}}]\label{thm:equi-packet}
        Let $\order_n\subset F_n$ be a sequence of orders, in a sequence of totally real fields $F_n$. Let $\mu_{\order_n}$ be the measure supported on the finite set of periodic $A$-orbits associated to the elements in $\mathcal{P}_{\order_n}$. If $\disc(\order_n)\to\infty$, then $\mu_{\order_n} \to \msr_{X_3}$ in the vague topology.
    \end{theorem}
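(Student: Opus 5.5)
Since this is \cite[Theorem~1.4]{ELMV11}, we would not re-derive it but follow the strategy of Einsiedler--Lindenstrauss--Michel--Venkatesh. That strategy combines three ingredients: a volume formula for packets, an entropy lower bound for weak-$*$ limits, and the measure rigidity theorem for higher-rank diagonal actions on $X_3$.

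\textbf{Step 1: packet volume and local mass.} Pass to a subsequence along which $\mu_{\order_n}$ converges vaguely to an $A_+$-invariant measure $\mu$ of mass at most $1$. We first record that the total volume of the packet $\mathcal{P}_{\order_n}$ is
\[
\operatorname{vol}(\mathcal{P}_{\order_n})=\abs{\mathcal{P}_{\order_n}}\,R_{\order_n}=\disc(\order_n)^{1/2+o(1)}.
\]
This follows from the class number formula for orders together with Siegel--Brauer type bounds.

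Next we show the packet is spread out. Fix a compact set $K\subset X_3$ and a small ball $B_\delta$ of radius $\delta$ in $\SL_3(\RR)$. We aim for
\[
\mu_{\order_n}(xB_\delta)\ll \delta^{\kappa}+o(1),\qquad x\in K,
\]
for some $\kappa>0$. The mechanism is the standard arithmetic one. Two points of the same packet that are $\delta$-close correspond to modules $M,M'$ and an element $\lambda\in\order_n$ with $\lambda M\subset M'$ which is nearly a unit in a box. Counting such $\lambda$ with a lattice-point count in $\order_n$ limits the number of close pairs. Dynamically this is a Bowen-ball estimate: we bound the measure of $x\,a^{-N}B_\delta a^{N}$ for a regular $a\in A_+$ by $e^{-hN}$, with $h$ a definite fraction of the maximal entropy.

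\textbf{Step 2: positive entropy.} Convert the Bowen-ball bound into a lower bound $h_{\mu}(a)\ge c>0$ for the normalized limit, and in fact for almost every ergodic component of $\mu$. This uses the standard upper semicontinuity argument on compact sets; the escaping part is handled by restricting to $K$ and normalizing. This is the step we expect to be the main obstacle. The difficulty is obtaining the entropy bound uniformly and not merely for the limit as a whole, since a component of zero entropy could a priori absorb mass. We would first try the ELMV route: combine the $\disc^{1/2+o(1)}$ volume with the fact that close pairs force a small-height element of $\order_n$, which is rare. That pins entropy above a positive proportion of $\operatorname{vol}$-based entropy in every component.

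\textbf{Step 3: rigidity and no escape of mass.} By the Einsiedler--Katok--Lindenstrauss theorem, an $A_+$-invariant ergodic probability measure on $X_3$ with positive entropy for some regular element is the Haar measure $\msr_{X_3}$. Hence $\mu=c\,\msr_{X_3}$ with $0\le c\le 1$, and the entropy bound gives $c>0$. To prove $c=1$, we would run the entropy argument on the whole space, including the cusp. Mass escaping to the cusp would yield a limit measure, on a partial compactification, whose entropy exceeds the maximal entropy contribution available from the cusp. Compatibility with the quantitative bound from Step 1 then forces $1-c=0$.

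Since every subsequential limit equals $\msr_{X_3}$, we conclude $\mu_{\order_n}\to\msr_{X_3}$ vaguely.
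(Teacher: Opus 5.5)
The paper gives no proof of this statement: it imports it as a black box from \cite{ELMV11}, so citing it is exactly what the paper does. The outline you give in its place, however, has two genuine gaps, and they sit at the two places where the real difficulty lies.

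\textbf{Step 2 (positive entropy for almost every component).} You can count close pairs in the packet, or prove a Bowen-ball bound $\mu_{\mathcal{O}_n}(xB_{\epsilon,N})\ll e^{-hN}$ at scales $N\asymp\log\mathrm{disc}(\mathcal{O}_n)$, and feed this through subadditivity and semicontinuity. That only bounds $h_\mu(a)=\int h_{\mu_\xi}(a)\,d\xi$, the \emph{average} over ergodic components. This is compatible with $\mu=c_1m_{X_3}+\mu_0$, where $\mu_0$ has zero entropy, for instance when it is carried by periodic $A_+$-orbits. Einsiedler--Katok--Lindenstrauss says nothing about $\mu_0$.

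To conclude, you need a lower bound on the local entropy at $\mu$-almost every point (then Brin--Katok applies). That means Bowen-ball bounds uniform in the base point which survive in the limit at every \emph{fixed} scale $N$. A bound valid only in a window $[\eta\log\mathrm{disc},c\log\mathrm{disc}]$ gives nothing at fixed $N$ after passing to the limit. Yet such a window is what the $\mathrm{disc}^{o(1)}$ (Siegel) loss in your volume formula leaves you with. Propagating the bound down from level $N'$ to a level $N<N'$ by covering costs a factor of roughly $e^{(h_{\max}-h)(N'-N)}$, which is harmless only when $h=h_{\max}$.

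Your sentence that the argument ``pins entropy above a positive proportion \dots\ in every component'' is precisely the missing step, and nothing in Steps 1--2 supplies it. Single periodic orbits in $X_3$ can have non-Haar limits, so the packet structure has to be used pointwise, not only through an average over pairs.

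\textbf{Step 3 (no escape of mass).} Each $\mu_{\mathcal{O}_n}$ lives on periodic orbits and has zero entropy, so any entropy argument ``including the cusp'' must be run at finite scales. The known inequalities relating entropy to escaped mass have the shape $\mu(X_3)\ge (h-h_{\mathrm{cusp}})/(h_{\max}-h_{\mathrm{cusp}})$. They give full mass only when $h=h_{\max}$, and nothing at all when $h\le h_{\mathrm{cusp}}$. With $h$ merely ``a definite fraction'' of $h_{\max}$ you cannot force $c=1$. The estimate of Step 1 concerns base points in a compact set and is silent about the cusp.

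Non-escape therefore needs a separate arithmetic input. A packet point whose lattice has a vector of length at most $\epsilon$ produces an element $m$ of the corresponding module $M$ with $|N(m)|/N(M)\ll\epsilon^{3}\,\mathrm{disc}(\mathcal{O}_n)^{1/2}$. One must show that, summed over all classes, these account for a proportion of the packet volume that tends to $0$ with $\epsilon$, uniformly in $n$. A naive count of such elements loses the same $\mathrm{disc}^{o(1)}$ factor against Siegel's lower bound for the packet volume, so this is a genuine step and not a formality.
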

    
    \begin{proof}[Proof of Theorem~\ref{thm:periodic-equi}]
        Let $z=g^{-1}\Gamma\in X_3$ be such that the orbit $A_+z\subset X_3$ is periodic. Then $C_z = g\RR^3_{\geq0}$ is a split cone and we denote by $\Klein_z$ the associated periodic Klein sail. By \Cref{lem:per-orbit}, we have $\psi_*(\mu_{A_+z,\C}) = \nu_{[\Klein_z]}$. In particular, we get that $\psi_*(\mu_{\order_n,\C}) = \nu_{\order_n}$ where $\nu_{\order_n}$ is defined in~\eqref{def:nu_O}.
        
        By \Cref{thm:equi-packet} we have $\mu_{\order_n} \xrightarrow{vague}\msr_{X_3}$ as $n\to\infty$ and since $\mu_{\order_n}$ is an $A$-invariant probability measures we can apply \Cref{lem:conv-in-crosssection} to obtain the convergence $\mu_{\order_n,\C} \to \mu_{\C}$ in vague topology of the associated cross-sectional measures. By continuity of $\psi$ we obtain $\nu_{\order_n} = \psi_*(\mu_{\order_n,\C}) \to \psi_*(\mu_{\C}) = \nu_{\FC}$ in vague topology as well.
    \end{proof}

    \begin{remark}
        Notice that the measure $\mu_{A_+z,\C}$ on the cross-section $\C$ associated to a compact orbit $A_+z\subset X_3$ is not necessarily a probability measure. In particular, it is not clear whether or not the total mass $\mu_{\order,\C}(\FC^2\times\NN)$ of a packet $\mathcal{P}_{\order}$ stays bounded or not when $\disc(\order)\to\infty$. Equivalently, let $\widehat{\mu}_{A_+z,\C}$ denote the uniform probability measure on the finite set $A_+z\cap\C$, and let $\widehat{\mu}_{\order,\C}$ denote the average of these measures for all finitely supported measures in the packet $\mathcal{P}_{\order}$. The total mass $\mu_{\order,\C}(\FC^2\times\NN)$ is bounded if and only if $\widehat{\mu}_{\order,\C}$ converge to the probability measure $\frac{1}{\nu_{\FC}(\FC^2\times\NN)}\nu_{\FC}$. 

        Unfortunately, this problem is out of reach with the current techniques. The reason is a similar phenomenon typically referred to as \highlight{escape of mass}. Although it is possible to control the contribution of $\Clv$, that is faces of a fixed level, to the total mass of a packet, it is possible that faces with very large level receive an unproportional amount of mass. This happens if a periodic orbit $A_+z$ intersects the faces of this large level an unproportional amount of time.

        Comparing with dimension $n=2$, there are well-known bounds for the average length of a periodic continued fraction expansion associated to to reduced quadratic irrationals in quadratic field. Similar bounds for $n\geq3$ regarding the average number of faces of a periodic Klein sail are not known. 
    \end{remark}


\section{Changing the viewpoint}\label{sec:Changing the viewpoint}
Define the semi-direct product
\[
H \defeq \GL_2(\RR)\ltimes \RR^2,
\]
with group multiplication given by $[A,b]\cdot [A',b']=[AA',\, b+Ab']$.
We consider the homogeneous space
\begin{align*}
Y_2
=
\tquot{\GL_2(\RR)\ltimes\RR^2}{\GL_2(\ZZ)\ltimes\ZZ^2}.
\end{align*}
Let $\msr_{Y_2}$ denote the measure on $Y_2$ induced by the right Haar measure $\msr_H^{\mathrm{right}}$ on $H$. Note that $\msr_{Y_2}$ is not finite.
The space $Y_2$ is naturally identified with the space of affine lattices in $\RR^2$, via
\[
[A,b]\bigl(\GL_2(\ZZ)\ltimes\ZZ^2\bigr)
\mapsto
A\ZZ^2 + b.
\]
Since $\smat{1}{0}{0}{-1}\in\GL_2(\ZZ)$ we may always assume that affine lattices written in the form $A\ZZ^2 + b$ satisfy $\det(A)>0$.

Now consider the group homomorphism
\begin{align}\label{def:auto-P-to-H}
\begin{split}
\varphi \colon P &\to H,\\
\begin{pmatrix}
A & b\\
0 & \det(A)^{-1}
\end{pmatrix}
&\mapsto
[\det(A)A,\det(A)b].
\end{split}
\end{align}
Then $\varphi$ is an isomorphism, with inverse given by
\begin{align}\label{def:auto-H-to-P}
\varphi^{-1}([A,b])
=
\frac{1}{\det(A)^{1/3}}
\begin{pmatrix}
A & b\\
0 & 1
\end{pmatrix}.
\end{align}
Since $\varphi(\SL_3(\ZZ)\cap P) =\GL_2(\ZZ)\ltimes\ZZ^2$, this induces an isomorphism
\[
P\Gamma
\simeq
P/(P\cap\Gamma)
\cong
Y_2.
\]
We continue to denote the induced map by $\varphi$.

We now define
\begin{align}
\label{eq:def-cross-H}
\Cc \defeq \varphi(\gv\C).
\end{align}
To study the subset $\Cc \subset Y_2$, we decompose $\C$ according to levels. Recall from \eqref{eq:def level} that the level of an element in $\C$ is a natural number. Hence we have the disjoint decomposition
\begin{align}\label{eq:level-dec}
\C = \bigsqcup_{k\in\NN} \Clv,
\end{align}
where $\Clv = \{\Lambda \in \C \st \level(\Lambda)=k\}$ and for $k \in \NN$, define
\[
\Cclv \defeq \varphi(\gv\Clv).
\]

The following proposition is the main technical result of the paper.

\begin{proposition}
    \label{prop:finite-measure}
There exist constants $c,C>0$ such that for every $k\geq 1$,
\[
\frac{c}{k^3}
\leq
\msr_{Y_2}(\Cclv)
\leq
\frac{C(\ln(k) + 1)}{k^2}.
\]
\end{proposition}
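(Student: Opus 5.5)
The plan is to make $\Cclv$ completely explicit as a set of affine lattices, and then integrate its indicator against $\msr_{Y_2}$ in coordinates adapted to the affine-lattice structure. Under $\varphi$, an element $\Lambda=\gv^{-1}p\Gamma$ of $\C$ corresponds to an affine lattice $A\ZZ^2+b$. The horizontal slices of $\gv\Lambda$ at heights $\ell\,\covol(\Lambda_0)^{-1}$ are the affine lattices $\Lambda_0+\ell b$ (up to scaling). Intersected with $\gv\RR^3_{\ge0}$, each such slice becomes an equilateral triangle $\ell\,\Delta$, where $\Delta$ is a fixed triangle. The condition $\Lambda\in\Cclv$ then reads as follows.
\begin{enumerate}
\item For every $1\le \ell<k$ we have $(\Lambda_0+\ell b)\cap \ell\Delta=\emptyset$.
\item The set $(\Lambda_0+kb)\cap k\Delta$ is nonempty and not contained in a line (the boundary cases have measure zero).
\end{enumerate}
After the normalization $\det(A)$ in $\varphi$, the triangle $k\Delta$ has a fixed size relative to $\covol$. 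I will write $A=r^{1/2}A'$ with $A'\in \SL_2(\RR)$ ranging over a fundamental domain for $\SL_2(\ZZ)$, $r>0$, and $b$ ranging over $\RR^2/A\ZZ^2$. In these coordinates the right Haar measure on $H$ becomes, up to a constant, $dm_{\SL_2}(A')\,\frac{dr}{r^{c}}\,db$ for an explicit power $c$. The quantity to estimate is then, for each fixed $A'$, the Lebesgue measure of the set of admissible $(r,b)$; this is exactly what the per-lattice statements \Cref{prop:upper-bound-level-measure} and \Cref{prop:lower-bound-level-measure} will provide, after which I integrate over $A'$.

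For the upper bound I first use condition (1) with $\ell=1,\dots,k-1$ to constrain the geometry of $A'$ and $r$, as in \Cref{sec:bounds-on-level}. Let $\lambda_1\le\lambda_2$ be the successive minima of $r^{1/2}A'\ZZ^2$. If $\lambda_2$ is small compared with the inradius of $\ell\Delta$, the triangle contains a fundamental domain and hence meets every slice. Level $k\ge2$ therefore forces the triangles of sizes $\ell\le k-1$ to fit into gaps between lattice lines of spacing about $\covol/\lambda_1$, while $k\Delta$ has to capture at least three non-collinear points. This pins down $r$ to within a constant factor in terms of $k$ and $\lambda_1$, and it forces the lattice to be long and thin, so that the relevant $A'$ lie high in the cusp.

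The requirement that $\ell b$ avoid $\ell\Delta$ modulo $\Lambda_0$ for all $\ell<k$ is a simultaneous-avoidance condition on the orbit $\{\ell b\}$. Along the short direction, the component $\beta$ of $b$ must satisfy $\|\ell\beta\|\gtrsim \ell/k$ for all $\ell<k$. Using the Farey/Dirichlet results of \Cref{sec:auxiliary-results}, this confines $\beta$ to neighbourhoods of rationals $p/q$ with $q\le k$ of width $\lesssim 1/(qk)$, and only certain congruence classes survive, by the witness restrictions of \Cref{sec:witnesses}. Summing over $q\le k$ produces the factor $\ln k+1$. The remaining integration over $r$ and over the cusp height of $A'$ (Haar density $\asymp y^{-2}\,dy$) should then give $\ll(\ln k+1)/k^2$.

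For the lower bound I will construct an explicit open family. I take $A'$ in a bounded cusp window of height $\asymp k$ and $r$ in a fixed-ratio interval, with $b$ in a small ball around a point of the form $(p/k)\cdot(\text{short vector})+(\text{offset})$, chosen using the configurations of \Cref{sec:reduced-minima}. These configurations guarantee that the level-$k$ slice contains a non-degenerate triangle of points while all lower slices miss their triangles. Since the conditions are open, a neighbourhood of relative size $\gg 1$ in $b$ survives, and the resulting Haar volume is $\gg k^{-3}$.

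I expect the main obstacle to be the upper bound, specifically showing that the avoidance conditions for all $\ell<k$ really cost a factor $1/k$ beyond the cusp measure. A naive bound only uses the largest few $\ell$ and loses a power of $k$. Making the Farey-interval counting sharp, while handling the case where the short vector is nearly parallel to an edge of $\Delta$, is where I expect the real work, and where the $\ln k$ loss enters.
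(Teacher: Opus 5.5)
\textbf{There is a genuine gap: your route to the per-lattice estimates rests on a false structural claim.}

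Your outer reduction is the paper's. With $A=r^{1/2}A'$, the measure $\msr_{Y_2}$ disintegrates as $\frac{dr}{r}\,d\msr_{X_2}(A')\,db$, with $b$ in a fundamental domain of area $r$, so your exponent is $c=1$. If you take \Cref{prop:upper-bound-level-measure} and \Cref{prop:lower-bound-level-measure} as given, integrating over $X_2$ is exactly the paper's proof. One omission here: those propositions are for $k\ge2$, and you never treat $k=1$. The paper handles it with \Cref{lem:covering-successive}, which gives $r\le 4\lambda_2(A')^{-2}$, together with the trivial bound $\Leb\le r$.

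The false claim is that level $k\ge 2$ forces $A'$ high into the cusp. \Cref{lem:covering-successive} only gives $k\asymp\lambda_2(r^{1/2}A'\ZZ^2)$. That fixes $r$ once $A'$ is given, but says nothing about the shape of $A'$.

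In fact the thin regime is where level $\ge 2$ is suppressed. Suppose the rescaled lattice has $\lambda_1$ much smaller than $\Delta$. Then each slice is essentially a family of lines of spacing $D$. The problem collapses to a planar one: the lattice generated by $(1,\beta)$ and $(0,D)$ in the $(\ell,t)$-plane, and the cone $\{-a\ell<t<c\ell\}$.
\begin{itemize}
\item Suppose $k\Delta$ meets two consecutive rows, at $(k,x)$ and $(k,x+D)$. The lattice triangle with vertices $0,(k,x),(k,x+D)$ has area $\tfrac{k}{2}D\ge D$.
\item By Pick's theorem it contains a further lattice point. That point must lie at some level $1\le\ell<k$ (since $(0,D)$ is primitive) and inside the open cone.
\item Up to an $O(\lambda_1)$ boundary effect, this is a point of the slice lying in $\ell\Delta$. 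This is the determinant identity behind ``every face has level one'' for $n=2$.
\end{itemize}
So your condition $\|\ell\beta\|\gtrsim\ell/k$ for all $\ell<k$, combined with the level-$k$ requirement, has essentially no solutions in the cusp. The Farey-neighbourhood count you propose does not describe the admissible set.

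The level-$k$ mass sits in the bulk of $X_2$. The set $Z_k$ in \Cref{prop:lower-bound-level-measure} is cut out by inequalities on the reduced minima. For well-rounded lattices these amount to one reduced minimum lying within angle $O(1/k)$ of an edge direction of $\Delta$. Each lattice in $Z_k$ contributes $\gg k^{-2}(\lambda_1/\lambda_2)^2$, which is largest exactly when $\lambda_1\asymp\lambda_2$. Your upper-bound scheme never controls these lattices.

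\textbf{What is missing is a bound that is uniform in $A'$ and genuinely two-dimensional.} This is where the paper's real input lies.
\begin{itemize}
\item By \Cref{lem:red-basis-in-tri}, the witnesses can be taken to differ by a reduced basis, of which there are at most $12$.
\item By \Cref{lem:desc-of-basis-I}, their coefficients satisfy $\gcd(c_i,k)=1$ and one of three congruences. After relabelling, this leaves one configuration for each residue $c_1$ prime to $k$.
\item For each configuration, three blocking points $Q_1,Q_2,Q_3$ of levels $k',k'',k-1$ are built from the Farey neighbours $h'/k'<c_1/k<h''/k''$.
\item \Cref{lem:directions-of-reduced-elements} shows these points cut the triangle of admissible $b$ along three different sides.
\item The $r$-integral of the remaining area is then computed exactly by inclusion--exclusion of equilateral triangles, giving $\ll 1/((k-1)k'(k-k'))$.
\end{itemize}
The logarithm comes from $\sum_{k'=1}^{k-1}\frac{1}{k'(k-k')}=\frac{2H_{k-1}}{k}$. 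It does not come from a sum over rationals $p/q$ with $q\le k$.

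Your lower-bound sketch has the corresponding gap. Openness of the defining conditions gives some neighbourhood of $b$, but no control on its size, and that size is exactly what has to be computed. You must also rule out every other blocking point in $\bigcup_{\ell<k}\ell^{-1}A'\ZZ^2$. In the paper this is the sign analysis of the coefficients $(d_1,d_2)$, using the inequalities that define $Z_k$. Without these steps the claimed $k^{-3}$ is asserted rather than derived.
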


We now prove second part of Theorem~\ref{thm:main} assuming Proposition~\ref{prop:finite-measure}.
\begin{proof}[Proof of second part of Theorem~\ref{thm:main} assuming Proposition~\ref{prop:finite-measure}]
    Using Lemma~\ref{lem:cross hom space}, we note that $\mu_\C$ is, up to normalization, the restriction to $\C$ of the measure on $ (\gv^{-1}P\gv) \gv^{-1}\Gamma$  induced by the right Haar measure on $\gv^{-1}P\gv$. 

    The push-forward of $\mu_\C$ under the map $\varphi\circ\gv$ is the restriction to $\Cc$ of the measure on $Y_2$ induced from the right Haar measure on $H$, i.e, $\msr_{Y_2}$. Thus, we get that
    \[ \mu_{\C}(\{\Lambda \in \C: \level(\Lambda) =k \})= \msr_{Y_2}(\Cclv). \]
    The theorem now follows directly using Proposition~\ref{prop:finite-measure}, and the fact that $\sum_{k \in \NN} \frac{\ln(k) + 1}{k^2} < \infty$.
\end{proof}


\section{Strategy for measure estimate of \texorpdfstring{$\Cclv$}{C(k)}}\label{sec:strategy-msr-estimate}

\subsection{Description of set $\Cclv$}
 
    \begin{lemma}\label{lem:level-equivalence}
Let $k \in \mathbb{N}$ be fixed. The affine lattice $A\mathbb{Z}^2 + b \subset \mathbb{R}^2$ belongs to $\Cclv$ if and only if the following conditions hold:
\begin{enumerate}
    \item $A\mathbb{Z}^2 \cap \bigcup_{1 \leq \ell < k} (-\ell b + \ell \btri) = \emptyset$;\label{itm:level-eq-1}
    \item the set $A\mathbb{Z}^2 \cap (-kb + k\tri)$ generates the lattice $A\mathbb{Z}^2$,\label{itm:level-eq-2}
\end{enumerate}
where 
\begin{align}\label{def:eqi-triangle}
        \btri \defeq \left\{(x,y)\in\RR^2\st 1 \geq \frac{\sqrt{3}x+y}{\sqrt{2}}; 1 \geq \frac{-\sqrt{3}x+y}{\sqrt{2}} ; 1 \geq -\sqrt{2}y\right\},
    \end{align}
    and $\tri$ denotes the interior of $\btri$.
\end{lemma}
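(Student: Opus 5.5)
The plan is to unwind the identifications $\Lambda\mapsto\gv\Lambda\mapsto\varphi(\gv\Lambda)$ and read off the layer structure of $\gv\Lambda$ along the third coordinate. Given $[A,b]\in H$ with $\det A>0$, \eqref{def:auto-H-to-P} gives $p=\varphi^{-1}([A,b])=\det(A)^{-1/3}\smat{A}{b}{0}{1}$. Hence the lattice $\Lambda=\gv^{-1}p\ZZ^3$ satisfies
\[
\gv\Lambda=\Bigl\{\det(A)^{-1/3}\bigl(Az+\ell b,\ \ell\bigr):\ z\in\ZZ^2,\ \ell\in\ZZ\Bigr\}.
\]
Since $\gv\Hv=\RR^2\times\{0\}$ and $\gv\v=\mathbf e_3$, the affine planes $s\v+\Hv$ become the horizontal planes $\RR^2\times\{s\}$. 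So the $\ell$-th layer of $\gv\Lambda$ is exactly the slice $\langle \v,w\rangle=\ell\det(A)^{-1/3}$, and $\ell$ is the level parameter in \eqref{eq:def level}: indeed $\Lambda_0$ corresponds to $\ell=0$ and $t_\Lambda$ to $\ell=1$.

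The first computational step is to identify the cone $\gv\RR^3_{\ge0}$. A point $(x,y,h)$ lies in it iff $\gv^{t}(x,y,h)^t\ge0$ coordinatewise. Reading off the rows of $\gv^t$, the three inequalities are:
\begin{itemize}
\item from the first coordinate, $-x/\sqrt2-y/\sqrt6+h/\sqrt3\ge0$;
\item from the second coordinate, $x/\sqrt2-y/\sqrt6+h/\sqrt3\ge0$;
\item from the third coordinate, $2y/\sqrt6+h/\sqrt3\ge0$.
\end{itemize}
Multiplying by $\sqrt6/h$ shows that the slice at height $h>0$ is precisely $h\btri$ as defined in \eqref{def:eqi-triangle}, and its interior is $h\tri$. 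At height $0$ the slice is only the origin, and for $h<0$ it is empty.

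Consequently, a point of layer $\ell\ge1$ lies in $\gv\RR^3_{\ge0}$ (respectively $\gv\RR^3_{>0}$) iff $Az+\ell b\in\ell\btri$ (respectively $\ell\tri$), that is, iff $Az\in-\ell b+\ell\btri$ (respectively $-\ell b+\ell\tri$). The positive factor $\det(A)^{-1/3}$ cancels on both sides.

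Next I translate the definition of $\Clv$ via \eqref{eq: def s lambda 2}. Layers $\ell\le0$ contribute no nonzero points of $\Lambda_{\ge0}$. Therefore $s_\Lambda$ is attained at the smallest $\ell\ge1$ with $(AZ^2+\ell b)\cap\ell\btri\neq\emptyset$, and $\level(\Lambda)=k$ says exactly that layers $1,\dots,k-1$ miss the closed cone while layer $k$ meets it.

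Membership in $\C$ then requires that the points of layer $k$ lying in the open orthant have affine $\ZZ$-span an affine lattice in that plane. By the argument following the definition of $\C$, this is the same as these points not all lying on one affine line, and then their affine span is the whole translate $kb+A\ZZ^2$. Shifting by $-kb$, this is condition \eqref{itm:level-eq-2}, read as: $A\ZZ^2\cap(-kb+k\tri)$ affinely generates the full lattice $A\ZZ^2$, which is automatic once it is not collinear. Moreover, \eqref{itm:level-eq-2} forces layer $k$ to meet the open, hence the closed, cone. Combined with \eqref{itm:level-eq-1}, this gives $s_\Lambda=k\det(A)^{-1/3}$ and level $k$. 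Conversely, level $k$ together with $\Lambda\in\C$ yields \eqref{itm:level-eq-1} and \eqref{itm:level-eq-2}.

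Finally, the normalization $\det A>0$ is harmless, since $\smat{1}{0}{0}{-1}\in\GL_2(\ZZ)$ does not change the affine lattice. The reduction via $\varphi$ is a bijection $\gv\C\to\Cc$, so both directions transfer.

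The only delicate point I expect is the bookkeeping between the closed triangle $\btri$ (which governs $s_\Lambda$ and hence the level) and the open triangle $\tri$ (which governs the spanning condition in $\C$). In particular, one must check that boundary points at layer $k$ do not affect membership. I will handle this by keeping the two conditions separate as above. If the intended reading of ``generates'' in \eqref{itm:level-eq-2} is linear rather than affine generation of $A\ZZ^2$, the plan is to prove the affine version and check, via the lattice-index interpretation of the level, that it is the one actually used later.
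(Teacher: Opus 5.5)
Your proposal is correct and follows essentially the same route as the paper: you pass to $\gv\Lambda=\det(A)^{-1/3}\{(Az+\ell b,\ell)\}$, identify the height-$h$ slice of $\gv\RR^3_{\ge0}$ with $h\btri$ (and of the open orthant with $h\tri$), read off the level as the first layer meeting the closed cone, and translate the spanning condition defining $\C$ at layer $k$, with the converse obtained by reversing these steps. Your affine reading of ``generates'' in (2) is the one the paper actually uses later (three witnesses whose differences form a basis of $A\ZZ^2$), and it is also the reading under which condition (2) does not depend on the choice of $b$ modulo $A\ZZ^2$.
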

     
\begin{proof}
    Suppose $A\ZZ^2 + b \in \Cclv$. Then, by definition, the associated lattice
    \[ \Lambda = \gv^{-1}\det(A)^{-1/3}\mat{A}{b}{}{1}\ZZ^3 \]
    satisfies $\level(\Lambda)=k$ and belongs to the cross-section $\C$ (since $\varphi(\gv\Lambda) = A\ZZ^2 + b\in\Cclv$).

By definition of level,
\[ \level(\Lambda) = \min\Bigl\{n\ge1:(\Lambda_0+n t_\Lambda)\cap\RR_{\ge0}^3\neq\emptyset\Bigr\}, \]
where $\Lambda_0$ and $t_\Lambda$ is defined as in \Cref{sec:explicit-crosssection}. In our case, as $E_v = \gv^{-1}(\RR^2\times\{0\})$, we have
\[ \Lambda_0 = \Lambda \cap \Hv = \gv^{-1} \left(\det(A)^{-1/3}\mat{A}{b}{}{1} \ZZ^3 \cap \RR^2 \times \{0\} \right)= \gv^{-1} \det(A)^{-1/3} \left(A \ZZ^2 \times \{0\} \right), \]
and we can choose 
\[ t_\Lambda = \gv^{-1}\det(A)^{-1/3}\begin{pmatrix}b \\ 1\end{pmatrix}. \] 
Thus, we get that
\begin{align*}
    \level(\Lambda) &= \min\Bigl\{n\ge1: \gv^{-1} \det(A)^{-1/3} \left(A \ZZ^2 \times \{0\} +\begin{pmatrix} nb \\ n\end{pmatrix} \right)\cap\RR_{\ge0}^3\neq\emptyset\Bigr\} \\
    &= \min\Bigl\{n\ge1:\left(A \ZZ^2 \times \{0\} + \begin{pmatrix}nb \\ n\end{pmatrix} \right)\cap \gv\RR_{\ge0}^3\neq\emptyset\Bigr\}.
\end{align*}
An explicit computation yields that an element \((x,y,n)\) is in \(\gv\RR_{\ge0}^3\) if and only if $(x,y) \in n\,\btri$.
Therefore
\[ \left(A \ZZ^2 \times \{0\} + \begin{pmatrix}nb \\ n \end{pmatrix}  \right)\cap  \gv\RR_{\ge0}^3 \neq \emptyset
\quad \Longleftrightarrow \quad A\mathbb{Z}^2 \cap (-nb + n\btri) \neq \emptyset.
\]

Since $\level(\Lambda)=k$, the above computation implies~(\ref{itm:level-eq-1}).

To obtain~(\ref{itm:level-eq-2}), note that $\Lambda \in \C$ implies that the affine $\ZZ$-span of $(\Lambda_+ -k\v)\cap \Hv$ is an affine lattice in $\Hv$. After applying $ \det(A)^{1/3}\gv$ this is equivalent to the fact that the affine $\ZZ$-span of
\[\left( \left( \begin{pmatrix}A & b \\ & 1\end{pmatrix}  \ZZ^3 \cap \gv \RR_{>0}^3 \right)- \begin{pmatrix}0 \\ 0 \\ k\end{pmatrix} \right) \cap (\RR^2 \times \{0\}) \]
is an affine lattice in $\RR^2 \times \{0\}$. Using the fact that \((x,y,k)\) is in \(\gv\RR_{>0}^3\) if and only if $(x,y) \in k\tri$, the above equation implies that the affine $\ZZ$-span of
\[ (A\ZZ^2 + kb) \cap k\tri \]
is an affine lattice in $\RR^2$. The latter implies~(\ref{itm:level-eq-2}).

    This proves one direction. The other direction follows by the reverse line of thoughts.
    \end{proof}
By abuse of notation, for any lattice $A\ZZ^2 + b\in\Cc$ we define (with $\varphi$ as in ~\eqref{def:auto-P-to-H})
\[ \level(A,b) \defeq \level(\Lambda), \quad \Lambda = \gv^{-1}\varphi^{-1}([A,b])\ZZ^3.\]  

\subsection{Heuristic for measure estimates of \texorpdfstring{$\Cclv$}{C(k)}}
The proof of \Cref{prop:finite-measure} proceeds in four main steps.

\medskip

\noindent\textbf{Step 1: Bounds on $\level(A,b)$ in terms of successive minima}
(\Cref{sec:bounds-on-level}).
We first show that the level of the affine lattice $A\ZZ^2+b$ is comparable to the second successive minimum of the lattice $A\ZZ^2$, namely, $\level(A,b)\asymp \succ_2(\tri,A\ZZ^2)$ (see \Cref{lem:covering-successive}). This immediately implies that the set of affine lattices of level one has finite measure.

\medskip

\noindent\textbf{Step 2: Congruence conditions for lattices of level $k\ge2$}
(\Cref{sec:witnesses}).
Suppose that $A\ZZ^2+b$ has level $k\ge2$. Then there exist three points $P_1,P_2,P_3\in A\ZZ^2\cap(-kb+k\tri)$ such that $a_1=P_2-P_1$ and $a_2=P_3-P_1$ form a basis of $A\ZZ^2$. We call such a triple a set of \highlight{witnesses}. In \Cref{lem:desc-of-basis-I}, we prove that the coefficients of $P_1$ with respect to the basis $a_1,a_2$ satisfy certain congruence conditions, which impose strong arithmetic restrictions on the possible witness configurations. 

\medskip

\noindent\textbf{Step 3: Reduced bases}
(\Cref{sec:reduced-minima}).
We introduce the notions of reduced minima and reduced bases, which play a similar role as successive minima, but for the non-symmetric convex set $\tri$. \Cref{lem:red-basis-in-tri} then shows that every affine lattice of level $k\ge2$ admits a set of witnesses whose difference vectors are a reduced basis. This further reduces the number of possible witness configurations.

\medskip

\noindent\textbf{Step 4: Estimating the measure of level-$k$ lattices}
(\Cref{sec:proof-level-measure}).
Combining the congruence conditions from Step~2 with the existence of reduced bases from Step~3, we show that, for each $k$, there are only $O(k)$ possible configurations of witnesses. Fixing one such configuration ${P_1,P_2,P_3}$, the translation vector $b$ must satisfy
\[
b\in\bigcap_{i=1}^3\left(-\frac1kP_i+\tri\right).
\]
On the other hand, since the affine lattice is exactly of level $k$, for every $1\le l<k$ and every lattice point $Q\in A\ZZ^2$,
\[
b\notin -\frac1lQ+\tri.
\]
Using this observation, we construct three lattice points $Q_1,Q_2,Q_3$ together with integers $1\le l_i<k$, and use the fact that the vector $b$ belongs to the set
\[
\bigcap_{i=1}^3\left(-\frac1kP_i+\tri\right)
\setminus
\bigcup_{i=1}^3
\left(-\frac1{l_i}Q_i+\tri\right),
\]
to obtain an explicit upper bound for the measure of the admissible translation vectors $b \in \RR^2/A\ZZ^2$ in terms of $\det(A)$. Integrating this estimate over the space of lattices and using elementary algebraic manipulations yields the required upper bound (\Cref{prop:upper-bound-level-measure}).

For the lower bound, we construct a family of lattices $Z_k$ such that every lattice in $Z_k$ admits a sufficiently large set of admissible translation vectors, yielding the desired lower bound (\Cref{prop:lower-bound-level-measure}).

 Finally, \Cref{prop:finite-measure} follows directly from
Propositions~\ref{prop:upper-bound-level-measure} and
\ref{prop:lower-bound-level-measure}; see
\Cref{subsec:proof of measure estimate}.

\subsection{Proof of \Cref{prop:finite-measure}} \label{subsec:proof of measure estimate}
 \begin{proof}[Proof of \Cref{prop:finite-measure}]
        Fix $k\in\NN$. The isomorphism
        \begin{align*}
            \GL_2^{+}(\RR) &\xlongrightarrow{\sim}~\RR_{>0}\times~\SL_2(\RR),\\
            A &\mapsto (\det(A),\det(A)^{-1/2}A)
        \end{align*}
        with inverse $(r,A')\mapsto r^{1/2}A'$ maps $\GL_2^{+}(\ZZ)$ to $\{1\}\times\SL_2(\ZZ)$. Define $\overline{X}_2\defeq\tquot{\GL_2^{+}(\RR)}{\GL_2^{+}(\ZZ)}$ as well as $X_2\defeq\tquot{\SL_2(\RR)}{\SL_2(\ZZ)}$, then the above gives a well-defined map $\psi\colon \overline{X_2}\to\RR_{>0}\times X_2$ such that $\dd\psi_*\msr_{\overline{X}_2} = \frac{\dd r}{r}\dd\msr_{X_2}$. For any $A\in\GL^{+}_2(\RR)$, let $\mathcal{F}_{A}\subset\RR^2$ denote a fundamental domain of $A\ZZ^2\subset\RR^2$ and let $\mathcal{F}_{A}^{(k)} = \{b\in\mathcal{F}_{A}\st \level(A,b) = k\}$. Using the substitution rule for $\psi$ we obtain
        \begin{align*}
            \msr_{Y_2}(\Cclv) = \int_{\overline{X}_2}\Leb\left(\mathcal{F}_{A}^{(k)}\right)\dd\msr_{\overline{X}_2}(A) = \int_{X_2}\int_{\RR_{>0}}\Leb\left(\mathcal{F}_{r^{1/2}A'}^{(k)}\right)\frac{\dd r}{r}\dd\msr_{X_2}(A').
        \end{align*}

        For $k=1$ we use \Cref{lem:covering-successive} to obtain $0 \leq r^{1/2}\succ_2(\tri,A'\ZZ^2) \leq 2$ which yields, for a given $A'\in\SL_2(\ZZ)$, an upper bound on $r$. By the trivial bound $\Leb(\mathcal{F}_{r^{1/2}A'}^{(k)}) \leq \det(r^{1/2}A') = r$ we get
        \begin{align*}
            \msr_{Y_2}(\Cc^{(1)}) &\leq \int_{X_2}\int_{0}^{\frac{4}{\succ_2(\tri,A'\ZZ^2)^2}}\Leb\left(\mathcal{F}_{r^{1/2}A'}^{(k)}\right)\frac{\dd r}{r}\dd\msr_{X_2}(A')\\
            &\leq \int_{X_2}\int_{0}^{\frac{4}{\succ_2(\tri,A'\ZZ^2)^2}}\dd r\dd\msr_{X_2}(A')\\
            &= 4\int_{X_2}\succ_2(\tri,A'\ZZ^2)^{-2}\dd\msr_{X_2}(A) \leq 4,
        \end{align*} 
        as $\succ_2(\tri,A'\ZZ^2) \gg 1$ for all $A'\ZZ^2\in X_2$ and $\msr_{X_2}$ is a probability measure. A non-trivial lower bound for $\msr_{Y_2}(\Cc^{(1)})$ can also be obtained by the bounds provided in \Cref{lem:covering-successive}. Indeed, if $2r^{1/2}\succ_2(\tri,A'\ZZ^2) < 1$, all affine lattices satisfying $r^{1/2}A'\ZZ^2 + b\in \Cc$ for some $b\in\RR^2$ are of level $1$. It is not hard to see that a large set of $b\in\mathcal{F}_A$ satisfy $r^{1/2}A'\ZZ^2 + b\in \Cc$, yielding a non-trivial lower bound for $\msr_{Y_2}(\Cc^{(1)})$.
        
        For $k\geq 2$ we have by \Cref{prop:upper-bound-level-measure} that
        \[ \int_{\RR_{>0}}\Leb\left(\mathcal{F}_{r^{1/2}A'}^{(k)}\right)\frac{\dd r}{r} \leq \frac{12\cdot9\sqrt{3}\cdot64(\ln(k)+1)}{k(k-1)} \leq \frac{12\cdot9\sqrt{3}\cdot128(\ln(k)+1)}{k^2} \]
        and the bound is independent of $A'\in\SL_2(\RR)$. Thus, we obtain
        \begin{align*}
            \msr_{Y_2}(\Cclv) = \int_{X_2}\int_{\RR_{>0}}\Leb\left(\mathcal{F}_{r^{1/2}A'}^{(k)}\right)\frac{\dd r}{r}\dd\msr_{X_2}(A') \leq \frac{C(\ln(k)+1)}{k^2}
        \end{align*}       
        with $C=12\cdot9\sqrt{3}\cdot128$. By \Cref{prop:lower-bound-level-measure}, there is a subset $Z_k\subset X_2$ of positive measure such that for any $A'\ZZ^2\in Z_k$ 
        \[ \int_{\RR_{>0}}\Leb\left(\mathcal{F}_{r^{1/2}A'}^{(k)}\right)\frac{\dd r}{r} \geq \frac{3\sqrt{3}}{5(2k-1)^2}\left(\frac{\succ_1(\tri,A'\ZZ^2)}{\succ_2(\tri,A'\ZZ^2)}\right)^2 \geq \frac{3\sqrt{3}}{20 \cdot k^2}\left(\frac{\succ_1(\tri,A'\ZZ^2)}{\succ_2(\tri,A'\ZZ^2)}\right)^2. \]
        Moreover, the second part of \Cref{prop:lower-bound-level-measure} implies that         
        \begin{align*}
            \msr_{Y_2}(\Cclv) &\geq \int_{Z_k}\int_{\RR_{>0}}\Leb\left(\mathcal{F}_{r^{1/2}A'}^{(k)}\right)\frac{\dd r}{r}\dd\msr_{X_2}(A') \geq \frac{3\sqrt{3}}{20\cdot k^2}\int_{Z_k}\left(\frac{\succ_1(\tri,A'\ZZ^2)}{\succ_2(\tri,A'\ZZ^2)}\right)^2\dd\msr_{X_2}(A') \geq \frac{c}{k^3}
        \end{align*}       
        where $c$ is an absolute positive constant not depending on $k$.
    \end{proof}

    \begin{proposition}\label{prop:upper-bound-level-measure}
        Let $k\geq 2$ and $A'\in\SL_2(\RR)$. Then, 
        \[ \int_{\RR_{>0}}\Leb(\mathcal{F}_{r^{1/2}A'}^{(k)})\frac{\dd r}{r} \leq \frac{12\cdot9\sqrt{3}\cdot64(\ln(k)+1)}{k(k-1)}. \]
    \end{proposition}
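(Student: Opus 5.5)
Fix $A'\in\SL_2(\RR)$ and write $A=r^{1/2}A'$, so $\det A=r$. The plan follows the strategy of Step 4. First, by Lemma~\ref{lem:level-equivalence}, if $b\in\mathcal F^{(k)}_A$ then $A\ZZ^2\cap(-kb+k\tri)$ generates $A\ZZ^2$. By the reduced-basis result of Section~\ref{sec:reduced-minima} (Lemma~\ref{lem:red-basis-in-tri}), we can choose witnesses $P_1,P_2,P_3\in A\ZZ^2\cap(-kb+k\tri)$ whose differences $a_1=P_2-P_1$, $a_2=P_3-P_1$ form a reduced basis of $A\ZZ^2$. This basis is determined by $A$ alone, up to boundedly many choices (at most $12$ choices, accounting for the symmetry group of $\tri$ and orientation). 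In this basis, write $P_1=m_1a_1+m_2a_2$. The congruence restrictions of Lemma~\ref{lem:desc-of-basis-I} then leave only $O(k)$ admissible pairs $(m_1,m_2)$ modulo the translation ambiguity $b\mapsto b+\frac1k(\text{lattice})$. Concretely, I expect $(m_1,m_2)\bmod k$ to lie on a one-parameter family, and the count $9$ to come from the residual cases. So
\[
\Leb(\mathcal F^{(k)}_A)\le \sum_{\text{config}}\Leb\Bigl(\bigcap_{i}\bigl(-\tfrac1kP_i+\tri\bigr)\setminus\bigcup_{1\le l<k}\bigcup_{Q\in A\ZZ^2}\bigl(-\tfrac1lQ+\tri\bigr)\Bigr).
\]
Here the first intersection is a translate of a triangle of area $\ll$ (size of $\tri$)$^2$, shrunk by the spread of the $P_i$ relative to $k$.

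Second, for each configuration I bound the area of the admissible $b$. The intersection $\bigcap_i(-\frac1kP_i+\tri)$ is a homothetic copy of $\tri$ of linear size $1-\frac1k\cdot(\text{$\tri$-width of }\{P_i\})$. Since $P_2-P_1,P_3-P_1$ span a unimodular-in-$A\ZZ^2$ triangle of area $r/2$, this copy is nonempty only when $r\lesssim k^2$, by Lemma~\ref{lem:covering-successive}-type comparisons with $\succ_1,\succ_2$. To cut it down further, I pick three points $Q_i=P_i+(\text{small combination of }a_1,a_2)$ and levels $l_i<k$ so that each $-\frac1{l_i}Q_i+\tri$ covers a corner of the small triangle. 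The removed corners are translates of $\tri$ scaled to within $\asymp (k-l_i)$-relative distance. Using the auxiliary triangle-intersection facts of Section~\ref{sec:auxiliary-results}, together with the Farey-fraction lemma to choose $l_i$ as a best approximation to the slope data $(m_1/k,m_2/k)$, the leftover region has area $\ll \frac{r}{k^2}\cdot\min(1,\frac{k^2}{r}\cdot\text{gap})$. After summing over the $O(k)$ configurations, the result should be $\Leb(\mathcal F^{(k)}_A)\ll \min\bigl(r,\ \frac{k\,r}{k^2}\cdot g(r/k^2)\bigr)$ with $g$ decaying. The implicit constants are $9\sqrt3$ (area of $\tri$ scaled) and $64$ (corner factors).

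Third, I integrate in $\frac{dr}{r}$. For small $r$, the trivial bound $\Leb\le r$ does not suffice uniformly. Instead, level $k\ge2$ forces $r\gtrsim k(k-1)/\succ$-data, because the $\ell=1$ condition of Lemma~\ref{lem:level-equivalence}\eqref{itm:level-eq-1} fails for small covolume. For $r$ beyond the range above, the set is empty. In the intermediate window, the per-configuration leftover area decays like $1/r$ across a range of $r$ of multiplicative length $\asymp k$. Integrating this gives the $\ln k$, and combining with the prefactor $1/(k(k-1))$ gives the claim.

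The main obstacle is the second step: choosing $Q_i,l_i$ uniformly enough that the uncovered area is $\ll 1/k^2$ per configuration after normalization. I would first try Farey neighbours of $m_j/k$ to produce $l_i$ with $|l_im_j/k-\text{integer}|\le 1/k$. If that fails, I would fall back on a dyadic decomposition in $r$, accepting the extra $\ln k$.
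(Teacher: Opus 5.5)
\textbf{There is a genuine gap: the quantitative core of the argument is missing, and your proposed source of the $\ln k$ does not exist.} Your ingredients are the right ones: reduced bases from Lemma~\ref{lem:red-basis-in-tri}, the congruences of Lemma~\ref{lem:desc-of-basis-I} leaving $O(\phi(k))$ configurations per reduced basis, and lower-level points from Farey neighbours cutting corners off $\tri(\mathcal P)$. Two problems break the bookkeeping.

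First, the $r$-range is short. By Lemma~\ref{lem:covering-successive}, an affine lattice $r^{1/2}A'\ZZ^2+b$ of level $k$ forces $\frac{k-1}{2}<r^{1/2}\succ_2(\tri,A'\ZZ^2)\le 2k$. (The lower bound comes from the condition with $\ell=k-1$, not $\ell=1$.) So the admissible $r$ lie in a window of multiplicative length at most $64$. There is no range of multiplicative length $\asymp k$, and a dyadic decomposition in $r$ produces no $\ln k$. For a single configuration the window is shorter still: the leftover set is nonempty only when $s(\mathcal P)>0$ and $s(\mathcal P\cup\mathcal Q)=0$. That is an $r$-interval of logarithmic length $2\ln(1+\beta_1+\beta_2+\beta_3)$.

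Second, no bound uniform in the configuration can suffice. The computation in the proof of Proposition~\ref{prop:lower-bound-level-measure} shows that the single configuration $c_1'=1$ already contributes $\gg k^{-2}$ for suitable $A'\in Z_k$. A uniform per-configuration bound summed over $\asymp\phi(k)$ residues therefore gives at best $O(1/k)$ for prime $k$. Your intermediate estimate $\min\bigl(r,\frac{r}{k}g(r/k^2)\bigr)$, with an unspecified gap and decaying $g$, is not derived and does not by itself yield $k^{-2}\ln k$.

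\textbf{What is missing is a bound that decays with the Farey denominators, plus the structural lemma that makes it computable.} The paper's route is as follows.
\begin{enumerate}
\item Relabel so that $c_2'=-1$, at the cost of three ordered bases per reduced basis.
\item Take the neighbours $\frac{h'}{k'}<\frac{c_1'}{k}<\frac{h''}{k''}$ in $F_k$ and the specific points $Q_1=\frac{h''}{k''}a_1$, $Q_2=\frac{h'}{k'}a_1$, $Q_3=\frac{c_1'a_1-a_2}{k-1}$, of levels $k'',k',k-1$.
\item Lemma~\ref{lem:directions-of-reduced-elements} shows that each $Q_i$ tightens exactly one of the three inequalities defining $\tri(\mathcal P)$. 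The three indices are pairwise distinct, so one removes corner triangles at three different vertices. It also gives relative cut depths with $\beta_1\beta_2\beta_3\le\frac{8}{(k-1)k'k''}$.
\item With this structure, inclusion--exclusion via Lemma~\ref{lem:intersec-tri}, integrated exactly against $\frac{\dd r}{r}$, collapses to $-3\sqrt3\ln\bigl(1-\frac{\beta_1\beta_2\beta_3(2+\beta_1+\beta_2+\beta_3)}{(1+\beta_1+\beta_2+\beta_3)(1+\beta_1)(1+\beta_2)(1+\beta_3)}\bigr)\ll\beta_1\beta_2\beta_3$.
\end{enumerate}
The gain is the product of all three depths, which a qualitative ``each $Q_i$ covers a corner'' does not give.

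The logarithm then comes from summing over configurations, not from the $r$-integral. As $c_1'$ runs over residues prime to $k$, so does $k'$ (Remark~\ref{rem:all-coprime-classes}), with $k''=k-k'$. Hence $\sum_{k'=1}^{k-1}\frac{1}{(k-1)k'(k-k')}=\frac{2H_{k-1}}{k(k-1)}\le\frac{2(\ln k+1)}{k(k-1)}$.
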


    \begin{proposition}\label{prop:lower-bound-level-measure}
        Let $k\geq 2$. There is a subset $Z_k\subset X_2$ depending on $k$, such that for any $A'\ZZ^2\in Z_k$ we have
        \[ \int_{\RR_{>0}}\Leb(\mathcal{F}_{r^{1/2}A'}^{(k)})\frac{\dd r}{r} \geq \frac{3\sqrt{3}}{5(2k-1)^2}\left(\frac{\succ_1(\tri,A'\ZZ^2)}{\succ_2(\tri,A'\ZZ^2)}\right)^2. \]
        Moreover, $Z_k$ satisfies
        \[ \int_{Z_k}\left(\frac{\succ_1(\tri,A'\ZZ^2)}{\succ_2(\tri,A'\ZZ^2)}\right)^2\dd\msr_{X_2}(A') \asymp \frac{1}{k}. \]
    \end{proposition}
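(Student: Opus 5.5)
The proof splits into two parts: an explicit construction of many translation vectors $b$ for each $A'\ZZ^2\in Z_k$, followed by a measure computation on $X_2$. In both parts I would work directly with the criterion of \Cref{lem:level-equivalence}. That lemma says $r^{1/2}A'\ZZ^2+b\in\Cclv$ exactly when two things hold. First, the dilated open triangles $-\ell b+\ell\tri$ for $1\le\ell<k$ miss the lattice. Second, $-kb+k\tri$ contains lattice points generating it. Set $A=r^{1/2}A'$ and fix a basis $a_1,a_2$ of $A\ZZ^2$ realising $\succ_1(\tri,\cdot)$ and $\succ_2(\tri,\cdot)$ up to constants. The intended witness configuration is $P_1,\;P_1+a_1,\;P_1+a_2$. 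Following Step~4 of the heuristic, $b$ is then forced into the region
\[
\bigcap_{i=1}^3\Bigl(-\tfrac1kP_i+\tri\Bigr).
\]
Since $P_2-P_1$ and $P_3-P_1$ already form a basis, condition (2) of \Cref{lem:level-equivalence} holds automatically, and only condition (1) needs work.

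\textbf{Definition of $Z_k$.} I would define $Z_k$ as the set of unimodular lattices whose $\tri$-reduced basis $(a_1,a_2)$ satisfies two conditions:
\begin{itemize}
\item $\succ_2/\succ_1$ is bounded (say at most $2$);
\item $a_1$ lies within angle $\asymp 1/k$ of the direction of one side of $\btri$.
\end{itemize}
The point of the second condition is this. Choose $P_1$ near the midpoint of the corresponding side of $k\btri$ and choose $b$ accordingly. The thin lattice row $P_1+\ZZ a_1$ then runs almost parallel to that side. After rescaling by $\ell/k$, this row lies outside $\ell\tri$ for every $\ell<k$. The remaining rows lie even further out, because the scale $r$ will be chosen so that $r^{1/2}\succ_2\asymp k$, in accordance with \Cref{lem:covering-successive}.

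\textbf{Area estimate for fixed $r$.} For each $r$ in a multiplicative window $r^{1/2}\succ_2(\tri,A'\ZZ^2)\in[c_1k,c_2k]$, I would show that the admissible set of $b$ modulo $A\ZZ^2$ contains a small triangle. Its area should be $\gg \det(A)\,k^{-2}(\succ_1/\succ_2)^2$, where $\det(A)=r$. Dividing by $r$ and integrating $dr/r$ over the window contributes a bounded factor. This gives the first inequality, with constant $3\sqrt3/(5(2k-1)^2)$. The factor $(2k-1)^{-2}$ comes from comparing the $k$-dilate with the $(k-1)$-dilates along the chosen side.

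\textbf{Measure of $Z_k$.} Integrating $(\succ_1/\succ_2)^2$ over $Z_k$ is comparable to the $\msr_{X_2}$-measure of $Z_k$, since this ratio is bounded above and below there. In Iwasawa coordinates on $X_2$, with a bounded-eccentricity region, the angular condition has measure $\asymp 1/k$. This gives $\asymp 1/k$ for the integral.

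\textbf{Main obstacle.} The hardest step is verifying condition (1) uniformly: no lattice point lies in any $-\ell b+\ell\tri$ for any $\ell<k$, not just for the nearby rows. I would handle this by a Farey-type argument. Consecutive dilates of the side line sweep across lattice rows at rational steps, and the $1/k$ angle condition keeps all rows with index less than $k$ outside. If this fails, I would shrink the angular window to $c/k$ with a small constant $c$, or restrict $b$ further. This only changes the constants, not the order $1/k$.
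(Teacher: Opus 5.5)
There is a genuine gap, and it is the step you label the main obstacle. That step carries the whole first inequality.

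\textbf{Condition (1) is arithmetic, not geometric.} Condition (1) of \Cref{lem:level-equivalence} says that $-kb+k\tri$ avoids $\bigcup_{\ell<k}\tfrac{k}{\ell}A\ZZ^2$. This set is only $kA\ZZ^2$-periodic. So what matters is not where $P_1$ sits inside the triangle, but the residues modulo $k$ of its coefficients in $P_1=c_1a_1+c_2a_2$, i.e.\ where the origin lies relative to the witnesses. Your proposal never fixes these residues, and by \Cref{lem:desc-of-basis-I} most choices give an empty admissible set.

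Your heuristic that the remaining rows ``lie further out'' is also false. The congruences of \Cref{lem:desc-of-basis-I} force some witness onto a lattice line, parallel to the opposite side of the witness triangle, that is a $kA\ZZ^2$-translate of a line through the origin. On that line there are forbidden points within $O(1/k)$ (in lattice units) of the witness.

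\textbf{How the paper handles it.} The paper makes a specific choice: $c_1'=1$, $a_1=-\rho_i$, $a_2=\rho_j$. The witnesses are then $\rho_e,\rho_e-\rho_i,-\rho_i$, three vertices of the fundamental parallelogram at $0$. The binding obstructions are $0$, $\tfrac{1}{k-1}a_1$ and $\tfrac{1}{k-1}\rho_e$. A sign argument on $Q=\tfrac{d_1}{\ell}a_1+\tfrac{d_2}{\ell}a_2$ then shows that no other point of $\bigcup_{\ell<k}\tfrac1\ell A'\ZZ^2$ cuts into $\tri(\mathcal{P})$, so \eqref{eq:concrete-set} is an equality. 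That argument needs the extra defining condition $\langle v_i,\rho_e\rangle\ge\tfrac12\succ_1$ of $Z_k$, which gives $d_2>-2d_1$. Your $Z_k$ (bounded eccentricity plus an angular window) does not impose this, and your construction does not pick an orientation that uses it. An unspecified ``Farey-type sweep'', or shrinking the window, does not replace it.

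\textbf{Area bookkeeping.} Suppose the admissible set had area $\gg r\,k^{-2}(\succ_1/\succ_2)^2$ for all $r$ with $r^{1/2}\succ_2\in[c_1k,c_2k]$. Your lattices have bounded eccentricity, so $\succ_1\asymp\succ_2\asymp 1$. You would then get $\int\Leb(\mathcal{F}^{(k)}_{r^{1/2}A'})\tfrac{dr}{r}\gg 1$, which contradicts \Cref{prop:upper-bound-level-measure}. In fact the admissible set for a fixed configuration lies in $\tri(\mathcal{P})$ and has absolute area of order $k^{-2}$ over the relevant window. It is not a $k^{-2}$-proportion of the fundamental domain. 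Your extra ``dividing by $r$'' is not part of $\int\Leb\,\tfrac{dr}{r}$; it only cancels this error.

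\textbf{The constant.} The explicit constant $\tfrac{3\sqrt3}{5(2k-1)^2}$ cannot come from an unspecified small triangle. In the paper it comes from reusing the exact inclusion--exclusion formula \eqref{eq:final-estimate} for this configuration, together with $\beta_{j_1}\ge\tfrac{\succ_1}{4\succ_2}$, $\beta_{j_2}\ge\tfrac{\succ_1}{2(k-1)\succ_2}$ and $\beta_{j_3}\ge\tfrac{1}{2(k-1)}$.

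Your estimate of the measure of $Z_k$, via an angular window of width $\asymp 1/k$, is fine. It parallels the paper's $\sin\gamma\le\tfrac{\sqrt3}{4k}$ argument.
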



\section{Bounds on level}\label{sec:bounds-on-level}
    Recall that for any $A\ZZ^2 + b\in\Cc$, we have (with $\varphi$ as in ~\eqref{def:auto-P-to-H})
    \[ \level(A,b) = \level(\Lambda), \quad \Lambda = \gv^{-1}\varphi^{-1}([A,b])\ZZ^3. \]
    
    The aim of this subsection is to bound $\level(A,b)$ in terms of the successive minima of the associated lattice, which are defined as follows.

      \begin{definition}
        Let $L\subset\RR^2$ be a lattice and $S\subseteq\RR^2$ a convex set. For $i\in\{1,2\}$, the $i$-the successive minimum with respect to $S$ is defined as
        \[ \succ_i(S, L) \defeq \inf\{r>0\st r(S-S)\cap L \text{ contains $i$ linearly independent vectors}\}, \]
        where $r(S-S) = \{r(x-y): x,y \in S\}$ for $r \in \RR$.
    \end{definition}
    
    The following lemma is the main result of this section.
    \begin{lemma} \label{lem:covering-successive} 
        Let $A\ZZ^2 + b\subset\RR^2$ be an affine lattice in $\Cc$, then
        \[ \tfrac{1}{2}\succ_2(\tri, A\ZZ^2) \leq  \level(A,b) < 2\succ_2(\tri,A\ZZ^2) + 1. \]
    \end{lemma}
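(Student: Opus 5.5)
The lower bound will come from condition~(\ref{itm:level-eq-2}) of \Cref{lem:level-equivalence}, and the upper bound from condition~(\ref{itm:level-eq-1}) combined with a covering-radius estimate of the form ``covering radius $\le \succ_1+\succ_2$''. Throughout, write $L=A\ZZ^2$, $\lambda_2=\succ_2(\tri,L)$ and $k=\level(A,b)$. The difference body $\tri-\tri$ is a centrally symmetric hexagon. Since $L$ is discrete, the infimum defining $\lambda_2$ is attained with respect to the closure: there are linearly independent $v_1,v_2\in L\cap\lambda_2(\btri-\btri)$.

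\textbf{Lower bound.} By \Cref{lem:level-equivalence}(\ref{itm:level-eq-2}), the set $S=L\cap(-kb+k\tri)$ generates $L$. It therefore contains three non-collinear points $P_1,P_2,P_3$. The differences $P_2-P_1$ and $P_3-P_1$ are linearly independent vectors of $L$ lying in $k(\tri-\tri)$. Hence $\lambda_2\le k$, which is even stronger than the claimed $\tfrac12\lambda_2\le k$.

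\textbf{Upper bound.} The key step is a covering statement: every translate $z+2\lambda_2\btri$ contains a point of $L$. Write $v_i=\lambda_2(x_i-y_i)$ with $x_i,y_i\in\btri$. For $s\in[0,1]$, convexity gives $sx_1+(1-s)y_1\in\btri$, so $s v_1\in\lambda_2(\btri-y_1)$; similarly $t v_2\in\lambda_2(\btri-y_2)$ for $t\in[0,1]$. Since $\btri+\btri=2\btri$ for convex sets, the half-open parallelogram $\Pi=\{sv_1+tv_2: s,t\in[0,1)\}$ satisfies
\[
\Pi\subset 2\lambda_2\btri-\lambda_2(y_1+y_2).
\]
$\Pi$ is a fundamental domain of the sublattice $\ZZ v_1+\ZZ v_2\subseteq L$, so every translate of $\Pi$ meets $L$. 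Consequently every translate of $2\lambda_2\btri$ meets $L$, and the same holds for $\ell\btri$ whenever $\ell\ge 2\lambda_2$.

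\textbf{Conclusion.} Suppose $k\ge2$ and $k-1\ge 2\lambda_2$. Take $\ell=k-1$, which satisfies $1\le\ell<k$. By the covering statement, $L\cap(-\ell b+\ell\btri)\neq\emptyset$, contradicting \Cref{lem:level-equivalence}(\ref{itm:level-eq-1}). Hence $k-1<2\lambda_2$ whenever $k\ge 2$. If $k=1$, the inequality $1<2\lambda_2+1$ is trivial because $\lambda_2>0$.

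The only delicate points are the closed-versus-open bookkeeping: $\btri$ appears in condition~(\ref{itm:level-eq-1}) while $\tri$ appears in the definition of $\succ_2$. These are handled by attainment of the infimum via discreteness of $L$, and by using the closed triangle in the covering step. I do not expect a substantial obstacle beyond the covering inclusion for $\Pi$.
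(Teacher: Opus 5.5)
Your proof is correct. It has the same skeleton as the paper: the lower bound comes from condition~(\ref{itm:level-eq-2}) of Lemma~\ref{lem:level-equivalence}, and the upper bound from condition~(\ref{itm:level-eq-1}) via a covering argument.

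The paper, however, routes both directions through the covering radius. It first proves $\tfrac12\mu_2(\tri,A\ZZ^2)\le\level(A,b)<\mu_2(\tri,A\ZZ^2)+1$, and then quotes the Kannan--Lov\'asz comparison $\succ_2\le\mu_2\le 2\succ_2$. Your argument differs in three ways.

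\begin{itemize}
\item \textbf{Lower bound.} You bypass $\mu_2$ altogether, since two witness differences already lie in $k(\tri-\tri)$. This gives $\succ_2(\tri,A\ZZ^2)\le k$, a factor $2$ sharper than the paper's chain $\succ_2\le\mu_2\le 2k$. It also needs only non-collinearity of the witnesses, which is available because membership in $\Cc$ means the witnesses affinely span a lattice.
\item \textbf{Upper bound.} You prove the needed half $\mu_2\le 2\succ_2$ of the cited inequality yourself. You use the same ``fundamental parallelogram inside a doubled convex set'' trick the paper uses for its lower bound, so your proof is self-contained.
\item \textbf{Strict inequality.} You work with the closed triangle $\btri$, as it appears in condition~(\ref{itm:level-eq-1}), and with an attained $\succ_2$; this is exactly what makes the upper bound strict. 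The paper's write-up instead passes to the open triangle $\tri$, and from $(k-1)b\notin(k-1)\tri+A\ZZ^2$ alone one only gets $\mu_2(\tri,A\ZZ^2)\ge k-1$, not the strict inequality it states.
\end{itemize}

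What the paper's route buys in exchange is a statement in terms of the covering radius, a natural invariant that directly reflects the quantifier over $b$. The paper also asserts (in the remark following the lemma) that its bounds in terms of $\mu_2$ are sharp.
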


    To prove \Cref{lem:covering-successive}, we need the following.
    \begin{lemma}\label{lem:level-bounds}
        Let $A\ZZ^2 + b\subset\RR^2$ be an affine lattice in $\Cc$, then
        \[
        \tfrac{1}{2}\mu_2(\tri,A\ZZ^2) \leq  \level(A,b) < \mu_2(\tri,A\ZZ^2) + 1, 
        \]
        where the \highlight{covering radius} $\mu_2(S,L)$ of a convex set $S\subseteq\RR^2$ and a lattice $L\subset\RR^2$ is defined as
        \[ \mu_2(S,L) = \inf\{r>0\st rS + L = \RR^2\}.  \]
    \end{lemma}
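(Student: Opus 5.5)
The plan is to translate the level condition into covering statements for the lattice $L\defeq A\ZZ^2$ using \Cref{lem:level-equivalence}. Write $k=\level(A,b)$ and $K\defeq -kb+k\btri$, a closed triangle homothetic to $\btri$. We first record a symmetry fact. Since $L=-L$, we have $r(-S)+L=-(rS+L)$, hence $\mu_2(S,L)=\mu_2(-S,L)$. Moreover, $rS+L=\RR^2$ holds if and only if every translate $x+r(-S)$ meets $L$. By compactness of $\btri$, the infimum defining $\mu_2(\btri,L)$ is attained, and the covering radius of $\tri$ agrees with that of $\btri$. So for every $r\ge\mu_2(\tri,L)$ and every $x\in\RR^2$, the set $x+r\btri$ contains a point of $L$.

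\textbf{Upper bound.} Let $\ell_0\defeq\lceil\mu_2(\tri,L)\rceil$, so $\ell_0<\mu_2(\tri,L)+1$. Applying the observation above with $x=-\ell_0 b$ and $r=\ell_0$, the set $-\ell_0 b+\ell_0\btri$ contains a point of $L$. By condition~(\ref{itm:level-eq-1}) of \Cref{lem:level-equivalence}, no $\ell<k$ has this property. Hence $k\le\ell_0<\mu_2(\tri,L)+1$.

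\textbf{Lower bound.} By condition~(\ref{itm:level-eq-2}) of \Cref{lem:level-equivalence}, the lattice points in $K$ generate $L$. They are therefore not collinear, and we want to extract three of them, $P_1,P_2,P_3\in L\cap K$, with $P_2-P_1,\,P_3-P_1$ a basis of $L$. To do this, take a non-degenerate triangle of minimal area with vertices in $L\cap K$. By minimality it contains no further points of $L$. An empty lattice triangle has area $\tfrac12\covol(L)$, which can be checked by the parallelogram reflection argument used in \Cref{sec:explicit-crosssection}. Hence its edge vectors form a basis of $L$.

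With such $P_1,P_2,P_3$ in hand, the parallelogram $D=P_1+[0,1](P_2-P_1)+[0,1](P_3-P_1)$ is a fundamental domain for $L$, so $D+L=\RR^2$. Now consider the homothety $K'\defeq P_1+2(K-P_1)$, a translate of $2k\btri$. It is convex and contains the points $P_1$, $P_1+2(P_2-P_1)$ and $P_1+2(P_3-P_1)$. The midpoint of the last two is the fourth vertex $P_2+P_3-P_1$ of $D$. So $K'$ contains all four vertices of $D$, hence $D\subseteq K'$. It follows that $2k\btri+L=\RR^2$, which gives $\mu_2(\tri,L)\le 2k$.

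The only step needing real care is the lower bound: the extraction of a unimodular witness triangle inside $K$ from the bare generating condition, together with the observation that a factor-$2$ dilation of $K$ about one vertex swallows the whole fundamental parallelogram. The upper bound is a direct consequence of the definitions once the sign and closed-versus-open issues are handled as above.
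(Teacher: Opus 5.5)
Your proof is correct and follows the paper's argument. The upper bound comes from condition~(\ref{itm:level-eq-1}) of \Cref{lem:level-equivalence} via the covering property. The lower bound comes from dilating the witness triangle by a factor $2$ so that it contains a fundamental parallelogram; your homothety about $P_1$ is just the paper's $2(-kb+k\tri)$ translated by $-P_1$.

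Two of your details are more careful than the paper's. Using the closed triangle $\btri$ in condition~(\ref{itm:level-eq-1}), together with attainment of the infimum for compact sets, is what actually makes the inequality strict; the paper's version with the open $\tri$ only yields $\mu_2((k-1)\tri,A\ZZ^2)\ge 1$. Your minimal-area empty-triangle argument also supplies the extraction of a unimodular witness triple, which the paper takes for granted.
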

    \begin{proof}
        First, we prove the upper bound. The upper bound is trivial in the case of $\level(A,b)=1$. Thus, assume that $ \level(A,b) = k \geq 2$. By \Cref{lem:level-equivalence} and the definition of $\level(A,b)$, we know that \[ A\ZZ^2\cap \left(-(k - 1)b + (k-1)\tri\right) = \emptyset. \]
        This is equivalent to $(k-1)b \not\in (k-1)\tri + A\ZZ^2$ which implies $\mu_2((k-1)\tri,A\ZZ^2) > 1$. Thus, we get
        \[ 1 < \mu_2((k-1)\tri,A\ZZ^2) = (k-1)^{-1}\mu_2(\tri,A\ZZ^2), \]
        and the upper bound follows.

        Second, we prove the lower bound. Again, by \Cref{lem:level-equivalence} the elements in $A\ZZ^2 \cap (-kb + k\tri)$ generate the full lattice $A\ZZ^2$. We claim that $2k\tri + A\ZZ^2 = \RR^2$.
        Pick three points $P_1,P_2,P_3\in A\ZZ^2 \cap (-kb + k\tri)$ which span the lattice $A\ZZ^2$, that is
        \[ A\ZZ^2 = \{c_1(P_2 - P_1) + c_2(P_3 - P_1)\st c_1,c_2\in\ZZ\}. \]
        Observe that $2k\tri + A\ZZ^2 = \RR^2$ is equivalent to $2(-kb + k\tri) + A\ZZ^2 = \RR^2$. We know that $2P_1, 2P_2, 2P_3\in 2(-kb + k\tri)$ and as $2(-kb + k\tri)$ is convex, the convex combinations
        \[ P_1 + P_2 = \frac{1}{2}2P_1 + \frac{1}{2}2P_2,\quad  P_1 + P_3 = \frac{1}{2}2P_1 + \frac{1}{2}2P_3, \quad P_2 + P_3 = \frac{1}{2}2P_2 + \frac{1}{2}2P_3. \]
        are all elements of $2(-kb + k\tri)$. This shows that $2(-kb + k\tri)$ contains the fundamental domain of $A\ZZ^2$ spanned by the four vectors
        \[ 2P_1, \quad 2P_1 + (P_2-P_1) = P_1 + P_2,\quad  2P_1+(P_3-P_1) = P_3 + P_1, \quad 2P_1 + (P_2-P_1) +(P_3-P_1) = P_2 + P_3, \]
        so $2(-kb + k\tri) + A\ZZ^2 = \RR^2$ follows.
    \end{proof}

    \begin{proof}[Proof of \Cref{lem:covering-successive}]
        The lemma follows directly using Lemma~\ref{lem:level-bounds} and the following relation between the covering radius $\mu_2$ and the successive minima $\succ_2$ (cf. \cite[Lemma (2.4)]{KL88})
        \[ \succ_2(S,L) \leq \mu_2(S,L) \leq 2\succ_2(S,L), \]
        which holds for any lattice $L\subset\RR^2$ and any convex subset $S\subseteq\RR^2$.
    \end{proof}

    \begin{remark}
        Both bounds in the \Cref{lem:level-bounds} are sharp. To see this consider the hexagonal lattice $A\ZZ^2$ with first and second successive minimum of size $a$. For this specific lattice, one can prove that $\mu_2(\tri,A) = \tfrac{\sqrt{2}}{\sqrt{3}}a$. If one chooses $a=\sqrt{2}$ so that $A\ZZ^2$ is of covolume $\sqrt{3}$, one has $\mu_2(\tri,A) = \tfrac{2}{\sqrt{3}}$. The lower bound is then equal to $1$ and the upper bound is $\tfrac{2}{\sqrt{3}}+1<3$ and there exist $b_1,b_2\in\RR^2$ such that $A\ZZ^2 + b_1\in\Cc$ and $A\ZZ^2 + b_2\in\Cc$ are of level $1$ and $2$, respectively.
    \end{remark}


\section{Congruence condition on points in the interior of \texorpdfstring{$\tri$}{triangle}}\label{sec:witnesses}

    The aim of this section is to prove the following lemma.
    \begin{lemma}\label{lem:desc-of-basis-I}
        Let $k\geq2$ and $A\ZZ^2 +b\in\Cclv$ be a lattice of level $k$. Assume there are $P_1,P_2,P_3\in A\ZZ^2\cap (-kb + k\tri)$ such that $a_1 \defeq P_2 - P_1$, and $a_2 \defeq P_3 - P_1$ form a basis of $A\mathbb{Z}^2$. If
        \[ P_1 = c_1a_1 + c_2a_2 \in A\ZZ^2, \]
        then 
        \[ \gcd(c_1,k) = \gcd(c_2,k) = 1, \]
        and at least one of the following holds:
        \[
            c_1 + 1 \equiv 0 \pmod{k}, \qquad
            c_2 + 1 \equiv 0 \pmod{k}, \qquad
            c_1 + c_2 \equiv 0 \pmod{k}.
        \]
    \end{lemma}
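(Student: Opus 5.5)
The plan is to turn the statement into a statement about an empty lattice tetrahedron in $\ZZ^3$, and then to use the classical structure theory of such tetrahedra.

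\textbf{Step 1: reduction to a tetrahedron.} Write everything in the basis $a_1,a_2$ of $A\ZZ^2$. Then
\[
P_1=(c_1,c_2),\qquad P_2=(c_1+1,c_2),\qquad P_3=(c_1,c_2+1).
\]
Since $-kb+k\tri$ is convex and open, the closed triangle $T=\operatorname{conv}(P_1,P_2,P_3)$ lies in it. For $1\le l<k$, multiplying by $l/k$ gives
\[
\tfrac{l}{k}T\subset -lb+l\tri .
\]
Condition~(\ref{itm:level-eq-1}) of \Cref{lem:level-equivalence} then says that $\tfrac lkT$ contains no point of $A\ZZ^2$. Equivalently, the tetrahedron $\operatorname{conv}\bigl(0,(P_1,k),(P_2,k),(P_3,k)\bigr)\subset\ZZ^3$ contains no lattice points other than its vertices. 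Indeed, points at heights $0<l<k$ are excluded by the above, and at height $k$ the only lattice points of $T$ are its vertices, because $T$ is unimodular. Its normalized volume is $k$.

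In coordinates, a lattice point $(m,n)\in\tfrac lk T$ means $km-lc_1=ls$ and $kn-lc_2=lt$ with $s,t\ge0$ and $s+t\le1$. Writing $x_l=\{-lc_1/k\}$ and $y_l=\{-lc_2/k\}$ for fractional parts, emptiness becomes
\[
x_l+y_l>\frac lk\qquad\text{for all }1\le l\le k-1. \tag{$\ast$}
\]
Here only the representatives in $[0,k)$ matter, since $l<k$.

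\textbf{Step 2: the coprimality conditions.} I would get these directly from $(\ast)$ by pairing $l$ with $k-l$. Suppose $d=\gcd(c_1,k)>1$ and put $l=k/d$. Then $x_l=0$ and $x_{k-l}=0$.

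If also $y_l=0$, then $(\ast)$ already fails at $l$. Otherwise $y_{k-l}=1-y_l$, and $(\ast)$ at $l$ and at $k-l$ gives
\[
y_l>\frac lk\qquad\text{and}\qquad 1-y_l>\frac{k-l}{k},\ \text{i.e. } y_l<\frac lk,
\]
a contradiction. Hence $\gcd(c_1,k)=1$, and by symmetry $\gcd(c_2,k)=1$. (With $c_1,c_2$ both units one can also note that $x_l,y_l\neq0$ for all $l$, so $(\ast)$ at $k-l$ becomes $x_l+y_l<1+l/k$.)

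\textbf{Step 3: the three congruences.} This is White's theorem on empty lattice tetrahedra. Every empty tetrahedron has lattice width one with respect to some pair of opposite edges. Equivalently, it is unimodularly equivalent to
\[
\operatorname{conv}\bigl(0,\ e_1,\ e_3,\ (p,k,1)\bigr)\qquad\text{with }\gcd(p,k)=1.
\]
Applied to our tetrahedron, the width-one functional is constant on one of the three pairs of opposite edges $\{0P_i\}$, $\{P_jP_m\}$.

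Translating this through the coordinates in Step 1, I expect the three pairings $i=1,2,3$ to give exactly the congruences $c_1+c_2\equiv0$, $c_2+1\equiv0$ and $c_1+1\equiv0 \pmod k$. Concretely, one looks for an integer functional $\lambda$ with $\lambda(P_j)=\lambda(P_m)$ and $\lambda(0)=\lambda(P_i)$, taking values differing by $1$ on the two edges, and checks that such a $\lambda$ exists modulo $k$ precisely under the corresponding congruence.

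\textbf{Main obstacle.} The main risk is this last step. I would first try to cite White's theorem (or Reeve/Scarf) and verify the dictionary by a direct computation. If a self-contained argument is preferred, I would instead derive it from $(\ast)$ by the standard Fourier/counting argument. Multiplying by the inverse of $c_1$ reduces to $c_1\equiv-1$-type normal forms $(1,q)$. Summing $(\ast)$ over $l$ and using $\sum_l x_l=\sum_l y_l=(k-1)/2$ forces equality in the paired inequalities. From this, the usual arithmetic argument (as in Sebő's proof of White's theorem) should force $q\equiv\pm1$ or the analogous relation. Pinning down this last arithmetic step cleanly is where I expect the difficulty.
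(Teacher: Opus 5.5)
Your proposal is correct and is essentially the paper's argument. The paper also shows that level $k$ forbids lattice points $\tfrac1k(d_1P_1+d_2P_2+d_3P_3)$ with $d_1,d_2,d_3\ge0$ and $d_1+d_2+d_3<k$, which is exactly emptiness of your tetrahedron. It then invokes White's theorem in its arithmetic form, $\ell+(\ell c_1 \bmod k)+(\ell c_2 \bmod k)\le k$, which is precisely the failure of your $(\ast)$ at $l=k-\ell$. Your $l\leftrightarrow k-l$ pairing for the coprimality is a tidier substitute for the paper's explicit choice of $d_1,d_2,d_3$.

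One small correction to your dictionary: the edge pair $0P_2,\,P_1P_3$ gives $c_1+1\equiv0$ and the pair $0P_3,\,P_1P_2$ gives $c_2+1\equiv0$, not the other way round. This swap is harmless, since all three congruences still appear.
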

    \begin{proof}
        First, we note that
        \[ P_1 = c_1a_1 + c_2a_2,\quad P_2 = (c_1+1)a_1 + c_2a_2,\quad P_3 = c_1a_2 + (c_2+1)a_3. \]
        Assume there is a linear combination $P \defeq d_1 P_1 + d_2 P_2 + d_3 P_3$, with $d_1,d_2,d_3\in\ZZ$ satisfying
        \begin{enumerate}
            \item $d_1,d_2,d_3\geq 0$; \label{itm:equations-1}
            \item $\ell\defeq d_1+d_2+d_3 < k$;\label{itm:equations-2}
            \item $\ell c_1 + d_2 \equiv 0\mod k$;\label{itm:equations-3}
            \item $\ell c_2 + d_3 \equiv 0\mod k$.\label{itm:equations-4}
        \end{enumerate} 
        Then, we have $\tfrac{1}{k}P \in A\ZZ^2\cap (-\ell b + \ell\tri)$, a contradiction to $A\ZZ^2 + b\in\Cclv$. Indeed, substituting $P_1,P_2$ and $P_3$ yields
        \begin{align*}
            P &= d_1(c_1a_1 + c_2a_2) + d_2((c_1+1)a_1 + c_2a_2) + d_3(c_1a_1 + (c_2+1)a_2)\\
            & = (\ell c_1 + d_2) a_1 + (\ell c_2 + d_3) a_2 \in kA\ZZ^2,
        \end{align*}
        by \eqref{itm:equations-3} and \eqref{itm:equations-4}, thus $\tfrac{1}{k}P \in A\ZZ^2$. The fact that $\tfrac{1}{k}P \in (-\ell b + \ell\tri)$ follows from the convex combination
        \[ \frac{d_1}{\ell} (P_1 + kb) + \frac{d_2}{\ell}(P_2 +kb) + \frac{d_3}{\ell}(P_3 + kb) = \frac{1}{\ell}P + k b  \in k\tri, \]
        of points in the convex set $k\tri$. Hence, the existence of $d_1,d_2,d_3\in\ZZ$ satisfying (\labelcref{itm:equations-1})--(\labelcref{itm:equations-4}) contradicts the level assumption of $A\ZZ^2 +b\in\Cc$.

        Assume that $\gcd(c_1,k)\eqdef m\geq 1$ and write $k = k'm$. Then,
        \[ d_1 = k'(m-1) - (k'c_2 \mod k),\quad d_2=0,\quad d_3 = (k'c_2 \mod k) \]
        gives a solution to (\labelcref{itm:equations-1})--(\labelcref{itm:equations-4}). The only non trivial statement is $d_1 \geq 0$, which follows from $ k'(m-1) \geq (k'c_2 \mod k)$, as $k'(m-1)$ is the largest multiple of $k'$ in $\{0,\dots,k-1\}$.
        The case $\gcd(c_2,k)>1$ can be dealt with analogously.

        Hence, we assume that $c_1$ and $c_2$ are relatively prime to $k$. If
        \[ c_1+1,c_2+1,c_1+c_2\not\equiv 0\mod k, \]
        then by \cite[Theorem 2]{W64} there exists $\ell\in\NN$ such that $1\leq\ell<k$ and
        \[ \ell + (\ell c_1 \mod k) + (\ell c_2 \mod k) \leq k. \]
        Using this $\ell$, we can set 
        \[ d_1 = k - \ell - (\ell c_1 \mod k) - (\ell c_2 \mod k)\geq 0,\quad d_2 = (\ell c_1 \mod k), \quad d_3 = (\ell c_2\mod k), \]
        and get a solution to (\labelcref{itm:equations-1})--(\labelcref{itm:equations-4}). The \namecref{lem:desc-of-basis-I} follows.
    \end{proof}
    An immediate \namecref{cor:no-fund-domain} of the \namecref{lem:desc-of-basis-I} above is the fact that the witnesses of $A\ZZ^2 +b\in\Cclv$ being of level $k\geq 2$ cannot give an entire fundamental domain of $A\ZZ^2$. 
    Although \Cref{cor:no-fund-domain} is not needed for the proof of \Cref{thm:main}, we state it because it drastically reduces the possible shapes of integer-affine types of level $k\geq 2$.  
    
    \begin{corollary}\label{cor:no-fund-domain}
        Let $k\geq2$ and $A\ZZ^2 +b\in\Cclv$ be a lattice of level $k$. Assume there are $P_1,P_2,P_3\in A\ZZ^2\cap (-kb + k\tri)$ such that $a_1 \defeq P_2-P_1$ and $a_2 \defeq P_3-P_1$ form a basis of~$A\ZZ^2$. Then
        \[ P_1 + a_1 + a_2\not\in A\ZZ^2\cap(-kb + k\tri). \]
    \end{corollary}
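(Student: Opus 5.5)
Suppose for contradiction that $P_4 \defeq P_1 + a_1 + a_2 \in A\ZZ^2\cap(-kb+k\tri)$. The plan is to run the argument of \Cref{lem:desc-of-basis-I} twice. The lemma is applied to two different witness triples drawn from the four points $P_1,P_2,P_3,P_4$, which form a fundamental parallelogram of $A\ZZ^2$. The resulting congruence conditions should turn out to be incompatible modulo $k$.

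\textbf{First triple.} Apply \Cref{lem:desc-of-basis-I} to $(P_1,P_2,P_3)$ with basis $a_1,a_2$ and $P_1=c_1a_1+c_2a_2$. This gives $\gcd(c_1,k)=\gcd(c_2,k)=1$, together with at least one of
\[
c_1+1\equiv0,\qquad c_2+1\equiv 0,\qquad c_1+c_2\equiv 0 \pmod k .
\]

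\textbf{Second triple.} Apply the lemma to $(P_4,P_3,P_2)$. Here the difference vectors are $P_3-P_4=-a_1$ and $P_2-P_4=-a_2$, which again form a basis. Writing $P_4=c_1'(-a_1)+c_2'(-a_2)$ gives $c_1'=-(c_1+1)$ and $c_2'=-(c_2+1)$. The lemma then yields
\[
\gcd(c_1+1,k)=\gcd(c_2+1,k)=1,
\]
together with at least one of
\[
c_1\equiv0,\qquad c_2\equiv 0,\qquad c_1+c_2+2\equiv 0 \pmod k .
\]

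\textbf{Combining.} From the first triple, $\gcd(c_1,k)=\gcd(c_2,k)=1$ with $k\ge2$, so $c_1,c_2\not\equiv 0\pmod k$. Likewise the second triple forces $c_1+1,c_2+1\not\equiv 0\pmod k$. The first alternatives from each triple are therefore ruled out, and we must have both $c_1+c_2\equiv 0$ and $c_1+c_2+2\equiv0\pmod k$. Subtracting gives $k\mid 2$, hence $k=2$.

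\textbf{The case $k=2$.} Coprimality of $c_1$ with $2$ means $c_1$ is odd, while coprimality of $c_1+1$ with $2$ means $c_1$ is even. This is a contradiction, so $P_4\notin A\ZZ^2\cap(-kb+k\tri)$. In fact the coprimality conditions alone already give a contradiction for every even $k$.

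The main point needing care is that \Cref{lem:desc-of-basis-I} really applies to the reordered triple $(P_4,P_3,P_2)$. Its hypotheses only require the three points to lie in $A\ZZ^2\cap(-kb+k\tri)$ with difference vectors forming a basis, so the lemma is symmetric under this relabelling. Beyond that, the proof is just the bookkeeping of the coordinate change $c_i'=-(c_i+1)$.
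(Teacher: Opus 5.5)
Your proof is correct and follows the paper's own argument. You apply Lemma~\ref{lem:desc-of-basis-I} to the reordered triple $(P_4,P_3,P_2)$ with basis $-a_1,-a_2$ and coordinates $-(c_1+1),-(c_2+1)$, then combine the two sets of coprimality and congruence conditions to force $k=2$ and reach a parity contradiction; your side remark that coprimality alone already excludes every even $k$ is also correct.
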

    \begin{proof}
        Assume that $P_4=P_1 + a_1 + a_2\in A\ZZ^2\cap (-kb + k\tri)$. Then we can apply \Cref{lem:desc-of-basis-I} to the points $P_1' = P_4$, $P_2'=P_3$ and $P_3'=P_2$. We get $(P_2'-P_1') = -a_1$, $(P_3'-P_1')=-a_2$ and
        \[ P_1' = (c_1+1)a_1 + (c_2+1)a_2 = -(c_1+1)(-a_1) - (c_2+1)(-a_2). \]
        We conclude that $\gcd(-(c_1+1),k)=\gcd(-(c_2+1),k)=1$ and either $-c_1$, $-c_2$, or $-c_1-c_2-2$ are congruent to $0$ modulo $k$. These conditions contradict the ones obtained for $P_1$, $P_2$, and $P_3$. Indeed, since $k\geq 2$ the only possible modularity conditions are $c_1+c_2\equiv 0\mod k$ and $-c_1-c_2-2\equiv 0\mod k$, in case that $k=2$. However, either $c_1$ or $c_1+1$ is even and so one of the $\gcd$ conditions $\gcd(c_1,k)=1$ and $\gcd(-(c_1+1),k)=1$ yields a contradiction.
    \end{proof}

    
\section{Reduced Minima and Reduced Bases}\label{sec:reduced-minima}
    
    Consider the three unit vectors
    \[ v_1 = (0,1),\quad  v_2 = (\tfrac{\sqrt{3}}{2}, -\tfrac{1}{2}),\quad v_3=(-\tfrac{\sqrt{3}}{2}, -\tfrac{1}{2}), \]
    and define for all $i\in\{1,2,3\}$, the closed and open halfspaces 
    \[ H_i \defeq \{P\in\RR^2\st \langle v_i,P\rangle \geq 0\},  \qquad  H_i^{\circ} \defeq \{P\in\RR^2\st \langle v_i,P\rangle > 0\}. \]
    Also, define the three half-open cones
    \begin{align*}
        \cone_1 \defeq H_2\cap H_3^{\circ},\quad \cone_2 \defeq H_3\cap H_1^{\circ}\quad \cone_3 \defeq H_1\cap H_2^{\circ}.
    \end{align*}
    It is easy to see that $\cone_i \subset -H_i$ and that we have the disjoint decomposition
    \[ \RR^2\setminus\{0\} = \cone_1\sqcup -\cone_1 \sqcup \cone_2 \sqcup -\cone_2 \sqcup \cone_3 \sqcup -\cone_3. \]

    \begin{figure}[H]
        \begin{center}
            \begin{tikzpicture}[scale=1]
                \coordinate (O) at (0,0);
                
                \draw[dashed] (-2,0) -- (O);
                \draw (O) coordinate (L1_start) -- (2,0);
                \coordinate (V1_start) at ($ (L1_start)!0.5!(2,0) $);
                \draw[->, thick, blue] (V1_start) -- ++(0,0.3) coordinate (V1_end);
                \node[blue, font=\small, above] at ($ (V1_start)!0.5!(V1_end) + (0,0.1) $) {$v_1$};
                
                \draw ({-2*cos(60)}, {-2*sin(60)}) coordinate (L2_start)-- (O);
                \draw[dashed] (O) -- ({2*cos(60)}, {2*sin(60)});
                \coordinate (V2_start) at ($ (L2_start)!0.5!(O) $);
                \draw[->, thick, blue] (V2_start) -- ++(-30:0.3) coordinate (V2_end);
                \node[blue, font=\small, below right] at ($ (V2_start)!0.5!(V2_end) + (0,0.1) $) {$v_2$};
                
                \draw[dashed] ({-2*cos(120)}, {-2*sin(120)})  -- (O);
                \draw (O) coordinate (L3_start) -- ({2*cos(120)}, {2*sin(120)});
                \coordinate (V3_start) at ($ (L3_start)!0.5!({2*cos(120)}, {2*sin(120)}) $);
                \draw[->, thick, blue] (V3_start) -- ++(210:0.3) coordinate (V3_end);
                \node[blue, font=\small, below left] at ($ (V3_start)!0.5!(V3_end) + (0,0.1) $) {$v_3$};
                
                \node at (30:1.8) {$\cone_3$};
                \node at (90:1.8) {};
                \node at (150:1.8) {$\cone_2$};
                \node at (210:1.8) {};
                \node at (270:1.8) {$\cone_1$};
                \node at (330:1.8) {};
            \end{tikzpicture} 
        \end{center}

    \end{figure}

    Observe that 
    \begin{align}\label{eq:dir-identity}
        v_1 + v_2 + v_3 = 0,
    \end{align}
    and that
    \begin{align}\label{eq:dir-pair-corr}
        \langle v_i, v_j \rangle = \begin{cases} 1, &\text{ if } i = j,\\ -\tfrac{1}{2}, &\text{ if } i \neq j.\end{cases}
    \end{align} 
    We note that $\btri$ as defined in \eqref{def:eqi-triangle} satisfies
    \begin{align}\label{eq:desc-btri}
        \btri = \{P\in\RR^2\st \forall i\in\{1,2,3\}~\langle v_i,P\rangle \geq -\tfrac{1}{\sqrt{2}} \},
    \end{align}
    and so
    \begin{align}\label{eq:desc-tri}
        \tri = \{P\in\RR^2\st \forall i\in\{1,2,3\}~\langle v_i,P\rangle > -\tfrac{1}{\sqrt{2}} \}.
    \end{align}
    
    \begin{lemma}\label{eq:basics-on-vectors}
        For any $P\in\RR^2$, we have
        \begin{align}\label{eq:point-desc}
            P = \frac{2}{3}\big(\langle v_1,P\rangle v_1 +  \langle v_2,P\rangle v_2 +  \langle v_3,P\rangle v_3\big),
        \end{align}
        and
        \begin{align}
        \label{eq:eq:basics-on-vectors}
            \max_{i\in\{1,2,3\}}\{\abs{\langle v_i, P \rangle}\} \leq 2\max_{i\in\{1,2,3\}}\{\langle v_i, P \rangle\}.
        \end{align}
    \end{lemma}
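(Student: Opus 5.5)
The first identity \eqref{eq:point-desc} is a frame identity for the three unit vectors, and I would prove it by linearity. Define the linear map
\[ T(P) = \tfrac{2}{3}\sum_{i=1}^3 \langle v_i,P\rangle v_i, \]
that is, $T = \tfrac23\sum_i v_iv_i^t$. It suffices to show that $T$ is the identity on a basis, and I would check it on the basis $v_1,v_2$. Using \eqref{eq:dir-pair-corr} we get
\[ T(v_1)=\tfrac23\bigl(v_1-\tfrac12 v_2-\tfrac12 v_3\bigr). \]
By \eqref{eq:dir-identity}, $v_2+v_3=-v_1$, so $T(v_1)=\tfrac23\cdot\tfrac32 v_1=v_1$. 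The same computation, with the indices permuted, gives $T(v_2)=v_2$. Since $v_1,v_2$ are linearly independent, $T=\mathrm{id}$. Alternatively one could compute the matrix $\sum_i v_iv_i^t=\tfrac32 I$ directly from the coordinates; either route is routine.

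For \eqref{eq:eq:basics-on-vectors}, set $x_i=\langle v_i,P\rangle$. By \eqref{eq:dir-identity}, $x_1+x_2+x_3=0$, and I would use only this linear relation. Let $M=\max_i x_i$. Then $M\ge 0$, since the three numbers sum to zero.

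For any index $j$, either $x_j\ge 0$, in which case $|x_j|=x_j\le M$, or $x_j<0$. In the second case
\[ |x_j| = -x_j = \sum_{i\ne j} x_i \le 2M. \]
Hence $\max_j|x_j|\le 2M$ in both cases, which is the claim.

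There is no real obstacle here. The only point needing care is to remember that the maximum on the right-hand side of \eqref{eq:eq:basics-on-vectors} is of signed values, not absolute values. The zero-sum relation coming from \eqref{eq:dir-identity} is exactly what makes this maximum nonnegative and lets it control the negative coordinates.
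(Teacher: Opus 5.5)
Your proof is correct and follows the paper's approach. The paper derives the frame identity directly from \eqref{eq:dir-identity} and \eqref{eq:dir-pair-corr}; you make this explicit by checking it on the basis $v_1,v_2$. For the inequality, the paper uses the same zero-sum relation to bound a negative coordinate by the sum of the other two, which is at most $2\max_i\langle v_i,P\rangle$.
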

    \begin{proof}
       The equality in \eqref{eq:point-desc} follows directly from~\eqref{eq:dir-identity} and~\eqref{eq:dir-pair-corr}. 
       
       To obtain \eqref{eq:eq:basics-on-vectors}, fix $j\in\{1,2,3\}$ such that $\abs{\langle v_j,P\rangle} = \max_{i\in\{1,2,3\}}\left\{\abs{\langle v_i, P \rangle}\right\}$.
       If $\langle v_j,P\rangle \geq 0$ the statement is trivially true, so assume $\langle v_j,P\rangle < 0$. By \eqref{eq:dir-identity} we have $\langle v_1 + v_2 + v_3, P\rangle = 0$, and so it follows that 
       \[ \max_{i\in\{1,2,3\}}\{\langle v_i, P \rangle\} \geq \tfrac{1}{2} \abs{\langle v_j,P\rangle}. \]
    \end{proof}

\subsection{Reduced minima: Definition and Existence}
   
    \begin{lemma}\label{lem:red-minima}
        Let $L\subset\RR^2$ be a lattice. There is a unique triple of elements $(\rho_1(L),\rho_2(L),\rho_3(L))$ in $L$ such that for all $i\in\{1,2,3\}$
        \begin{align}\label{def:red-minima}
            \rho_i(L)\in K_i,\quad\langle v_i, \rho_i(L)\rangle = \max\{\langle v_i,P\rangle\st P\in \cone_i\cap L\} < 0
        \end{align}
        and
        \begin{align}\label{eq:sum-of-adm}
            \rho_1(L) + \rho_2(L) + \rho_3(L) = 0.
        \end{align}
        We call $\rho_i(L)$ the $i$-th reduced minimum of $L$ (with respect to $\tri$).
    \end{lemma}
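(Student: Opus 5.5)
The plan is to first find, for each $i$ separately, the set of maximizers. Then I would show that these sets can be matched so that the three chosen points sum to zero, and that this matching is forced.

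\textbf{Step 1: existence of the maxima.} By \eqref{eq:dir-identity}, for $P\in\cone_1=H_2\cap H_3^{\circ}$ we have
\[
\langle v_1,P\rangle=-\langle v_2,P\rangle-\langle v_3,P\rangle<0,
\]
and the same holds cyclically, so every value in \eqref{def:red-minima} is negative. The cone $\cone_i$ is a $60^\circ$ sector centred on $-v_i$. Hence for every $c>0$ the set
\[
\{P\in\cone_i \st \langle v_i,P\rangle\geq -c\}
\]
is bounded and contains only finitely many points of $L$. The cone $\cone_i$ contains an open sector, so $\cone_i\cap L\neq\emptyset$. Therefore the maximum $m_i<0$ is attained, and the set of maximizers
\[
M_i\defeq\{P\in\cone_i\cap L\st \langle v_i,P\rangle=m_i\}
\]
is a nonempty finite set of lattice points on a segment of the line $\langle v_i,\cdot\rangle=m_i$.

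\textbf{Step 2: a lattice-free region.} By the decomposition $\RR^2\setminus\{0\}=\bigsqcup_i(\cone_i\sqcup-\cone_i)$ and the maximality of each $m_i$, the open centrally symmetric hexagon
\[
D\defeq\{P\st \abs{\langle v_i,P\rangle}<\abs{m_i}\text{ for } i=1,2,3\}
\]
contains no point of $L$ other than $0$. For any $P\in\cone_1$ one has $\abs{\langle v_2,P\rangle},\abs{\langle v_3,P\rangle}\leq\abs{\langle v_1,P\rangle}$, so on each $\pm\cone_i$ the $i$-th inequality is the binding one. I would also record that the three lines $\langle v_i,\cdot\rangle=m_i$ bound a triangle $\Delta\ni 0$ (a translate of a dilate of $-\btri$) whose interior is free of points of $L\setminus\{0\}$ lying in $\bigcup_i\cone_i$.

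\textbf{Step 3: existence of a zero-sum triple.} Fix $P_1\in M_1$ and $P_2\in M_2$, chosen as the extreme points of $M_1$ and $M_2$ nearest to the common edge of $\cone_1$ and $\cone_2$. Put $P_3\defeq-(P_1+P_2)$. I would show that $P_3\in\cone_3$ and that $\langle v_3,P_3\rangle=m_3$.
\begin{enumerate}
\item For membership in $\cone_3$: compute $\langle v_1,P_3\rangle$ and $\langle v_2,P_3\rangle$ using \eqref{eq:dir-pair-corr} and the defining inequalities of $\cone_1$ and $\cone_2$.
\item For the inequality $\langle v_3,P_3\rangle\geq m_3$: argue by contradiction. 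If some $Q\in\cone_3\cap L$ had $\langle v_3,Q\rangle>\langle v_3,P_3\rangle$, then one of $Q+P_1$, $Q+P_2$, $-Q-P_1-P_2$ would land in $D$, or would beat $P_1$ or $P_2$ in its own cone. Either outcome contradicts Step 2 or the maximality of $m_1,m_2$.
\item The reverse inequality follows symmetrically from maximality of $m_3$.
\end{enumerate}
This compatibility step is the main obstacle, because the case analysis depends on the shape of the segments $M_i$. The first thing I would try is to work in the coordinates $(\langle v_1,\cdot\rangle,\langle v_2,\cdot\rangle,\langle v_3,\cdot\rangle)$ given by \eqref{eq:point-desc}, which lie in the plane where the three coordinates sum to zero. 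In these coordinates all the conditions become linear inequalities.

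\textbf{Step 4: uniqueness.} Suppose $(P_1,P_2,P_3)$ and $(P_1',P_2',P_3')$ are two admissible triples. Then $P_i-P_i'$ lies on the direction line $v_i^{\perp}$ for each $i$, and by \eqref{eq:sum-of-adm}
\[
\sum_i(P_i-P_i')=0.
\]
The three directions $v_1^\perp,v_2^\perp,v_3^\perp$ are pairwise independent. Pairing the $v_1^\perp$- and $v_2^\perp$-components against the $v_3^\perp$-component, a vanishing sum of one vector from each line forces all three to vanish only if they are parallel. I would combine this with the half-openness of the cones, which excludes boundary ties, and with the lattice-freeness of $D$ from Step 2, to conclude that all three differences are zero.
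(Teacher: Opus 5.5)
\textbf{Step~3 (existence).} This step is the heart of the lemma, and it is not a proof; with the choice you specify, the construction actually fails. Measuring angles counterclockwise from the positive $x$-axis, $K_3=[0^\circ,60^\circ)$, $K_2=[120^\circ,180^\circ)$ and $K_1=[240^\circ,300^\circ)$. No two of these share an edge: for instance $K_1$ and $K_2$ are separated by $-K_3=[180^\circ,240^\circ)$. So ``the common edge of $K_1$ and $K_2$'' is undefined.

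Take the natural reading, namely the ends of $M_1$ and $M_2$ facing $-K_3$, and let $L=\ZZ(1,0)\oplus\ZZ(\tfrac{3}{10},-10)$. Then:
$M_3=\{(1,0)\}$ with $m_3=-\tfrac{\sqrt3}{2}$;
$M_2=\{(-6.3,10)\}$;
$M_1$ consists of the points $(\tfrac{3}{10}+n,-10)$ lying in $K_1$, running from $(-5.7,-10)$ to $(5.3,-10)$.
Your choice $P_1=(-5.7,-10)$ gives $-(P_1+P_2)=(12,0)\in K_3$, but $\langle v_3,(12,0)\rangle=-6\sqrt3\neq m_3$. Moreover, for $Q=(1,0)$ none of the alternatives you list in (2) occurs. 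The admissible triple uses the other end $(5.3,-10)$.

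Sub-step (1) also does not follow from the cone inequalities. The quantity $\langle v_1,P_1+P_2\rangle=m_1+\langle v_1,P_2\rangle$ has no determined sign. As in the paper's Case~2, you need $P_1+P_2\in H_3^\circ$ together with maximality to exclude $K_1\sqcup K_2$. When $\langle v_2,P_1\rangle=0$, you additionally need a tie-breaking choice of $P_2$.

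The missing idea is the identity
$\langle v_1,-(X_2+X_3)\rangle+\langle v_2,-(X_3+X_1)\rangle+\langle v_3,-(X_1+X_2)\rangle=m_1+m_2+m_3$
for $X_i\in M_i$, which follows from $v_1+v_2+v_3=0$. Once each $-(X_i+X_j)$ is known to lie in $K_k$, for $\{i,j,k\}=\{1,2,3\}$, it forces equality in all three inequalities $\langle v_k,-(X_i+X_j)\rangle\le m_k$. The paper applies this when all $M_i$ are singletons and treats the remaining case by an explicit construction.

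Your unified approach can be rescued by a cyclic choice: take $X_1\in M_1$, $X_2\in M_2$, $X_3\in M_3$ minimising $\langle v_3,\cdot\rangle$, $\langle v_1,\cdot\rangle$, $\langle v_2,\cdot\rangle$ respectively. Then each $-(X_i+X_j)$ lands in $K_k$, the identity gives $-(X_1+X_2)\in M_3$, and the triple $(X_1,X_2,-(X_1+X_2))$ is admissible. None of this is in your proposal.

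\textbf{Step~4 (uniqueness).} This is also not an argument. Three vectors $w_i\in v_i^\perp$ with $w_1+w_2+w_3=0$ need not vanish: the unit vectors at $0^\circ,120^\circ,240^\circ$ lie on these three lines and sum to zero. Linear algebra therefore only shows that either all $w_i=0$, or all three $w_i$ are nonzero, which means every $M_i$ has at least two points.

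Excluding the second alternative is the real content of uniqueness. The paper does it in Case~1, by showing that if one $R_i$ has two elements, then the other two are singletons. With your own Step~2 the argument is short:
two distinct points of $M_i$ lie in the half-open segment $K_i\cap\{\langle v_i,\cdot\rangle=m_i\}$, of length $2|m_i|/\sqrt3$;
so their difference $d\in L\setminus\{0\}$ satisfies $\langle v_i,d\rangle=0$ and $|\langle v_j,d\rangle|=\tfrac{\sqrt3}{2}|d|<|m_i|$ for $j\neq i$;
for the index $i$ with $|m_i|$ minimal this puts $d\in D$, contradicting Step~2.
Hence that $M_i$ is a singleton, so $w_i=0$ and then all $w_i=0$. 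You need to supply such an argument rather than appeal vaguely to half-openness and to $D$.
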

    \begin{remark}
        For the proof of \Cref{thm:main}, it would suffice to establish the existence of reduced minima for almost every lattice $L\subset\RR^2$. Indeed, for almost every lattice, $L\cap\RR v_i=\{0\}$ for each $i=1,2,3$, and hence \eqref{def:red-minima} uniquely determines $\rho_i(L)$. The identity \eqref{eq:sum-of-adm} then follows from Case~2 of the proof below. For completeness, however, we establish \Cref{lem:red-minima} for all lattices.
    \end{remark}
    \begin{proof}   
        The existence of the reduced minima of $L$ is established as follows. Since $L$ is a lattice, $\cone_i\cap L$ is not empty for all $i\in\{1,2,3\}$.

        \medskip
        
        \textbf{Case 1:} There is an index $i\in\{1,2,3\}$ for which the set
        \[ R_i \defeq \left\{P\in \cone_i\cap L\st \langle v_i, P\rangle = \max\{\langle v_i,P'\rangle\st P'\in \cone_i\cap L\}\right\} \]
        has more than one element.

        It will become apparent from the proof below that in this case this index is unique. For ease of exposition, assume that $R_1$ contains two elements $P\neq Q$. Then, $\langle v_1, P-Q\rangle =0$ and by \eqref{eq:dir-identity} we have $\langle v_2, P-Q \rangle = -\langle v_3, P-Q \rangle \neq 0$. By exchanging $P$ and $Q$ if necessary, we may assume $\langle v_2,P-Q \rangle > 0$ and $\langle v_3,P-Q \rangle < 0$. Since $\cone_1$ is convex we have $\{Q + t(P-Q) \st t\in [0,1]\}\cap L\subset \cone_1$ and so we may further assume that $P-Q\in L$ is primitive. Pick the unique $m\in\NN$ with
        \[ -(m-1)\langle v_3,P-Q \rangle < \langle v_3, Q \rangle \leq -m\langle v_3,P-Q \rangle \]
        and define $Q'\defeq Q + m(P-Q)$. We claim that $R_2 = \{-Q'\}$ and $R_3 = \{P-Q\}$.
        
        First, we show that $R_3 = \{P-Q\}$. From
        \[ \langle v_1, P-Q \rangle=0, \quad \langle v_2, P-Q \rangle > 0\quad \text{and}\quad \langle v_3, P-Q \rangle < 0 \]
        it follows that $P-Q\in \cone_3\cap L$. Assume there is an element $P'\neq P-Q \in \cone_3\cap L$ such that $0 > \langle v_3, P'\rangle \geq \langle v_3,P-Q\rangle$. This implies that
        \[ \langle v_3, P' + Q \rangle \geq \langle v_3, P \rangle  > 0, \]
        where the last inequality holds as $P\in \cone_1 \subset H_3^{\circ}$, as well as
        \[ \langle v_2, P' + Q \rangle = \langle v_2, P' \rangle  + \langle v_2, Q \rangle > 0, \]
        as $P'\in \cone_3\subset H_2^\circ$ and $Q\in \cone_1\subset H_2$, so that $P' + Q\in \cone_1\cap L$ follows. Thus, we have
        \begin{align*}
            \langle v_1, P' + Q \rangle = \langle v_1, P' \rangle + \langle v_1, Q \rangle <0
        \end{align*}
        and either $\langle v_1, P' \rangle = 0$, which contradicts primitivity of $P-Q$ (as $P'$ is a lattice element contained in $\{t(P-Q)\st t\in[0,1)\}$), or $\langle v_1, P' \rangle > 0$, which is a contradiction to $Q\in R_1$. Thus, we get $R_3 = \{P-Q\}$.
        
        Next, we show that $R_2 = \{-Q'\}$. Notice that $-Q'\in \cone_2\cap L$ follows from
        \begin{align*}
            &\langle v_1, -Q'\rangle = -\langle v_1, Q\rangle - m\langle v_1, P-Q\rangle = -\langle v_1, Q\rangle > 0,\\
            &\langle v_2, -Q'\rangle = -\langle v_2, Q\rangle - m\langle v_2,P-Q\rangle < 0,\\
            &\langle v_3, -Q'\rangle = -\langle v_3, Q\rangle + m\langle v_3,Q-P\rangle \geq 0.
        \end{align*}
        The first inequality follows as $Q\in \cone_1$, the second as $Q\in \cone_1\subset H_2$ and $P-Q\in \cone_3\subset~H_2^{\circ}$ and the third by definition of $m$. Assume there is an element $P'\neq -Q' \in \cone_2\cap L$ such that $0 > \langle v_2,P'\rangle \geq \langle v_2, -Q'\rangle$. This implies that
        \[ \langle v_2, Q + P' + Q'\rangle = \langle v_2, Q \rangle + \langle v_2, P' + Q'\rangle \geq 0 \]
        as $Q\in \cone_1\subset H_2$. Further, notice that $Q' = (m-1)(P-Q) + P$ so we get
        \[ \langle v_3, Q + P' + Q'\rangle = \langle v_3, Q + (m-1)(P-Q) \rangle + \langle v_3, P' + P\rangle > 0 \]
        as $Q + (m-1)(P-Q)\in \cone_1\subset H_3^\circ$ by the choice of $m$, $P'\in \cone_2\subset H_3$ and $P\in \cone_1\subset H_3^\circ$. It follows that $Q + P' + Q' \in \cone_1\cap L$ and so
        \[ \langle v_1, Q \rangle > \langle v_1, Q + P' + Q' \rangle \Leftrightarrow \langle v_1, -P' \rangle > \langle v_1, Q' \rangle = \langle v_1, Q \rangle \]
        by the definition of $Q$ (the strict inequality holds by primitivity of $P-Q$ and the fact that $-Q' + (P-Q)\not\in H_3$). Now pick the unique $n\in\NN$ with
        \[ (n-1)\langle v_3, Q-P \rangle \leq \langle v_3, P' \rangle < n\langle v_3,Q-P \rangle \]
        so that $n(Q-P) - P' \in H_3^\circ$. Since
        \[ \langle v_1, n(Q-P) - P' \rangle = \langle v_1,- P' \rangle > \langle v_1, Q \rangle \]
        we cannot have $n(Q-P) - P' \in \cone_1 = H_2\cap H_3^\circ$ and thus $n(Q-P) - P' \not\in H_2$. It follows that $P' + n(P-Q) \in H_2^\circ$.
        Finally, observe that
        \[ \langle v_1, P' + n(P-Q) \rangle = \langle v_1, P' \rangle > 0, \]
        so we obtain $P' + n(P-Q)\in \cone_3=H_1\cap H_2^\circ$ and in particular $P' + n(P-Q)\neq P-Q$ as $\langle v_1, P' + n(P-Q)\rangle > 0$, but $\langle v_1, P-Q\rangle = 0$. However, then
        \[ \langle v_3, P' + n(P-Q) \rangle = \langle v_3, P' + (n-1)(P-Q) \rangle + \langle v_3, P-Q \rangle \geq \langle v_3, P-Q\rangle \]
        is a contradiction to $R_3 = \{P-Q\}$. Thus, $P'$ cannot exist so $R_2 = \{-Q'\}$ and the claim follows.  
        In this case, the reduced minima are
        \[ (\rho_1(L),\rho_2(L),\rho_3(L)) = (Q' - (P-Q), -Q', P-Q) \]
        and they satisfy $\rho_1(L) + \rho_2(L) + \rho_3(L) = 0$. This finishes the proof of existence in the case that one of the sets $R_1,R_2,R_3$ has more than one element. 

        \medskip
        \textbf{Case 2:} For all $i\in\{1,2,3\}$, $R_i$ contains a single element, which we denote by $\rho_i$.
        
        We claim that $\rho_1 + \rho_2 + \rho_3 = 0$, which shows that $(\rho_1(L), \rho_2(L), \rho_3(L))$ is well-defined. Notice that $\rho_1 +\rho_2\in H_3^{\circ}$ and since $\rho_1\in H_2$ and $\rho_2\in H_1^{\circ}$ we have
        \[ \langle v_2, \rho_1+\rho_2\rangle \geq \langle v_2,\rho_2\rangle,\quad\text{and}\quad \langle v_1, \rho_1+\rho_2\rangle > \langle v_1,\rho_2\rangle. \]
        It follows that $\rho_1+\rho_2\not\in \cone_1\sqcup \cone_2$, otherwise we get a contradiction to the definitions of $\rho_1$ or $\rho_2$. Hence, $\rho_1+\rho_2\in H_3^{\circ}\setminus(\cone_1\sqcup \cone_2) = -\cone_3$ and so $-(\rho_1+\rho_2)\in \cone_3$. Thus, either $\rho_3 = -(\rho_1+\rho_2)$ and we are done, or 
        \[ \langle v_3, \rho_3\rangle > \langle v_3, -(\rho_1 +\rho_2)\rangle. \]
        If $\rho_1+\rho_2+\rho_3\neq 0$, one can analogously obtain
        \[ \langle v_2, \rho_2\rangle > \langle v_2, -(\rho_1 +\rho_3)\rangle,\quad\text{and}\quad \langle v_1, \rho_1\rangle > \langle v_1, -(\rho_2 +\rho_3)\rangle. \]
        Summing up all these inequalities, we get
        \begin{align*}
             \langle v_1, \rho_1\rangle + \langle v_2, \rho_2\rangle + \langle v_3, \rho_3\rangle &> \langle v_1, -(\rho_2 +\rho_3)\rangle + \langle v_2, -(\rho_1 +\rho_3)\rangle + \langle v_3, -(\rho_1 +\rho_2)\rangle\\
             &= \langle v_2 + v_3, -\rho_1\rangle + \langle v_1 + v_3, -\rho_2\rangle + \langle v_1 + v_2, -\rho_3\rangle\\
             &= \langle v_1, \rho_1\rangle + \langle v_2, \rho_2\rangle + \langle v_3, \rho_3\rangle,
        \end{align*}
        where we used \eqref{eq:dir-identity}. This is a contradiction and so $\rho_1+\rho_2+\rho_3~=0$ follows.
       
    \end{proof}

\subsection{Reduced bases in $\tri$}

    \begin{definition}\label{def:red-basis}
        We call a basis $\{a_1,a_2\}$ of a lattice $L\subset\RR^2$ reduced, if
        \[ \{a_1,a_2\}\subset\{\pm\rho_1(L),\pm\rho_2(L),\pm\rho_3(L)\}. \]
    \end{definition}
    Observe that there are at most $12$ reduced bases for any lattice $L\subset\RR^2$.
    
    \begin{lemma}\label{lem:red-basis-in-tri}
        Let $L\subset\RR^2$ be a lattice. If for some $r>0$ and $s\in\RR^2$, the elements in $(-s + r\tri)\cap L$ generate the lattice $L$, then $(-s + r\tri)\cap L$ contains three points $P_1,P_2$ and $P_3$ such that their differences $\{P_2-P_1,P_3-P_1\}$ form a reduced basis.
    \end{lemma}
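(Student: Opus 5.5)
The plan is to exploit the special shape of $\tri$. By \eqref{eq:desc-tri}, the set $T\defeq -s+r\tri$ is cut out by three strict inequalities $\langle v_i,P\rangle>c_i$ for $i\in\{1,2,3\}$. Put $S\defeq T\cap L$.

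The basic observation concerns translating by a reduced minimum. From $\cone_1=H_2\cap H_3^\circ$ and its cyclic analogues, we have $\langle v_j,\rho_i(L)\rangle\ge 0$ for $j\neq i$. Hence translating a point of $T$ by $\rho_i(L)$ can only violate the $i$-th inequality. Now suppose $P,P'\in S$ and $d\defeq P'-P\in\cone_i$. By \eqref{def:red-minima} we have $\langle v_i,d\rangle\le\langle v_i,\rho_i(L)\rangle<0$, so
\[
\langle v_i,P+\rho_i(L)\rangle\ \ge\ \langle v_i,P'\rangle\ >\ c_i .
\]
Therefore $P+\rho_i(L)\in S$. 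Since every nonzero vector lies in some $\pm\cone_i$, after possibly swapping $P$ and $P'$ we get: every pair of distinct points of $S$ produces a pair of points of $S$ differing by some reduced minimum. The analogous statement for $-\rho_i(L)$ follows by applying the same argument starting from $P'$.

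Next I would show that any two of $\rho_1(L),\rho_2(L),\rho_3(L)$ form a basis of $L$. Using $\rho_1+\rho_2+\rho_3=0$ from \eqref{eq:sum-of-adm}, it suffices to show that the closed triangle with vertices $0$, $\rho_i$, $-\rho_k$ contains no lattice points other than its vertices. A lattice point in that triangle would be written as a nonnegative combination of $\rho_i$ and $-\rho_k$ with coefficients summing to at most $1$. I would check, using the signs of $\langle v_j,\cdot\rangle$ on the cones, that such a point lies in some $\cone_j$ and strictly beats $\rho_j$ in \eqref{def:red-minima}, which is a contradiction. This argument mirrors Case~2 of the proof of \Cref{lem:red-minima}.

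Having a pair $Q,\ Q+\rho_i\in S$ (say $i=1$), it remains to find a third point $Q'\in S$ with $Q'-Q\in\{\pm\rho_2,\pm\rho_3\}$, or $Q'-(Q+\rho_1)$ of that form. Then the triple has differences $\{\rho_1,\pm\rho_j\}$ up to relabelling, which is a reduced basis by the previous step. Since $\rho_1$ is primitive and $S$ generates $L$, the lattice lines parallel to $\rho_1$ that meet $S$ are consecutive. So there is a line adjacent to one containing a point of $S$ that itself contains a point of $S$. Call these points $Q$ (on the first line) and $R$ (on the adjacent line). Using the translation observation repeatedly, we may slide $Q$ and $R$ by $\pm\rho_1$ inside $S$ while the relevant inequality holds. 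I then want to arrange $R-Q\in\{\pm\rho_2,\pm\rho_3\}=\{\pm\rho_2,\mp(\rho_1+\rho_2)\}$. The vectors joining adjacent lines are exactly $\pm\rho_2+\ZZ\rho_1$. The point is that, among these, the ones lying in $T-T$ closest to the segment determined by $\rho_1$ must be $\pm\rho_2$ or $\pm\rho_3$. I would prove this by applying the translation observation to $R-Q$: if $R-Q=\rho_2+m\rho_1$ with $m\notin\{0,-1\}$, then $R-Q$ lies in $\cone_1$ or $-\cone_1$ (or in a cone where comparison with $\rho_2,\rho_3$ applies). This allows $R$ to be replaced by $R\mp\rho_1\in S$, decreasing $|m|$.

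I expect this last reduction to be the main obstacle. Controlling which cone $\rho_2+m\rho_1$ falls in for each $m$, and verifying that the translated point stays inside the open triangle, requires careful sign bookkeeping with \eqref{eq:dir-pair-corr}. I also expect to need the degenerate Case~1 situation of \Cref{lem:red-minima}, where some $\rho_i$ is orthogonal to $v_j$, to be handled separately, with non-strict inequalities.
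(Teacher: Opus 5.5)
Your architecture matches the paper's. The translation observation is exactly its first step, and your vectors $\rho_2+m\rho_1$ joining adjacent lines play the role of its $P_3-P_1=-\rho_e+c_2\rho_i$, whose cone the paper then analyses in four cases. (Your plan to show that two reduced minima form a basis is sound, and it supplies a fact the paper only ``recalls''.) The gap is that you leave that case analysis undone, and the two claims you put in its place are false.

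First, if ``the analogous statement for $-\rho_i(L)$'' means $P'-\rho_i\in S$, it fails. Translating by $-\rho_i$ lowers $\langle v_j,\cdot\rangle$ by $\langle v_j,\rho_i\rangle\ge 0$ for $j\ne i$, and nothing controls this. Here is an example; triples with sum $0$ determine vectors by \eqref{eq:point-desc}. Let $L$ be spanned by $\rho,d$ with $(\langle v_j,\rho\rangle)_{j=1,2,3}=(-2,0,2)$ and $(\langle v_j,d\rangle)_{j=1,2,3}=(-4,3,1)$. Then $\rho=\rho_1(L)$ and $d\in\cone_1$. The set $T=\{x:\langle v_1,x\rangle>-5,\ \langle v_2,x\rangle>-\tfrac12,\ \langle v_3,x\rangle>-\tfrac12\}$ is of the form $-s+r\tri$ and contains $0,\rho,d$, so $S$ generates $L$. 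Yet $(\langle v_j,d-\rho\rangle)_j=(-2,3,-1)$, so $d-\rho\notin T$. Thus in the sliding step you may only move the \emph{tail} of the difference by $+\rho_1$: $Q\mapsto Q+\rho_1$ if $R-Q\in\cone_1$, and $R\mapsto R+\rho_1$ if $R-Q\in-\cone_1$, never $R\mapsto R-\rho_1$.

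Second, $\rho_2+m\rho_1$ need not lie in $\pm\cone_1$ away from the good values. The good values are $m\in\{0,1\}$, since $\rho_1+\rho_2=-\rho_3$, not $m\in\{0,-1\}$. In the lattice above, $(\langle v_j,\rho_2(L)\rangle)_j=(2,-3,1)$, so $\langle v_2,\rho_2+m\rho_1\rangle=-3$. Hence for every $m\ge2$ the vector lies in $-\cone_3$ rather than $\pm\cone_1$, and no $\rho_1$-slide is available.

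The missing moves are these:
\begin{itemize}
\item If $R-Q\in-\cone_3$, then $R+\rho_3=Q+(m-1)\rho_1\in S$; it lies on $Q$'s line at difference $-\rho_3$ from $R$.
\item If $R-Q\in\cone_2$ (possible for $m\le-1$), then $Q+\rho_2\in S$.
\item One must also verify, from $\rho_3=-\rho_1-\rho_2\in\cone_3$, that $\rho_2+m\rho_1$ never lies in $\cone_3$ or $-\cone_2$, and that the slides drive $m$ monotonically into $\{0,1\}$.
\end{itemize}
This bookkeeping is precisely the paper's Cases 1--4, i.e.\ the proof itself.

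Finally, to close the triple you need a $\rho_1$-neighbour of the endpoint on $Q$'s line. So $Q$ must stay on the line $\ell_0$ of your original pair, where $S$ is a run of at least two consecutive points. The existence of a point of $S$ on a line adjacent to $\ell_0$ does not follow from ``$\rho_1$ primitive and $S$ generates $L$''. For instance, the convex lattice set $\{(0,0),(1,0),(1,1),(2,3)\}$ meets $y=0,1,3$ but not $y=2$. It does follow from convexity: triangulate the lattice triangle $Q,Q+\rho_1,R_0$, with $R_0\in S\setminus\ell_0$, into unimodular triangles. The one on the primitive edge $[Q,Q+\rho_1]$ has its third vertex in $S$ on an adjacent line. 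The paper avoids this issue by starting from three points of $S$ whose differences already form a basis.
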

    \begin{proof}
        First, notice that since the elements in $(-s + r\tri)\cap L$ generate the lattice $L$, there exist $P_1,P_2,P_3\in (-s + r\tri)\cap L$ such that $\{P_2-P_1,P_3-P_1\}$ is a basis of $L$.
        
        Up to relabeling $P_1,P_2$ and $P_3$, we may assume that $P_2-P_1\in\cone = \cone_1\sqcup\cone_2\sqcup\cone_3$, say $P_2-P_1\in\cone_i$. 
        By definition of $\rho_i(L)$, we have that $0 > \langle v_i,\rho_i(L)\rangle \geq \langle v_i, P_2-P_1\rangle$. We claim that $P_1+\rho_i(L)\in (-s + r\tri)$. Indeed, we have
        \[ \langle v_i,s + P_1 + \rho_i(L)\rangle \geq \langle v_i,s + P_1 \rangle + \langle v_i, P_2-P_1\rangle = \langle v_i,s + P_2 \rangle > -\frac{r}{\sqrt{2}} \]
        since $P_2\in -s + r\tri$. Further, for $i\neq j\in\{1,2,3\}$ we have $\langle v_j,\rho_i(L)\rangle \geq 0$ and so
        \[ \langle v_j,s + P_1 + \rho_i(L)\rangle = \langle v_j,s + P_1 \rangle + \langle v_j, \rho_i(L)\rangle \geq \langle v_j,s + P_1 \rangle > -\frac{r}{\sqrt{2}} \]
        since $P_1\in -s + r\tri$. It follows that $P_1+\rho_i(L)\in (-s + r\tri)\cap L$ as claimed.

        Now one of $P_2-P_1$ and $P_3-P_1$ is not collinear with $\rho_i(L)$, thus at least one of $\{P_3-P_1,\rho_i(L)\}$ and $\{P_2-P_1,\rho_i(L)\}$ spans a finite index sublattice of $L$. By convexity of $(-s + r\tri)$ we can find a point $P\in(-s + r\tri)\cap L$ in the triangle spanned by $P_1$, $P_1+\rho_i(L)$ and $P_2$, or $P_3$, respectively, such that $P-P_1$ and $\rho_i(L)$ form a basis of $L$. To sum up, we may assume that we have $P_1,P_2,P_3\in(-s + r\tri)\cap L$, such that $\{P_2-P_1,P_3-P_1\}$ forms a basis of $L$ and that $P_2-P_1 = \rho_i(L)$ for some $i\in\{1,2,3\}$. We will now distinguish four cases
        \[ P_3-P_1 \in (K_j\sqcup K_{j'})\sqcup (-K_j\sqcup -K_{j'})\sqcup -K_i\sqcup K_i = \RR^2\setminus\{0\} \]
        for $j\neq j'\in\{1,2,3\}\setminus\{i\}$.
        
        Recall that for $e\in\{j,j'\}$ the set $\{\rho_i(L),\rho_e(L)\}$ is a basis of $L$ and that $\rho_{j'}(L) = -\rho_i(L) - \rho_j(L)$. Since $\{\rho_i(L),P_3-P_1\}$ is a basis  as well, we have
        \[ P_3 - P_1 = c_1\rho_e(L) + c_2\rho_i(L) \]
        for $c_1\in\{\pm1\}$ and some $c_2\in\ZZ$. Up to replacing $e\in\{j,j'\}$ with the unique element in $\{j,j'\}\setminus\{e\}$ and using $\rho_{j'}(L) = -\rho_i(L) - \rho_j(L)$ we may assume that $c_1=-1$.
        
        \textbf{Case 1:} $P_3-P_1\in K = K_j\sqcup K_{j'}$, say $P_3-P_1\in K_j$. 
        
        By the analogous argument as for $\rho_i(L)$, we deduce that $P_1+\rho_j(L)\in(-s+r\tri)\cap L$. However, then $P_1', P_2'\defeq P_1+\rho_i(L),P_3'\defeq P_1+\rho_j(L)\in(-s+r\tri)\cap L$ is a triple of points for which $\{P_2'-P_1',P_3'-P_1'\}\subseteq\{\pm\rho_1(L),\pm\rho_2(L),\pm\rho_3(L)\}$.

        \textbf{Case 2:} $P_3-P_1\in -K_j\sqcup -K_{j'}$, say $P_3-P_1\in -K_j$.

        We claim that $e=j$. Indeed, if $c_2 \geq 0$, then $P_3-P_1\in -K_j \subset -H_{j'}$ and $\rho_i(L)\in H_{j'}$ implies
        \[ 0 \geq \langle v_{j'}, P_3-P_1 \rangle = \langle v_{j'},-\rho_e(L)\rangle + c_2\langle v_{j'},\rho_i(L)\rangle \geq \langle v_{j'},-\rho_e(L)\rangle, \]
        and so $e=j$ as claimed. On the other hand, if $c_2 < 0$, then $P_3-P_1\in -K_j \subset H_j$ and $\rho_i(L)\in H_j$ implies
        \[ 0 \leq \langle v_j, P_3-P_1 \rangle = \langle v_j,-\rho_e(L)\rangle + c_2\langle v_j,\rho_i(L)\rangle \leq \langle v_j,-\rho_e(L)\rangle, \]
        and so $e\neq j'$, that is $e=j$ as claimed.
        
        We have $P_3-P_1\in -K_j$, and so $P_1-P_3\in K_j$. By the analogous argument as for $\rho_i(L)$, we deduce that $P\defeq P_3+\rho_j(L)= P_1 + c_2\rho_i(L)\in(-s+r\tri)\cap L$. By convexity of $(-s+r\tri)$ and since $P_1,P\in(-s+r\tri)$ we have that $P -\sgn(c_2)\rho_i(L)\in(-s+r\tri)\cap L$. Thus, $P_1'\defeq P, P_2'\defeq P-\sgn(c_2)\rho_i(L)$ and $P_3'\defeq P_3=P+\rho_e(L)$ forms a basis of $L$ for which $\{P_2'-P_1',P_3'-P_1'\}\subseteq\{\pm\rho_1(L),\pm\rho_2(L),\pm\rho_3(L)\}$.        
        
        \textbf{Case 3:} $P_3-P_1\in -K_i$.

        We claim that $c_2<0$. Indeed, if $c_2\geq 0$ then $P_3-P_1\in -K_i \subset H_i^{o}$ implies
        \[ 0 < \langle v_i, P_3 - P_1\rangle = \langle v_i, -\rho_e(L) + c_2\rho_i(L)\rangle \leq \abs{\langle v_i, \rho_e(L)\rangle} + \langle v_i,\rho_i(L)\rangle \leq 0, \]
        which is a contradiction, so $c_2 < 0$. However, then $P_3 - c_2\rho_i(L) = P_1  - \rho_e(L)$ is in $(-s+r\tri)$. Indeed, we have
        \[ \langle v_i, s + P_3 - c_2\rho_i(L) \rangle = \langle v_i, s + P_1\rangle - \langle v_i,\rho_e(L)\rangle \geq \langle v_i, s + P_1\rangle +\langle v_i,\rho_i(L)\rangle = \langle v_i,P_2\rangle \geq  -\frac{r}{\sqrt{2}}, \]
        since $P_2 \in (-s+r\tri)$ and as $\rho_i(L)\in K_i\subset H_j\cap H_{j'}$ we have
        \[ \langle v_j, s+ P_3 - c_2\rho_i(L) \rangle = \langle v_j, s+ P_3\rangle - c_2\langle v_j,\rho_i(L)\rangle \geq \langle v_j,s+P_3\rangle -\frac{r}{\sqrt{2}}, \]
        as well as
        \[ \langle v_{j'}, s+ P_3 - c_2\rho_i(L) \rangle = \langle v_{j'}, s+ P_3\rangle - c_2\langle v_{j'},\rho_i(L)\rangle \geq \langle v_{j'},s+P_3\rangle -\frac{r}{\sqrt{2}}, \]
        since $P_3 \in (-s+r\tri)$. The differences of $P_1'\defeq P_1, P_2'\defeq P_1+\rho_i(L)$ and $P_3'\defeq P_1-\rho_e(L)$ form a basis of $L$ such that $\{P_2'-P_1',P_3'-P_1'\}\subseteq\{\pm\rho_1(L),\pm\rho_2(L),\pm\rho_3(L)\}$.

        \textbf{Case 4:} $P_3-P_1\in K_i$.
        
        We claim that $c_2>0$. Indeed, if $c_2<0$ then $P_3-P_1\in K_i \subset -H_i^{o}$ implies
        \[ 0 > \langle v_i, P_3 - P_1\rangle = \langle v_i, -\rho_e(L) + c_2\rho_i(L)\rangle \geq \langle v_i, -\rho_e(L)-\rho_i(L)\rangle \geq 0, \]
        since $-\rho_e(L)-\rho_i(L)\in H_i$, which is a contradiction, so $c_2 \geq 0$.
        If $c_2 = 0$ then $P_3-P_1 = -\rho_e(L) \in -K_e$ which is a contradiction to $P_3-P_1\in K_i$.
        
        By definition of the reduced minima of $L$ we can pick $1\leq c_2'\leq c_2$ such that 
        \[ P_1' \defeq P_1 + c_2'\rho_i(L)\in(-s+r\tri)\cap L \text{ and } P_3-P_1' \in K_j\sqcup K_{j'}. \]
        Say $P_3-P_1'\in K_j$, then by the analogous argument as for $\rho_i(L)$, we get $P_1' + \rho_j(L)\in(-s+r\tri)\cap L$. Thus,  the differences of $P_1', P_2'\defeq P_1'-\rho_i(L)$ and $P_3'\defeq P_1'+\rho_j(L)$ form a basis of $L$ for which $\{P_2'-P_1',P_3'-P_1'\}\subseteq\{\pm\rho_1(L),\pm\rho_2(L),\pm\rho_3(L)\}$. Notice that $P_2'=P_1+(c_2'-1)\rho_i(L)\in (-s+r\tri)\cap L$, by convexity of $-s+r\tri$ and since $P_1,P_1+c_2'\rho_i(L)\in-s+r\tri$.

        This finishes the proof of the \namecref{lem:red-basis-in-tri}.
    \end{proof}

    
\section{Auxiliary results}\label{sec:auxiliary-results}

\subsection{Geometry of triangles}
    For a finite set of points $\mathcal{P}\subseteq\RR^2$ we define 
    \begin{align}
        \label{eq:def delta P}
        \tri(\mathcal{P}) \defeq \bigcap_{P\in\mathcal{P}}(-P+\tri),
    \end{align}
    as well as
    \[ s(\mathcal{P}) \defeq \max\left\{0, 1 -  \frac{\sqrt{2}}{3}\left(\sum_{i=1}^3 \nu_i(\mathcal{P})\right)\right\},\quad\text{and}\quad C(\mathcal{P}) \defeq \frac{2}{3}\left(\sum_{i=1}^3 \nu_i(\mathcal{P})v_i \right) \]
    where 
    \begin{align}
        \label{eq:def nu i}
        \nu_i(\mathcal{P}) \defeq \max_{P\in\mathcal{P}}\{\langle v_{i},-P\rangle\}
    \end{align}
    for all $i\in\{1,2,3\}$. Note that $s(\mathcal{P})$ is translation invariant and $C(\mathcal{P})$ is translation equivariant, \ie 
    \[ s(\mathcal{P} - P) = s(\mathcal{P}), \qquad C(\mathcal{P}-P) = C(\mathcal{P}) - P, \] 
    for any $P\in\RR^2$.
    
    \begin{lemma}\label{lem:intersec-tri}
        Let $\mathcal{P}\subseteq\RR^2$ be a finite set of points. Then $\tri(\mathcal{P})\neq\emptyset$ if and only if $s\defeq s(\mathcal{P})>0$. In this case, $\tri(\mathcal{P}) = C+s\tri$ is an equilateral triangle with center $C\defeq C(\mathcal{P})\in\RR^2$.
        Moreover, the Lebesgue measure of $\tri(\mathcal{P})$ is given by 
        \[ \Leb(\tri(\mathcal{P})) = \frac{3\sqrt{3}}{2}s(\mathcal{P})^2. \]
    \end{lemma}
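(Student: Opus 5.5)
The plan is to rewrite $\tri(\mathcal{P})$ as a system of three linear inequalities and identify it with a translated, rescaled copy of $\tri$. By \eqref{eq:desc-tri}, $Q\in -P+\tri$ if and only if $\langle v_i,Q\rangle > -\tfrac{1}{\sqrt{2}} + \langle v_i,-P\rangle$ for all $i\in\{1,2,3\}$. Intersecting over $P\in\mathcal{P}$ gives
\[ \tri(\mathcal{P}) = \Big\{Q\in\RR^2 \st \forall i\in\{1,2,3\}~\langle v_i,Q\rangle > \nu_i - \tfrac{1}{\sqrt{2}}\Big\}, \qquad \nu_i\defeq\nu_i(\mathcal{P}). \]
So it remains to analyse a set of the form $\{Q\st \langle v_i,Q\rangle > \beta_i\}$ with $\beta_i=\nu_i-\tfrac{1}{\sqrt2}$.

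The key step is a change of variables. Put $C\defeq \tfrac23\sum_i\nu_iv_i$. By \eqref{eq:dir-pair-corr} one gets $\langle v_j,C\rangle = \tfrac23(\nu_j-\tfrac12\sum_{i\neq j}\nu_i) = \nu_j - \tfrac13\sigma$, where $\sigma\defeq\nu_1+\nu_2+\nu_3$. Substituting $Q=C+R$, the conditions become $\langle v_i,R\rangle > \tfrac13\sigma-\tfrac{1}{\sqrt2} = -\tfrac{s'}{\sqrt2}$ for all $i$, where $s'\defeq 1-\tfrac{\sqrt2}{3}\sigma$. The condition on $R$ has the same right-hand side for every $i$.

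There are two cases.
\begin{itemize}
\item If $s'>0$, then by \eqref{eq:desc-tri} this set is exactly $s'\tri$. Hence $\tri(\mathcal{P}) = C+s'\tri$ with $s'=s(\mathcal{P})$. Since $\tri$ is symmetric under rotation by $2\pi/3$, it is an equilateral triangle centred at the origin, so $C+s\tri$ is centred at $C$.
\item If $s'\le 0$, summing the three inequalities and using \eqref{eq:dir-identity} gives $0=\langle v_1+v_2+v_3,R\rangle > -\tfrac{3s'}{\sqrt2}\ge 0$, a contradiction. So the set is empty, which matches $s(\mathcal{P})=0$.
\end{itemize}
Together these prove the equivalence $\tri(\mathcal{P})\neq\emptyset \Leftrightarrow s(\mathcal{P})>0$ and the description $\tri(\mathcal{P})=C+s\tri$.

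For the area, $\Leb(C+s\tri)=s^2\Leb(\tri)$, so it suffices to compute $\Leb(\tri)$. The triangle $\tri$ is equilateral with inradius $\tfrac{1}{\sqrt2}$, since each side lies at distance $1/\sqrt2$ from the origin. An equilateral triangle with inradius $\rho$ has area $3\sqrt3\rho^2$, which here equals $\tfrac{3\sqrt3}{2}$, as required.

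The argument is entirely elementary. The only places needing care are the computation of $\langle v_j,C\rangle$ via \eqref{eq:dir-pair-corr}, which is where the constants $\tfrac23$ and $\tfrac{\sqrt2}{3}$ in the definitions are fixed, and the boundary case $s'=0$, where one must check that strict inequalities force emptiness.
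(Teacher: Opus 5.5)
Your proof is correct and rests on the same ingredients as the paper's: the half-plane description \eqref{eq:desc-tri}, the identity $\langle v_j, C\rangle = \nu_j - \tfrac13(\nu_1+\nu_2+\nu_3)$ obtained from \eqref{eq:dir-pair-corr}, and summing the three inequalities using \eqref{eq:dir-identity}. The paper verifies $C\in\tri(\mathcal{P})$, identifies $C$ as the centroid via the vertices, and only asserts the side length $\sqrt{6}\,s$; your translation $Q=C+R$ instead gives $\tri(\mathcal{P})=C+s\tri$ in one step, and with it the scale factor and the area, which is slightly cleaner.
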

    \begin{proof}
        Notice that for any $P\in\RR^2$, $p\in -P +\tri$ is equivalent to $p+P\in\tri$, thus, by \eqref{eq:desc-tri},
        \[ -P + \tri = \left\{p\in\RR^2\st\forall i \in\{1,2,3\}~ \langle v_i, p+P \rangle > -\frac{1}{\sqrt{2}} \right\}. \] 
        The intersection of two sets of the form $-P+r\tri$ and $-P'+r'\tri$ is easily seen to be either empty or an equilateral triangle, thus $\tri(\mathcal{P})$ is an equilateral triangle as well.
        
        If $\tri(\mathcal{P})\neq\emptyset$, we may pick $p\in\tri(\mathcal{P})$ and conclude that $\langle v_i, p \rangle > \langle v_i, -P\rangle - \frac{1}{\sqrt{2}}$ for all $P\in\mathcal{P}$, hence $\langle v_i, p \rangle > \nu_i(\mathcal{P}) -\frac{1}{\sqrt{2}}$. We obtain that $s>0$, since by \eqref{eq:dir-identity}
        \[ 0 = \langle v_1 + v_2 + v_3, p \rangle > \nu_1(\mathcal{P}) + \nu_2(\mathcal{P}) + \nu_3(\mathcal{P}) - \frac{3}{\sqrt{2}} = \frac{3}{\sqrt{2}}\left(\frac{\sqrt{2}}{3}\left(\sum_{i=1}^3 \nu_i(\mathcal{P})\right) -1\right) = -\frac{3}{\sqrt{2}}s. \] 
        
        Assume now that $s>0$, then for all $i\in\{1,2,3\}$ and $P\in\mathcal{P}$ we get, using \eqref{eq:dir-pair-corr}, that
        \begin{align*}
            \langle v_i, C + P \rangle &= \frac{2}{3}\left(\sum_{j=1}^3 \nu_i(\mathcal{P})\langle v_i,v_j\rangle \right) +\langle v_i, P \rangle\\
            &=  \frac{2}{3}\nu_i(\mathcal{P}) - \frac{1}{3}\left(\sum_{j\neq i}\nu_j(\mathcal{P})\right) + \langle v_i, P\rangle\\
            &= (\nu_i(\mathcal{P}) - \langle v_i, -P\rangle) - \frac{1}{3}\left(\sum_{i=1}^3 \nu_i(\mathcal{P})\right)\\
            &\geq 0 - \frac{1}{3}\cdot\frac{3}{\sqrt{2}}(1-s) = - \frac{1}{\sqrt{2}} + \frac{s}{\sqrt{2}} > -\frac{1}{\sqrt{2}},
        \end{align*}
        where the last inequality holds as $s>0$. It follows that $C\in\tri(\mathcal{P})$ and so $\tri(\mathcal{P}) \neq \emptyset$.
        
        Let $i\in\{1,2,3\}$ be given. Then one vertex $P_i$ of $\tri(\mathcal{P})$ is uniquely determined by satisfying
        \[ \langle v_j,P_i \rangle = \nu_j(\mathcal{P})\quad\text{and}\quad \langle v_e,P_i \rangle = \nu_e(\mathcal{P}), \]
        for $j\neq e\in\{1,2,3\}\setminus\{i\}$, thus, by \eqref{eq:point-desc}, 
        \[ P_i = \frac{2}{3}\big((2\nu_j(\mathcal{P})+\nu_e(\mathcal{P}))v_j + (\nu_j(\mathcal{P})+2\nu_e(\mathcal{P}))v_e\big). \]
        The center of $\tri(\mathcal{P})$ is then given by $\frac{1}{3}(P_1 + P_2 + P_3)$, which can be seen to be equal to $C$ by a straightforward calculation.

        Lastly, the formula for $\Leb(\tri(\mathcal{P}))$ follows as the side length of $\tri(\mathcal{P})$ is equal to $\sqrt{6}s(\mathcal{P})$ and so
        \[ \Leb(\tri(\mathcal{P})) = \frac{\sqrt{3}}{4}(\sqrt{6}s(\mathcal{P}))^2 = \frac{3\sqrt{3}}{2}s(\mathcal{P})^2. \]
    \end{proof}

\subsection{Farey fractions}

   We recall standard properties of Farey fractions (cf. \cite[Chapter III]{HW08}).
    \begin{lemma}\label{lem:Farey-fractions}
        Let $F_n = \{\frac{h}{k} \st 0\leq h\leq k\leq n,~\gcd(h,k) = 1\}$ be the set of Farey fractions of order~$n$. If $\frac{h'}{k'}<\frac{h}{k}<\frac{h''}{k''}$ are three successive terms of $F_n$, then
        \[ \frac{h}{k} = \frac{h' + h''}{k' + k''}. \]
        Moreover, we have $k'h - h' k = 1$ and $kh'' - h k''= 1$, hence
        \[ \frac{h}{k} - \frac{h'}{k'} = \frac{1}{kk'}\qquad \text{ and } \qquad \frac{h''}{k''} - \frac{h}{k} = \frac{1}{kk''}. \]
    \end{lemma}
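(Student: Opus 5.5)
The statement is the classical Farey mediant property together with the unimodularity relation between neighbours, and I will derive it from the unimodularity relation. So the order is: first show that any two \emph{consecutive} fractions $\frac{a}{b}<\frac{c}{d}$ of $F_n$ satisfy $bc-ad=1$; then deduce both displayed differences and the mediant formula.

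\textbf{Step 1: consecutive terms satisfy $bc-ad=1$.} I argue by induction on $n$.
\begin{itemize}
\item For $n=1$ the set is $F_1=\{\frac01,\frac11\}$, and $1\cdot1-0\cdot1=1$.
\item Assume the claim for $F_{n-1}$, and let $\frac{a}{b}<\frac{c}{d}$ be consecutive in $F_{n-1}$ with $bc-ad=1$.
\item Any $\frac{x}{y}$ strictly between them satisfies $bx-ay\ge1$ and $cy-dx\ge1$. Hence
\[ y=y(bc-ad)=d(bx-ay)+b(cy-dx)\ge b+d. \]
\item The mediant $\frac{a+c}{b+d}$ lies strictly between $\frac{a}{b}$ and $\frac{c}{d}$, has denominator exactly $b+d$, and satisfies $b(c+a)-a(b+d)=1$ and $c(b+d)-d(a+c)=1$.
\item By the displayed bound, it is the unique fraction between them with denominator $b+d$, and no fraction between them has smaller denominator.
\end{itemize}
So passing from $F_{n-1}$ to $F_n$ inserts, between $\frac ab$ and $\frac cd$, exactly the mediant when $b+d=n$, and nothing otherwise. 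In either case the new consecutive pairs satisfy the determinant relation, which completes the induction.

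\textbf{Step 2: the difference formulas.} Apply Step 1 to the pairs $(\frac{h'}{k'},\frac hk)$ and $(\frac hk,\frac{h''}{k''})$. This gives $k'h-h'k=1$ and $kh''-hk''=1$. Dividing by $kk'$ and $kk''$ respectively yields the two stated differences.

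\textbf{Step 3: the mediant formula.} Subtract the two identities from Step 2 to get $h(k'+k'')=k(h'+h'')$. Hence $\frac hk=\frac{h'+h''}{k'+k''}$.

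The only real content is the inequality $y\ge b+d$ in Step 1. It uses that $bx-ay$ and $cy-dx$ are positive integers, and is otherwise routine, so I expect no obstacle. One could alternatively simply cite \cite[Chapter III]{HW08}, as the paper does.
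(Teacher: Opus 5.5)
Your proof is correct. The paper gives no argument of its own and simply cites Hardy--Wright, Chapter~III; your induction (terms of $F_n$ between consecutive terms of $F_{n-1}$ must be mediants, via $y=d(bx-ay)+b(cy-dx)\ge b+d$) is essentially the classical proof found there, with two small points worth stating explicitly. First, consecutiveness in $F_{n-1}$ forces $b+d\ge n$, since otherwise the reduced mediant would already lie in $F_{n-1}$; this is what rules out new terms when $b+d\neq n$. Second, uniqueness of the mediant at denominator $b+d$ follows because equality in $y\ge b+d$ forces $bx-ay=cy-dx=1$.
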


    \begin{remark}\label{rem:all-coprime-classes}
        Notice that $k'h - h' k = 1$ implies that $k'$ is the multiplicative inverse of $h$ modulo $k$. Similarly, $k''$ is the multiplicative inverse of $-h$ modulo $k$. In particular, if $k$ is fixed and $h\in\{1,\dots,k-1\}$ runs through all co-prime residue classes of $k$, so does $k'$.
    \end{remark}


\section{Proof of Proposition~\ref{prop:upper-bound-level-measure} and~\ref{prop:lower-bound-level-measure}}\label{sec:proof-level-measure}
\subsection{Notation and Setup}
\label{subsec:Notation and Setup}
    
    Fix $A'\in\GL_2(\RR)$. Let $\fund_{A'}\subset\RR^2$ denote a fundamental domain of $A'\ZZ^2\subset\RR^2$ and let 
    \begin{align}
        \label{eq:def fundamental a k}
        \fund_{A'}^{(k)} = \{b\in\fund_{A'}\st \level(A',b) = k\}.
    \end{align}
    By \Cref{lem:red-basis-in-tri} we know that if $b\in\RR^2$ satisfies $\level(A',b) = k$, then there are three points $P'_1,P'_2,P'_3\in A'\ZZ^2\cap(-kb + k\tri)$ such that $\{P'_2-P'_1, P'_3-P'_1\}$ is a reduced bases of $A'\ZZ^2$. Thus, for every reduced basis $\{a_1,a_2\}$ we define        
    \[ \fund_{\{a_1,a_2\}}^{(k)} \defeq \{b\in\fund_{A'}^{(k)}\st \exists P'_1,P'_2,P'_3\in A'\ZZ^2\cap(-kb + k\tri); ~\{P'_2-P'_1,P'_3-P'_1\}=\{a_1,a_2\}\} \]
    and we note that
    \[ \fund_{A'}^{(k)} \subseteq \bigcup_{\{a_1,a_2\}} \fund_{\{a_1,a_2\}}^{(k)}, \]
    where the union is taken over all reduced bases, of which there are at most $12$. We will proceed by analyzing $\fund_{\{a_1,a_2\}}^{(k)}$ for every reduced basis $\{a_1,a_2\}$.
    
    We decompose the set $\fund_{\{a_1,a_2\}}^{(k)}$ according to the representation of $P'_1$ in terms of the reduced basis $a_1\defeq P'_2-P'_1,a_2\defeq P'_3-P'_1$. Assume that $P'_1 = c_1a_1 + c_2a_2$, then by \Cref{lem:desc-of-basis-I} we have $\gcd(c_1,k) = \gcd(c_2,k) = 1$, and at least one of
    \[ c_1 + 1 \equiv 0 \pmod{k}, \qquad c_2 + 1 \equiv 0 \pmod{k}, \qquad c_1 + c_2 \equiv 0 \pmod{k} \]
    holds. It follows that $\fund_{\{a_1,a_2\}}^{(k)}$ can be divided into at most $3\phi(k)$ many subsets of $\RR^2$ under the natural projection from $\RR^2$ to $\fund_{A'}$, where $\phi$ denotes Euler's totient function. Indeed, there are at most $3\phi(k)$ possibilities for elements $c_1'\in\{0,\dots,k-1\}$ and $c_2'\in\{-k,\dots,-1\}$ satisfying $\gcd(c_1',k) = \gcd(c_2',k) =1$, and at least one of
    \[ c_1' + 1 \equiv 0 \pmod{k}, \qquad c_2' + 1 \equiv 0 \pmod{k}, \qquad c_1' + c_2' \equiv 0 \pmod{k}. \]
    For each of these choices we define
    \[ B_{\{a_1,a_2\}}(c_1',c_2') \defeq \left\{\begin{aligned}b\in \RR^2  \st &P'_1,P'_2,P'_3\in A'\ZZ^2\cap(-kb + k\tri), \text{ where}\\~& P'_1 = c_1'a_1 + c_2'a_2;~ a_1=P'_2-P'_1,~a_2=P'_3-P'_1~\end{aligned}\right\}. \]
    Then
    \begin{align}\label{eq:first-dec}
        \fund_{\{a_1,a_2\}}^{(k)} \subseteq \bigcup_{(c_1',c_2')\text{ eligible}} B_{\{a_1,a_2\}}(c_1',c_2') + A'\ZZ^2
    \end{align}
    where the union is taken over all eligible pairs $c_1',c_2'$ as discussed above. To see this, assume now that $b\in\fund_{\{a_1,a_2\}}^{(k)}$ and pick $P_1',P_2',P_3'$ so that $a_1\defeq P'_2-P'_1,a_2\defeq P'_3-P'_1$ and that $P'_1 = c_1a_1 + c_2a_2$ with $\gcd(c_1,k) = \gcd(c_2,k) = 1$, and at least one of
    \[ c_1 + 1 \equiv 0 \pmod{k}, \qquad c_2 + 1 \equiv 0 \pmod{k}, \qquad c_1 + c_2 \equiv 0 \pmod{k} \]
    holds. Define $c_1' = c_1\mod k\in\{0,\dots,k-1\}$ and $c_2' = (c_2 \mod k) - k\in\{-k,\dots,-1\}$ and set $b' = b - \frac{1}{k}(c_1-c_1')a_1 + \frac{1}{k}(c_2-c_2')a_2 \in b + A'\ZZ^2$. Then,
    \[ c_1'a_1 + c_2'a_2 \in A'\ZZ^2 \cap (-kb' + k\tri) = (A'\ZZ^2 \cap (-kb + (c_1-c_1')a_1 + (c_2-c_2')a_2 + k\tri)  \]
    as well as $(c_1'+1)a_1 + c_2'a_2,c_1'a_1 + (c_2'+1)a_2 \in A'\ZZ^2 \cap (-kb' + k\tri)$. It follows that $b'\in B_{\{a_1,a_2\}}(c_1',c_2')$.

\medskip

    Fix a reduced basis $\{a_1,a_2\}$ and an eligible pair $c_1',c_2'$ as discussed above. By relabeling $P'_1,P'_2,P'_3$ and redefining $c_1'$ and $c_2'$ accordingly, we may assume that $c_2'=-1$. 
    Indeed, if $c_1'+1\equiv0\mod k$ we exchange the labels of $P'_2$ and $P'_3$, $a_1$ and $a_2$, as well as $c_1'$ and $c_2'$. If $c_1'+c_2'\equiv 0 \mod k$ we switch the labels of $P'_1$ and $P'_2$. This amounts to exchanging $\{a_1,a_2\}$ with the basis $\{a_1-a_2,-a_2\}$. Then $P'_2 = c_1'(a_1-a_2) + (-c_1'-c_2'-1)(-a_2)$ will be relabeled to $P'_1$ and $(-c_1'-c_2'-1) + 1\equiv0\mod k$. Hence, we have
    \begin{align}\label{eq:second-dec}
        \bigcup_{(c_1',c_2')\text{ eligible}} B_{\{a_1,a_2\}}(c_1',c_2') = \bigcup_{\gcd(c_1',k)=1}\left(\overline{B}_{a_1,a_2}(c_1') \cup \overline{B}_{a_2,a_1}(c_1') \cup \overline{B}_{a_1-a_2,-a_2}(c_1')\right),
    \end{align}
    where
    \[ \overline{B}_{x_1,x_2}(c_1') \defeq \left\{\begin{aligned}b\in \RR^2\st &P'_1,P'_2,P'_3\in A'\ZZ^2\cap(-kb + k\tri), \text{ where}\\~& P'_1 = c_1'x_1 -x_2;~ x_1=P'_2-P'_1,~x_2=P'_3-P'_1~\end{aligned}\right\}. \]
    for any reduced basis $\{a_1,a_2\}$ of $A'\ZZ^2$ and $c_1'\in\{0,\dots,k-1\}$. Notice that the set $\overline{B}_{a_1,a_2}(c_1')$ depends on the order of the elements $a_1$ and $a_2$, in contrast to $B_{\{a_1,a_2\}}(c_1',c_2')$.

    We will now proceed by bounding the Lebesgue measure of $\overline{B}_{a_1,a_2}(c_1')$ for any fixed $a_1,a_2\in A'\ZZ^2$ and $c_1'\in\{0,\dots,k-1\}$ with $\gcd(c_1',k)=1$.
    Define
    \[ P'_1\defeq c_1'a_1 + (k-1)a_2,\quad P'_2 \defeq P'_1 +a_1,\quad P'_3 \defeq P'_1 + a_2 \]
    as well as the three points
    $Q_1, Q_2, Q_3 \in \bigcup_{\ell=1}^{k-1} \frac{1}{\ell} A\mathbb{Z}^2$
    as follows
    \begin{equation}\label{eq:def-of-Q_i}
        \begin{alignedat}{2}
            Q_1 &\defeq \frac{h''}{k''} a_1,\quad  Q_2 &\defeq \frac{h'}{k'} a_1,\quad  Q_3 &\defeq \frac{c_1'}{k-1} a_1 - \frac{1}{k-1} a_2,
        \end{alignedat}
    \end{equation}
    where $\frac{h'}{k'} < \frac{c_1'}{k} < \frac{h''}{k''}$ are three successive Farey fractions (see \Cref{lem:Farey-fractions}).
    Finally, set
    \begin{align}\label{def:Ps-Qs}
        P_i \defeq \frac{1}{k} P'_i \quad (i=1,2,3),
        \qquad
        \mathcal{P} = \{P_1,P_2,P_3\},
        \qquad
        \mathcal{Q} = \{Q_1,Q_2,Q_3\}.
    \end{align}
    Using \Cref{lem:level-equivalence}, we notice that
    \begin{align}\label{eq:cond-on-b}
        \overline{B}_{a_1,a_2}(c_1') \subseteq \tri(\mathcal{P}) \setminus\left(\bigcup_{Q\in\mathcal{Q}}\tri(\mathcal{P}\cup\{ Q\})\right),
    \end{align}
    where $\tri(\cdot)$ is defined in \eqref{eq:def delta P}.
    Indeed, observe that $P_1',P_2',P_3'\in(-kb + k\tri)$ is equivalent to
    \[ b\in (-\tfrac{1}{k}P_1' + \tri) \cap  (-\tfrac{1}{k}P_2' + \tri) \cap (-\tfrac{1}{k}P_3' + \tri) = \tri\left(\mathcal{P}\right). \]
    Assume that $b\in\overline{B}_{a_1,a_2}(c_1') $. Then $\level(A',b)=k$ and so $b\not\in (-Q + \tri)$ for any $Q\in\bigcup_{\ell=1}^{k-1}\tfrac{1}{\ell}A\ZZ^2$ by \Cref{lem:level-equivalence}. In particular, we obtain
    \[ b\not\in(-Q_1+\tri)\cup(-Q_2+\tri)\cup(-Q_3+\tri). \]
    Thus, $b\in \tri(\mathcal{P}) \setminus\left(\bigcup_{Q\in\mathcal{Q}}\tri(\mathcal{P}\cup\{ Q\})\right)$ and so the inclusion $\subseteq$ follows. 

\subsection{First reduction}
    \begin{lemma}\label{lem:directions-of-reduced-elements}
        Recall the definitions of $\mathcal{P}$ and $\mathcal{Q}$ in \eqref{def:Ps-Qs} and assume that
        \[ \tri(\mathcal{P}) \setminus\left(\bigcup_{Q\in\mathcal{Q}}\tri(\mathcal{P}\cup\{ Q\})\right) \neq \emptyset. \]
        Then, there are pairwise different indices $j_1,j_2,j_3\in\{1,2,3\}$ such that for all $i\in\{1,2,3\}$
        \[ \nu_{j_i}(\mathcal{P}\cup\{Q_i\}) = \langle v_{j_i}, -Q_i\rangle > \nu_{j_i}(\mathcal{P})\quad\text{and}\quad \nu_{j}(\mathcal{P}\cup\{Q_i\}) = \nu_{j}(\mathcal{P}) \]
        for all $j\neq j_i\in\{1,2,3\}$.
        In fact, it holds that 
        \begin{align}\label{eq:bounds-on-alpha_i}
            0 < \nu_{j_1}(\mathcal{P}\cup\{Q_1\}) - \nu_{j_1}(\mathcal{P}) &\leq \frac{1}{kk'}\langle v_{j_1}, -a_1 \rangle,\\
            0 < \nu_{j_2}(\mathcal{P}\cup\{Q_2\}) - \nu_{j_2}(\mathcal{P}) &\leq \frac{1}{kk''}\langle v_{j_2}, a_1 \rangle,\\
            0 < \nu_{j_3}(\mathcal{P}\cup\{Q_3\}) - \nu_{j_3}(\mathcal{P}) &\leq \frac{1}{k(k-1)}\langle v_{j_3}, a_2 \rangle.
        \end{align}
    \end{lemma}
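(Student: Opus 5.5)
The plan is to write each $Q_i$ as an affine combination of the points of $\mathcal{P}$ plus a small error vector along $\pm a_1$ or $a_2$. One then reads off from linearity of $P\mapsto\langle v_j,-P\rangle$ which $\nu_j$ can grow when $Q_i$ is added. The nonemptiness hypothesis then pins down the combinatorics.

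\textbf{Step 1: Farey decompositions.} Using $P_1=\tfrac1k P_1'$, $P_2=P_1+\tfrac1k a_1$, $P_3=P_1+\tfrac1k a_2$ and \Cref{lem:Farey-fractions}, I express each $Q_i$ as a point on a segment or triangle spanned by $\mathcal{P}$ (modulo the translation fixed in the setup) plus an error.
\begin{itemize}
\item For $Q_1$, the relation $\tfrac{c_1'}{k}-\tfrac{h'}{k'}=\tfrac1{kk'}$ should give $Q_1 = R_1 - \tfrac{1}{kk'}a_1$, with $R_1$ in the convex hull of $\mathcal{P}$.
\item For $Q_2$, the relation $\tfrac{h''}{k''}-\tfrac{c_1'}{k}=\tfrac1{kk''}$ should give $Q_2=R_2+\tfrac1{kk''}a_1$.
\item For $Q_3$, the identity $\tfrac{1}{k-1}=\tfrac1k+\tfrac{1}{k(k-1)}$ should give $Q_3=R_3+\tfrac{1}{k(k-1)}a_2$, up to sign.
\end{itemize}
(The attachment of $h'/k'$ versus $h''/k''$ to $Q_1,Q_2$ is to be matched to the bounds in \eqref{eq:bounds-on-alpha_i}.) Since $\langle v_j,-R_i\rangle\le\nu_j(\mathcal{P})$ for $R_i\in\operatorname{conv}\mathcal{P}$, we get
\[ \nu_j(\mathcal{P}\cup\{Q_i\})-\nu_j(\mathcal{P})\le \max\{0,\langle v_j,-\varepsilon_i\rangle\}, \]
where $\varepsilon_i$ is the error vector. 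This yields the upper bounds in \eqref{eq:bounds-on-alpha_i} as soon as the correct indices $j_i$ are identified.

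\textbf{Step 2: each $Q_i$ raises at least one $\nu_j$.} Note that $\tri(\mathcal{P}\cup\{Q\})\subseteq\tri(\mathcal{P})$. If no $\nu_j$ increased, we would have $s(\mathcal{P}\cup\{Q_i\})=s(\mathcal{P})$ and $C(\mathcal{P}\cup\{Q_i\})=C(\mathcal{P})$, so $\tri(\mathcal{P}\cup\{Q_i\})=\tri(\mathcal{P})$ by \Cref{lem:intersec-tri}. That contradicts the hypothesis, so some $\nu_j$ strictly increases, which gives the lower bounds $>0$.

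\textbf{Step 3: exactly one index per $Q_i$, and the indices are distinct.} By Step 1, only indices $j$ with $\langle v_j,-\varepsilon_i\rangle>0$ can increase. Since $a_1,a_2$ lie in $\{\pm\rho_1,\pm\rho_2,\pm\rho_3\}$ and $\rho_m\in\cone_m$ (so $\langle v_m,\rho_m\rangle<0\le\langle v_j,\rho_m\rangle$ for $j\ne m$), the vector $\rho_m$ has a unique direction of positive $\langle v_\cdot,-\rho_m\rangle$, namely $v_m$. For the error vectors that are a positive multiple of some $\rho_m$, this already forces a unique $j_i$. For the remaining error vectors (a multiple of some $-\rho_m$, which has two candidate directions), I would use the geometry of \Cref{lem:intersec-tri} instead.

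Raising a single $\nu_j$ produces an equilateral sub-triangle of $\tri(\mathcal{P})$ sharing the vertex opposite side $j$. Raising two of them produces a sub-triangle touching only one side. Nonemptiness of $\tri(\mathcal{P})\setminus\bigcup_i\tri(\mathcal{P}\cup\{Q_i\})$ then has to be played against explicit size estimates. These come from $s(\mathcal{P}\cup\{Q_i\})$ versus $s(\mathcal{P})$, using $\sum_j\nu_j$ and the $\tfrac23\sum\nu_jv_j$ center formula, and they should show that:
\begin{itemize}
\item a doubly-raised $Q_i$, or two $Q_i$ sharing the same $j$, would cover $\tri(\mathcal{P})$;
\item equivalently, the three corner triangles must sit at three distinct corners.
\end{itemize}

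I expect Step 3 to be the main obstacle. Proving that $\tri(\mathcal{P})$ is covered in the degenerate configurations needs quantitative control of $\langle v_j,-Q_i\rangle$ relative to the $\nu_j(\mathcal{P})$ coming from the Farey identities. I would first try to show directly that $\langle v_j,-Q_i\rangle\le\nu_j(\mathcal{P})$ for the unwanted index, by writing $Q_i$ alternatively as a convex combination of $\mathcal{P}$ plus an error of the opposite sign via the other Farey neighbour. Only if that fails would I fall back on the covering argument.
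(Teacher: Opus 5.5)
Your Steps 1 and 2 are sound. Writing $Q_i=R_i+\varepsilon_i$ with $R_i\in\operatorname{conv}\mathcal P$ gives the upper bounds once $j_i$ is known, and nonemptiness does force each $Q_i$ to raise at least one $\nu_j$. (With the stated definitions and $P_1'=c_1'a_1-a_2$, one has $Q_1=P_3+\tfrac{1}{kk''}a_1$, $Q_2=P_3-\tfrac{1}{kk'}a_1$, and $Q_3=R_3-\tfrac{1}{k(k-1)}a_2$ with $R_3=P_1+\tfrac{c_1'}{k-1}(P_2-P_1)$. So an index raised by $Q_1$, $Q_2$, $Q_3$ satisfies $\langle v_j,a_1\rangle<0$, $\langle v_j,a_1\rangle>0$, $\langle v_j,a_2\rangle>0$ respectively. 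The $k'$, $k''$ in the displayed bounds are swapped relative to the definitions, which is harmless since only $k'k''$ is used later.)

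The gap is Step 3, and the covering fallback cannot close it. The hypothesis carries no information beyond ``each $Q_i$ raises some $\nu_j$''. Suppose some $Q_i$ raises two indices, or two of the $Q_i$ raise a common index. Then there is a vertex $V$ of $\tri(\mathcal P)$ such that no $Q_i$ raises only the side opposite $V$. Every $\tri(\mathcal P\cup\{Q_i\})$ then stays at positive distance from $V$, so a neighbourhood of $V$ in $\tri(\mathcal P)$ lies in the set assumed nonempty. Hence the degenerate configurations satisfy the hypothesis automatically, the covering claim in your first bullet is false, and no size estimate on $s(\cdot)$ will produce a contradiction. The case split by $\pm\rho_m$ does not help either: the lemma does not use reducedness of $\{a_1,a_2\}$ at all.

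What is missing is an arithmetic separation of the raised indices. Your first instinct, rewriting $Q_i$ relative to other points of $\mathcal P$, is the right one. But it must be applied to every point of $\mathcal P$, and aimed at disjointness across different $i$ rather than at excluding a pre-chosen ``unwanted'' index. Since $\nu_j(\mathcal P)=\max_m\langle v_j,-P_m\rangle$, an index raised by $Q_i$ must beat each of $P_1,P_2,P_3$ separately. This gives three necessary inequalities per $Q_i$, not just the one from your single error vector.
\begin{itemize}
\item \emph{$Q_1$ versus $Q_2$:} the signs of $\langle v_j,a_1\rangle$ above already separate the indices they raise.
\item \emph{$Q_1$ versus $Q_3$:} comparing $Q_1$ with $P_2$ gives $k''\langle v_j,a_2\rangle<(k''-1)\langle v_j,a_1\rangle$. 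Together with $\langle v_j,a_1\rangle<0$ this forces $\langle v_j,a_2\rangle<0$, so these indices differ from those raised by $Q_3$.
\item \emph{$Q_2$ versus $Q_3$:} comparing both with $P_1$ gives $\langle v_j,a_1\rangle>k'\langle v_j,a_2\rangle$ and $\langle v_j,a_2\rangle>c_1'\langle v_j,a_1\rangle$. For a common index both pairings are positive, so $\langle v_j,a_2\rangle>c_1'k'\langle v_j,a_2\rangle$, i.e.\ $c_1'k'<1$, which is impossible.
\end{itemize}
Hence the sets of indices raised by $Q_1,Q_2,Q_3$ are pairwise disjoint nonempty subsets of $\{1,2,3\}$, and therefore singletons. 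This pigeonhole step gives, all at once, the uniqueness of each $j_i$, the distinctness of $j_1,j_2,j_3$, and $\nu_j(\mathcal P\cup\{Q_i\})=\nu_j(\mathcal P)$ for $j\neq j_i$. That is how the paper argues.
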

    \begin{proof}
        By assumption, we have $\tri(\mathcal{P}\cup\{Q_i\}) \subsetneq \tri(\mathcal{P})$ for any $i\in\{1,2,3\}$. Thus, by \Cref{lem:intersec-tri} there exists at least one index $j_i\in\{1,2,3\}$ for which $\nu_{j_i}(\mathcal{P}\cup\{Q_i\}) > \nu_{j_i}(\mathcal{P})$ and evidently for any such index $\nu_{j_i}(\mathcal{P}\cup\{Q_i\}) = \langle v_{j_i}, -Q_i\rangle$ holds. This shows existence of the indices $j_1,j_2,j_3\in\{1,2,3\}$ as described above. Next we will show that these indices are pairwise different, which also proves that $\nu_{j}(\mathcal{P}\cup\{Q_i\}) = \nu_{j}(\mathcal{P})$ for all $j\neq j_i\in\{1,2,3\}$ and all $i\in\{1,2,3\}$.
    
        First, we consider $i=1$. Notice that
        \begin{align}
            -Q_1 &= -\frac{h''}{k''}a_1\notag\\
            &= -\frac{1}{k}P_1 - \frac{1}{kk''}a_1 - \frac{1}{k}a_2\notag\\
            &= -\frac{1}{k}P_2 + \frac{k''-1}{kk''}a_1 - \frac{1}{k}a_2\notag\\
            &= -\frac{1}{k}P_3 - \frac{1}{kk''}a_1,\label{eq:Q_1-P_3}
        \end{align}
        where we used that $\frac{c_1'}{k} - \frac{h''}{k''} = -\frac{1}{kk''}$ (\Cref{lem:Farey-fractions}). Thus, $\langle v_{j_1}, -Q_1 \rangle > \nu_{j_1}(\mathcal{P})$ is equivalent to
        \begin{align}
            -\langle v_{j_1}, a_1\rangle &> k''\langle v_{j_1}, a_2 \rangle,\notag\\
            (k''-1)\langle v_{j_1}, a_1\rangle & > k''\langle v_{j_1}, a_2 \rangle,\label{eq:Q_1-P_2>0}\\
            0 &> \langle v_{j_1}, a_1\rangle.\label{eq:Q_1-P_3>0}
        \end{align}
        Since $1\leq k''\leq k-1$, combining \eqref{eq:Q_1-P_2>0} and \eqref{eq:Q_1-P_3>0}  we obtain $0>\langle v_{j_1}, a_2 \rangle$.

        Next, we consider $i=2$. Notice that 
        \begin{align}
            -Q_2 &= -\frac{h'}{k'}a_1\notag\\
            &= -\frac{1}{k}P_1 + \frac{1}{kk'}a_1 - \frac{1}{k}a_2\notag\\
            &= -\frac{1}{k}P_2 + \frac{k'+1}{kk'}a_1 - \frac{1}{k}a_2\notag\\
            &= -\frac{1}{k}P_3 + \frac{1}{kk'}a_1,\label{eq:Q_2-P_3}
        \end{align}
        where we used that $\frac{c_1'}{k} - \frac{h'}{k'} = \frac{1}{kk'}$ (\Cref{lem:Farey-fractions}). Thus, $\langle v_{j_2}, -Q_2 \rangle > \nu_{j_2}(\mathcal{P})$ is equivalent to
        \begin{align}
            \langle v_{j_2}, a_1\rangle &> k'\langle v_{j_2}, a_2 \rangle,\label{eq:Q_2-P_1>0} \\
            (k'+1)\langle v_{j_2}, a_1\rangle & > k'\langle v_{j_2}, a_2 \rangle,\notag\\
            \langle v_{j_2}, a_1\rangle &>0.\label{eq:Q_2-P_3>0}
        \end{align}
        From \eqref{eq:Q_1-P_3>0} and \eqref{eq:Q_2-P_3>0} it immediately follows that $j_2$ is different from $j_1$.
        
        Last, we consider $i=3$. Notice that
        \begin{align}
            -Q_3 &= -\frac{c_1'}{k-1}a_1  +\frac{1}{k-1}a_2\notag\\
            &= -\frac{1}{k}P_1 - \frac{c_1'}{k(k-1)}a_1 + \frac{1}{k(k-1)}a_2\label{eq:Q_3-P_1}\\
            &= -\frac{1}{k}P_2 + \frac{(k-1) - (c_1')}{k(k-1)} a_1 + \frac{1}{k(k-1)}a_2\label{eq:Q_3-P_2}\\
            &= -\frac{1}{k}P_3 - \frac{c_1'}{k(k-1)}a_1 + \frac{1}{k-1}a_2.\notag
        \end{align}
        Thus, $\langle v_{j_3}, -Q_3 \rangle > \nu_{j_3}(\mathcal{P})$  is equivalent to
        \begin{align}
            \langle v_{j_3}, a_2\rangle &> c_1'\langle v_{j_3}, a_1 \rangle,\label{eq:Q_3-P_1>0}\\
            \langle v_{j_3}, a_2\rangle & > (c_1' - (k-1))\langle v_{j_3}, a_1 \rangle,\label{eq:Q_3-P_2>0}\\
            k\langle v_{j_3}, a_2\rangle & >c_1'\langle v_{j_3}, a_1 \rangle.\notag
        \end{align}
        As $\gcd(c_1,k)=1$, we have $c_1' \in \{1,\dots k-1\}$ and so $\langle v_{j_3}, a_2\rangle > 0$ since the right hand side of $\eqref{eq:Q_3-P_1>0}$ or $\eqref{eq:Q_3-P_2>0}$ is non-negative. We deduce that $j_3\neq j_1$ since $0>\langle v_{j_1},a_2\rangle$ holds. It remains to prove that $j_2\neq j_3$. Assume otherwise, that is $j_2=j_3$. Observe that the inequalities \eqref{eq:Q_2-P_1>0} and \eqref{eq:Q_3-P_1>0} are then incompatible. Indeed,
        \[ \langle v_{j_3}, a_2\rangle \stackrel{\eqref{eq:Q_3-P_1>0}}{>} c_1'\langle v_{j_3}, a_1\rangle = c_1'\langle v_{j_2}, a_1\rangle \stackrel{\eqref{eq:Q_2-P_1>0}}{>} c_1'k'\langle v_{j_2}, a_2\rangle \]
        and since $\langle v_{j_2}, a_1 \rangle > 0$ this implies that $1 > c_1' k'$ which is a contradiction to $1\leq c_1', k'\leq k-1$. Hence, $j_2$ and $j_3$ are different.       

        Finally, we prove the bounds on $\nu_{j_i}(\mathcal{P}\cup\{Q_i\})-\nu_{j_i}(\mathcal{P})$. The lower bounds are obvious. It remains to verify the upper bounds. We can use \eqref{eq:Q_1-P_3} and \eqref{eq:Q_2-P_3} to deduce
        \begin{align*}
            \nu_{j_1}(\mathcal{P}\cup\{Q_1\}) - \nu_{j_1}(\mathcal{P}) &\leq \langle v_{j_1}, -Q_1\rangle -\langle v_{j_1}, -P_3\rangle \leq \frac{1}{kk'}\langle v_{j_1}, -a_1\rangle,\\
            \nu_{j_2}(\mathcal{P}\cup\{Q_2\}) - \nu_{j_2}(\mathcal{P}) &\leq \langle v_{j_2}, -Q_2\rangle -\langle v_{j_2}, -P_3\rangle \leq \frac{1}{kk''}\langle v_{j_2}, a_1\rangle.
        \end{align*}
        Moreover, we use either \eqref{eq:Q_3-P_1} or \eqref{eq:Q_3-P_2}, that is we choose $j=1$ or $2$ depending on whether $\langle v_{j_3}, a_1\rangle \geq 0$ or $\langle v_{j_3}, a_1\rangle<0$, to deduce
        \[ \langle v_{j_3}, -Q_3\rangle - \nu_{j_3}(\mathcal{P}) \leq \langle v_{j_3}, -Q_3\rangle -\langle v_{j_3}, -P_j\rangle \leq \frac{1}{k(k-1)}\langle v_{j_3}, a_2\rangle. \]
    \end{proof}

\subsection{Proof of \Cref{prop:upper-bound-level-measure}}
    \begin{proof}
    \noindent
    \textbf{Step 1:}
        Following the discussion in Section~\ref{subsec:Notation and Setup}, it suffices to give an upper bound on the Lebesgue measure of the right hand side in \eqref{eq:cond-on-b}. If the set is empty, then the bound is trivial. Hence, throughout the proof we assume that
        \begin{align}
            \label{eq:a 1}
            \tri(\mathcal{P}) \setminus\left(\bigcup_{Q\in\mathcal{Q}}\tri(\mathcal{P}\cup\{ Q\})\right) \neq\emptyset.
        \end{align}
        By the inclusion-exclusion principle, it follows that
        \begin{align} \label{eq:incl-excl}
        \Leb\left(\tri(\mathcal{P})\setminus\bigcup_{Q\in\mathcal{Q}}\tri(\mathcal{P}\cup\{Q\})\right)=&\,\Leb(\tri(\mathcal{P}))-\sum_{i=1}^{3}\Leb(\tri(\mathcal{P}\cup\{Q_i\})) \notag\\
            &+\sum_{1\le i<j\le 3}\Leb(\tri(\mathcal{P}\cup\{Q_i,Q_j\}))-\Leb(\tri(\mathcal{P}\cup\mathcal{Q})).
        \end{align}
        To compute the right hand side of \eqref{eq:incl-excl}, we use \Cref{lem:intersec-tri} and compute
        \[ s(\mathcal{P}),\quad s(\mathcal{P}\cup\{Q_i\}), \quad s(\mathcal{P}\cup\{Q_i,Q_j\}),\quad s(\mathcal{P}\cup\mathcal{Q}) \]
        for all $i\neq j\in\{1,2,3\}$.
        
        By \eqref{eq:a 1}, we have $\tri(\mathcal{P})\neq \emptyset$, or equivalently $s(\mathcal{P}) > 0$ (\Cref{lem:intersec-tri}). Since $s(\mathcal{P})$ is translation invariant we have
        \begin{align*} 
            s(\mathcal{P}) = s(\mathcal{P} - \tfrac{1}{k}P_1) &= 1 - \frac{\sqrt{2}}{3}\left(\sum_{i=1}^3\nu_i(\left\{0,\tfrac{1}{k}a_1,\tfrac{1}{k}a_2\right\})\right).
        \end{align*}
        By \Cref{lem:directions-of-reduced-elements}, there exist (unique) pairwise different indices $j_1,j_2,j_3\in\{1,2,3\}$ such that for all $i\in\{1,2,3\}$ and $j\neq j_i\in\{1,2,3\}$
        \[ \langle v_{j_i}, -Q_i \rangle > \nu_{j_i}(\mathcal{P}) \quad \text{and}\quad \langle v_{j}, -Q_i \rangle \leq \nu_j(\mathcal{P}). \]
        By the uniqueness of $j_i$, we get
        \begin{align*}
            s(\mathcal{P}\cup\{Q_i\}) &= \max\left\{s(\mathcal{P}) - \frac{\sqrt{2}}{3}\left(\langle v_{j_i},-Q_i\rangle - \nu_{j_i}(\mathcal{P})\right),0\right\},
        \end{align*}
        for all $i\in\{1,2,3\}$, and since $j_1,j_2$ and $j_3$ are pairwise different we get
        \begin{align*}
            s(\mathcal{P}\cup\{Q_i,Q_j\}) &= \max\left\{s(\mathcal{P}\cup\{Q_i\}) + s(\mathcal{P}\cup\{Q_j\}) - s(\mathcal{P}),0\right\}
        \end{align*}
        for all $i\neq j\in\{1,2,3\}$.
        
        Finally, if $s(\mathcal{P}\cup\mathcal{Q}) > 0$, or equivalently $\tri(\mathcal{P}\cup\mathcal{Q}) \neq \emptyset$, then $\bigcup_{i\in\{1,2,3\}}\tri(\mathcal{P}\cup\{Q_i\})$ covers $\tri(\mathcal{P})$ and so $\Leb(\overline{B}_{a_1,a_2}(c_1')) = 0$ follows.
        Indeed, if there is $P\in\tri(\mathcal{P})$ so that for all $i\in\{1,2,3\}$ we have $P\not\in\tri(\mathcal{P}\cup\{Q_i\})$, then $\langle v_{j_i}, P-Q_i\rangle > \frac{1}{\sqrt{2}}$, and so
        \begin{align*}
            \frac{3}{\sqrt{2}} < \sum_{i\in\{1,2,3\}}\langle v_{j_i}, P-Q_i\rangle&= \sum_{i\in\{1,2,3\}}\langle v_{j_i}, P\rangle + \sum_{i\in\{1,2,3\}}\langle v_{j_i}, -Q_i\rangle\\
            &= 0 + \sum_{i\in\{1,2,3\}}\langle v_{j_i}, -Q_i\rangle \leq \sum_{i\in\{1,2,3\}}\nu_{j_i}(\mathcal{P}\cup\mathcal{Q}),
        \end{align*} 
        which is a contradiction to 
        \[ s(\mathcal{P}\cup\mathcal{Q}) = 1 - \frac{\sqrt{2}}{3}\left(\sum_{i\in\{1,2,3\}}\nu_{j_i}\mathcal{P}\cup\mathcal{Q})\right) > 0. \]
        Thus, we will further assume that $s(\mathcal{P}\cup\mathcal{Q}) = 0$.

        \textbf{Step 2:}
        So far we only considered a fixed matrix $A'\in\SL_2(\RR)$. In order to upper bound
        \[ \int_{\RR_{>0}}\Leb(\fund_{r^{1/2}A'}^{(k)})\frac{\dd r}{r} \]
        we need to review the discussion above for varying parameters $r$. First, notice that if $\{a_1,a_2\}$ is a reduced basis for $A'\ZZ^2$, then $\{r^{1/2}a_1, r^{1/2}a_2\}$ is a reduced basis for $r^{1/2}A'\ZZ^2$. The definitions of $P'_1,P'_2$ and $P'_3$ and hence also $P_1,P_2$ and $P_3$, are linear in the basis $\{a_1,a_2\}$, thus if we define $r^{1/2}\mathcal{P} \defeq \{r^{1/2}P_1, r^{1/2}P_2, r^{1/2}P_3\}$ we get
        \begin{align*}
            s(r^{1/2}\mathcal{P}) &= 1 - \frac{\sqrt{2}}{3}\left(\sum_{i=1}^3\nu_i(\left\{0,r^{1/2}\tfrac{1}{k}a_1,r^{1/2}\tfrac{1}{k}a_2\right\})\right)\\
            &=1 - \frac{\sqrt{2}}{3}r^{1/2}\left(\sum_{i=1}^3\nu_i(\left\{0,\tfrac{1}{k}a_1,\tfrac{1}{k}a_2\right\})\right) = 1 - \frac{\sqrt{2}}{3}r^{1/2}\alpha_{\mathcal{P}},    
        \end{align*}
        where we define 
        \[ \alpha_{\mathcal{P}} \defeq \sum_{i=1}^3\nu_i(\left\{0,\tfrac{1}{k}a_1,\tfrac{1}{k}a_2\right\}). \]
        Similarly, the definitions of $Q_1,Q_2$ and $Q_3$ are linear in the basis $\{a_1,a_2\}$, thus, for all $i\in\{1,2,3\}$ we have
        \begin{align*}
            s(r^{1/2}\mathcal{P}\cup \{r^{1/2}Q_i\}) &= \max\left\{s(r^{1/2}\mathcal{P}) - \frac{\sqrt{2}}{3}r^{1/2}\left(\langle v_{j_1},-Q_1\rangle - \nu_{j_1}(\mathcal{P})\right),0\right\}\\
            &= \max\left\{1-\frac{\sqrt{2}}{3}r^{1/2}(\alpha_{\mathcal{P}} + \alpha_i),0\right\},
        \end{align*}
        where we define 
        \[ \alpha_i \defeq \langle v_{j_i},-Q_i\rangle - \nu_{j_i}(\mathcal{P}). \]
        Finally, we define 
        \[ \alpha_{\mathcal{Q}} \defeq \sum_{i=1}^3\nu_i(\mathcal{Q}), \]
        so that
        \[ s(r^{1/2}\mathcal{P}\cup r^{1/2}\mathcal{Q}) = \max\left\{1 - \frac{\sqrt{2}}{3}r^{1/2}\alpha_{\mathcal{Q}}, 0\right\}, \]
        where $r^{1/2}\mathcal{Q} \defeq \{r^{1/2}Q_1, r^{1/2}Q_2, r^{1/2}Q_3\}$.
        Observe that
        \begin{align}\label{eq:imp-identity}
            \alpha_1 + \alpha_2 + \alpha_3 = \alpha_{\mathcal{Q}} - \alpha_{\mathcal{P}}.
        \end{align}

        The above allows to give lower and upper bounds on the parameter $r$, determining whether the summands on the right hand side in \eqref{eq:incl-excl} appear or not. Using \Cref{lem:intersec-tri}, we have 
        \begin{align*}
            \begin{split}
                \eqref{eq:incl-excl}&= \frac{3\sqrt{3}}{2}\Bigg(\int_{(\frac{\sqrt{2}}{3}\alpha_{\mathcal{Q}})^{-2}}^{(\frac{\sqrt{2}}{3}\alpha_{\mathcal{P}})^{-2}}(1-\tfrac{\sqrt{2}}{3}\alpha_{\mathcal{P}}r^{1/2})^2~\frac{\dd r}{r}\\
                &-\sum_{i\in\{1,2,3\}}\int_{(\tfrac{\sqrt{2}}{3}\alpha_{\mathcal{Q}})^{-2}}^{(\tfrac{\sqrt{2}}{3}(\alpha_{\mathcal{P}}+\alpha_i))^{-2}}(1-\tfrac{\sqrt{2}}{3}(\alpha_{\mathcal{P}}+\alpha_i)r^{1/2})^2~\frac{\dd r}{r}\\
                &+ \sum_{i\neq j\in\{1,2,3\}}\int_{(\tfrac{\sqrt{2}}{3}\alpha_{\mathcal{Q}})^{-2}}^{(\tfrac{\sqrt{2}}{3}(\alpha_{\mathcal{P}}+\alpha_i+\alpha_j))^{-2}}(1-\tfrac{\sqrt{2}}{3}(\alpha_{\mathcal{P}}+\alpha_i+\alpha_j)r^{1/2})^2~\frac{\dd r}{r}\Bigg). 
            \end{split}
        \end{align*}
        The last summand corresponding to $\Leb(\tri(\mathcal{P}\cup\mathcal{Q}))$ is always equal to $0$, since we may assume that $s(\mathcal{P}\cup\mathcal{Q})=0$ as discussed above. Using the substitution $(1-\tfrac{\sqrt{2}}{3}\alpha r^{1/2}) \mapsto t$ for all $\alpha\in\{\alpha_{\mathcal{P}},\alpha_1,\alpha_2,\alpha_3\}$ in the integrals above, we obtain
        \begin{align*}
            \eqref{eq:incl-excl}=& 3\sqrt{3}\Bigg(\int_{0}^{1-\frac{\alpha_{\mathcal{P}}}{\alpha_{\mathcal{Q}}}}t^2 \frac{\dd t}{1-t}
            -\sum_{i\in\{1,2,3\}}\int_{0}^{1-\frac{\alpha_{\mathcal{P}}+\alpha_i}{\alpha_{\mathcal{Q}}}}t^2\frac{\dd t}{1-t} + \sum_{i\neq j\in\{1,2,3\}}\int_{0}^{1-\frac{\alpha_{\mathcal{P}}+\alpha_i+\alpha_j}{\alpha_{\mathcal{Q}}}}t^2\frac{\dd t}{1-t}\Bigg).
        \end{align*}
        For $\abs{t}<1$ the (indefinite) integrals above are given by $-\frac{t(t+2)}{2} - \ln(1-t)$. By \eqref{eq:imp-identity}, we have
        \[ 1- \frac{\alpha_{\mathcal{P}}}{\alpha_\mathcal{Q}} = \frac{\alpha_1}{\alpha_\mathcal{Q}} + \frac{\alpha_2}{\alpha_\mathcal{Q}} + \frac{\alpha_3}{\alpha_\mathcal{Q}} \]
        and using this identity a straightforward calculation yields that the summands of the (definite) integrals corresponding to the terms $-\frac{t(t+2)}{2}$ cancel. Hence, we obtain
        \begin{align*}
            \eqref{eq:incl-excl}=&-3\sqrt{3}\Bigg(\ln\left(\tfrac{\alpha_{\mathcal{P}}}{\alpha_{\mathcal{Q}}}\right) - \sum_{i\in\{1,2,3\}}\ln\left(\tfrac{\alpha_{\mathcal{P}}+\alpha_i}{\alpha_{\mathcal{Q}}}\right) + \sum_{i\neq j\in\{1,2,3\}}\ln\left(\tfrac{\alpha_{\mathcal{P}}+\alpha_i+\alpha_j}{\alpha_{\mathcal{Q}}}\right)\Bigg)\\
            &= -3\sqrt{3}\ln\left(\frac{\alpha_{\mathcal{P}}(\alpha_{\mathcal{P}}+\alpha_1+\alpha_2)(\alpha_{\mathcal{P}}+\alpha_2+\alpha_3)(\alpha_{\mathcal{P}}+\alpha_3+\alpha_1)}{\alpha_{\mathcal{Q}}(\alpha_{\mathcal{P}}+\alpha_1)(\alpha_{\mathcal{P}}+\alpha_2)(\alpha_{\mathcal{P}}+\alpha_3)}\right).
        \end{align*}
        Substituting $\alpha_{\mathcal{Q}} = \alpha_{\mathcal{P}} +\alpha_1+\alpha_2+\alpha_3$ and setting $\beta_{i} \defeq \tfrac{\alpha_i}{\alpha_\mathcal{P}}$ this translates to
        \begin{align}\label{eq:final-estimate}
            \eqref{eq:incl-excl}=&-3\sqrt{3}\ln\left(\frac{(1+\beta_1+\beta_2)(1 +\beta_2+\beta_3)(1+\beta_3+\beta_1)}{(1+\beta_1+\beta_2+\beta_3)(1+\beta_1)(1+\beta_2)(1+\beta_3)}\right).
        \end{align}
        Finally, using the following identity
        \begin{align}\label{eq:identity-betas}
            \frac{(1+\beta_1+\beta_2)(1 +\beta_2+\beta_3)(1+\beta_3+\beta_1)}{(1+\beta_1+\beta_2+\beta_3)(1+\beta_1)(1+\beta_2)(1+\beta_3)} = 1 - \frac{\beta_1\beta_2\beta_3(2+\beta_1+\beta_2+\beta_3)}{(1+\beta_1+\beta_2+\beta_3)(1+\beta_1)(1+\beta_2)(1+\beta_3)} 
        \end{align}
        as well as the standard inequality $-\ln(1-t) \leq \frac{t}{1-t}$, we obtain
        \begin{align*}
            \int_{\RR_{>0}}\Leb\left(\overline{B}_{r^{1/2}a_1,r^{1/2}a_2}(c_1')\right)\frac{\dd r}{r} &\leq 3\sqrt{3}\frac{\beta_1\beta_2\beta_3(2+\beta_1+\beta_2+\beta_3)}{(1+\beta_1+\beta_2)(1+\beta_2+\beta_3)(1+\beta_3+\beta_1)}\\
            &\leq 3\sqrt{3}\frac{8}{(k-1)k'(k-k')}\left(2 + \frac{2}{k-1} + \frac{2}{k'} + \frac{2}{k''}\right)\\
            &\leq 3\sqrt{3}\cdot64\frac{1}{(k-1)k'(k-k')},
        \end{align*}
        where the second to last inequality holds by trivially bounding the denominator by $1$ and using \Cref{lem:bounds-on-betas} and the fact that $k'' = k-k'$ (see \Cref{lem:Farey-fractions}) to bound the numerator.

        If we now consider all elements $c_1'\in\{0,\dots,k\}$ with $\gcd(c_1',k)=1$, the parameter $k'$ in the successive Farey fractions $\frac{h'}{k'} \leq \frac{c_1'}{k}\leq \frac{h''}{k''}$ will run through all co-prime residue classes modulo $k$ (see \Cref{rem:all-coprime-classes}). Thus, using \eqref{eq:first-dec} as well as \eqref{eq:second-dec} we obtain for every reduced basis $\{r^{1/2}a_1,r^{1/2}a_2\}$ of $r^{1/2}A'\ZZ^2$ 
        \begin{align*}
             \int_{\RR_{>0}}\Leb(\fund_{\{r^{1/2}a_1,r^{1/2}a_2\}^{(k)}})\frac{\dd r}{r} &= \sum_{(c_1,c_2)\text{ eligible}}\int_{\RR_{>0}}\Leb\left(B_{\{r^{1/2}a_1,r^{1/2}a_2\}}(c_1',c_2')\right)\frac{\dd r}{r}\\
             &\leq \sum_{(a_1',a_2')\in\mathfrak{a}}\left(\sum_{\gcd(c_1',k)=1}\int_{\RR_{>0}}\Leb\left(\overline{B}_{r^{1/2}a_1',r^{1/2}a_2'}(c_1')\right)\frac{\dd r}{r}\right)\\
             &\leq \sum_{\{a_1',a_2'\}\in\mathfrak{a}}\left(3\sqrt{3}\cdot64 \sum_{k'=1}^{k-1} \frac{1}{(k-1)k'(k-k')}\right) \leq 3\cdot 3\sqrt{3}\cdot64\frac{H_{k-1}}{k(k-1)} ,
         \end{align*}
        where $\mathfrak{a} = \{ (a_1,a_2), (a_2,a_1), (-a_1,a_2-a_1)\}$ is a set consisting of three pairs and $H_{k-1} = \sum_{\ell=1}^{k-1} \tfrac{1}{\ell}$ is the standard $(k-1)$-th harmonic number.
        In total, summing over all possible reduced bases, of which there are $12$ in total, we get
        \begin{align*}
            \int_{\RR_{>0}}\Leb\left(\fund_{r^{1/2}A'}^{(k)}\right)\frac{\dd r}{r} = \sum_{\mathfrak{a}}\left(\int_{\RR_{>0}}\Leb(B_{r^{1/2}\mathfrak{a}})\frac{\dd r}{r}\right)\leq 12\cdot9\sqrt{3}\cdot64\frac{H_{k-1}}{k(k-1)}
        \end{align*}
        Lastly, note that $H_{k-1} \leq \ln(k) + 1$, so \Cref{prop:upper-bound-level-measure} follows.
    \end{proof}

     \begin{lemma}\label{lem:bounds-on-betas}
        Using the notations as above, we have
        \[ 0 < \beta_1 \leq \frac{2}{k'},\quad 0 < \beta_2 \leq \frac{2}{k''},\quad 0 < \beta_3 \leq \frac{2}{k-1}. \]
    \end{lemma}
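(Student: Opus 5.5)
The plan is to bound each $\beta_i=\alpha_i/\alpha_{\mathcal P}$ by combining a bound on the numerator from \Cref{lem:directions-of-reduced-elements} with a lower bound on $\alpha_{\mathcal P}$ from \Cref{eq:basics-on-vectors}. No further geometry should be needed.

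\textbf{Positivity.} By \Cref{lem:directions-of-reduced-elements}, $\alpha_i=\nu_{j_i}(\mathcal P\cup\{Q_i\})-\nu_{j_i}(\mathcal P)>0$ for each $i$. For the denominator, write
\[
\alpha_{\mathcal P}=\frac1k\sum_{i=1}^3\max\{0,\langle v_i,-a_1\rangle,\langle v_i,-a_2\rangle\}.
\]
Since $v_1+v_2+v_3=0$ by \eqref{eq:dir-identity} and $a_1\neq 0$, at least one of the values $\langle v_i,-a_1\rangle$ is strictly positive. Hence $\alpha_{\mathcal P}>0$, and so $\beta_i>0$.

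\textbf{Lower bound on $\alpha_{\mathcal P}$.} Each summand in $\alpha_{\mathcal P}$ is nonnegative. Dropping all but one summand gives, for $a\in\{a_1,a_2\}$,
\[
k\,\alpha_{\mathcal P}\ \ge\ \max_{i}\langle v_i,-a\rangle\ \ge\ \tfrac12\max_i\abs{\langle v_i,a\rangle}.
\]
The second inequality is \eqref{eq:eq:basics-on-vectors} applied to the point $-a$.

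\textbf{Upper bounds on the numerators.} By \eqref{eq:bounds-on-alpha_i} we have
\[
\alpha_1\le \frac{1}{kk'}\langle v_{j_1},-a_1\rangle,\qquad \alpha_2\le\frac1{kk''}\langle v_{j_2},a_1\rangle,\qquad \alpha_3\le\frac{1}{k(k-1)}\langle v_{j_3},a_2\rangle.
\]
Each inner product on the right is at most $\max_i\abs{\langle v_i,a\rangle}$ for the relevant $a\in\{a_1,a_2\}$. By the lower bound above, this is at most $2k\,\alpha_{\mathcal P}$.

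\textbf{Conclusion.} Dividing each numerator bound by $\alpha_{\mathcal P}$ gives $\beta_1\le 2/k'$, $\beta_2\le 2/k''$ and $\beta_3\le 2/(k-1)$. For $\beta_1$ and $\beta_2$ we use $a=a_1$, and for $\beta_3$ we use $a=a_2$.

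The only delicate step is the lower bound on $\alpha_{\mathcal P}$. Its point is that $\alpha_{\mathcal P}$ dominates the \emph{absolute values} $\abs{\langle v_i,a\rangle}$, not just their positive parts. This is exactly what \eqref{eq:eq:basics-on-vectors} provides, at the cost of the factor $2$ that appears in the statement.
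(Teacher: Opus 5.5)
Your proof is correct and is essentially the paper's own argument: you bound $\alpha_{\mathcal P}$ below by $\frac{1}{2k}\max_i\abs{\langle v_i,a_j\rangle}$ via \eqref{eq:eq:basics-on-vectors}, bound the numerators via \eqref{eq:bounds-on-alpha_i}, and divide. Your per-vector lower bound is only a cosmetic variant of the paper's joint maximum over $i$ and $j$.

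One caveat, which you inherit from the paper rather than introduce: as printed, \eqref{eq:Q_1-P_3} and \eqref{eq:Q_2-P_3} yield the factors $\frac{1}{kk''}$ for $\alpha_1$ and $\frac{1}{kk'}$ for $\alpha_2$. So $k'$ and $k''$ are interchanged in \eqref{eq:bounds-on-alpha_i}, and hence in this lemma. This is harmless downstream, because the subsequent estimate depends only on $\beta_1\beta_2\beta_3$ and $\beta_1+\beta_2+\beta_3$, whose bounds are symmetric in $k'$ and $k''$.
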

    \begin{proof}
        Recall that $\beta_i = \frac{\alpha_i}{\alpha_{\mathcal{P}}}$. By definition $\alpha_i>0$ and $\alpha_{\mathcal{P}}>0$, so the lower bounds are immediate. To obtain the upper bounds, we first give a lower bound on $\alpha_{\mathcal{P}}$. We have
        \[ \alpha_{\mathcal{P}} = \sum_{i=1}^3\nu_i(\left\{0,\tfrac{1}{k}a_1,\tfrac{1}{k}a_2\right\}) = \frac{1}{k}\left(\sum_{i=1}^3\max_{P\in\left\{0,a_1,a_2\right\}}\langle v_i, -P \rangle\right). \]
        Let $j\in\{1,2\}$. Since $\langle v_1 + v_2 + v_3, -a_j\rangle = 0$ and $a_j\neq 0$, there is at least one index $i_j\in\{1,2,3\}$ for which $\langle v_{i_j}, -a_j\rangle > 0$. Thus, it follows that
        \begin{align}\label{eq:bound-denominator}
             \alpha_{\mathcal{P}} \geq \frac{1}{k}\max_{i\in\{1,2,3\}}\left\{\max_{j\in\{1,2\}}\{\langle v_i,-a_j\rangle\}\right\} \geq \frac{1}{2k}\max_{i\in\{1,2,3\}}\left\{\max_{j\in\{1,2\}}\{\abs{\langle v_i,a_j\rangle}\}\right\},
        \end{align}
        where the second inequality follows from \Cref{eq:basics-on-vectors}.

        It remains to give an upper bound on $\alpha_i$. By \eqref{eq:bounds-on-alpha_i} we have
        \begin{align}\label{eq:bound-numerator}
            \begin{split}
                \alpha_1 & \leq  \frac{1}{kk'}\langle v_{j_1},-a_1\rangle \leq \frac{1}{kk'}\max_{i\in\{1,2,3\}}\left\{\max_{j\in\{1,2\}}\{\abs{\langle v_i,a_j\rangle}\}\right\},\\
                \alpha_2 & \leq  \frac{1}{kk''}\langle v_{j_2},a_1\rangle \leq \frac{1}{kk''}\max_{i\in\{1,2,3\}}\left\{\max_{j\in\{1,2\}}\{\abs{\langle v_i,a_j\rangle}\}\right\},\\
                \alpha_3 & \leq  \frac{1}{k(k-1)}\langle v_{j_3},a_2\rangle \leq \frac{1}{k(k-1)}\max_{i\in\{1,2,3\}}\left\{\max_{j\in\{1,2\}}\{\abs{\langle v_i,a_j\rangle}\}\right\}.
            \end{split}
        \end{align}
        Combining the bounds in \eqref{eq:bound-denominator} and \eqref{eq:bound-numerator} give the upper bounds as claimed.
    \end{proof}

\subsection{Proof of \Cref{prop:lower-bound-level-measure}}
    \begin{proof}
        To obtain a lower bound, we will compute a lower bound for $\Leb(\overline{B}_{a_1,a_2}(c_1'))$ for an explicit reduced basis $\{a_1,a_2\}$ and $c_1'\in\{0,\dots,k-1\}$. 
        
        For $A'\ZZ^2\in X_2$ we denote the reduced minima by $\rho_i \defeq \rho_i(A'\ZZ^2)$, for $i\in\{1,2,3\}$, as well as the first and second successive minima of $A'\ZZ^2$ with respect to $\tri$ by $\succ_j \defeq \succ_j(\tri,A'\ZZ^2)$ for $j\in\{1,2\}$.
        We define the set
        \[ Z_k \defeq \left\{A'\ZZ^2\in X_2~\Big{|}~ \exists i\neq e\in\{1,2,3\}:\begin{aligned}
        \langle v_i,-\rho_i\rangle =\succ_1, ~&~\langle v_i,\rho_e\rangle \geq \frac{1}{2}\succ_1\\
         \min\left\{\frac{1}{2}\succ_1,\frac{1}{2k}\succ_2\right\} &> \langle v_e,\rho_i\rangle,
        \end{aligned}\right \} \subset X_2. \]
        Notice that $Z_k$ depends on the level $k$. 
        For any $A'\ZZ^2\in Z_k$  we let $j\in\{1,2,3\}\setminus\{i,e\}$. The basis $a_1 = -\rho_i$ and $a_2 = \rho_j$ is reduced and we may assume that $\fund_{A'}$ is defined as the parallelogram spanned by $a_1$ and $a_2$. For $c_1'=1$ we will obtain a lower bound of the Lebesgue measure of $\overline{B}_{a_1,a_2}(c_1')$. We have
        \begin{align}\label{eq:concrete-set}
            \overline{B}_{a_1,a_2}(c_1') \subseteq \tri(\mathcal{P}) \setminus\left(\bigcup_{Q\in\mathcal{Q}}\tri(\mathcal{P}\cup\{ Q\})\right)
        \end{align}
        where $\mathcal{P} = \{\tfrac{1}{k}P'_1,\tfrac{1}{k}P'_2,\tfrac{1}{k}P'_3\}$ with
        \[ P'_1 = a_1 -a_2=-\rho_i-\rho_j=\rho_e,\quad P'_2 =2a_1-a_2=-\rho_i+\rho_e,\quad P'_3 = a_1=-\rho_i \]
        and $\mathcal{Q}=\{Q_1,Q_2,Q_3\}$ for $Q_1,Q_2,Q_3\in \bigcup_{\ell=1}^{k-1}\tfrac{1}{\ell}A'\ZZ^2$ defined as in \eqref{eq:def-of-Q_i}.
        More precisely, by the choice of $c_1'=1$ the successive Farey fractions $\tfrac{h'}{k'}< \tfrac{1}{k}<\tfrac{h''}{k''}$ are given by $h' = 0$, $k' = 1$ and $h'' = 1$, $k''=k-1$ and we get
        \begin{align*}
            \begin{alignedat}{1}
            Q_1 &=\frac{0}{1}a_1=0,\\
            Q_2&= \frac{1}{k-1}a_1 = -\frac{1}{k-1}\rho_i,\\
            Q_3 &= \frac{1}{k-1}a_1 - \frac{1}{k-1}a_2=\frac{1}{k-1}\rho_e.
            \end{alignedat}
        \end{align*}        
        We claim that in this case \eqref{eq:concrete-set} turns out to be an equality. Indeed, if $\overline{B}_{a_1,a_2}(c_1')\neq\emptyset$, then the right hand side in \eqref{eq:concrete-set} is non-empty as well and so by \Cref{lem:directions-of-reduced-elements} there are unique and pairwise different indices $j_1, j_2, j_3\in\{1,2,3\}$ so that for all $i\in\{1,2,3\}$ and $i'\neq j_i\in\{1,2,3\}$ we have
        \[ \nu_{j_i}(\mathcal{P}\cup\mathcal{Q}) =\langle v_{j_i},-Q_i\rangle > \nu_{j_i}(\mathcal{P}) \quad\text{and}\quad \nu_{i'}(\mathcal{P}\cup\mathcal{Q}) = \nu_{i'}(\mathcal{P}). \]
        In fact, it is easy to verify that in this case these indices are given by $j_1 = i$, $j_2 = j$, $j_3 = e$.
        Assume now that there is $b\in \tri(\mathcal{P}) \setminus\left(\bigcup_{Q\in\mathcal{Q}}\tri(\mathcal{P}\cup\{Q\})\right)$ such that $b\not\in \overline{B}_{a_1,a_2}(c_1')$.
        By \Cref{lem:level-equivalence}, there exists $Q\in \bigcup_{\ell=1}^{k-1}\tfrac{1}{\ell}A\ZZ^2$, such that $b\in -Q +\tri$. Let $\mathcal{Q}=\{Q_1.Q_2,Q_3\}$, and by assumption 
        \[ b\not\in (-Q_1+\tri)\cup(-Q_2+\tri)\cup(-Q_3+\tri), \]
        so that $Q\not\in\mathcal{Q}$. Moreover, by the description of $\tri$ given in \eqref{eq:desc-tri} and the uniqueness of $j_i$, we get
        \[ \langle v_{j_i}, b+Q \rangle > \frac{-1}{\sqrt{2}} \geq \langle v_{j_i}, b+Q_i \rangle  \]
        and so $\langle v_{j_i}, Q-Q_i\rangle > 0$ for all $i\in\{1,2,3\}$.
        We write
        \[ Q = \frac{d_1}{\ell}a_1 + \frac{d_2}{\ell}a_2 \]
        for some $d_1,d_2\in\NN$ and $1\leq \ell\leq k-1$. Using the definitions of $a_1 = -\rho_i$, $a_2=\rho_j$, $j_1$, $j_2$, $j_3$, $Q_1,Q_2$ and $Q_3$ we obtain
        \begin{align}
            \langle v_{j_1}, Q-Q_1\rangle > 0 &&\Leftrightarrow  &&\frac{d_1}{\ell}\langle v_i, -\rho_i \rangle &> -\frac{d_2}{\ell}\langle v_i, \rho_j \rangle,\label{eq:Q-Q_1}\\
            \langle v_{j_2}, Q-Q_2\rangle > 0 &&\Leftrightarrow && -\left(\frac{d_1}{\ell}-\frac{1}{k-1}\right)\langle v_j,\rho_i \rangle &> \frac{d_2}{\ell}\langle v_j, -\rho_j \rangle,\label{eq:Q-Q_2}\\
            \langle v_{j_3}, Q-Q_3\rangle > 0 &&\Leftrightarrow && -\left(\frac{d_1}{\ell}-\frac{1}{k-1}\right)\langle v_e, \rho_i \rangle &> -\left(\frac{d_2}{\ell}+\frac{1}{k-1}\right)\langle v_e, \rho_j \rangle,\label{eq:Q-Q_3}
        \end{align}
        where we arranged the terms in such a way that all inner products are non-negative. Notice that since $\ell \leq k-1$, the terms $\frac{d_1}{\ell}$,$(\frac{d_1}{\ell}-\tfrac{1}{k-1})$ and the terms $\frac{d_2}{\ell}$,$(\frac{d_2}{\ell}+\tfrac{1}{k-1})$ cannot have opposite signs. If $d_2\leq0$, then \eqref{eq:Q-Q_1} implies $d_1>0$. However, then \eqref{eq:Q-Q_3} yields a contradiction as the right hand side is non-negative, whereas the left hand side is negative. We conclude that $d_2>0$ and using \eqref{eq:Q-Q_2} this implies $d_1\leq0$. Since $\langle v_j,-\rho_j\rangle \geq \langle v_j,\rho_i\rangle$, \eqref{eq:Q-Q_2} gives
        \begin{align}\label{eq:cond-on-d_2}
            -\left(\frac{d_1}{\ell}-\frac{1}{k-1}\right) > \frac{d_2}{\ell}.
        \end{align}
        Using one of the inequalities defining the set $Z_k$, we have
        \[ \langle v_i,\rho_j\rangle = \langle v_i, -\rho_i\rangle - \langle v_i,\rho_e\rangle \leq \succ_1 - \frac{1}{2}\succ_1 = \frac{1}{2}\succ_1. \]
        Hence, \eqref{eq:Q-Q_1} gives $d_2 > -2d_1$. Combining this with \eqref{eq:cond-on-d_2} we obtain $\frac{d_1}{\ell} > -\frac{1}{k-1}$, which is a contradiction as $d_1<0$ and $\ell\leq k-1$. Thus, such a $Q$ cannot exist and  \eqref{eq:concrete-set} is indeed an equality. 
        
        In particular, for these choices of $a_1$, $a_2$, $c_1$ and $c_2$ following the same
        calculation leading to \eqref{eq:final-estimate}, we obtain, using \eqref{eq:identity-betas} as well as the standard inequality $-\ln(1-t)\geq t$, that
        \begin{align}\label{eq:concrete-lower-bound}
            \begin{split}
                \int_{\RR_{>0}}\Leb\left(B_{r^{1/2}a_1,r^{1/2}a_2}(c_1,c_2)\right)\frac{\dd r}{r} &\geq 3\sqrt{3}\frac{\beta_1\beta_2\beta_3(2+\beta_1+\beta_2+\beta_3)}{(1+\beta_1+\beta_2+\beta_3)(1+\beta_1)(1+\beta_2)(1+\beta_3)}\\
                &\geq 3\sqrt{3}\frac{\beta_1}{1+\beta_1}\frac{\beta_2}{1+\beta_2}\frac{\beta_3}{1+\beta_3}.
            \end{split}
        \end{align}
        Recall that $\beta_{i'} = \frac{\alpha_{i'}}{\alpha_{\mathcal{P}}}$, where $\alpha_{\mathcal{P}} = \sum_{i'=1}^3\nu_{i'}(\mathcal{P})$ and $\alpha_{i'} = \nu_{j_{i'}}(\mathcal{Q})- \nu_{j_{i'}}(\mathcal{P}) = \langle v_{j_{i'}}, -Q_{i'}\rangle - \nu_{j_{i'}}(\mathcal{P})$ for all $i'\in\{1,2,3\}$. On the one hand, we have the lower bounds     
        \begin{align}
            \begin{split}\label{eq:concrete-nu_i}
                \alpha_1 = \nu_{j_1}(\mathcal{Q})-\nu_{j_1}(\mathcal{P}) &=\frac{1}{k}\langle v_i, \rho_e \rangle\geq \frac{1}{2k}\succ_1,\\
                \alpha_2 =\nu_{j_2}(\mathcal{Q})-\nu_{j_2}(\mathcal{P}) & = \frac{1}{k(k-1)}\langle v_j, \rho_i\rangle = \frac{1}{2k(k-1)}\succ_1,\\
                \alpha_3 =\nu_{j_3}(\mathcal{Q})-\nu_{j_3}(\mathcal{P}) &= \frac{1}{k(k-1)}\langle v_e,-\rho_e\rangle  -\frac{1}{k-1}\langle v_e, \rho_i \rangle \geq \frac{1}{2k(k-1)}\succ_2,
            \end{split}
        \end{align}
        where the inequalities follow by definition of the set $Z_k$.
        On the other hand, using \eqref{eq:dir-identity}, we have the upper bound
        \[ \alpha_{\mathcal{P}} = \frac{1}{k}\left(\langle v_i,-\rho_e\rangle+\langle v_j,\rho_i\rangle+\langle v_e,\rho_i-\rho_e\rangle\right) = \frac{1}{k}(\langle v_j,\rho_e\rangle+\langle v_i,-\rho_i\rangle)\leq \frac{2}{k}\succ_2. \]
        This, together with the lower bounds in \eqref{eq:concrete-nu_i}, yields
        \begin{align*}
            \beta_{j_1} \geq \frac{\succ_1}{4\succ_2},\quad \beta_{j_2} \geq \frac{\succ_1}{2(k-1)\succ_2},\quad \beta_{j_3} \geq \frac{1}{2(k-1)}.
        \end{align*}
        To sum up, we get
        \begin{align*}
            \frac{\beta_{j_1}}{1+\beta_{j_1}} \geq \frac{\succ_1}{4\succ_2 + \succ_1} \geq \frac{\succ_1}{5\succ_2},\quad
            \frac{\beta_{j_2}}{1+\beta_{j_2}} \geq \frac{\succ_1}{2(k-1)\succ_2 + \succ_1} \geq \frac{\succ_1}{(2k-1)\succ_2},\quad
            \frac{\beta_{j_3}}{1+\beta_{j_3}} &\geq \frac{1}{2k-1}.
        \end{align*}
        Thus, by \eqref{eq:concrete-lower-bound} we have for every $A'\ZZ^2\in Z_k$ that
        \[ \int_{\RR_{>0}}\Leb\left(\overline{B}_{r^{1/2}a_1,r^{1/2}a_2}(c_1')\right)\frac{\dd r}{r} \geq \frac{3\sqrt{3}}{5(2k-1)^2}\left(\frac{\succ_1}{\succ_2}\right)^2. \]

        What is left to show is that
        \[ \int_{Z_k}\left(\frac{\succ_1}{\succ_2}\right)^2\dd\msr_{X_2} \gg \frac{1}{k}. \]
        Let $A'\ZZ^2\in X_2$ and assume that $\langle v_i,-\rho_i\rangle = \succ_1$. Let $\gamma$ denote the (absolute) angle between the half-line $\{x\in\cone_i\st \langle v_e,x\rangle = 0\}$ and the point $\rho_i\in\cone_i$ , where $e$ is chosen so that this angle is minimized, so we have $\gamma \leq 30^{\circ}$. Then
        \[ \langle v_e,\rho_i\rangle = \frac{2}{\sqrt{3}}\sin(\gamma)\langle v_i,-\rho_i\rangle = \frac{2}{\sqrt{3}}\sin(\gamma)\succ_1\leq \frac{2}{\sqrt{3}}\sin(\gamma)\succ_2. \]
        It follows that if $\sin(\gamma)\leq \frac{\sqrt{3}}{4k}$, then $\langle v_e, \rho_i\rangle < \min\{\frac{1}{2}\succ_1,\frac{1}{2k}\succ_2\}$. If additionally $\langle v_i,\rho_e\rangle \geq \frac{1}{2}\succ_1$, then $A'\ZZ^2\in Z_k$. However, since $\succ_1 = \langle v_i,-\rho_i\rangle = \langle v_i,\rho_e\rangle + \langle v_i,\rho_j\rangle$, this condition eliminates roughly half of the lattices satisfying $\sin(\gamma)\leq \frac{\sqrt{3}}{4k}$. It is clear that $\left(\frac{\succ_1}{\succ_2}\right)^2$ is integrable in the space of all unimodular lattices $X_2/\SO_2(\RR)$ up to rotation, and we set
        \[ 0 < c'\defeq \int_{X_2/\SO_2(\RR)}\left(\frac{\succ_1}{\succ_2}\right)^2\dd\msr_{X_2/\SO_2(\RR)} < \infty. \] 
        Since $\Leb(\{\gamma\in[0,2\pi)\st \sin(\gamma) \leq \frac{\sqrt{3}}{4k}\}) \geq \frac{\sqrt{3}}{4k}$ we get
        $\int_{Z_k}\left(\frac{\succ_1}{\succ_2}\right)^2\dd\msr_{X_2} \gg \frac{1}{k}$
        where the implied constant is independent of $k$. This finishes the proof of the lower bound.
    \end{proof}

	\printbibliography[title=References]
\end{document}